\newcommand\tsup[2][2]{%
 \def\useanchorwidth{T}%
  \ifnum#1>1%
    \stackon[-.5pt]{\tsup[\numexpr#1-1\relax]{#2}}{\scriptscriptstyle\sim}%
  \else%
    \stackon[.5pt]{#2}{\scriptscriptstyle\sim}%
  \fi%
}
 \theoremstyle{definition}
 \theoremstyle{remark}
 \numberwithin{equation}{section}
\newtheorem{theorem}{Theorem}
\newtheorem{corollary}[theorem]{Corollary}
\newtheorem{definition}[theorem]{Definition}
\newtheorem{lemma}[theorem]{Lemma}
\newtheorem{proposition}[theorem]{Proposition}
\newtheorem{remark}[theorem]{Remark}
\newcommand{\dis}{\displaystyle}
\newcommand{\thetanu}{\theta^{(\nu)}}
\newcommand{\unu}{u^{(\nu)}}
\newcommand{\divv}{\text{\rm div}}
\newcommand{\bK}{\bar{K}_{\infty}}
\newcommand{\Dh}{\delta_{h}}
\newcommand{\dt}{\partial_t}
\newcommand{\1}{\mathbf{1}}
\newcommand{\dimf}{\text{\rm dim}_{f}}
\newcommand{\A}{{\mathcal A}}
\newcommand{\An}{{\mathcal A}^{\nu}}
\newcommand{\Gg}{\mathcal{G}^{\nu}}
\newcommand{\Ggo}{\mathcal{G}^{\nu_0}}
\newcommand{\Lh}{\mathcal{L}_{h}}
\newcommand{\K}{K_{\infty}}
\newcommand{\N}{\mathbb N}
\newcommand{\Tr}{\text{\rm Tr}}
\newcommand{\T}{\mathbb T}
\newcommand{\Z}{\mathbb Z}
\newcommand{\pin}{\pi^{\nu}}
\newcommand{\pino}{\tilde\pi^{\nu}}
\newcommand{\pio}{\tilde\pi^{0}}
\newcommand{\I}{I^*}
\newcommand{\U}{\mathcal U}
\newcommand{\R}{\mathbb R}
\newcommand{\dist}{\text{\rm dist}}
\newcommand{\intox}{\int_{\mathbb{T}^d}}
\numberwithin{equation}{section}
\numberwithin{theorem}{section}
\numberwithin{figure}{section}
\begin{document}

%-------------------------------------------------------------------------
% editorial commands: to be inserted by the editorial office
%
%\firstpage{1} \volume{228} \Copyrightyear{2004} \DOI{003-0001}
%
%
%\seriesextra{Just an add-on}
%\seriesextraline{This is the Concrete Title of this Book\br H.E. R and S.T.C. W, Eds.}
%
% for journals:
%
%\firstpage{1}
%\issuenumber{1}
%\Volumeandyear{1 (2004)}
%\Copyrightyear{2004}
%\DOI{003-xxxx-y}
%\Signet
%\commby{inhouse}
%\submitted{March 14, 2003}
%\received{March 16, 2000}
%\revised{June 1, 2000}
%\accepted{July 22, 2000}
%
%
%
%---------------------------------------------------------------------------
%Insert here the title, affiliations and abstract:
%

\title[Vanishing diffusion limits]
 {Vanishing diffusion limits and long time behaviour of a class of forced active scalar equations}

 %----------Author 1
\author{Susan Friedlander}

\address{Department of Mathematics\\
University of Southern California}

\email{susanfri@usc.edu}

%\thanks{This work was completed with the support of our
%\TeX-pert.}
%----------Author 2
\author{Anthony Suen}

\address{Department of Mathematics and Information Technology\\
The Education University of Hong Kong}

\email{acksuen@eduhk.hk}

\date{}

\keywords{active scalar equations, vanishing viscosity limit, Gevrey-class solutions, global attractors}

\subjclass{76D03, 35Q35, 76W05}

\begin{abstract}
We investigate the properties of an abstract family of advection diffusion equations in the context
of the fractional Laplacian. Two independent diffusion parameters enter the system,
one via the constitutive law for the drift velocity and one as the prefactor of the 
fractional Laplacian. We obtain existence and convergence results in certain
parameter regimes and limits. We study the long time behaviour of solutions to the general
problem and prove the existence of a unique global attractor. We apply results to two
particular active scalar equations arising in geophysical fluid dynamics, namely the
surface quasigeostrophic equation and the magnetogeostrophic equation.
\end{abstract}

%%% ----------------------------------------------------------------------
\maketitle
%%% ----------------------------------------------------------------------
\tableofcontents
\section{Introduction}\label{introduction}

Active scalar equations have been a topic of considerable study in recent years, in part because they arise in many physical models and in part because they present challenging nonlinear PDEs. In particular, such equations are prevalent in mathematical fluid dynamics. One such equation is the surface quasi-geostrophic equation (SQG) which was introduced by Constantin, Majda and Tabak as a 2 dimensional analogue for the three dimensional Euler equations \cite{CMT94}, \cite{HPG95}, \cite{OY97}. Another model with related, but distinct, features is the magnetogeostrophic equation (MG) which was proposed by Moffatt and Loper as a model for magnetogeostrophic turbulence \cite{FV11a}, \cite{FRV14}, \cite{M08}, \cite{ML94}. The physics of an active scalar equation is encoded in the constitutive law that relates the transport velocity vector $u$ with a scalar field $\theta$. This law produces a differential operator that when applied to the scalar field determines the velocity. The singular or smoothing properties of this operator are closely connected with the mathematics of the nonlinear advection equation for $\theta$. In this present paper we study an abstract class of active scalar equations in $\mathbb{T}^d\times(0,\infty)=[0,2\pi]^d\times(0,\infty)$ with $d\in\{2,3\}$ \footnote{We point out that most of the results given in our work hold for $d\ge2$.} of the following form
\begin{align}
\label{abstract active scalar eqn} \left\{ \begin{array}{l}
\partial_t\theta+u\cdot\nabla\theta=-\kappa\Lambda^{\gamma}\theta+S, \\
u_j[\theta]=\partial_{x_i} T_{ij}^{\nu}[\theta],\theta(x,0)=\theta_0(x)
\end{array}\right.
\end{align}
where $\nu\ge0$, $\kappa\ge0$, $\gamma\in(0,2]$ and $\Lambda:=\sqrt{-\Delta}$. Here $\theta_0$ is the initial datum and $S=S(x)$ is a given function that represents the forcing of the system. We assume that \footnote{Such mean zero assumption is common in many physical models which include SQG equation and MG equation; see \cite{CTV14} and \cite{FS18} for example.}
\begin{equation}\label{zero mean assumption on data and forcing}
\int_{\mathbb{T}^d}\theta_0(x)dx=\int_{\mathbb{T}^d}S(x)=0,
\end{equation}
and throughout this paper, we consider mean-zero (zero average) solutions. $\{T_{ij}^{\nu}\}_{\nu\ge0}$ is a sequence of operators which satisfy:
\begin{enumerate}
\item[A1]  $\partial_{x_i}\partial_{x_j} T^{\nu}_{ij}f=0$ for any smooth functions $f$ for all $\nu\ge0$.
\item[A2] $T_{ij}^\nu:L^\infty(\mathbb{T}^d)\rightarrow BMO(\mathbb{T}^d)$ are bounded uniformly in $\nu$ for all $\nu\ge0$.
\item[A3] For each $\nu>0$, there exists a constant $C_{\nu}>0$ such that for all $1\le i,j\le d$, $$|\widehat{T^{\nu}_{ij}}(k)|\le C_{\nu}|k|^{-3}, \qquad\forall k\in\mathbb{Z}^d\backslash\{|k|=0\}.$$
\item[A4] For each $1\le i,j\le d$ and $\nu\ge0$, $\widehat{T^{\nu}_{ij}}(k)=0$ for $|k|=0$.
\item[A5] There exists a constant $C_{0}>0$ independent of $\nu$, such that for all $1\le i,j\le d$, 
\begin{equation*}
\sup_{\nu\in[0,1]}\sup_{\{k\in\mathbb{Z}^d_*\}}|\widehat{T^{\nu}_{ij}}(k)|\le C_{0}.
\end{equation*}
\end{enumerate}
\begin{remark}
There are several remarks for the assumptions A1 to A5 as given above:
\begin{itemize}
\item A1 implies that $u=u[\theta]$ is divergence-free for all $\nu\ge0$. Hence together with \eqref{zero mean assumption on data and forcing}, it immediately implies that $\theta$ obeys
\begin{equation}\label{zero mean assumption}
\int_{\mathbb{T}^d}\theta(x,t)dx=0,\qquad\forall t\ge0. 
\end{equation}
\item A2 implies that the drift velocity $u$ lies in the space $L^{\infty}_t BMO^{-1}_x$ for all $\nu\ge0$.
\item A3 implies that $u_j[\cdot]=\partial_{x_i} T_{ij}^{\nu}[\cdot]$ are operators of smoothing order 2 for $\nu>0$, in the sense that for any $s\ge0$ and $f\in L^p$ with $p>1$,
\begin{align}\label{two order smoothing for u when nu>0}
\|\Lambda^s u[f]\|_{L^p}\le C_{\nu}\|\Lambda^{s'}f\|_{L^p},
\end{align}
where $s'=\max\{s-2,0\}$. Here $C_{\nu}$ is a positive constant which depends on $\nu$, $p$ and $d$ only, and $C_\nu$ may blow up as $\nu\to0$.
\item A4 implies that $u$ has zero mean, which is consistent with $\theta$ also having zero mean.
\item A5 implies that $u_j[\cdot]=\partial_{x_i} T_{ij}^{0}[\cdot]$ is a singular operator of order 1, in the sense that for any $f\in L^p$ with $p>1$,
\begin{align}\label{one order singular for u when nu=0}
\|u[f]\|_{L^p}\le C_{0}\|\nabla f\|_{L^p},
\end{align}
for some positive constant $C_{0}$ which depends on $p$ and $d$ only.
\end{itemize}
\end{remark}

The abstract family of active scalar equations \eqref{abstract active scalar eqn} satisfying the properties A1--A5 include as special cases the SQG equation and the MG equation, both of which model phenomena in rotating fluids. The critical SQG equation is an example where the dimension $d=2$, the diffusive parameter $\nu=0$, the thermal diffusion $\kappa>0$, the fractional power $\gamma=1$, and the relation between the velocity $u$ and the scalar field $\theta$ is given by the perpendicular Riesz transform. We note that this is a singular integral operator of degree zero. Global well-posedness for the critical SQG equation was first proved in Kiselev, Nazarov and Volberg \cite{KFV07} and Caffarelli and Vasseur \cite{CV10}. More recently there has been a considerable literature on the long time dynamics of the forced SQG equation including \cite{CD14}, \cite{CTV14}, \cite{CZV16} and references there in.

The MG equation is an example where the dimension $d=3$, the diffusive parameter $\nu\ge0$, the thermal diffusion $\kappa\ge0$, and the fractional power $\gamma=2$. The derivation of the MG equation via the postulates in \cite{ML94} reduces the MHD system to an active scalar equation
\begin{align}\label{active scalar equation general MG intro}
\partial_t \theta + u \cdot \nabla \theta = \kappa \Delta \theta + S
\end{align}
where the constitutive law is obtained from the linear system
\begin{align}
e_3 \times u &= - \nabla P + e_2 \cdot \nabla b + \theta e_3 + \nu \Delta u,\label{MG equation linear system 1 intro}\\
0 &= e_2 \cdot \nabla u + \Delta b, \label{MG equation linear system 2 intro}\\
\nabla \cdot u &= 0, \nabla \cdot b = 0. \label{MG equation linear system 3 intro}
\end{align}
This system encodes the vestiges of the physics in the problem, namely the Coriolis force, the Lorentz force and gravity. Vector manipulations of \eqref{MG equation linear system 1 intro}-\eqref{MG equation linear system 3 intro} give the expression
\begin{align}\label{Vector manipulations intro}
\{[\nu \Delta^2 - (e_2 \cdot \nabla)^2]^2 + (e_3 \cdot \nabla)^2 \Delta \} u &= - [\nu \Delta^2 - (e_2 \cdot \nabla)^2] \nabla \times (e_3 \times \nabla \theta)\notag\\
&\qquad + (e_3 \cdot \nabla)\Delta (e_3 \times \nabla \theta).
\end{align}
Here $(e_1, e_2, e_3)$ denote Cartesian unit vectors. The explicit expression for the components of the Fourier multiplier symbol $\widehat{M}^\nu$ as functions of the Fourier variable $k = (k_1, k_2, k_3) \in \mathbb{Z}^3$ with $k_3\neq0$ are obtained from the constitutive law \eqref{Vector manipulations intro} to give
\begin{align}
\widehat M^{\nu}_1(k)&=[k_2k_3|k|^2-k_1k_3(k_2^2+\nu|k|^4)]D(k)^{-1},\label{MG Fourier symbol_1 intro}\\
\widehat M^{\nu}_2(k)&=[-k_1k_3|k|^2-k_2k_3(k_2^2+\nu|k|^4)]D(k)^{-1},\label{MG Fourier symbol_2 intro}\\
\widehat M^{\nu}_3(k)&=[(k_1^2+k_2^2)(k_2^2+\nu|k|^4)]D(k)^{-1},\label{MG Fourier symbol_3 intro}
\end{align}
where
\begin{align}
D(k)=|k|^2k_3^2+(k_2^2+\nu|k|^4)^2.\label{MG Fourier symbol_4 intro}
\end{align}

In the magnetostrophic turbulence model the parameters $\nu$, the nondimensional viscosity, and $\kappa$, the nondimensional thermal diffusivity, are extremely small. The behaviour of the MG equation is dramatically different when the parameters $\nu$ and $\kappa$ are present (i.e. positive) or absent (i.e. zero). The limit as either or both parameters vanish is highly singular. Since both parameters multiply a Laplacian term, their presence is smoothing. However $\kappa$ enters \eqref{active scalar equation general MG intro} in a parabolic heat equation role whereas $\nu$ enters via the constitutive law \eqref{Vector manipulations intro}.  The mathematical properties of the MG equation have been determined in various settings of the parameters via an analysis of the Fourier multiplier symbol $\widehat{M}^\nu$ given by \eqref{MG Fourier symbol_1 intro}-\eqref{MG Fourier symbol_4 intro}. When $\nu = 0$ the relation between $u$ and $\theta$ is given by a {\it singular} operator of order 1. The implications of this fact for the inviscid MG$^0$ equation are summarized in the survey article by Friedlander, Rusin and Vicol \cite{FRV14}. In particular, when $\kappa > 0$ the inviscid but thermally dissipative MG$^0$ equation is globally well-possed \cite{FV11a} \footnote{$L^\infty$ is the critical Lebesgue space with respect to the natural scaling for both the critically diffusive SQG and MG$^0$ equations.}. In contrast when $\nu = 0$ {\it and} $\kappa = 0$, the singular inviscid MG$^0$ equation is {\it ill-possed} in the sense of Hadamard in any Sobolev space \cite{FV11b}. In a recent paper Friedlander and Suen \cite{FS18} examine the limit of vanishing viscosity in the case when $\kappa > 0$. They prove global existence of classical solutions to the forced MG$^\nu$ equations and obtain strong convergence of solutions as the viscosity $\nu$ vanishes. 

The purpose of our current paper is to investigate properties of the abstract system \eqref{abstract active scalar eqn} under assumptions A1--A5, with emphasis on convergence results in the context of the fractional Laplacian. In Section~\ref{Existence and convergence of Hs-solutions section} the parameter $\nu$ is taken to be positive and the ensuing smoothing properties of $T_{ij}^{\nu}$ permits existence and convergence in Sobolev space $H^s$ as $\kappa$ goes to zero. In contrast, when the parameter $\nu$ is set to zero, A5 implies that $\partial_{x_i} T_{ij}^{\nu}$ is a singular operator. In this case the existence and convergence results proved in Section~\ref{Existence and convergence of Gevrey-class solutions section} are restricted to analytic and Gevrey-class solutions. We note that the results in Section~\ref{Existence and convergence of Gevrey-class solutions section} are valid in the example of the critical SQG equation. In Section~\ref{long time behaviour and attractors section} we study the long time behaviour of solutions to the abstract system when both $\nu>0$ and $\kappa>0$. We prove that the solution map associated with \eqref{abstract active scalar eqn} possesses a unique global attractor $\Gg$ in $H^1$ for all $\nu>0$. With the restriction that $\gamma$ lies in $[1,2]$, we prove that $\Gg$ has finite fractal dimension. In Section~\ref{Applications to magneto-geostrophic equations section} we apply the results proved in Section~\ref{long time behaviour and attractors section} to the specific example of the MG equation. We prove convergence of $\Gg$ as $\nu$ goes to zero to the global attractor $\A$ in $L^2$ for the MG$^0$ equation whose existence was demonstrated in \cite{FS18}. It is of interest to note that although we prove that $\Gg$ has finite fractal dimension for all $\nu > 0$, it is unknown whether or not $\A$ has finite fractal dimension.

\section{Main results}\label{main results}

The main results that we prove for the forced problem \eqref{abstract active scalar eqn} are stated in the following theorems, and they will be proved in Section~\ref{Existence and convergence of Hs-solutions section}, Section~\ref{Existence and convergence of Gevrey-class solutions section} and Section~\ref{long time behaviour and attractors section}. These results will then be applied to the magnetogeostrophic (MG) active scalar equation which will be discussed in Section~\ref{Applications to magneto-geostrophic equations section}.

\begin{theorem}[$H^s$-convergence as $\kappa\rightarrow0$ when $\nu>0$]\label{Hs convergence}
Let $\nu>0$ and $\gamma\in(0,2]$ be given in \eqref{abstract active scalar eqn}, and let $\theta_0,S\in C^\infty$ be the initial datum and forcing term respectively which satisfy \eqref{zero mean assumption on data and forcing}. If $\theta^\kappa$ and $\theta^0$ are smooth solutions to \eqref{abstract active scalar eqn} for $\kappa>0$ and $\kappa=0$ respectively, then 
\begin{align}\label{Hs convergence thm} 
\lim_{\kappa\rightarrow0}\|(\theta^\kappa-\theta^0)(\cdot,t)\|_{H^s}=0,
\end{align}
for all $s\ge0$ and $t\ge0$.
\end{theorem}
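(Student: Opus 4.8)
The plan is to control the difference $w := \theta^\kappa - \theta^0$ by energy estimates, first in $L^2$ and then upgrade to $H^s$ by interpolation against uniform-in-$\kappa$ high-regularity bounds. Subtracting the two equations and using that $u[\cdot]=\partial_{x_i}T_{ij}^\nu[\cdot]$ is linear in its argument (the parameter $\nu>0$ being fixed), so that $u[\theta^\kappa]-u[\theta^0]=u[w]$, one finds that $w$ solves
\begin{equation*}
\partial_t w + u[\theta^\kappa]\cdot\nabla w + u[w]\cdot\nabla\theta^0 = -\kappa\Lambda^\gamma\theta^\kappa, \qquad w(\cdot,0)=0,
\end{equation*}
since $\theta^\kappa$ and $\theta^0$ share the initial datum $\theta_0$. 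The right-hand side is the only genuinely $\kappa$-dependent forcing, and it is small by virtue of its explicit factor of $\kappa$.

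The first ingredient I would establish is a bound on $\theta^\kappa$ that is uniform in $\kappa\in[0,1]$: for every $m\ge0$ and every $T>0$,
\begin{equation*}
\sup_{\kappa\in[0,1]}\ \sup_{t\in[0,T]}\|\theta^\kappa(\cdot,t)\|_{H^m}\le C(m,T,\nu).
\end{equation*}
This follows from standard $H^m$ energy estimates: testing the equation for $\theta^\kappa$ against $\Lambda^{2m}\theta^\kappa$, the dissipative term contributes $-\kappa\|\Lambda^{\gamma/2+m}\theta^\kappa\|_{L^2}^2\le0$ and may simply be discarded, which is precisely what renders the resulting bound independent of $\kappa$. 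The advection term is treated by a commutator (Kato--Ponce) estimate together with the divergence-free property A1; crucially, the order-two smoothing of A3 gives $\|u[f]\|_{H^m}\le C_\nu\|f\|_{H^{\max(m-2,0)}}$, so the velocity is two derivatives smoother than $\theta^\kappa$ and the nonlinearity is effectively subcritical. A Gr\"onwall argument on $[0,T]$ then closes the estimate with a constant independent of $\kappa$.

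The second ingredient is the $L^2$ estimate for $w$. Pairing the equation for $w$ with $w$ in $L^2$, the term $\langle u[\theta^\kappa]\cdot\nabla w,w\rangle$ vanishes by the divergence-free property, the stretching term is bounded using $\|u[w]\|_{L^2}\le C_\nu\|w\|_{L^2}$ from A3 together with the uniform bound $\|\nabla\theta^0\|_{L^\infty}\le C(T,\nu)$ from the first ingredient, and the dissipative term obeys $\kappa|\langle\Lambda^\gamma\theta^\kappa,w\rangle|\le\kappa\,C(T,\nu)\|w\|_{L^2}$. This yields a differential inequality of the form $\tfrac{d}{dt}\|w\|_{L^2}\le C(\nu,T)\|w\|_{L^2}+\kappa\,C(\nu,T)$, and since $w(\cdot,0)=0$, Gr\"onwall gives $\|w(\cdot,t)\|_{L^2}\le\kappa\,C(\nu,T)$ on $[0,T]$, so that $\|w(\cdot,t)\|_{L^2}\to0$ as $\kappa\to0$.

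To reach arbitrary $H^s$ I would interpolate rather than repeat the energy estimate at high order. For fixed $s\ge0$ choose any $s'>s$; the Gagliardo--Nirenberg interpolation inequality gives $\|w\|_{H^s}\le\|w\|_{L^2}^{1-s/s'}\|w\|_{H^{s'}}^{s/s'}$, and since $\|w\|_{H^{s'}}\le 2C(s',T,\nu)$ by the first ingredient while $\|w\|_{L^2}\le\kappa\,C(\nu,T)$ by the second, the right-hand side tends to $0$ as $\kappa\to0$; as $s$ and $t\le T$ were arbitrary, this yields \eqref{Hs convergence thm}. The step I expect to be the main obstacle is the uniform-in-$\kappa$ a priori bound of the first ingredient, namely making the commutator estimates for the advection term close on each $[0,T]$ with constants that do not degenerate as $\kappa\to0$; the feature that makes this tractable is the two-derivative gain of A3, which makes $u$ smoother than $\theta$, while the removal of the good-signed dissipative term is what secures uniformity in $\kappa$.
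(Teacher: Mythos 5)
Your overall architecture coincides with the paper's own proof: a uniform-in-$\kappa$ high-regularity bound, an $L^2$ energy estimate for the difference $w=\theta^\kappa-\theta^0$ (the paper's $\varphi$), and interpolation $\|w\|_{H^s}\le\|w\|_{L^2}^{\sigma}\|w\|_{H^{s+1}}^{1-\sigma}$ to conclude. Your second and third ingredients are, up to constants, exactly the paper's steps \eqref{integral of varphi}--\eqref{Hs estimate on varphi}; indeed your way of handling the dissipative term even yields the slightly better rate $O(\kappa)$ in $L^2$, where the paper settles for $O(\sqrt{\kappa})$.

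The genuine soft spot is in your first ingredient, and it matters because the theorem asserts convergence for \emph{all} $t\ge0$. After the commutator estimate, the divergence-free property A1, and the order-two smoothing from A3, the $H^m$ energy inequality one actually obtains is the paper's \eqref{differential inequality on theta final}, namely
\begin{equation*}
\frac{d}{dt}\|\Lambda^m\theta^\kappa\|_{L^2}^2+2\kappa\|\Lambda^{m+\frac{\gamma}{2}}\theta^\kappa\|_{L^2}^2\le C\|\theta^\kappa\|_{L^\infty}\|\Lambda^m\theta^\kappa\|_{L^2}^2+2\|\Lambda^m S\|_{L^2}\|\Lambda^m\theta^\kappa\|_{L^2}.
\end{equation*}
The two-derivative gain of A3 is what reduces the advection term to this form, but it does not remove the coefficient $\|\theta^\kappa\|_{L^\infty}$: Gr\"onwall closes on an arbitrary $[0,T]$ with a $\kappa$-independent constant only if $\int_0^T\|\theta^\kappa(\cdot,t)\|_{L^\infty}\,dt$ is itself controlled uniformly in $\kappa$. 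If you try to produce that control from the energy estimate, via $\|\theta^\kappa\|_{L^\infty}\le C\|\theta^\kappa\|_{H^m}$, the inequality becomes Riccati-type ($\dot y\lesssim y^{3/2}$ plus lower order), which closes only up to a finite time determined by the data, so "for all $t\ge0$" is not reached. The missing ingredient --- and precisely what the paper inserts at this point --- is the maximum-principle bound \eqref{infty bound on theta with kappa=0 or >0}, $\|\theta^\kappa(\cdot,t)\|_{L^\infty}\le\|\theta_0\|_{L^\infty}+\|S\|_{L^\infty}$, valid uniformly for all $\kappa\ge0$ because the drift is divergence-free and $-\kappa\Lambda^\gamma$ obeys a maximum principle (here one uses $\theta_0,S\in C^\infty\subset L^\infty$); it is this bound, not the smoothing of A3 nor the discarding of the dissipation, that makes the Gr\"onwall coefficient uniformly integrable in time. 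Once it is inserted, you recover the paper's uniform estimate \eqref{uniform Hs+1 bound on theta for all kappa}, and the rest of your argument goes through verbatim.
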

\begin{theorem}[Analytic convergence as $\kappa\rightarrow0$ when $\nu=0$]\label{analytic convergence kappa}
Let $\nu=0$ and $\gamma\in(0,2]$ be given in \eqref{abstract active scalar eqn}, and let $\theta_0,S$ be the initial datum and forcing term respectively which satisfy \eqref{zero mean assumption on data and forcing}. Suppose that $\theta_0$ and $S$ are both analytic functions. Then if $\theta^{\kappa}$, $\theta^{0}$ are analytic solutions to \eqref{abstract active scalar eqn} for $\kappa>0$ and $\kappa=0$ respectively with initial datum $\theta_0$ on $\mathbb{T}^d\times[0,\bar{T}]$ with radius of convergence at least $\bar{\tau}$, then there exists $T\le\bar{T}$ and $\tau=\tau(t)<\bar{\tau}$ such that, for $t\in[0,T]$, we have:
\begin{align}\label{analytic convergence}
\lim_{\kappa\to0}\|(\Lambda^re^{\tau\Lambda}\theta^{\kappa}-\Lambda^re^{\tau\Lambda}\theta^{0})(\cdot,t)\|_{L^2}=0,
\end{align}
where $\Lambda:=(-\Delta)^\frac{1}{2}$ and $r>\frac{d}{2}+\frac{5}{2}$ is the Sobolev exponent.
\end{theorem}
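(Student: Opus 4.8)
The plan is to derive a differential inequality for the analytic energy of the difference $w:=\theta^{\kappa}-\theta^{0}$, exploiting that both solutions share the initial datum $\theta_0$, so that $w(\cdot,0)=0$, and to close it by Gronwall with an inhomogeneity of size $O(\kappa)$. Subtracting the two copies of \eqref{abstract active scalar eqn} (with $\nu=0$, where $\theta\mapsto u[\theta]$ is linear and the forcing $S$ cancels) and writing the advective difference as $u^{\kappa}\cdot\nabla w+u[w]\cdot\nabla\theta^{0}$ gives
\begin{align*}
\partial_t w + u^{\kappa}\cdot\nabla w + u[w]\cdot\nabla\theta^{0} = -\kappa\Lambda^{\gamma}\theta^{\kappa},\qquad w(\cdot,0)=0.
\end{align*}
I introduce the analytic operator $A=A(t):=\Lambda^{r}e^{\tau(t)\Lambda}$ with a radius $\tau(t)\in(0,\bar\tau)$ to be chosen decreasing, set $\Phi(t):=\|Aw(t)\|_{L^2}^2$, and take $\tau(0)<\bar\tau$ with $\dot\tau\le 0$ so that $\tau$ stays positive on some $[0,T]$ with $T\le\bar T$. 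The strict gap $\tau<\bar\tau$ is essential: it lets me dominate every analytic norm of $\theta^{\kappa},\theta^{0}$ at radius $\tau$ by the given uniform-in-$\kappa$ a priori bounds at radius $\bar\tau$, since $\Lambda^{\sigma}e^{-(\bar\tau-\tau)\Lambda}$ is a bounded Fourier multiplier for every $\sigma\ge0$.

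Differentiating $\Phi$, and splitting the dissipative contribution via $\theta^{\kappa}=w+\theta^{0}$, produces
\begin{align*}
\tfrac12\tfrac{d}{dt}\Phi = \dot\tau\,\|\Lambda^{1/2}Aw\|_{L^2}^2 - \langle A(u^{\kappa}\!\cdot\!\nabla w),Aw\rangle - \langle A(u[w]\!\cdot\!\nabla\theta^{0}),Aw\rangle - \kappa\|\Lambda^{\gamma/2}Aw\|_{L^2}^2 - \kappa\langle A\Lambda^{\gamma}\theta^{0},Aw\rangle.
\end{align*}
The term $-\kappa\|\Lambda^{\gamma/2}Aw\|^2$ has a good sign and will simply be discarded, and the last term is a genuine source of size $O(\kappa)$: by the a priori bound above and Young's inequality it is at most $C\kappa\|Aw\|_{L^2}\le C\kappa^2+\tfrac14\|Aw\|_{L^2}^2$. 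Everything therefore hinges on the two advective terms, which I estimate with the Gevrey machinery built on $e^{\tau|k|}\le e^{\tau|j|}e^{\tau|k-j|}$ and $|k|^{r}\lesssim|j|^{r}+|k-j|^{r}$.

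The transport term is the tractable one: since $u^{\kappa}$ is divergence-free (assumption A1), $\langle u^{\kappa}\cdot\nabla Aw,Aw\rangle=0$, leaving the symmetric commutator $\langle[A,u^{\kappa}\cdot\nabla]w,Aw\rangle$, whose top-order frequency cancels; distributing one power of $|k|^{1/2}$ onto each of the three factors and lifting the order-one bound \eqref{one order singular for u when nu=0} to the analytic norm (the multiplier $A$ commutes with $T^{0}_{ij}$) bounds it by $C(\|\Lambda^{3/2}e^{\bar\tau\Lambda}\theta^{\kappa}\|_{L^2})\,\|\Lambda^{1/2}Aw\|_{L^2}^2$, which the half-derivative term $\dot\tau\|\Lambda^{1/2}Aw\|^2$ can absorb. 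The linearized term $\langle A(u[w]\cdot\nabla\theta^{0}),Aw\rangle$ is the main obstacle, because it is not symmetric in the two copies of $w$ and the order-one velocity threatens to place a full derivative on $w$. I plan to split it according to the coefficient frequency $|k-j|$: in the regime where $|k-j|$ is small compared with the $w$-frequencies one has $|j|\sim|k|$, so the excess half-power distributes symmetrically as $\|\Lambda^{1/2}Aw\|^2$ times $\|\Lambda e^{\tau\Lambda}\theta^{0}\|_{\ell^1}$ and is again absorbed by $\dot\tau\|\Lambda^{1/2}Aw\|^2$; in the complementary regime $|k-j|\sim|j|$ the analyticity gap gives the decay $e^{\tau|k-j|}|\widehat{\theta^{0}}(k-j)|\le C e^{-(\bar\tau-\tau)|k-j|}$, which dominates any polynomial derivative loss, leaving only a harmless $C\|Aw\|_{L^2}^2$. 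Verifying that the threshold $r>\tfrac{d}{2}+\tfrac{5}{2}$ exactly makes the requisite product and embedding estimates close is where the technical weight of the argument sits.

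Collecting the estimates and choosing $\dot\tau=-M$ with $M$ a fixed multiple of the uniform a priori Gevrey bounds of $\theta^{\kappa}$ and $\theta^{0}$, the half-derivative dissipation dominates all surviving $\|\Lambda^{1/2}Aw\|^2$ contributions, and I obtain a differential inequality of the form $\tfrac{d}{dt}\Phi\le C\Phi+C\kappa^2$ on $[0,T]$, where $T\le\bar T$ is the first time at which the linearly decreasing $\tau(t)$ would vanish. Since $\Phi(0)=\|Aw(0)\|_{L^2}^2=0$, Gronwall's inequality yields $\Phi(t)\le C\kappa^{2}\,t\,e^{Ct}$ for $t\in[0,T]$, whence $\|\Lambda^{r}e^{\tau\Lambda}w(\cdot,t)\|_{L^2}\to0$ as $\kappa\to0$ for each fixed $t\in[0,T]$, which is precisely \eqref{analytic convergence}.
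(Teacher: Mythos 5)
Your proposal is correct and follows essentially the same route as the paper's proof: subtract the two equations to get an evolution equation for $w=\theta^\kappa-\theta^0$ with $w(\cdot,0)=0$, run an analytic-norm energy estimate with a time-decreasing radius $\tau(t)<\bar\tau$ so that the $\dot\tau\|\Lambda^{r+\frac12}e^{\tau\Lambda}w\|_{L^2}^2$ term absorbs the half-derivative losses from both advective terms, and close the resulting inequality, whose only inhomogeneity is the $O(\kappa)$ dissipative remainder, by Gr\"onwall. The paper organizes the trilinear bounds slightly differently (it estimates $\langle u[w]\cdot\nabla\theta^0,\Lambda^{2r}e^{2\tau\Lambda}w\rangle$ and $\langle u^\kappa\cdot\nabla w,\Lambda^{2r}e^{2\tau\Lambda}w\rangle$ directly in convolution form with coefficients $\|\Lambda^{r+\frac12}e^{\tau\Lambda}\theta^0\|_{L^2}+\|\Lambda^{r+\frac12}e^{\tau\Lambda}\theta^\kappa\|_{L^2}$, rather than via your divergence-free commutator cancellation and frequency-regime splitting, and it integrates the linear differential inequality directly instead of using Young plus Gr\"onwall), but these are cosmetic variations of the same mechanism.
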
 
\begin{theorem}[Existence of global attractors]\label{existence of global attractor theorem}
Let $S\in L^\infty\cap H^1$ be the forcing term. For $\nu$, $\kappa>0$ and $\gamma\in(0,2]$, let $\pin(t)$ be the solution operator for the initial value problem \eqref{abstract active scalar eqn} via
\begin{align*}
\pin(t): H^1\to H^1,\qquad \pin(t)\theta_0=\theta(\cdot,t),\qquad t\ge0.
\end{align*}
Then the solution map $\pin(t):H^1\to H^1$ associated to \eqref{abstract active scalar eqn} possesses a unique global attractor $\Gg$ for all $\nu>0$. In particular, if we assume that $\gamma\in[1,2]$, then for all $\nu>0$, the global attractor $\Gg$ of $\pin(t)$ further enjoys the following properties:
\begin{itemize}
\item $\Gg$ is fully invariant, namely
\begin{align*}
\pin(t)\Gg=\Gg,\qquad \forall t\ge0.
\end{align*}
\item $\Gg$ is maximal in the class of $H^1$-bounded invariant sets.
\item $\Gg$ has finite fractal dimension.
\end{itemize}
\end{theorem}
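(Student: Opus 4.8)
The plan is to run the standard theory of global attractors for dissipative semigroups, in the form developed by Constantin--Foias--Temam and Temam, adapting the trace and Lyapunov-exponent estimates to the fractional dissipation $-\kappa\Lambda^\gamma$ and to the velocity operator $u[\cdot]=\partial_{x_i}T^\nu_{ij}[\cdot]$, which for $\nu>0$ is smoothing of order two by A3 and \eqref{two order smoothing for u when nu>0}. First I would check that $\pin(t)$ is a well-defined, continuous, and (as needed for the dimension estimate) Fr\'echet differentiable semigroup on $H^1$. For $\nu,\kappa>0$ the order-two smoothing of $T^\nu_{ij}$ renders the drift velocity very regular, so the usual energy method yields global existence, uniqueness, and continuous dependence on the data of mean-zero $H^1$ solutions; this produces the semigroup $\pin(t)$ together with its joint continuity in $(t,\theta_0)$.

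Next I would exhibit a bounded absorbing set. Testing \eqref{abstract active scalar eqn} against $\theta$, using that $u$ is divergence-free (A1), the mean-zero Poincar\'e inequality, and the dissipation from $-\kappa\Lambda^\gamma$, gives a uniform-in-time $L^2$ bound; testing against $-\Delta\theta$ and controlling the advection term by the two-order smoothing of $u$ upgrades this to a uniform $H^1$ bound, hence an absorbing ball $B_0\subset H^1$. A further estimate, again combining the smoothing of $T^\nu_{ij}$ with the gain from $\Lambda^\gamma$ via a uniform Gronwall argument, bounds the orbit in $H^{1+\delta}$ for some $\delta>0$. Since $H^{1+\delta}\hookrightarrow H^1$ compactly on $\T^d$, the semigroup is uniformly compact for large time, and the abstract existence theorem yields a unique global attractor $\Gg=\omega(B_0)$. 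Full invariance $\pin(t)\Gg=\Gg$ and maximality among $H^1$-bounded invariant sets are then automatic consequences of this construction.

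The main work, and the source of the restriction $\gamma\in[1,2]$, is the bound on the fractal dimension $\dimf\Gg$, for which I would use the volume-contraction method. Linearizing \eqref{abstract active scalar eqn} about a trajectory $\theta(t)\subset\Gg$ gives the first-variation equation
\begin{align*}
\dt\Theta+u[\theta]\cdot\nabla\Theta+u[\Theta]\cdot\nabla\theta=-\kappa\Lambda^\gamma\Theta,
\end{align*}
with solution operator $\cL(t)$. The aim is to show that the sum of the first $n$ global Lyapunov exponents is negative for $n$ large, which by the Constantin--Foias--Temam dimension theorem forces $\dimf\Gg<\infty$. This reduces to estimating the time-averaged trace $\Tr(\cL\circ Q_n)$, where $Q_n$ projects onto the span of an $L^2$-orthonormal family $\{\phi_j\}_{j=1}^n$. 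The dissipation contributes $-\kappa\sum_{j=1}^n\|\Lambda^{\gamma/2}\phi_j\|_{L^2}^2$; the advection term $u[\theta]\cdot\nabla\phi_j$ contributes nothing after testing against $\phi_j$ by divergence-freeness, so only the stretching term $\sum_j\langle u[\phi_j]\cdot\nabla\theta,\phi_j\rangle$ must be controlled.

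The hard part will be closing this trace estimate uniformly over trajectories on $\Gg$. Writing $\rho=\sum_{j=1}^n|\phi_j|^2$, I would bound the stretching term by the gradient of the (uniformly bounded) trajectory $\theta$ times suitable norms of $\rho$, and bound the dissipation from below through the fractional Lieb--Thirring inequality $\intox\rho^{\,1+\gamma/d}\le C\sum_j\|\Lambda^{\gamma/2}\phi_j\|_{L^2}^2$. Matching the H\"older exponents and applying Young's inequality to absorb the stretching term into the dissipation is possible precisely when $\gamma\ge1$: this is the threshold at which the dissipative gain $\rho^{\,1+\gamma/d}$ dominates the order-one loss coming from $\nabla\theta$, so that $q_n=\limsup_{t\to\infty}\sup\frac1t\int_0^t\Tr(\cL\circ Q_n)\,ds<0$ for all sufficiently large $n$. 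Below $\gamma=1$ this balance fails, which is why finite-dimensionality is asserted only for $\gamma\in[1,2]$.
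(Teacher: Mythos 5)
Your overall architecture (a bounded absorbing set upgraded to a compact one for existence, then volume contraction for the dimension bound) is the same as the paper's, but two of your key steps contain genuine gaps.

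First, the $H^1$ absorbing ball. Testing against $-\Delta\theta$ and integrating by parts, the advection term reduces to $\big|\int\partial_k u\cdot\nabla\theta\,\partial_k\theta\big|\le\|\nabla u\|_{L^\infty}\|\nabla\theta\|_{L^2}^2$, and the order-two smoothing \eqref{two order smoothing for u when nu>0} gives $\|\nabla u\|_{L^\infty}\le C_\nu\|\Lambda^{\sigma}\theta\|_{L^2}$ for any $\sigma>\frac{d}{2}-1$. This contribution is \emph{cubic} in $\theta$, while at this stage your scheme only provides the $L^2$ absorbing bound. Interpolating all three factors between $L^2$ and the dissipative norm $\|\Lambda^{1+\gamma/2}\theta\|_{L^2}$ produces the dissipation raised to the power $2(\sigma+2)/(2+\gamma)$, and Young's inequality can absorb it only when this power is strictly less than $2$, i.e.\ only when $\gamma>\frac{d}{2}-1$ (so $\gamma>\tfrac12$ in $d=3$). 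For the remaining range of $\gamma\in(0,2]$ plain Gr\"onwall yields bounds that grow exponentially in time --- a global bound, not an absorbing set --- so your construction of $B_0$ fails precisely where the statement still claims existence. This is exactly the difficulty the paper's Lemma~\ref{Existence of an L infty absorbing set lemma}, Lemma~\ref{Estimates in C alpha-space lemma} and Lemma~\ref{Existence of an H1 absorbing set lemma} are built to overcome: first an $L^\infty$ absorbing set via the maximum principle and the De Giorgi estimate of Lemma~\ref{De Giorgi iteration lemma}, then $C^\alpha$ bounds, and then the Constantin--Vicol nonlinear maximum principle, whose superquadratic lower bound on $D[\nabla\theta]$ absorbs the nonlinearity for \emph{every} $\gamma\in(0,2]$. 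The minimal repair of your argument is to insert the $L^\infty$ absorbing bound before the $H^1$ estimate; with $\|\nabla u\|_{L^\infty}\le C_\nu\|\theta\|_{L^\infty}$ uniformly bounded, the interpolation $\|\nabla\theta\|_{L^2}^2\le\|\theta\|_{L^2}^{2\gamma/(2+\gamma)}\|\Lambda^{1+\gamma/2}\theta\|_{L^2}^{4/(2+\gamma)}$ then closes for all $\gamma$.

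Second, you have misidentified where $\gamma\ge1$ enters, and the missing hypotheses of the dimension theorem are never verified. Since $\nu>0$, the velocity operator smooths by two orders, so in your $L^2$ framework the stretching term is bounded simply by $\sum_j\|u[\phi_j]\|_{L^\infty}\|\nabla\theta\|_{L^2}\|\phi_j\|_{L^2}\le C_\nu\, n\,\sup_{\Gg}\|\theta\|_{H^1}$ for every $\gamma\in(0,2]$: no Lieb--Thirring/Young balance is needed and no threshold at $\gamma=1$ appears (that balance is the critical SQG story with $\nu=0$, where $u$ is of order zero). In the paper the restriction $\gamma\in[1,2]$ comes instead from running the whole machinery in the $H^1$ phase space: the elementary inequality \eqref{bound on Lambda theta for gamma>1}, i.e.\ $2-\tfrac{\gamma}{2}\le1+\tfrac{\gamma}{2}$, underlies the $H^1$ continuity of $\pin(t)$ (Lemma~\ref{continuity of solution map lemma}), backwards uniqueness (Lemma~\ref{Backwards uniqueness lemma}), the uniform differentiability of the solution map (Lemma~\ref{uniform differentiable lemma}), and the $H^1$ trace estimate (Lemma~\ref{Contractivity of large dimensional volume elements lemma}). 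Your proposal assumes Fr\'echet differentiability and $H^1$ continuity ``by the usual energy method'' --- but these are precisely the steps that require $\gamma\ge1$ --- and your $L^2$-orthonormal volume contraction bounds the fractal dimension of $\Gg$ in the $L^2$ metric, which still has to be converted into the $H^1$ fractal dimension asserted in the theorem (possible by interpolation, using that $\Gg$ is bounded in $H^{1+\gamma/2}$, but it must be argued). Relatedly, invariance and maximality are not ``automatic'' on the whole range $\gamma\in(0,2]$: they require continuity of the semigroup, which is only available for $\gamma\ge1$; for $\gamma<1$ the paper instead invokes the minimal-attractor framework of \cite{CCP12}, which delivers existence and uniqueness of $\Gg$ without continuity --- this is why the statement of the theorem splits the two parameter ranges.
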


\begin{remark}
There are several remarks for the main results as given above:
\begin{itemize}
\item The existence of global-in-time $H^s$-solutions to \eqref{abstract active scalar eqn} when $\nu>0$ will be given by Theorem~\ref{global-in-time wellposedness in Sobolev}.
\item The local-in-time existence of analytic solutions to \eqref{abstract active scalar eqn} when $\nu=0$ will be given by Theorem~\ref{Local-in-time existence of analytic solutions thm}. In particular, under a smallness assumption on the initial data in terms of {\it Sobolev} norm, the analytic solutions as claimed by Theorem~\ref{Local-in-time existence of analytic solutions thm} can be extended globally in time; refer to Theorem~\ref{Global-in-time existence of analytic solutions thm} for more details. 
\item Under a stronger assumption on the Fourier symbols $\widehat{T^{\nu}_{ij}}(k)$, one can extend Theorem~\ref{analytic convergence kappa} to Gevrey-class solutions when the initial datum $\theta_0$ and forcing term $S$ are both Gevrey-class functions; refer to Theorem~\ref{Existence and convergence of Gevrey-class solutions thm} for more details.
\item The results claimed above can be applied to physical models which include MG equation and SQG equation; refer to Section~\ref{Existence and convergence of Gevrey-class solutions section} and Section~\ref{Applications to magneto-geostrophic equations section} for related discussions.
\end{itemize}
\end{remark}

\section{Preliminaries}\label{preliminaries}

We introduce the following notations and conventions:

\begin{itemize}
\item We write $u:=u[\theta]$, where $u$ is given by $u_j:=\partial_{x_i} T_{ij}^{\nu}[\theta]$. We also refer $u[\cdot]$ to an operator in the sense that $u_j[f]=\partial_{x_i} T_{ij}^{\nu}[f]$ for appropriate functions $f$.
\item To emphasise the dependence of solutions on $\nu$ and $\kappa$, we sometimes write $\theta=\theta^\kappa$ and $u=u^{\kappa}:=u[\theta^\kappa]$ for varying $\kappa$, while we write $\theta=\theta^{(\nu)}$ and $u^{(\nu)}[\cdot]:=\partial_{x_i} T_{ij}^{\nu}[\cdot]$ for varying $\nu$.
\item $W^{s,p}$ is the usual inhomogeneous Sobolev space with norm $\|\cdot\|_{W^{s,p}(\mathbb{T}^d)}$, and we write $H^s=W^{s,2}$. For simplicity, we write $\|\cdot\|_{L^p}=\|\cdot\|_{L^p(\mathbb{T}^d)}$, $\|\cdot\|_{W^{s,p}}=\|\cdot\|_{W^{s,p}(\mathbb{T}^d)}$, etc. unless otherwise specified. 
\item We define $\langle\cdot,\cdot\rangle$ to be the $L^2$-inner product on $\mathbb{T}^d$, that is
\begin{equation*}
\langle f,g\rangle:=\int_{\mathbb{T}^d}fgdx,
\end{equation*}
for any $f,g\in L^2$.
\item Regarding the constants used in this work, we have the following conventions:
\begin{itemize}
\item $C$ shall denote a positive and sufficiently large constant, whose value may change from line to line; 
\item $C$ is allowed to depend on the size of $\T^d$ and other universal constants which are fixed throughout this work; 
\item In order to emphasise the dependence of $C$ on a certain quantity $Q$ we usually write $C_{Q}$ or $C(Q)$.
\end{itemize}
\end{itemize}

We recall the following Sobolev embedding inequalities from the literature (see for example Bahouri-Chemin-Danchin \cite{BCD11} and Ziemer \cite{Z89}): 
\begin{itemize}
\item Let $d\ge2$ be the dimension. There exists $C=C(d)>0$ such that
\begin{equation}
\|f\|_{L^\infty }\le C \|f\|_{W^{2,d} }.\label{L infty bound 1}
\end{equation}
\item For $q>d$, there exists $C=C(q)>0$ such that
\begin{equation}
\|f\|_{L^\infty }\le C \|f\|_{W^{1,q} }.\label{L infty bound 2}
\end{equation}
\item If $k>l$ and $k-\frac{d}{p}>l-\frac{d}{q}$, then there exists $C=C(k,l,d,p,q)>0$ such that
\begin{align}
\|f\|_{W^{l,q}}\le C\|f\|_{W^{k,p}}.\label{Kondrachov embedding theorem}
\end{align}
\item For $q>1$, $q'\in[q,\infty)$ and $\frac{1}{q'}=\frac{1}{q}-\frac{s}{d}$, if $\Lambda^s h\in L^q$, there exists $C=C(q,q',d,s)>0$ such that
\begin{align}\label{Sobolev inequality}
\|h\|_{L^{q'}}\le C\|\Lambda^s h\|_{L^{q}}.
\end{align}
\end{itemize}

We also recall the following product and commutator estimates: If $s>0$ and $p>1$, then for all $f,g\in H^s\cap L^\infty$, we have
\begin{align}\label{product estimate}
\|\Lambda^s(fg)\|_{L^p}\le C\Big(\|f\|_{L^{p_1}}\|\Lambda^s g\|_{L^{p_2}}+\|\Lambda^s f\|_{L^{p_3}}\|g\|_{L^{p_4}}\Big),
\end{align}
\begin{align}\label{commutator estimate}
\|\Lambda^s(fg)-f\Lambda^s(g)\|_{L^p}\le C\Big(\|\nabla f\|_{L^{p_1}}\|\Lambda^{s-1} g\|_{L^{p_2}}+\|\Lambda^s f\|_{L^{p_3}}\|g\|_{L^{p_4}}\Big),
\end{align}
where $\frac{1}{p}=\frac{1}{p_1}+\frac{1}{p_2}=\frac{1}{p_3}+\frac{1}{p_4}$ and $p$, $p_2$, $p_3\in(1,\infty)$. 

As in \cite{FT89}, \cite{LO97} and \cite{MV11}, for $s\ge1$, the Gevrey-class $s$ is defined by
\begin{align*}
\bigcup_{\tau>0}\mathcal{D}(\Lambda^re^{\tau\Lambda^\frac{1}{s}}),
\end{align*}
where for any $r\ge0$,
\begin{align*}
\mathcal{D}(\Lambda^re^{\tau\Lambda^\frac{1}{s}}):=\{f\in H^r:\|\Lambda^re^{\tau\Lambda^\frac{1}{s}}f\|_{L^2}<\infty\},
\end{align*}
and we use the Gevrey-class real-analytic norm given by
\begin{align*}
\|\Lambda^re^{\tau\Lambda^\frac{1}{s}}f\|^2_{L^2}:=\sum_{k\in\mathbb{Z}^d_*}|k|^{2r}e^{2\tau|k|^\frac{1}{s}}|\hat{f}(k)|^2,
\end{align*}
where $\tau=\tau(t)>0$ denotes the radius of convergence. When $s = 1$, we recover the class of real-analytic functions with radius of analyticity $\tau$. When $s > 1$, the Gevrey-classes consist of $C^\infty$-functions which are in general not analytic. In view of the mean-zero assumption \eqref{zero mean assumption on data and forcing}, it makes sense to take $\mathbb{Z}^d_*=\mathbb{Z}^d\backslash\{|k|=0\}$ in the definition of Gevrey-class norm.

\section{Existence and convergence of $H^s$-solutions when $\nu>0$}\label{Existence and convergence of Hs-solutions section}

In this section, we first obtain the global-in-time existence of $H^s$-solutions to \eqref{abstract active scalar eqn} when $\nu>0$ by applying the De Giorgi iteration method. We then prove $H^s$-convergence of the solutions $\theta$ to the active scalar equation \eqref{abstract active scalar eqn} as $\kappa\rightarrow0$. More precisely, given $\nu>0$ and $\gamma\in(0,2]$, we prove that if $\theta^\kappa$ and $\theta^0$ are smooth solutions to \eqref{abstract active scalar eqn} for $\kappa>0$ and $\kappa=0$ respectively, then $\|(\theta^\kappa-\theta^0)(\cdot,t)\|_{H^s}\rightarrow0$ as $\kappa\rightarrow0$ for all $t>0$ and $s\ge0$. 

\subsection{Existence of $H^s$-solutions}

In this subsection, we prove the following theorem which gives the desired global-in-time wellposedness for \eqref{abstract active scalar eqn} in Sobolev space $H^s$ when $\kappa\ge0$ and $\nu>0$. 

\begin{theorem}[Global-in-time wellposedness in Sobolev space]\label{global-in-time wellposedness in Sobolev}
Fix $\nu>0$, $\gamma\in(0,2]$ and $s\ge0$, and let $\theta_0\in H^s$ and $S\in H^s\cap L^\infty$ be given. 
\begin{itemize}
\item For any $\kappa>0$, there exists a global-in-time solution to \eqref{abstract active scalar eqn} such that 
\begin{align}\label{functional spaces for theta}
\theta^\kappa\in C([0,\infty);H^s)\cap L^2([0,\infty);H^{s+\frac{\gamma}{2}}).
\end{align}
\item For $\kappa=0$, if we further assume that $\theta_0\in L^\infty$, then there exists a global-in-time solution to \eqref{abstract active scalar eqn} such that $\theta^0(\cdot,t)\in H^s$ for all $t\ge0$.
\end{itemize}
\end{theorem}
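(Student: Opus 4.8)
The plan is to construct the solution by a standard approximation scheme (for instance a Galerkin truncation to frequencies $|k|\le N$, or a mollified Picard iteration), to extract a priori estimates that are uniform in the approximation parameter, and then to pass to the limit; the solution is shown to be global by verifying that the $H^s$ norm cannot blow up in finite time. The structural fact that makes everything work when $\nu>0$ is the order-two smoothing of the drift recorded in \eqref{two order smoothing for u when nu>0}: since $u=u[\theta]$ satisfies $\|\Lambda^s u\|_{L^p}\le C_\nu\|\Lambda^{\max\{s-2,0\}}\theta\|_{L^p}$, the velocity is two derivatives more regular than $\theta$, so the transport term acts as a lower-order perturbation and the (possibly very weak) fractional dissipation $-\kappa\Lambda^\gamma$ is not needed to control it. Throughout, A1 guarantees $\nabla\cdot u=0$, which is what annihilates the top-order contribution of the nonlinearity in every energy identity.

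First I would run the $L^2$ energy estimate: pairing the equation with $\theta$ and using $\nabla\cdot u=0$ to discard $\langle u\cdot\nabla\theta,\theta\rangle=0$ gives
\begin{align*}
\tfrac12\tfrac{d}{dt}\|\theta\|_{L^2}^2+\kappa\|\Lambda^{\gamma/2}\theta\|_{L^2}^2=\langle S,\theta\rangle .
\end{align*}
When $\kappa>0$, the mean-zero property \eqref{zero mean assumption} supplies a Poincar\'e inequality ($\|\theta\|_{L^2}\le\|\Lambda^{\gamma/2}\theta\|_{L^2}$, since $|k|\ge1$) that lets me absorb $\langle S,\theta\rangle$ into the dissipation, yielding a uniform-in-time bound on $\|\theta\|_{L^2}$ together with the gain $\theta\in L^2_t H^{\gamma/2}$ on bounded intervals. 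Next I would obtain the $L^\infty$ bound. For $\kappa>0$ this is exactly where the De Giorgi iteration enters: truncating $\theta$ at an increasing sequence of levels, noting that the drift term again drops out by $\nabla\cdot u=0$ and that the truncated fractional energy obeys $\langle\Lambda^\gamma\theta,(\theta-C_k)_+\rangle\ge\|\Lambda^{\gamma/2}(\theta-C_k)_+\|_{L^2}^2$, one derives a nonlinear recursion showing the truncated energy vanishes above a finite level, whence $\theta\in L^\infty$. For $\kappa=0$ there is no dissipation, but the equation is then pure transport plus forcing, so the divergence-free maximum principle gives $\|\theta(t)\|_{L^\infty}\le\|\theta_0\|_{L^\infty}+t\|S\|_{L^\infty}$, which is precisely why the hypothesis $\theta_0\in L^\infty$ is imposed in that case.

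The core estimate is in $H^s$. Applying $\Lambda^s$, pairing with $\Lambda^s\theta$, and using $\nabla\cdot u=0$ to replace $\langle\Lambda^s(u\cdot\nabla\theta),\Lambda^s\theta\rangle$ by the commutator $\langle\Lambda^s(u\cdot\nabla\theta)-u\cdot\nabla\Lambda^s\theta,\Lambda^s\theta\rangle$, I would apply the commutator estimate \eqref{commutator estimate} to get $\lvert\langle\Lambda^s(u\cdot\nabla\theta)-u\cdot\nabla\Lambda^s\theta,\Lambda^s\theta\rangle\rvert\le C\big(\|\nabla u\|_{L^\infty}\|\Lambda^s\theta\|_{L^2}^2+\|\Lambda^s u\|_{L^{p_3}}\|\nabla\theta\|_{L^{p_4}}\|\Lambda^s\theta\|_{L^2}\big)$. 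Here the order-two smoothing \eqref{two order smoothing for u when nu>0} is decisive: it dominates $\|\Lambda^s u\|$ by $\|\Lambda^{\max\{s-2,0\}}\theta\|$ and, via the embeddings \eqref{L infty bound 1}--\eqref{L infty bound 2}, dominates $\|\nabla u\|_{L^\infty}$ by $C_\nu\|\theta\|_{L^q}$ for some $q>d$, which is finite thanks to the $L^\infty$ bound just established. This produces a differential inequality $\tfrac{d}{dt}\|\Lambda^s\theta\|_{L^2}^2+\kappa\|\Lambda^{s+\gamma/2}\theta\|_{L^2}^2\le C_\nu(t)\big(1+\|\Lambda^s\theta\|_{L^2}^2\big)$ with $C_\nu(t)$ controlled by the lower-order norms; Gr\"onwall then bounds $\|\Lambda^s\theta\|_{L^2}$ on every finite interval, ruling out finite-time blow-up. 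Passing to the limit in the approximation (Aubin--Lions for strong compactness) yields the global solution, with the parabolic-gain regularity in $L^2_t H^{s+\gamma/2}$ read off by integrating the dissipation in time and time-continuity $\theta\in C([0,\infty);H^s)$ following when $\kappa>0$; for $\kappa=0$ the weaker claim $\theta^0(\cdot,t)\in H^s$ for each $t$ is exactly the pointwise Gr\"onwall bound.

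The step I expect to be the main obstacle is closing the $H^s$ estimate \emph{uniformly}, which hinges entirely on controlling $\|\nabla u\|_{L^\infty}$; because the fractional dissipation may be arbitrarily weak ($\gamma$ near $0$) and is absent when $\kappa=0$, this control cannot be extracted from the parabolic term and must come from the drift's smoothing together with the $L^\infty$ bound on $\theta$. This is why the De Giorgi (respectively maximum-principle) step is inserted \emph{before} the $H^s$ estimate, and it is the only place where the distinction between the $\kappa>0$ and $\kappa=0$ cases genuinely enters.
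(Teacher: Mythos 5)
Your overall strategy coincides with the paper's: an $L^2$ energy estimate, a De Giorgi iteration upgrading $L^2$ to $L^\infty$ when $\kappa>0$ (the paper's Lemma~\ref{De Giorgi iteration lemma}), the transport maximum principle when $\kappa=0$, the order-two smoothing \eqref{two order smoothing for u when nu>0} to dominate $\|\Lambda u^\kappa\|_{L^\infty}$ by $\|\theta^\kappa\|_{L^\infty}$, a commutator-based $H^s$ energy inequality, and Gr\"onwall. The ingredients and their roles match the paper's proof essentially point for point.

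However, your closing Gr\"onwall step has a genuine gap in the $\kappa>0$ case. Since $\theta_0\in H^s$ is \emph{not} assumed to lie in $L^\infty$ (this is the whole point when $s\le d/2$), any $L^\infty$ bound on $\theta^\kappa(\cdot,t)$ must degenerate as $t\to0^+$: the De Giorgi bound \eqref{infty bound on theta in terms of L2 norm of initial theta} carries the factor $\big(\tfrac{2}{t}+1\big)^{\frac{d+1-\gamma}{2\gamma}}$. Your coefficient $C_\nu(t)$ in the differential inequality is proportional to $\|\theta^\kappa(\cdot,t)\|_{L^\infty}$, and Gr\"onwall on $[0,T]$ requires $\int_0^T\|\theta^\kappa(\cdot,t)\|_{L^\infty}\,dt<\infty$. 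But $\frac{d+1-\gamma}{2\gamma}\ge1$ precisely when $\gamma\le\frac{d+1}{3}$, i.e. $\gamma\le1$ for $d=2$ and $\gamma\le\frac43$ for $d=3$, and the theorem allows all $\gamma\in(0,2]$; in that range the singularity at $t=0$ is non-integrable, so ``Gr\"onwall on every finite interval'' cannot be launched from $t=0$, and the dissipation (which you explicitly decline to use) does not rescue the estimate. The paper repairs exactly this point: using the continuity in time of the local-in-time $H^s$ solution (Lemma~\ref{local-in-time existence lemma}) it first fixes $\tau>0$ with $\sup_{0\le t\le\tau}\|\Lambda^s\theta^\kappa(\cdot,t)\|_{L^2}\le2\|\Lambda^s\theta_0\|_{L^2}$, and only then applies Gr\"onwall on $[\tau,\infty)$, where \eqref{infty bound on theta in terms of L2 norm of initial theta} together with \eqref{infty bound on theta with kappa>0 and t>1} furnishes a uniform $L^\infty$ bound $C_\tau$. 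You need this two-step argument (local-in-time $H^s$ control near $t=0$ that avoids the $L^\infty$ bound, then Gr\"onwall away from $0$); your Galerkin framework does not sidestep it, because the uniform-in-$N$ estimate for the truncated system runs into the same non-integrable coefficient, as $\|P_N\theta_0\|_{L^\infty}$ is not bounded uniformly in $N$.
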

In view of the case when $\kappa>0$, the most subtle part for proving Theorem~\ref{global-in-time wellposedness in Sobolev} is to estimate the $L^\infty$-norm of $\theta^\kappa(\cdot,t)$ when $\theta_0$ is {\it not} necessarily in $L^\infty$. In achieving our goal, we apply De Giorgi iteration method which will be illustrated in Lemma~\ref{De Giorgi iteration lemma}. 

We first recall the following energy inequalities which were established in \cite{CTV14} for the case $\gamma=1$. The general cases for $\gamma\in(0,2]$ follows similarly and we omit the details here.

\begin{proposition}\label{boundedness on theta with kappa>0 prop}
Assume that $S\in L^2\cap L^\infty$, and let $\theta^\kappa$ be a smooth solution to \eqref{abstract active scalar eqn}. For $\kappa>0$, we have
\begin{align}\label{energy inequality kappa>0}
\|\theta^\kappa(\cdot,t)\|^2_{L^2}+\kappa\int_0^t\|\Lambda^\frac{\gamma}{2}\theta^\kappa(\cdot,\tau)\|^2_{L^2}d\tau\le\|\theta_0\|^2_{L^2}+\frac{t}{c_0\kappa}\|S\|^2_{L^2}, \qquad\forall t\ge0.
\end{align}
If we further assume that $\theta_0\in L^\infty$, for all $\kappa>0$, it gives
\begin{align}\label{infty bound on theta with kappa>0}
\|\theta^\kappa(\cdot,t)\|_{L^\infty}\le\|\theta_0\|_{L^\infty}e^{-c_o\kappa t}+\frac{\|S\|_{L^\infty}}{c_0\kappa}, \qquad\forall t\ge0,
\end{align}
where $c_0>0$ is a universal constant which depends only on the dimension $d$.
\end{proposition}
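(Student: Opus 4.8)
The two estimates are both obtained by testing the evolution equation \eqref{abstract active scalar eqn} against a suitable power of $\theta^\kappa$, and in both the only structural input needed is assumption A1, which makes $u=u[\theta^\kappa]$ divergence free, together with the mean-zero property \eqref{zero mean assumption}. The plan is to obtain \eqref{energy inequality kappa>0} at the $L^2$ level and \eqref{infty bound on theta with kappa>0} by a maximum-principle argument. Since the case $\gamma=1$ is carried out in \cite{CTV14}, the real task is to check that nothing in the argument is sensitive to the value of $\gamma\in(0,2]$.

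For \eqref{energy inequality kappa>0} I would pair \eqref{abstract active scalar eqn} with $\theta^\kappa$ in $L^2$. The time-derivative term yields $\tfrac12\tfrac{d}{dt}\|\theta^\kappa\|_{L^2}^2$; the transport term drops out since $\langle u\cdot\nabla\theta^\kappa,\theta^\kappa\rangle=\tfrac12\int_{\mathbb T^d}u\cdot\nabla|\theta^\kappa|^2\,dx=0$ by the divergence-free property; and the dissipative term produces $\kappa\|\Lambda^{\gamma/2}\theta^\kappa\|_{L^2}^2$ by Plancherel. For the forcing I would use Cauchy--Schwarz together with the Poincar\'e inequality $\|\theta^\kappa\|_{L^2}\le\|\Lambda^{\gamma/2}\theta^\kappa\|_{L^2}$ (valid because every nonzero frequency satisfies $|k|\ge1$, hence $|k|^\gamma\ge1$), followed by Young's inequality, giving $\langle S,\theta^\kappa\rangle\le\tfrac{\kappa}{2}\|\Lambda^{\gamma/2}\theta^\kappa\|_{L^2}^2+\tfrac{1}{2\kappa}\|S\|_{L^2}^2$. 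Half of the dissipation is thereby absorbed into the left-hand side, and integrating the resulting differential inequality in time produces \eqref{energy inequality kappa>0}. This step is entirely routine.

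The bound \eqref{infty bound on theta with kappa>0} is more delicate and rests on a C\'ordoba--C\'ordoba-type pointwise estimate. Let $\bar x_t$ be a point at which $\theta^\kappa(\cdot,t)$ attains its maximum $M(t)=\max_x\theta^\kappa$, which is nonnegative by the mean-zero property. At $\bar x_t$ one has $\nabla\theta^\kappa=0$, so the transport term vanishes from the equation, and the crucial claim is the lower bound $\Lambda^\gamma\theta^\kappa(\bar x_t)\ge c_0 M(t)$. I would derive this from the singular-integral representation $\Lambda^\gamma\theta^\kappa(\bar x_t)=c_{d,\gamma}\int_{\mathbb T^d}\big(M(t)-\theta^\kappa(y)\big)K(\bar x_t-y)\,dy$, where $K\ge c_*>0$ is the periodized Riesz kernel: since the integrand is nonnegative and $\int_{\mathbb T^d}\big(M(t)-\theta^\kappa(y)\big)\,dy=(2\pi)^d M(t)$ by mean-zero, the integral is bounded below by $c_0 M(t)$. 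Feeding this into the evolution of the maximum gives $\tfrac{d}{dt}M(t)\le-c_0\kappa M(t)+\|S\|_{L^\infty}$, and comparison with the linear ODE yields $M(t)\le\|\theta_0\|_{L^\infty}e^{-c_0\kappa t}+\|S\|_{L^\infty}/(c_0\kappa)$. Running the same argument for $-\theta^\kappa$ controls $-\min_x\theta^\kappa$, and taking the larger of the two bounds gives \eqref{infty bound on theta with kappa>0}.

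The main obstacle is exactly this $L^\infty$ estimate. One must justify the pointwise lower bound on $\Lambda^\gamma\theta^\kappa$ at the extremum, where the mean-zero hypothesis is precisely what converts the positivity of the Riesz kernel into a bound proportional to $M(t)$ rather than merely $\Lambda^\gamma\theta^\kappa(\bar x_t)\ge0$; and one must make sense of $\tfrac{d}{dt}\max_x\theta^\kappa$, which I would handle either by Rademacher's theorem and Danskin's lemma or, more robustly, by running the estimate on $\|\theta^\kappa\|_{L^p}$ using the C\'ordoba--C\'ordoba positivity inequality $\int|\theta^\kappa|^{p-2}\theta^\kappa\,\Lambda^\gamma\theta^\kappa\,dx\ge0$ and then letting $p\to\infty$. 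The nonlocal representation is cleanest for $\gamma\in(0,2)$, which already covers the critical SQG application; the endpoint $\gamma=2$ is the classical Laplacian and requires separate bookkeeping of the same maximum-principle type. Since \cite{CTV14} establishes these ingredients for $\gamma=1$, the remaining verification is that the positivity and kernel estimates carry over with only cosmetic changes.
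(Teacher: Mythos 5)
Your proposal reconstructs, essentially verbatim, the argument the paper defers to: the paper gives no proof of its own, but cites \cite{CTV14} (the case $\gamma=1$) and asserts that the general case ``follows similarly,'' and the \cite{CTV14} argument is exactly your combination of the $L^2$ energy estimate (transport term vanishes by divergence-freeness, Poincar\'e on mean-zero functions since all active frequencies satisfy $|k|\ge 1$, then Young and absorption) with the nonlocal maximum principle (positivity of the periodized kernel of $\Lambda^\gamma$ plus the mean-zero condition giving $\Lambda^\gamma\theta^\kappa(\bar x_t)\ge c_0 M(t)$, followed by ODE comparison). For \eqref{energy inequality kappa>0}, and for \eqref{infty bound on theta with kappa>0} in the range $\gamma\in(0,2)$, your write-up is correct and is the intended proof.

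The genuine gap is the endpoint $\gamma=2$, which the Proposition includes and which the paper actually needs for the MG application. Your claim that this case ``requires separate bookkeeping of the same maximum-principle type'' is not right: for the local Laplacian there is no pointwise lower bound $-\Delta\theta^\kappa(\bar x_t)\ge c_0 M(t)$ at a maximum point --- a function with a flat plateau at its maximum has $-\Delta\theta^\kappa(\bar x_t)=0$ while $M(t)$ is arbitrary --- so the classical maximum principle only gives $-\Delta\theta^\kappa(\bar x_t)\ge 0$, which yields the non-decaying bound \eqref{infty bound on theta with kappa=0 or >0} but not the exponential decay in \eqref{infty bound on theta with kappa>0}. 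Equivalently, in your kernel representation the normalizing constant $c_{d,\gamma}$ vanishes as $\gamma\to 2^-$, so the $c_0$ your argument produces degenerates as $\gamma\to 2$ and the bound becomes vacuous in the limit (this also shows the constant cannot depend on $d$ alone uniformly in $\gamma$, contrary to the wording of the statement). To handle $\gamma=2$ one needs a different mechanism: for instance, exponential $L^2$ decay (which follows from Poincar\'e and is unaffected by the drift) combined with a drift-independent $L^2\to L^\infty$ smoothing estimate for divergence-free drifts --- a Nash/De Giorgi iteration of precisely the kind carried out in Lemma~\ref{De Giorgi iteration lemma}, leading to a bound of the form \eqref{infty bound on theta with kappa>0 and t>1} --- or a Harnack inequality; note these routes produce a multiplicative constant in front of $\|\theta_0\|_{L^\infty}e^{-c_0\kappa t}$ rather than the coefficient $1$ claimed in \eqref{infty bound on theta with kappa>0}. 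In fairness, the paper's own ``the general cases for $\gamma\in(0,2]$ follow similarly'' glosses over exactly the same point, but since you flagged $\gamma=2$ yourself, you should be aware that the fix is not cosmetic.
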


\begin{remark}
Instead of the decay estimate given in \eqref{infty bound on theta with kappa>0}, we also have the following bound on $\|\theta^\kappa(\cdot,t)\|_{L^\infty}$ for all $\kappa\ge0$ provided that $\theta_0\in L^\infty$:
\begin{align}\label{infty bound on theta with kappa=0 or >0}
\|\theta^\kappa(\cdot,t)\|_{L^\infty}\le \|\theta_0\|_{L^\infty}+\|S\|_{L^\infty},\qquad\forall t\ge0.
\end{align} 
\end{remark}

Next we state and prove the following lemma which gives local-in-time existence results for the equation \eqref{abstract active scalar eqn}. 

\begin{lemma}\label{local-in-time existence lemma}
Let $\nu>0$, $\kappa>0$, $\gamma\in(0,2]$ and fix $s \ge 0$. Assume that $\theta_0\in H^s$ and $S\in H^s\cap L^\infty$, then there exists $T_*=T_*(\theta_0,S)>0$ and a unique solution $\theta^\kappa$ of \eqref{abstract active scalar eqn} such that
\begin{align*}
\theta^\kappa\in C([0,T_*);H^s)\cap L^2([0,T_*);H^{s+\frac{\gamma}{2}}).
\end{align*}
When $\kappa=0$ and $s \ge 0$, if $\theta_0$, $S\in H^s\cap L^\infty$, then there exists $T_*=T_*(\theta_0,S)>0$ and a unique solution $\theta^0$ of \eqref{abstract active scalar eqn} such that $\theta^0(\cdot,t)\in H^s$ for all $t\in[0,T_*)$.
\end{lemma}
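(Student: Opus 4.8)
The plan is to establish local-in-time existence and uniqueness via a fixed-point argument on the mild (Duhamel) formulation of \eqref{abstract active scalar eqn}. Writing the equation as $\partial_t\theta + \kappa\Lambda^\gamma\theta = -u[\theta]\cdot\nabla\theta + S$, the solution is a fixed point of the map
\begin{align*}
(\Phi\theta)(t) = e^{-\kappa t\Lambda^\gamma}\theta_0 + \int_0^t e^{-\kappa(t-\tau)\Lambda^\gamma}\big(S - u[\theta]\cdot\nabla\theta\big)(\tau)\,d\tau,
\end{align*}
where $e^{-\kappa t\Lambda^\gamma}$ is the analytic semigroup generated by the fractional diffusion (for $\kappa>0$). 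First I would fix a ball $B_R\subset C([0,T_*];H^s)$ of radius $R$ comparable to $\|\theta_0\|_{H^s}+\|S\|_{H^s\cap L^\infty}$ and show that for $T_*$ small enough $\Phi$ maps $B_R$ into itself and is a contraction there. The desired regularity \eqref{functional spaces for theta} is then read off from the smoothing of the semigroup: the parabolic gain $e^{-\kappa(t-\tau)\Lambda^\gamma}$ supplies the $L^2([0,T_*];H^{s+\gamma/2})$ integrability.

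The core of both the self-mapping and contraction estimates is controlling the nonlinear term $u[\theta]\cdot\nabla\theta$ in $H^s$. Here I would invoke assumption A3, specifically the two-order smoothing bound \eqref{two order smoothing for u when nu>0}, which for $\nu>0$ gives $\|\Lambda^s u[\theta]\|_{L^p}\le C_\nu\|\Lambda^{s'}\theta\|_{L^p}$ with $s'=\max\{s-2,0\}$. Combined with the product estimate \eqref{product estimate} and the Sobolev embeddings \eqref{L infty bound 1}--\eqref{Sobolev inequality}, one estimates $\|\Lambda^s(u[\theta]\cdot\nabla\theta)\|_{L^2}$ by a product of $H^s$-norms of $\theta$, yielding the quadratic bound needed for contraction. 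For the difference $u[\theta_1]\cdot\nabla\theta_1 - u[\theta_2]\cdot\nabla\theta_2$, I would split as $u[\theta_1-\theta_2]\cdot\nabla\theta_1 + u[\theta_2]\cdot\nabla(\theta_1-\theta_2)$ and apply the same estimates, using the linearity of $u[\cdot]$ in $\theta$, to get Lipschitz dependence with a constant that tends to zero as $T_*\to0$.

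For the case $\kappa=0$ there is no diffusive smoothing, so the semigroup is the identity and the Duhamel term loses its parabolic gain. Here I would instead run an energy estimate directly on $\|\theta\|_{H^s}$, differentiating $\tfrac12\tfrac{d}{dt}\|\Lambda^s\theta\|_{L^2}^2$ and using the commutator estimate \eqref{commutator estimate} to handle $\langle\Lambda^s(u[\theta]\cdot\nabla\theta),\Lambda^s\theta\rangle$; the divergence-free property from A1 kills the top-order term $\langle u\cdot\nabla\Lambda^s\theta,\Lambda^s\theta\rangle$, leaving commutator contributions that close a Gr\"onwall inequality on a short interval. The additional hypothesis $\theta_0,S\in L^\infty$ is what allows \eqref{L infty bound 1}--\eqref{L infty bound 2} (together with the divergence-free transport structure) to bound the velocity and thereby complete the a priori estimate; existence and uniqueness then follow by a standard mollification-and-compactness or Picard argument.

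I expect the main obstacle to be the nonlinear estimate in the borderline low-regularity range, where $s$ may be small (even $s=0$) and the smoothing order $s'=\max\{s-2,0\}$ does not give room for the naive H\"older splitting. One must choose the intermediate Lebesgue exponents $p_1,\dots,p_4$ in \eqref{product estimate} carefully, balancing the gain from \eqref{two order smoothing for u when nu>0} against the fractional Sobolev inequality \eqref{Sobolev inequality}, so that the nonlinearity is genuinely controlled by $\|\theta\|_{H^s}$ alone. For $\nu>0$ the two-order smoothing makes this comfortable, but the bookkeeping of exponents, ensuring the constraints $p,p_2,p_3\in(1,\infty)$ and $\tfrac1p=\tfrac1{p_1}+\tfrac1{p_2}$ are met uniformly in the relevant range of $s$, is the delicate point.
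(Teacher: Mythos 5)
Your plan for $\kappa>0$ hinges on the claim that $\|\Lambda^s(u[\theta]\cdot\nabla\theta)\|_{L^2}$ can be bounded by a product of $H^s$-norms of $\theta$ using \eqref{two order smoothing for u when nu>0} together with \eqref{product estimate}. That claim is false, and it is the step where the argument breaks. In every splitting offered by \eqref{product estimate} with $f=u[\theta]$, $g=\nabla\theta$, one term has all $s$ derivatives landing on $\nabla\theta$, i.e.\ a term of the form $\|u[\theta]\|_{L^{p_1}}\|\Lambda^{s}\nabla\theta\|_{L^{p_2}}$; on the torus no juggling of the exponents $p_1,\dots,p_4$ can turn control of $\Lambda^{s+1}\theta$ into control of $\Lambda^{s}\theta$, and the smoothing in A3 acts only on the $u$ factor --- it cannot remove the gradient from $\theta$. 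So the nonlinearity genuinely loses one derivative: it maps $H^s$ into $H^{s-1}$ (quadratically), not into $H^s$. In the Duhamel formulation this loss must be paid for by the semigroup, $\|e^{-\kappa t\Lambda^\gamma}\|_{H^{s-1}\to H^s}\lesssim(\kappa t)^{-1/\gamma}$, and $(t-\tau)^{-1/\gamma}$ is integrable in $\tau$ only when $\gamma>1$. Hence your contraction can at best be repaired for $\gamma\in(1,2]$; it cannot prove the lemma as stated, which covers all $\gamma\in(0,2]$, including the critical and supercritical range $\gamma\le1$ where the parabolic gain is too weak to absorb the derivative loss.

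The missing idea is that the derivative loss is avoided not by smoothing but by cancellation: pairing the equation with $\Lambda^{2s}\theta$ and using $\divv u=0$, the top-order term $\langle u\cdot\nabla\Lambda^s\theta,\Lambda^s\theta\rangle$ vanishes, and the remaining commutator contributions are controlled via \eqref{commutator estimate} and \eqref{two order smoothing for u when nu>0} by quantities of the form $\|\theta\|_{L^\infty}\|\Lambda^s\theta\|_{L^2}^2$. This cancellation is visible only in an energy estimate, not in a fixed-point iteration on the mild formulation. That is exactly what the paper does: for $\kappa>0$ it runs the energy method of \cite{FRV12}, which closes uniformly for all $\gamma\in(0,2]$ because the dissipation is never needed to control the nonlinearity, obtains $\theta^\kappa\in L^\infty([0,T_*);H^s)\cap L^2([0,T_*);H^{s+\frac{\gamma}{2}})$, and then upgrades $L^\infty_t H^s$ to $C([0,T_*);H^s)$ via weak continuity in time (Lemma~1.4 of \cite{T01}) combined with the argument of \cite{M06}; the case $\kappa=0$ is deferred to \cite{FS19}. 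Your treatment of $\kappa=0$ --- energy estimate, commutator bound, divergence-free cancellation --- is precisely the correct mechanism; had you applied the same scheme to $\kappa>0$ (where the dissipative term only helps), you would have recovered the paper's proof.
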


\begin{proof}
We consider only the case for $\kappa >0$. We note that the case for $\kappa=0$ was addressed in \cite{FS19}. Following the argument given in the proof of \cite[Theorem~3.1]{FRV12}, applying \eqref{two order smoothing for u when nu>0} for $\nu>0$ and using standard energy method, there exists $T_*=T_*(\theta_0,S)>0$ and a unique solution $\theta^\kappa$ of \eqref{abstract active scalar eqn} such that
\begin{align*}
\theta^\kappa\in L^\infty([0,T_*);H^s)\cap L^2([0,T_*);H^{s+\frac{\gamma}{2}}),
\end{align*}
and in particular, the following inequality holds for $t\in[0,T_*)$:
\begin{align}\label{H^s bound on theta with S and higher norm}
\|\theta^\kappa(\cdot,t)\|^2_{H^s}\le \|\theta_0\|^2_{H^s}+Ct\|S\|^2_{H^s}+C\int_0^t\|\theta^\kappa(\cdot,\tau)\|^2_{H^{s+\frac{\gamma}{2}}}d\tau,
\end{align}
where $C$ is a positive constant which is independent of $t$. It then remains to prove the continuity of $\theta^\kappa$ in time. By the standard bootstrap argument, it suffices to show that $\theta^\kappa(\cdot,t)\to\theta_0$ in $H^s$ as $t\to0^+$. Using the fact that $\theta^\kappa\in L^\infty([0,T_*);H^s)$, we apply Lemma~1.4 in \cite[Chapter 3]{T01} to show that $\theta^\kappa\in C_{w}([0,T_*);H^s)$. Therefore, using \eqref{H^s bound on theta with S and higher norm}, we obtain
\begin{align*}
&\|\theta^\kappa(\cdot,t)-\theta_0\|^2_{H^s}\\
&\le2\|\theta_0\|^2_{H^s}-2\langle\theta^\kappa(\cdot,t),\theta_0\rangle_{H^{s}}+Ct\|S\|^2_{H^s}+C\int_0^t\|\theta^\kappa(\cdot,\tau)\|^2_{H^{s+\frac{\gamma}{2}}}d\tau,
\end{align*}
where $\langle\cdot,\cdot\rangle_{H^s}$ is the inner product on $H^s$. As $t\to0^+$, one can show that $\|\theta^\kappa(\cdot,t)-\theta_0\|_{H^s}\to0$ which follows by the argument given in \cite{M06} and we omit the details here.
\end{proof}

The lemma below gives the desired bound on $\|\theta^\kappa(\cdot,t)\|_{L^\infty}$ for the case when the time $t$ is small, which will be used for proving Theorem~\ref{global-in-time wellposedness in Sobolev}. The idea follows from the work given in \cite{CV10} and \cite{CZV16}.
\begin{lemma}[From $L^2$ to $L^\infty$]\label{De Giorgi iteration lemma}
Fix $\nu>0$, $\kappa>0$, $s\ge0$ and $\gamma\in(0,2]$. Let $\theta(t)$ be the solution to \eqref{abstract active scalar eqn} with initial datum $\theta_0\in H^s$. Then for all $t\in(0,1]$, we have
\begin{align}\label{infty bound on theta in terms of L2 norm of initial theta}
\|\theta(t)\|_{L^\infty}\le C\Big[\Big(\frac{2}{t}+1\Big)^{\frac{d+1-\gamma}{2\gamma}}\Big(\|\theta_0\|_{L^2}+\frac{\|S\|_{L^2}}{c_0^\frac{1}{2}\kappa^\frac{1}{2}}\Big)+\|S\|_{L^\infty}\Big],
\end{align}
where $C=C(d)>0$ is a constant which only depends on the dimension $d$.
\end{lemma}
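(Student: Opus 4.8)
The plan is to run a De~Giorgi level-set iteration in the spirit of Caffarelli--Vasseur \cite{CV10} and \cite{CZV16}, but with \emph{simultaneously} increasing truncation levels and time cut-offs, so that the bound holds at the prescribed time $t$ and degenerates at the correct rate as $t\to0^+$. It suffices to bound $\theta$ from above: the lower bound follows by applying the identical argument to $-\theta$, which solves the same type of equation with forcing $-S$ (note $-u\cdot\nabla\theta=u\cdot\nabla(-\theta)$ and $u$ is unchanged), and the two one-sided bounds combine to give \eqref{infty bound on theta in terms of L2 norm of initial theta}.

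First I would isolate the two structural cancellations that drive the scheme. Since $u=u[\theta]$ is divergence-free by A1, testing the transport term against any function of $\theta$ gives $\int_{\T^d}(u\cdot\nabla\theta)\,\phi(\theta)\,dx=\int_{\T^d}u\cdot\nabla\Phi(\theta)\,dx=0$ with $\Phi'=\phi$; in particular it vanishes against the truncations $(\theta-M)_+$. For the diffusion I would use the convexity/symmetrization inequality
\[
\int_{\T^d}(\theta-M)_+\,\Lambda^\gamma\theta\,dx\ \ge\ \|\Lambda^{\gamma/2}(\theta-M)_+\|_{L^2}^2 ,
\]
which for $\gamma\in(0,2)$ follows by writing $\Lambda^\gamma$ as a periodic singular integral and symmetrizing, using that both $(\theta-M)_+$ and $\min(\theta,M)$ are nondecreasing functions of $\theta$ so that the symmetrized kernel is nonnegative, and which for $\gamma=2$ is the classical integration by parts.

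Next I would set up the iteration. Put $A:=\|\theta_0\|_{L^2}+\|S\|_{L^2}/(c_0\kappa)^{1/2}$, so that \eqref{energy inequality kappa>0} yields $\sup_{0\le\tau\le1}\|\theta(\tau)\|_{L^2}\le A$ and $\kappa\int_0^1\|\Lambda^{\gamma/2}\theta\|_{L^2}^2\,d\tau\le A^2$. Fix $t\in(0,1]$ and a level $M>0$ to be chosen, and define
\[
M_k=M(1-2^{-k}),\qquad t_k=\tfrac{t}{2}(1-2^{-k}),\qquad \theta_k=(\theta-M_k)_+ ,
\]
\[
E_k=\sup_{t_k\le\tau\le1}\|\theta_k(\tau)\|_{L^2}^2+\kappa\int_{t_k}^{1}\|\Lambda^{\gamma/2}\theta_k(\tau)\|_{L^2}^2\,d\tau ,
\]
noting $E_0\lesssim A^2$ (truncation is a contraction in $\dot H^{\gamma/2}$). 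Testing \eqref{abstract active scalar eqn} against $\theta_k$ and invoking the two cancellations gives $\tfrac12\tfrac{d}{d\tau}\|\theta_k\|_{L^2}^2+\kappa\|\Lambda^{\gamma/2}\theta_k\|_{L^2}^2\le\int_{\T^d}S\theta_k\,dx$. Integrating from a time in $[t_{k-1},t_k]$ selected by averaging (which yields the reciprocal length $2^{k+1}/t$, the source of the factor $2/t$) and bounding the forcing by $\|S\|_\infty\|\theta_k\|_{L^1}$, I would then interpolate: the fractional Sobolev embedding $H^{\gamma/2}\hookrightarrow L^{2d/(d-\gamma)}$ (with a finite exponent in the borderline case $d=\gamma=2$), the inclusion $\{\theta_k>0\}\subset\{\theta_{k-1}>M2^{-k}\}$, and Chebyshev convert the spatial $L^2$ bound into a super-linear recursion of the schematic form $E_k\le C^k\,(2/t+1)^{a}\,M^{-b}\,E_{k-1}^{1+\beta}$ for explicit $a,b,\beta>0$ depending on $d$ and $\gamma$.

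Finally I would close the iteration: the standard fast-decay-sequence lemma gives $E_k\to0$ once $E_0$ lies below the threshold, and with $E_0\lesssim A^2$ this forces the choice $M\simeq C(2/t+1)^{(d+1-\gamma)/(2\gamma)}A+C\|S\|_\infty$, the exponent being precisely the balance of the averaging factor, the Sobolev gain and $\beta$ (so that $M$ scales linearly in $A$ and the forcing enters additively). Since $M_k\uparrow M$, $t_k\uparrow t/2$ and $E_k\to0$, we conclude $(\theta(\tau)-M)_+\equiv0$ for all $\tau\in[t/2,1]$, in particular at $\tau=t$, giving $\theta(t)\le M$. I expect the main obstacle to be the bookkeeping in the recursion: arranging the time cut-offs so that the prefactor is exactly $(2/t+1)^{(d+1-\gamma)/(2\gamma)}$ and, simultaneously, so that the inhomogeneous term contributes only the additive $\|S\|_\infty$; by contrast the dissipation inequality and the divergence-free cancellation are routine structural inputs.
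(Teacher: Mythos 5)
Your scheme is, in all structural respects, the paper's own proof: the same truncation levels $M(1-2^{-k})$, geometrically increasing time cutoffs, level-set energies combining the $L^\infty_t L^2_x$ piece with the dissipation, the same two cancellations (divergence-free drift tested against truncations, and the C\'ordoba--C\'ordoba convexity inequality for $\Lambda^\gamma$), the bound on $E_0$ via \eqref{energy inequality kappa>0}, the absorption of the forcing using $M\ge 2\|S\|_{L^\infty}$, the fast-decay lemma, and the symmetric argument for $-\theta$. The comparison therefore reduces to the single analytic input you chose for the nonlinear gain, and that is exactly where the proposal has a genuine gap.

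With your stated input --- the spatial embedding $H^{\gamma/2}\hookrightarrow L^{2d/(d-\gamma)}$ interpolated against $L^\infty_t L^2_x$ --- the best diagonal space-time integrability extractable from $E_{k-1}$ is $L^p(\mathbb{T}^d\times[t_{k-1},1])$ with $p=\frac{2(d+\gamma)}{d}$. Chebyshev then yields the recursion $E_k\lesssim\bigl(\tfrac{2}{t}+1\bigr)\,2^{kp}M^{-(p-2)}E_{k-1}^{p/2}$, and the fast-decay lemma forces $M\simeq\bigl(\tfrac{2}{t}+1\bigr)^{1/(p-2)}E_0^{1/2}$, i.e.\ the exponent $\frac{1}{p-2}=\frac{d}{2\gamma}$, \emph{not} $\frac{d+1-\gamma}{2\gamma}$; the two coincide only at $\gamma=1$. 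For $\gamma<1$ your exponent is smaller, so your bound is stronger and does imply \eqref{infty bound on theta in terms of L2 norm of initial theta} (since $\tfrac{2}{t}+1\ge1$); but for $\gamma\in(1,2]$ it is strictly larger, and the proposal as written does not prove the stated inequality. The sentence asserting that the exponent ``comes out precisely'' as $\frac{d+1-\gamma}{2\gamma}$ is thus the unverified step, and with your Sobolev input it is false whenever $\gamma\neq1$. The paper produces the stated exponent from a different ingredient: the space-time bound \eqref{bound on Qn-1 using theta n-1}, $Q_{n-1}\ge C_d\|\theta_{n-1}\|^2_{L^{2(d+1)/(d+1-\gamma)}(\mathbb{T}^d\times[T_{n-1},1])}$, whose exponent $\frac{2(d+1)}{d+1-\gamma}$ exceeds the diagonal interpolation exponent $\frac{2(d+\gamma)}{d}$ precisely when $\gamma>1$, so it is genuinely more than what your embedding provides and would have to be established (or cited) separately to close the argument with the constant in the statement. (For the subsequent uses of the lemma, where only some polynomial rate in $1/t$ matters, your weaker exponent would in fact suffice; but it does not match the statement being proved.)
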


\begin{remark}
By exploiting the bounds \eqref{infty bound on theta with kappa>0} and \eqref{infty bound on theta in terms of L2 norm of initial theta} on $\|\theta\|_{L^\infty}$, for $\theta_0\in H^s$ with $s\ge0$, we have the following bound for $t\ge1$ (see also \cite[Theorem~3.2]{CZV16}):
\begin{align}\label{infty bound on theta with kappa>0 and t>1}
\|\theta(t)\|_{L^\infty}\le\frac{C}{\kappa}\left[\|\theta_0\|_{L^2}+\frac{\|S\|_{L^2}}{\kappa^\frac{1}{2}}\right]e^{-c_0\kappa t}+\frac{1}{c_0\kappa}\|S\|_{L^\infty},
\end{align}
for some constant $C>0$.
\end{remark}

\begin{proof}[Proof of Lemma~\ref{De Giorgi iteration lemma}]
Fix $\kappa>0$, we drop $\kappa$ from $\theta^{\kappa}$ and write $\theta=\theta^{\kappa}$ for simplicity. Suppose that $M\ge2\|S\|_{L^\infty}$, where $M>0$ to be fixed later, we define
\begin{align*}
\lambda_{n}:=M(1-2^{-n}),\qquad n\in\N\cup\{0\},
\end{align*}
and $\theta_{n}$ to be the truncated function $\theta_n=\max\{\theta(t)-\lambda_n,0\}$. Fix $t\in(0,1]$, we define the time cutoffs by
\begin{align*}
T_{n}:=t(1-2^{-n}),
\end{align*}
and we denote the level set of energy by
\begin{align*}
Q_{n}:=\sup_{T_{n}\le\tau\le1}\|\theta_n(\cdot,\tau)\|^2_{L^2}+2\kappa\int_{T_{n}}^1\|\Lambda^{\frac{\gamma}{2}}\theta_{n}(\cdot,\tau)\|^2_{L^2}d\tau.
\end{align*}
Using the point-wise inequality given in \cite[Proposition 2.3]{CC04}, for all $s\in(T_{n-1},T_{n})$, we have the following level set inequality:
\begin{align*}
&\sup_{T_{n}\le\tau\le1}\|\theta_{n}(\cdot,\tau)\|^2_{L^2}+2\kappa\int_{T_{n}}^1\|\Lambda^{\frac{\gamma}{2}}\theta_{n}(\cdot,\tau)\|^2_{L^2}d\tau\\
&\le\|\theta_{n}(\cdot,s)\|^2_{L^2}+2\|S\|_{L^\infty}\int_{T_{n-1}}^1\|\theta_{n}(\cdot,\tau)\|_{L^1}d\tau.
\end{align*}
We take the mean value in $s$ on $(T_{n-1},T_{n})$ and multiply by $\frac{2^n}{t}$ to obtain
\begin{align}\label{bound on Qn}
Q_{n}\le \frac{2^{n}}{t}\int_{T_{n-1}}^1\|\theta_{n}(\cdot,\tau)\|^2_{L^2}d\tau+2\|S\|_{L^\infty}\int_{T_{n-1}}^1\|\theta_{n}(\cdot,\tau)\|_{L^1}d\tau
\end{align}
and using \eqref{energy inequality kappa>0}, we also have
\begin{align}\label{bound on Q0}
Q_{0}\le\|\theta_0\|^2_{L^2}+\frac{1}{c_0\kappa}\|S\|^2_{L^2}.
\end{align}
We aim at bounding the right side of \eqref{bound on Qn} by a power of $Q_{n-1}$. Using H\"{o}lder inequality and Sobolev embedding, there exists $C_d>0$ which depends on $d$ such that
\begin{align}\label{bound on Qn-1 using theta n-1}
Q_{n-1}\ge C_{d}\|\theta_{n-1}\|^2_{L^\frac{2(d+1)}{d+1-\gamma}(\T^d\times[T_{n-1},1]}\qquad\forall n\in\N.
\end{align}
Since $\theta_{n-1}\ge 2^{-n}M$ on the set $\{(x,\tau):\theta_{n}(x,\tau)>0\}$, together with \eqref{bound on Qn-1 using theta n-1}, we have
\begin{align*}
\frac{2^{n}}{t}\int_{T_{n}}^1\|\theta_{n}(\cdot,\tau)\|^2_{L^2}d\tau&\le\frac{2^{n+1}}{t}\int_{T_{n}}^1\int_{\T^d}\theta^2_{n-1}\cdot\1_{\{\theta_{n}>0\}}\\
&\le\frac{2^{n+1}}{t}\int_{T_{n}}^1\int_{\T^d}\theta^2_{n-1}\Big(\frac{2^n\theta_{n-1}}{M}\Big)^\frac{2\gamma}{d+1-\gamma}\\
&\le 2\Big(\frac{2^{n(\frac{d+1+\gamma}{d+1-\gamma})}}{tM^{\frac{2\gamma}{d+1-\gamma}}}\Big)\int_{T_{n}}^1\theta_{n-1}^\frac{2(d+1)}{d+1-\gamma}\\
&\le 2C_{d}\Big(\frac{2^{n(\frac{d+1+\gamma}{d+1-\gamma})}}{tM^{\frac{2\gamma}{d+1-\gamma}}}\Big)Q_{n-1}^\frac{d+1}{d+1-\gamma}.
\end{align*}
Similarly, we have
\begin{align*}
2\|S\|_{L^\infty}\int_{T_{n-1}}^1\|\theta_{n}(\cdot,\tau)\|_{L^1}d\tau\le 2C_{d}\|S\|_{L^\infty}\Big(\frac{2^{n(\frac{d+1+\gamma}{d+1-\gamma})}}{M^{\frac{2\gamma}{d+1-\gamma}+1}}\Big)Q_{n-1}^\frac{d+1}{d+1-\gamma}.
\end{align*}
Since $M\ge2\|S\|_{L^\infty}$, we further obtain
\begin{align*}
Q_{n}\le \Big(\frac{2C_{d}}{t}+C_{d}\Big)\Big(\frac{2^{n(\frac{d+1+\gamma}{d+1-\gamma})}}{M^{\frac{2\gamma}{d+1-\gamma}}}\Big)Q_{n-1}^\frac{d+1}{d+1-\gamma}.
\end{align*}
If we assume that
\begin{align}\label{bounding condition on M}
M\ge\Big(\frac{2C_d}{t}+C_d\Big)^\frac{d+1-\gamma}{2\gamma}Q_{0}^\frac{1}{2},
\end{align} 
then $Q_{n}\to0$ as $n\to\infty$. Hence using \eqref{bound on Q0}, if we choose $M>0$ such that
\begin{align*}
M\ge\Big(\frac{2C_d}{t}+C_d\Big)^\frac{d+1-\gamma}{2\gamma}\Big(\|\theta_0\|_{L^2}+\frac{\|S\|_{L^2}}{c_0^\frac{1}{2}\kappa^\frac{1}{2}}\Big)+2\|S\|_{L^\infty},
\end{align*}
then it implies that $\theta$ is bounded above by $M$. Applying the same argument to $-\theta$, we conclude that the bound \eqref{infty bound on theta in terms of L2 norm of initial theta} holds for all $t\in(0,1]$.
\end{proof}
With the help of the bound \eqref{infty bound on theta in terms of L2 norm of initial theta} on $\|\theta(\cdot,t)\|_{L^\infty}$, we are ready to give the proof of Theorem~\ref{global-in-time wellposedness in Sobolev}.

\begin{proof}[Proof of Theorem~\ref{global-in-time wellposedness in Sobolev}]
It is enough to establish an {\it a priori} estimate on $\theta^\kappa$ with initial data $\theta_0\in H^s$ for $s\ge0$. We multiply \eqref{abstract active scalar eqn} by $\Lambda^{2s}\theta^{\kappa}$ and integrate over $\T^d$ to obtain
\begin{align}\label{differential inequality on theta}
\frac{1}{2}\frac{d}{dt}\|\Lambda^s\theta^\kappa\|^2_{L^2}+\kappa\|\Lambda^{s+\frac{\gamma}{2}}\theta^\kappa\|^2_{L^2}\le\Big|\int_{\T^d}S\Lambda^{2s}\theta^\kappa\Big|+\Big|\int_{\T^d}u^\kappa\cdot\nabla\theta^\kappa\Lambda^{2s}\theta^\kappa\Big|.
\end{align}
The term $\dis\Big|\int_{\T^d}S\Lambda^{2s}\theta^\kappa\Big|$ is readily bounded by $\dis\|\Lambda^sS\|_{L^2}\|\Lambda^s\theta^\kappa\|_{L^2}$, and for $\dis\Big|\int_{\T^d}u^\kappa\cdot\nabla\theta^\kappa\Lambda^{2s}\theta^\kappa\Big|$, it can be estimated as follows.
\begin{align}\label{estimate on convective term}
&\Big|\int_{\T^d}u^\kappa\cdot\nabla\theta^\kappa\Lambda^{2s}\theta^\kappa\Big|=\Big|\int_{\T^d}(u^\kappa\theta^\kappa)\cdot\nabla\Lambda^{2s}\theta^\kappa\Big|\notag\\
&\le\Big|\int_{\T^d}[\Lambda^{s+1}(u^\kappa\theta^\kappa)-u^\kappa\Lambda^{s+1}\theta^\kappa]\cdot\nabla\Lambda^{s-1}\theta^\kappa\Big|+\Big|\int_{\T^d}u^\kappa\Lambda^{s+1}\theta^\kappa\cdot\nabla\Lambda^{s-1}\theta^\kappa\Big|.
\end{align}
Since $\divv (u^\kappa)=0$, the second term on the right side of \eqref{estimate on convective term} can be rewritten as follows
\begin{align*}
\Big|\int_{\T^d}u^\kappa\Lambda^{s+1}\theta^\kappa\cdot\nabla\Lambda^{s-1}\theta^\kappa\Big|=\Big|\int_{\T^d}\Lambda^{s}\theta^\kappa(\Lambda(u^\kappa\cdot\Lambda^{s-1}\theta^\kappa)-u^\kappa\cdot\Lambda^{s}\theta^\kappa)\Big|.
\end{align*}
Hence using \eqref{commutator estimate}, we have
\begin{align*}
&\Big|\int_{\T^d}u^\kappa\Lambda^{s+1}\theta^\kappa\cdot\nabla\Lambda^{s-1}\theta^\kappa\Big|=\Big|\int_{\T^d}\Lambda^{s}\theta^\kappa(\Lambda(u^\kappa\cdot\Lambda^{s-1}\theta^\kappa)-u^\kappa\cdot\Lambda^{s}\theta^\kappa)\Big|\\
&\le C\|\Lambda^{s}\theta^\kappa\|_{L^2}\|\nabla u^\kappa\|_{L^\infty}\|\nabla\Lambda^{s-1}\theta^\kappa\|_{L^2}\le C\|\Lambda u^\kappa\|_{L^\infty}\|\Lambda^{s}\theta^\kappa\|^2_{L^2}.
\end{align*}
For the leading term on the right side of \eqref{estimate on convective term}, we apply \eqref{commutator estimate} again to obtain
\begin{align*}
&\Big|\int_{\T^d}[\Lambda^{s+1}(u^\kappa\theta^\kappa)-u^\kappa\Lambda^{s+1}\theta^\kappa]\cdot\nabla\Lambda^{s-1}\theta^\kappa\Big|\\
&\le \|\Lambda^{s+1}(u^\kappa\theta^\kappa)-u^\kappa\Lambda^{s+1}\theta^\kappa\|_{L^2}\|\nabla\Lambda^{s-1}\theta^\kappa\|_{L^2}\\
&\le\Big(\|\Lambda u^\kappa\|_{L^\infty}\|\Lambda^{s}\theta^\kappa\|_{L^2}+\|\Lambda^{s+1}u^\kappa\|_{L^2}\|\theta^\kappa\|_{L^\infty}\Big)\|\nabla\Lambda^{s-1}\theta^\kappa\|_{L^2}.
\end{align*}
Hence we have from \eqref{estimate on convective term} that 
\begin{align}\label{estimate on convective term step 2}
\Big|\int_{\T^d}u^\kappa\cdot\nabla\theta^\kappa\Lambda^{2s}\theta^\kappa\Big|\le C\Big(\|\Lambda u^\kappa\|_{L^\infty}\|\Lambda^{s}\theta^\kappa\|_{L^2}+\|\Lambda^{s+1}u^\kappa\|_{L^2}\|\theta^\kappa\|_{L^\infty}\Big)\|\Lambda^{s}\theta^\kappa\|_{L^2}.
\end{align}
By \eqref{two order smoothing for u when nu>0}, the term $\|\Lambda^{s+1}u^\kappa\|_{L^2}$ can be bounded by $C_\nu\|\Lambda^{s}\theta^\kappa\|_{L^2}$, and for the term $\|\Lambda u^\kappa\|_{L^\infty}$, using \eqref{L infty bound 2} for $q=d+1$ and together with \eqref{two order smoothing for u when nu>0}, we have
\begin{align}\label{L infty bound on nabla u}
\|\Lambda u^\kappa\|_{L^\infty}&\le C\|\Lambda u^\kappa\|_{W^{1,d+1}}\notag\\
&\le C\|\theta^\kappa\|_{L^{d+1}}\le C\|\theta^{\kappa}\|_{L^\infty}.
\end{align}
Therefore, we obtain from \eqref{estimate on convective term step 2} that
\begin{align}\label{estimate on convective term step 3}
\Big|\int_{\T^d}u^\kappa\cdot\nabla\theta^\kappa\Lambda^{2s}\theta^\kappa\Big|\le C\|\theta^\kappa\|_{L^\infty}\|\Lambda^{s}\theta^\kappa\|^2_{L^2}.
\end{align}
We apply \eqref{estimate on convective term step 3} on \eqref{differential inequality on theta} and deduce that
\begin{align}\label{differential inequality on theta final}
\frac{d}{dt}\|\Lambda^s\theta^\kappa\|^2_{L^2}+2\kappa\|\Lambda^{s+\frac{\gamma}{2}}\theta^\kappa\|^2_{L^2}\le C\|\theta^\kappa\|_{L^\infty}\|\Lambda^{s}\theta^\kappa\|^2_{L^2}+2\|\Lambda^sS\|_{L^2}\|\Lambda^s\theta^\kappa\|_{L^2}.
\end{align}
For the case when $\kappa>0$, using the bounds \eqref{infty bound on theta in terms of L2 norm of initial theta} and \eqref{infty bound on theta with kappa>0 and t>1} on $\theta^\kappa$, for each $\tau>0$, there exists $C_\tau >0$ which depends on $\kappa$, $\tau$, $c_0$, $\|\theta_0\|_{L^2}$, $\|S\|_{L^\infty}$ but independent of $t$ such that
\begin{equation}\label{L infty bound on theta for t>tau}
\|\theta^\kappa(\cdot,t)\|_{L^\infty}\le C_\tau ,\qquad \forall t\ge\tau.
\end{equation}
Furthermore, by the continuity in time as proved in Lemma~\ref{local-in-time existence lemma}, we choose $\tau>0$ sufficiently small such that
\[\sup_{0\le t\le\tau}\|\Lambda^{s}\theta^\kappa(\cdot,t)\|_{L^2}\le 2\|\Lambda^{s}\theta_0\|_{L^2},\]
Hence with the help of Gr\"{o}nwall's inequality, we conclude from \eqref{differential inequality on theta final} that for $\kappa>0$,
\begin{align}\label{H^s bound for theta kappa>0 and nu>0}
\|\Lambda^{s}\theta^\kappa(\cdot,t)\|_{L^2}\le (2\|\Lambda^{s}\theta_0\|_{L^2}+C\|\Lambda^{s}S\|_{L^2})(e^{CC_\tau t}+1),\qquad \forall t\ge0.
\end{align}
Therefore, the above inequality implies that $\|\Lambda^{s}\theta^\kappa(\cdot,t)\|_{L^2}$ remains finite for all positive time when $\kappa>0$.

For the case when $\kappa=0$, under the assumption that $\theta_0\in L^\infty$, we use the bound \eqref{infty bound on theta with kappa=0 or >0} to deduce from \eqref{differential inequality on theta final} that
\begin{align*}
&\|\Lambda^s\theta^0(\cdot,t)\|_{L^2}\\
&\le \exp\Big(Ct(\|\theta_0\|_{L^\infty}+\|S\|_{L^\infty})\Big)(\|\Lambda^{s}\theta_0\|_{L^2}+C\|\Lambda^{s}S\|_{L^2}),\qquad\forall t>0,
\end{align*}
and hence $\|\Lambda^s\theta^0(\cdot,t)\|_{L^2}$ remains finite for all $t>0$ as well.
\end{proof}

\subsection{Convergence of $H^s$-solutions as $\kappa\to0$}

We are now ready to give the proof of Theorem~\ref{Hs convergence}. 

\begin{proof}[Proof of Theorem~\ref{Hs convergence}]
We let $\theta_0$, $S\in C^\infty$. The proof is divided into three parts.

\medskip

\noindent{\bf Uniform $H^s$-bound:}
For fixed $\nu>0$, we first obtain a uniform (independent of $\kappa$) $H^{s}$-bound for all $s\ge0$ on $\theta^\kappa$. Apply the estimate \eqref{differential inequality on theta final} on $\theta^\kappa$, we have
\begin{align}\label{differential inequality on theta s+1}
\frac{d}{dt}\|\Lambda^{s}\theta^\kappa\|^2_{L^2}+2\kappa\|\Lambda^{s+\frac{\gamma}{2}}\theta^\kappa\|^2_{L^2}\le C\|\theta^\kappa\|_{L^\infty}\|\Lambda^{s+1}\theta^\kappa\|^2_{L^2}+2\|\Lambda^{s}S\|_{L^2}\|\Lambda^{s}\theta^\kappa\|_{L^2}.
\end{align}
Since $\theta_0\in C^\infty\subset L^\infty$, we can apply the uniform bound \eqref{infty bound on theta with kappa=0 or >0} to deduce from \eqref{differential inequality on theta s+1} that 
\begin{align*}
&\frac{d}{dt}\|\Lambda^{s}\theta^\kappa\|^2_{L^2}+2\kappa\|\Lambda^{s+\frac{\gamma}{2}}\theta^\kappa\|^2_{L^2}\\
&\le C(\|\theta_0\|_{L^\infty}+\|S\|_{L^\infty})\|\Lambda^{s}\theta^\kappa\|^2_{L^2}+2\|\Lambda^{s}S\|_{L^2}\|\Lambda^{s}\theta^\kappa\|_{L^2}.
\end{align*}
Upon integrating the above inequality from $0$ to $t$, we obtain, for all $\kappa\ge0$ that
\begin{align}\label{uniform Hs+1 bound on theta for all kappa}
&\|\Lambda^{s}\theta^\kappa(\cdot,t)\|_{L^2}\\
&\le C(\|\Lambda^{s}\theta_0\|_{L^2}+\|\Lambda^{s}S\|_{L^2})\exp\Big(Ct(\|\theta_0\|_{L^\infty}+\|S\|_{L^\infty})\Big),\qquad\forall t>0.
\end{align}

\noindent{\bf $L^2$-convergence:}
Fix $\nu>0$ and $\gamma\in(0,2]$. We let $\theta^\kappa$, $\theta^0$ be the smooth solution to \eqref{abstract active scalar eqn} for $\kappa>0$ and $\kappa=0$ respectively. Define $\varphi=\theta^\kappa-\theta^0$, then $\varphi$ satisfies
\begin{align}\label{equation for varphi}
\partial_t\varphi+(u^\kappa-u^0)\cdot\nabla\theta^0+u^\kappa\cdot\nabla\varphi=-\kappa\Lambda^{\gamma}\theta^\kappa,
\end{align}
where $u^\kappa$ and $u^0$ are given by
\begin{align*}
u_j^\kappa:=\partial_{x_i} T_{ij}^{\nu}[\theta^\kappa],\qquad u_j^0:=\partial_{x_i} T_{ij}^{\nu}[\theta^0]
\end{align*}
for $1\le i,j\le d$. Multiply \eqref{equation for varphi} by $\varphi$ and integrate,
\begin{align}\label{integral of varphi}
\frac{1}{2}\frac{d}{dt}\|\varphi(\cdot,t)\|^2_{L^2}=-\int_{\mathbb{T}^d}(u^\kappa-u^0)\cdot\nabla\theta^0\cdot\varphi-\int_{\mathbb{T}^d}u^\kappa\cdot\nabla\varphi\cdot\varphi-\kappa\int_{\mathbb{T}^d}\Lambda^{\gamma}\theta^\kappa\cdot\varphi.
\end{align}
Using the divergence-free assumption on $u^\kappa$, the term $\dis-\int_{\mathbb{T}^d}u^\kappa\cdot\nabla\varphi\cdot\varphi$ is zero. On the other hand, using \eqref{two order smoothing for u when nu>0}, we have
\begin{equation*}
\|(u^\kappa-u^0)(\cdot,t)\|_{L^2}\le C_\nu\|(\theta^\kappa-\theta^0)(\cdot,t)\|_{L^2}.
\end{equation*}
Hence the first term on the right side of \eqref{integral of varphi} can be bounded as follows.
\begin{align*}
-\intox(u^\kappa-u^0)\cdot\nabla\theta^0\cdot\varphi&\le\|\nabla\theta^0(\cdot,t)\|_{L^\infty}\|(u^\kappa-u^0)(\cdot,t)\|_{L^2}\|\varphi(\cdot,t)\|_{L^2}\notag\\
&\le C_{\nu}\|\nabla\theta^0(\cdot,t)\|_{L^\infty}\|\varphi(\cdot,t)\|_{L^2}^2.
\end{align*}
For the third term on the right side of \eqref{integral of varphi}, we can write 
\begin{align*}
-\kappa\int_{\mathbb{T}^d}\Lambda^{\gamma}\theta^\kappa\cdot\varphi\le4\kappa\|\Lambda^{\gamma}\theta^\kappa(\cdot,t)\|^2_{L^2}+\frac{\kappa}{4}\|\varphi(\cdot,t)\|^2_{L^2}.
\end{align*}
Applying the above estimates on \eqref{integral of varphi}, we obtain
\begin{align}\label{differential of varphi}
\frac{1}{2}\frac{d}{dt}\|\varphi(\cdot,t)\|_{L^2}^2&\le4\kappa\|\Lambda^\gamma\theta^\kappa\|_{L^2}^2+\frac{\kappa}{4}\|\varphi(\cdot,t)\|^2_{L^2}+C_{\nu}\|\nabla\theta^0(\cdot,t)\|_{L^\infty}\|\varphi(\cdot,t)\|_{L^2}^2\notag\\
&\le\Big[C_{\nu}\|\nabla\theta^0(\cdot,t)\|_{L^\infty}+\frac{\kappa}{4}\Big]\|\varphi(\cdot,t)\|_{L^2}^2+4\kappa\|\Lambda^\gamma\theta^\kappa\|_{L^2}^2.
\end{align}
Using the bound \eqref{uniform Hs+1 bound on theta for all kappa}, for sufficiently large $s>1$, we have
\begin{align*}
&\|\nabla\theta^0(\cdot,t)\|_{L^\infty}\\
&\le C\|\Lambda^{s}\theta^\kappa(\cdot,t)\|_{L^2}\le C(\|\Lambda^{s}\theta_0\|_{L^2}+\|\Lambda^{s}S\|_{L^2})\exp\Big(Ct(\|\theta_0\|_{L^\infty}+\|S\|_{L^\infty})\Big),
\end{align*}
as well as
\begin{align*}
\|\Lambda^\gamma\theta^\kappa\|_{L^2}&\le C\|\Lambda^{s}\theta^\kappa(\cdot,t)\|_{L^2}\\
&\le C(\|\Lambda^{s}\theta_0\|_{L^2}+\|\Lambda^{s}S\|_{L^2})\exp\Big(Ct(\|\theta_0\|_{L^\infty}+\|S\|_{L^\infty})\Big).
\end{align*}
Hence by integrating \eqref{differential of varphi} over $t$ and using Gr\"{o}nwall's inequality, for $t>0$ there exist positive functions $C_1(t)$, $C_2(t)$ depending on $t$, $\theta_0$ and $S$ such that
\begin{align*}
\|\varphi(\cdot,t)\|_{L^2}^2\le \kappa C_1(t)e^{C_2(t)},
\end{align*}
where we recall that $\varphi(\cdot,0)=0$. Therefore, as $\kappa\rightarrow0$, we have 
\begin{align}\label{L2 convergence}
\lim_{\kappa\rightarrow0}\int_{\mathbb{T}^d}|\theta^\kappa-\theta^0|^2(x,t)dx=\lim_{\kappa\rightarrow0}\|\varphi(\cdot,t)\|_{L^2}^2=0.
\end{align}

\noindent{\bf $H^s$-convergence:} By Sobolev inequality, for $s>0$, there exists $\sigma\in(0,1)$ such that for $t>0$,
\begin{align}\label{Hs estimate on varphi}
\|(\theta^\kappa-\theta^0)(\cdot,t)\|_{H^s}\le\|(\theta^\kappa-\theta^0)(\cdot,t)\|_{L^2}^\sigma\|(\theta^\kappa-\theta^0)(\cdot,t)\|_{H^{s+1}}^{1-\sigma}.
\end{align}
Using the uniform bound \eqref{uniform Hs+1 bound on theta for all kappa} on $\|(\theta^\kappa-\theta^0)(\cdot,t)\|_{H^{s+1}}^{1-\sigma}$ and the $L^2$-convergence result \eqref{L2 convergence}, by taking $\kappa\to0$ on \eqref{Hs estimate on varphi}, the result \eqref{Hs convergence thm} holds for all $s>0$ as well. 
\end{proof}

\section{Existence and convergence of analytic and Gevrey-class solutions when $\nu=0$}\label{Existence and convergence of Gevrey-class solutions section}

In this section, we address the existence and convergence of the analytic and Gevrey-class solutions to \eqref{abstract active scalar eqn} for $\nu=0$ as $\kappa\to0$. More precisely, we consider the following system:
\begin{align}
\label{abstract active scalar eqn nu=0} \left\{ \begin{array}{l}
\partial_t\theta^\kappa+u^\kappa\cdot\nabla\theta^\kappa=-\kappa\Lambda^{\gamma}\theta^\kappa+S, \\
u^\kappa_j=\partial_{x_i} T_{ij}^{0}[\theta^\kappa],\theta^\kappa(x,0)=\theta_0(x),
\end{array}\right.
\end{align}
where $T_{ij}^{0}$ satisfies the bounds
\begin{equation*}
\sup_{\{k\in\mathbb{Z}^d_*\}}|\widehat{T^0_{ij}}(k)|\le C_{0},
\end{equation*}
and $C_0$ is given in assumption A5. The results will be proved in the following subsections. In Subsection~\ref{Local-in-time existence and convergence of analytic solutions}, we prove that the equation \eqref{abstract active scalar eqn nu=0} possesses local-in-time analytic solutions $\theta^\kappa$ for all $\kappa\ge0$ and $\gamma\in(0,2]$, and we prove that $\theta^\kappa$ converges to $\theta^0$ in terms of analytic norm as $\kappa\to0$. In Subsection~\ref{Global-in-time existence of analytic solutions with small Sobolev initial data}, we proceed to prove that the local-in-time analytic solutions obtained in Subsection~\ref{Local-in-time existence and convergence of analytic solutions} can be extended globally in time under a smallness assumption on the initial data in terms of a certain {\it Sobolev} norm. Finally, in Subsection~\ref{Existence and convergence of Gevrey-class solutions with bounded operators}, we prove the existence and convergence of the Gevrey-class $s$ solutions to \eqref{abstract active scalar eqn nu=0} for $s\ge1$ when the Fourier symbols $\widehat{\partial_{x_i} T_{ij}^{0}}(k)$ are {\it bounded} functions in $k$.

\subsection{Local-in-time existence and convergence of analytic solutions}\label{Local-in-time existence and convergence of analytic solutions} 

In this subsection, we prove the convergence of analytic solutions as stated in Theorem~\ref{analytic convergence kappa}. We first obtain the local-in-time existence of analytic solutions to the equation \eqref{abstract active scalar eqn nu=0} when $\kappa\ge0$ and $\gamma\in(0,2]$, which is illustrated by Theorem~\ref{Local-in-time existence of analytic solutions thm} below.

\begin{theorem}[Local-in-time existence of analytic solutions]\label{Local-in-time existence of analytic solutions thm}
Let $\kappa\ge0$ and $\gamma\in(0,2]$ be fixed, and let $\theta_0$ and $S$ be the initial datum and forcing term respectively. Fix $K_0>0$. Suppose $\theta_0$ and $S$ are analytic functions with radius of convergence $\tau_0>0$ and
\begin{align}\label{bounds on analytic norm of S and theta0}
\|\Lambda^re^{\tau_0\Lambda}\theta_0\|_{L^2}\le K_0,\qquad\|\Lambda^re^{\tau_0\Lambda}S\|_{L^2}\le K_0,
\end{align}
where $\Lambda:=(-\Delta)^\frac{1}{2}$ and $r>\frac{d}{2}+\frac{5}{2}$. Then there exists $T_*=T_*(\tau_0, K_0) > 0$ and a unique analytic solution on $[0, T_*)$ to the initial value problem associated to \eqref{abstract active scalar eqn nu=0}.
\end{theorem}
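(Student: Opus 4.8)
The plan is to run a Gevrey-class energy estimate in the Foias--Temam style, allowing the radius of analyticity $\tau=\tau(t)$ to decrease in time so that the gain it produces absorbs the derivative loss coming from the order-$1$ singular velocity. Concretely, I would pair \eqref{abstract active scalar eqn nu=0} with $\Lambda^{2r}e^{2\tau\Lambda}\theta^\kappa$ in $L^2$ and, keeping track of the $t$-dependence of $\tau$, obtain
\begin{align*}
\tfrac12\tfrac{d}{dt}\|\Lambda^re^{\tau\Lambda}\theta^\kappa\|_{L^2}^2
&=\dot\tau\|\Lambda^{r+\frac12}e^{\tau\Lambda}\theta^\kappa\|_{L^2}^2-\kappa\|\Lambda^{r+\frac{\gamma}{2}}e^{\tau\Lambda}\theta^\kappa\|_{L^2}^2\\
&\quad-\langle\Lambda^re^{\tau\Lambda}(u^\kappa\cdot\nabla\theta^\kappa),\Lambda^re^{\tau\Lambda}\theta^\kappa\rangle
+\langle\Lambda^re^{\tau\Lambda}S,\Lambda^re^{\tau\Lambda}\theta^\kappa\rangle.
\end{align*}
The dissipative term is nonpositive and is simply discarded (so that the estimate is uniform in $\kappa\ge0$), while the forcing term is bounded by $K_0\|\Lambda^re^{\tau\Lambda}\theta^\kappa\|_{L^2}$ using \eqref{bounds on analytic norm of S and theta0}. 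Writing $M(t):=\|\Lambda^re^{\tau\Lambda}\theta^\kappa(\cdot,t)\|_{L^2}$, everything then hinges on the nonlinear term.

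The heart of the proof is the estimate on $\langle\Lambda^re^{\tau\Lambda}(u^\kappa\cdot\nabla\theta^\kappa),\Lambda^re^{\tau\Lambda}\theta^\kappa\rangle$. Here I would exploit the elementary inequality $|k|\le|k-j|+|j|$, which yields $e^{\tau|k|}\le e^{\tau|k-j|}e^{\tau|j|}$ and lets the exponential weight be distributed onto the two factors of the product; combined with A5 --- which via \eqref{one order singular for u when nu=0} says $u^\kappa$ is an order-$1$ singular operator, so that $\|\Lambda^re^{\tau\Lambda}u^\kappa\|_{L^2}\le C_0\|\Lambda^{r+1}e^{\tau\Lambda}\theta^\kappa\|_{L^2}$ --- and the product and commutator estimates \eqref{product estimate}--\eqref{commutator estimate} in their Gevrey-weighted form, the goal is the bound
\begin{align*}
|\langle\Lambda^re^{\tau\Lambda}(u^\kappa\cdot\nabla\theta^\kappa),\Lambda^re^{\tau\Lambda}\theta^\kappa\rangle|
\le C\,M(t)\,\|\Lambda^{r+\frac12}e^{\tau\Lambda}\theta^\kappa\|_{L^2}^2.
\end{align*}
The point of the exponent $r>\frac{d}{2}+\frac52$ is precisely to make this work: after the one-derivative loss from $\nabla\theta^\kappa$ and the one-derivative loss from $u^\kappa$, one must still distribute the top derivatives symmetrically (each factor carrying at most $r+\tfrac12$) and control the remaining low-frequency factors in $L^\infty$ by a Sobolev embedding, which costs another $\frac{d}{2}$ together with the $\frac12$ used in the splitting. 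Using the divergence form $u^\kappa\cdot\nabla\theta^\kappa=\divv(u^\kappa\theta^\kappa)$ together with $\divv u^\kappa=0$ keeps the structure tractable.

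With these two estimates in hand I would close the argument by choosing $\tau$. The differential inequality reads $\tfrac12\tfrac{d}{dt}M^2\le(\dot\tau+CM)\|\Lambda^{r+\frac12}e^{\tau\Lambda}\theta^\kappa\|_{L^2}^2+K_0 M$, so prescribing $\tau$ as the solution of the ODE $\dot\tau(t)=-C\,M(t)$, $\tau(0)=\tau_0$, annihilates the top-order term and leaves $\tfrac{d}{dt}M\le K_0$, whence $M(t)\le K_0(1+t)$ remains finite. Because $M$ stays bounded, $\tau(t)=\tau_0-C\int_0^tM(s)\,ds$ remains strictly positive on a maximal interval $[0,T_*)$ with $T_*=T_*(\tau_0,K_0)>0$, which yields the claimed local-in-time analytic solution with positive radius of convergence. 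Existence is then made rigorous through a Galerkin (or mollified) approximation scheme whose a priori bounds are exactly the ones just derived, and uniqueness follows from a standard $L^2$ energy estimate for the difference of two solutions, again invoking the order-$1$ bound \eqref{one order singular for u when nu=0} and Gr\"onwall. The main obstacle throughout is the nonlinear estimate: since $\nu=0$ removes all smoothing from the constitutive law, the velocity genuinely loses a derivative, and only the gain from a strictly decreasing $\tau$ --- rather than any parabolic smoothing --- can absorb the resulting top-order term, so that getting the derivative bookkeeping to match $r+\tfrac12$ on every factor is the delicate part.
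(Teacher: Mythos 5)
Your proposal follows essentially the same route as the paper's proof: the identical analytic-norm energy identity obtained by pairing with $\Lambda^{2r}e^{2\tau\Lambda}\theta^\kappa$, the same key nonlinear bound $|\mathcal{R}|\le C\,\|\Lambda^r e^{\tau\Lambda}\theta^\kappa\|_{L^2}\,\|\Lambda^{r+\frac12}e^{\tau\Lambda}\theta^\kappa\|_{L^2}^2$ proved by splitting the exponential weight and the derivatives over the convolution (the paper does this directly on the Fourier side, using $|\widehat{u^\kappa}(j)|\le C|j||\widehat{\theta^\kappa}(j)|$ and $|j|^{1/2}\le 2|l|^{1/2}|k|^{1/2}$, rather than through Gevrey-weighted product/commutator estimates, but the content is the same), and the same mechanism of a strictly decreasing radius $\tau(t)$ absorbing the top-order term. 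The only immaterial difference is your choice $\dot\tau=-C M(t)$, which cancels that term exactly, versus the paper's constant-rate choice $\dot\tau=-4CK_0$ combined with a bootstrap keeping $\|\Lambda^r e^{\tau\Lambda}\theta^\kappa\|_{L^2}\le 3K_0$; both give $T_*=T_*(\tau_0,K_0)>0$.
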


\begin{proof}[Proof of Theorem~\ref{Local-in-time existence of analytic solutions thm}]
A proof can be found in \cite{FV11b} and we include here for the sake of completeness. We fix $r$ such that $r>\frac{d}{2}+\frac{5}{2}$. We denote $\tau=\tau(t)$ and then multiply \eqref{abstract active scalar eqn nu=0}$_1$ by $\Lambda^{2r} e^{2\tau\Lambda}\theta^{\kappa}$ and integrate to obtain
\begin{align}\label{evolution identity in analytic norm}
&\frac{1}{2}\frac{d}{dt}\|\Lambda^r e^{\tau\Lambda}\theta^{\kappa}\|^2_{L^2}-\dot{\tau}\|\Lambda^{r+\frac{1}{2}}e^{\tau\Lambda}\theta^{\kappa}\|^2_{L^2}+\kappa\|\Lambda^{r+\frac{\gamma}{2}}e^{\tau\Lambda}\theta^{\kappa}\|^2_{L^2} \notag \\
&= \mathcal{R}+\langle S,\Lambda^{2r}e^{2\tau\Lambda}\theta^{\kappa}\rangle,
\end{align}
where $\mathcal{R}$ is given by $\mathcal{R}=\langle u^{\kappa}\cdot\nabla\theta^{\kappa},\Lambda^{2r} e^{2\tau\Lambda}\theta^{\kappa}\rangle$. The term $\langle S,\Lambda^{2r}e^{2\tau\Lambda}\theta^{\kappa}\rangle$ can be readily bounded by $\|\Lambda^r e^{\tau\Lambda}S\|_{L^2}\|\Lambda^r e^{\tau\Lambda}\theta^{\kappa}\|_{L^2}$ and thus we focus on $\mathcal{R}$. Using the bound \eqref{one order singular for u when nu=0} that $|\widehat{u}(j)|\le C|j||\widehat{\theta}(j)|$ and the fact $|j|^\frac{1}{2}\le 2 |l|^\frac{1}{2}|k|^\frac{1}{2}$ and $|k|^\frac{1}{2}\le 2 |l|^\frac{1}{2}|j|^\frac{1}{2}$ for $s\ge1$ and $|j|,|k|,|l|\ge1$, we can bound $\mathcal{R}$ by
\begin{align*}
\mathcal{R}&\le C\sum_{j+k=l;j,k,l\in\mathbb{Z}^d_*}|j||\hat{\theta^{\kappa}}(j)||k||\hat{\theta^{\kappa}}(k)||l|^{2r}e^{2\tau|l|}|\hat{\theta^{\kappa}}(l)|\notag\\
&\le C\sum_{j+k=l;j,k,l\in\mathbb{Z}^d_*}|\hat{\theta^{\kappa}}(j)|e^{\tau|j|}|\hat{\theta^{\kappa}}(k)|e^{\tau|k|}|\hat{\theta^{\kappa}}(l)|e^{\tau|l|}|l|^{r}(|j|^{r+1}|k|+|j||k|^{r+1})\notag\\
&\le C\sum_{j+k=l;j,k,l\in\mathbb{Z}^d_*}\Big[|\hat{\theta^{\kappa}}(j)||j|^{r+\frac{1}{2}}e^{\tau|j|}\Big]\Big[|\hat{\theta^{\kappa}}(k)||k|^{1+\frac{1}{2}}e^{\tau|k|}\Big]\Big[|\hat{\theta^{\kappa}}(l)||l|^{r+\frac{1}{2}}e^{\tau|l|}\Big]\notag\\
&+C\sum_{j+k=l;j,k,l\in\mathbb{Z}^d_*}\Big[|\hat{\theta^{\kappa}}(j)||j|^{1+\frac{1}{2}}e^{\tau|j|}\Big]\Big[|\hat{\theta^{\kappa}}(k)||k|^{r+\frac{1}{2}}e^{\tau|k|}\Big]\Big[|\hat{\theta^{\kappa}}(l)||l|^{r+\frac{1}{2}}e^{\tau|l|}\Big]\notag\\
&\le C\|\Lambda^{r+\frac{1}{2}}e^{\tau\Lambda}\theta^{\kappa}\|^2_{L^2}\|\Lambda^r e^{\tau\Lambda}\theta^{\kappa}\|_{L^2},
\end{align*}
where the last inequality follows since $r>\frac{d}{2}+\frac{5}{2}$. Hence we obtain from \eqref{evolution identity in analytic norm} that
\begin{align}\label{a priori bound on theta singular analytic}
&\frac{1}{2}\frac{d}{dt}\|\Lambda^r e^{\tau\Lambda}\theta^{\kappa}\|^2_{L^2}\notag\\
&\le (\dot{\tau}(t)+C\|\Lambda^r e^{\tau\Lambda}\theta^{\kappa}\|_{L^2})\|\Lambda^{r+\frac{1}{2}}e^{\tau\Lambda}\theta^{\kappa}\|^2_{L^2}+\|\Lambda^r e^{\tau\Lambda}S\|_{L^2}\|\Lambda^r e^{\tau\Lambda}\theta^{\kappa}\|_{L^2}.
\end{align}
In view of \eqref{a priori bound on theta singular analytic}, we can apply the same argument given in \cite{FS19}. Specifically, we let $\tau(t)$ be decreasing and satisfies
\begin{align*}
\dot{\tau}+4CK_0=0,
\end{align*}
with initial condition $\tau(0)=\tau_0$, then we have $\dot{\tau}(t)+C\|\Lambda^r e^{\tau\Lambda}\theta^{\kappa}\|_{L^2}<0$, and from \eqref{a priori bound on theta singular analytic} that 
\begin{align}\label{bound on solution nu=0}
\|\Lambda^r e^{\tau\Lambda}\theta^{\kappa}\|_{L^2}\le\|\Lambda^r e^{\tau\Lambda}\theta_0\|_{L^2}+2t\|\Lambda^r e^{\tau\Lambda}S\|_{L^2}=3K_0
\end{align}
as long as $\tau(t)>0$ and $t\le1$. Hence it implies the existence of analytic solution $\theta^\kappa$ on $[0,T_*)$, where the maximal time of existence of the analytic solution is given by $T_*=\min\{\frac{\tau_0}{4CK_0},1\}$.
\end{proof}

Once we obtain the existence of analytic solutions to \eqref{abstract active scalar eqn nu=0}, we are ready to prove the convergence of analytic solutions as $\kappa\to0$, thereby proving Theorem~\ref{analytic convergence kappa}.

\begin{proof}[Proof of Theorem~\ref{analytic convergence kappa}] Let $\varphi=\theta^{\kappa}-\theta^{0}$, then we have
\begin{equation}\label{energy identity for the difference convergence}
\dt\varphi+(u^{\kappa}-u^{0})\cdot\nabla\theta^{0}+u^{\kappa}\nabla\varphi=-\kappa\Lambda^{\gamma}\theta^{0}.
\end{equation}
For $r>\frac{d}{2}+\frac{5}{2}$, we multiply \eqref{energy identity for the difference convergence} by $\Lambda^{2r}e^{2\tau\Lambda}\theta^0$ and obtain
\begin{align}\label{differential equation for analytic norm of varphi} 
\frac{1}{2}\frac{d}{dt}\|\Lambda^r e^{\tau\Lambda}\varphi(\cdot,t)\|^2_{L^2}&=\dot{\tau}\|\Lambda^{r+\frac{1}{2}} e^{\tau\Lambda}\varphi(\cdot,t)\|^2_{L^2}+\langle(u^{\kappa}-u^{0})\cdot\nabla\theta^{0},\Lambda^{2r}e^{2\tau\Lambda^\frac{1}{2}}\varphi\rangle\notag\\
&\qquad+\langle u^{\kappa}\cdot\nabla\varphi,\Lambda^{2r}e^{2\tau\Lambda^\frac{1}{2}}\varphi\rangle-\langle\kappa\Lambda^{\gamma}\theta^{0},\Lambda^{2r}e^{2\tau\Lambda^\frac{1}{2}}\varphi\rangle\notag\\
&=\dot{\tau}\|\Lambda^{r+\frac{1}{2}} e^{\tau\Lambda}\varphi(\cdot,t)\|^2_{L^2}+\mathcal{R}_1+\mathcal{R}_2+\mathcal{R}_3,
\end{align}
where
\begin{align*}
\mathcal{R}_1&=\langle(u^{\kappa}-u^{0})\cdot\nabla\theta^{0},\Lambda^{2r}e^{2\tau\Lambda^\frac{1}{2}}\varphi\rangle,\\
\mathcal{R}_2&=\langle u^{\kappa}\cdot\nabla\varphi,\Lambda^{2r}e^{2\tau\Lambda^\frac{1}{2}}\varphi\rangle,\\
\mathcal{R}_3&=-\langle\kappa\Lambda^{\gamma}\theta^{0},\Lambda^{2r}e^{2\tau\Lambda^\frac{1}{2}}\varphi\rangle.
\end{align*}
To estimate $\mathcal{R}_1$, following the method of bounding $\mathcal{R}$ as in the proof of Theorem~\ref{Local-in-time existence of analytic solutions thm}, we have
\begin{align*}
|\mathcal{R}_1|&\le C\Big|\sum_{j+k=l;j,k,l\in\mathbb{Z}^d_*}\widehat{(u^{\kappa}-u^{0})}(j)\cdot k\widehat{\theta^{0}}(k)|l|^{2r}e^{2\tau|l|}\hat{\varphi}(-l)\Big|\\
&\le C\sum_{j+k=l;j,k,l\in\mathbb{Z}^d_*}|j||k|\Big[|j|^r+|k|^r\big]|\hat{\varphi}(j)||\hat{\varphi}(j)|e^{\tau|j|}|\widehat{\theta^{0}}(k)|e^{\tau|k|}|l|^re^{\tau|l|}|\hat{\varphi}(l)|\\
&\le C\|\Lambda^{r}e^{\tau\Lambda}\theta^{0}\|_{L^2}\|\Lambda^{r+\frac{1}{2}}e^{\tau\Lambda}\varphi\|^2_{L^2}+C\|\Lambda^{r+\frac{1}{2}}e^{\tau\Lambda}\theta^{0}\|_{L^2}\|\Lambda^{r+\frac{1}{2}}e^{\tau\Lambda}\varphi\|_{L^2}\|\Lambda^r e^{\tau\Lambda}\varphi\|_{L^2}\\
&\le C\|\Lambda^{r+\frac{1}{2}}e^{\tau\Lambda}\theta^{0}\|_{L^2}\|\Lambda^{r+\frac{1}{2}}e^{\tau\Lambda}\varphi\|^2_{L^2}.
\end{align*}
Similarly, we can bound $\mathcal{R}_2$ and $\mathcal{R}_3$ respectively by 
\begin{equation*}
|\mathcal{R}_2|\le C\|\Lambda^{r+\frac{1}{2}}e^{\tau\Lambda}\theta^{\kappa}\|_{L^2}\|\Lambda^{r+\frac{1}{2}}e^{\tau\Lambda}\varphi\|^2_{L^2},
\end{equation*}
and
\begin{equation*}
|\mathcal{R}_3|\le \kappa\|\Lambda^{r+\frac{\gamma}{2}}e^{\tau\Lambda}\theta^{0}\|_{L^2}\|\Lambda^r e^{\tau\Lambda}\varphi\|_{L^2}.
\end{equation*}
Hence we conclude from \eqref{differential equation for analytic norm of varphi} that
\begin{align*}
\frac{1}{2}\frac{d}{dt}\|\Lambda^r e^{\tau\Lambda}\varphi\|^2_{L^2}&\le \|\Lambda^{r+\frac{1}{2}}e^{\tau\Lambda}\varphi\|^2_{L^2}\Big[\dot{\tau}+C\|\Lambda^{r+\frac{1}{2}}e^{\tau\Lambda}\theta^{0}\|_{L^2}+C\|\Lambda^{r+\frac{1}{2}}e^{\tau\Lambda}\theta^{\kappa}\|_{L^2}\Big]\\
&\qquad+\kappa\|\Lambda^{r+\frac{\gamma}{2}}e^{\tau\Lambda}\theta^{0}\|_{L^2}\|\Lambda^r e^{\tau\Lambda}\varphi\|_{L^2}.
\end{align*}
Choose $\tau>0$ such that 
$$\dot{\tau}+C\|\Lambda^{r+\frac{1}{2}}e^{\tau\Lambda}\theta^{0}\|_{L^2}+C\|\Lambda^{r+\frac{1}{2}}e^{\tau\Lambda}\theta^{\kappa}\|_{L^2}<0,$$ 
then we get
\begin{equation*}
\frac{d}{dt}\|\Lambda^r e^{\tau\Lambda}\varphi\|_{L^2}\le \kappa\|\Lambda^{r+\frac{\gamma}{2}}e^{\tau\Lambda}\theta^{0}\|_{L^2},
\end{equation*}
and the result follows by taking $\kappa\to0$.
\end{proof}

\subsection{Global-in-time existence of analytic solutions with small Sobolev initial data}\label{Global-in-time existence of analytic solutions with small Sobolev initial data}

In this subsection, we prove that for the case when $\kappa>0$ and $\gamma\in[1,2]$, under a smallness assumption on the initial data, the analytic solutions to \eqref{abstract active scalar eqn nu=0} obtained in Theorem~\ref{Local-in-time existence of analytic solutions thm} exist for all time:

\begin{theorem}[Global-in-time existence of analytic solutions]\label{Global-in-time existence of analytic solutions thm}
Let $\kappa>0$ and $\gamma\in[1,2]$, and suppose that both $\theta_0$ and $S$ are both analytic functions. There exists $\varepsilon>0$ depending on $\kappa$ such that, if $\theta_0$ and $S$ satisfy
\begin{equation}\label{smallness assumption on theta0 and S 1}
\|\theta_0\|_{L^2}^\beta\|\theta_0\|_{H^\alpha}^{1-\beta}+\|\theta_0\|_{L^2}^\beta\|S\|^{1-\beta}_{H^{\alpha-\frac{\gamma}{2}}}\le\varepsilon,
\end{equation}
and
\begin{equation}\label{smallness assumption on theta0 and S 2}
\|\Lambda^\alpha\theta_0\|^2_{L^2}+\frac{2}{\kappa^2}\|S\|^2_{H^{\alpha-\frac{\gamma}{2}}}\le\varepsilon,
\end{equation} 
where $\alpha>\frac{d+2}{2}+(1-\gamma)$ and $\beta=1-\frac{1}{\alpha}\Big[\frac{d+2}{2}+(1-\gamma)\Big]$, then the local-in-time analytic solution $\theta^\kappa$ as claimed by Theorem~\ref{Local-in-time existence of analytic solutions thm} can be extended to all time.
\end{theorem}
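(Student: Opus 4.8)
The plan is to prove global existence by establishing two uniform‑in‑time \emph{a priori} bounds and then invoking the local theory: Theorem~\ref{Local-in-time existence of analytic solutions thm} produces an analytic solution on a time interval whose length is bounded below by the radius of analyticity divided by the analytic norm, so by a continuation argument it suffices to show that, under the smallness hypotheses, the radius of analyticity stays bounded below by a fixed positive constant and the analytic norm $\|\Lambda^r e^{\tau\Lambda}\theta^\kappa\|_{L^2}$ stays uniformly bounded for all time. These will come from (i) a global $H^\alpha$ bound and (ii) a global bound on the analytic norm with the radius frozen at $\tau\equiv\tau_0$.

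The central ingredient is a nonlinear estimate with a \emph{subcritical Sobolev coefficient}. Writing $s_0=\frac{d+2}{2}+(1-\gamma)$ and using that $u=u[\theta^\kappa]$ is of order $+1$, so $|\widehat u(k)|\le C|k||\widehat{\theta^\kappa}(k)|$ by \eqref{one order singular for u when nu=0}, together with $\divv u=0$, I expect to prove for any $\sigma\ge0$ that
\[
\Big|\langle u\cdot\nabla\theta^\kappa,\Lambda^{2\sigma}\theta^\kappa\rangle\Big|\le C\|\theta^\kappa\|_{H^{s_0}}\|\Lambda^{\sigma+\frac{\gamma}{2}}\theta^\kappa\|_{L^2}^2 .
\]
The exponent is dictated by scaling: the quadratic term carries two net derivatives, the dissipation supplies $\gamma$, and the lowest‑frequency factor must absorb an extra $\tfrac d2$ for $\ell^1$‑summability, so $s_0=\tfrac d2+(2-\gamma)$. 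Crucially, $\beta=1-\frac{1}{\alpha}\big[\frac{d+2}{2}+(1-\gamma)\big]$ is exactly the interpolation exponent for which $\|\theta^\kappa\|_{H^{s_0}}\le\|\theta^\kappa\|_{L^2}^\beta\|\theta^\kappa\|_{H^\alpha}^{1-\beta}$, which explains the form of \eqref{smallness assumption on theta0 and S 1}. For Step (i) I test \eqref{abstract active scalar eqn nu=0} against $\Lambda^{2\alpha}\theta^\kappa$ and use the estimate above with $\sigma=\alpha$ to get
\[
\tfrac12\tfrac{d}{dt}\|\Lambda^\alpha\theta^\kappa\|_{L^2}^2+\kappa\|\Lambda^{\alpha+\frac{\gamma}{2}}\theta^\kappa\|_{L^2}^2\le C\|\theta^\kappa\|_{H^{s_0}}\|\Lambda^{\alpha+\frac{\gamma}{2}}\theta^\kappa\|_{L^2}^2+\|\Lambda^{\alpha-\frac{\gamma}{2}}S\|_{L^2}\|\Lambda^{\alpha+\frac{\gamma}{2}}\theta^\kappa\|_{L^2}.
\]
A continuity argument then keeps $C\|\theta^\kappa\|_{H^{s_0}}\le\frac\kappa4$ for all time: whenever this holds the nonlinear term is absorbed by the dissipation, and since $\gamma\ge1$ gives $\|\Lambda^{\alpha+\gamma/2}\cdot\|_{L^2}\ge\|\Lambda^\alpha\cdot\|_{L^2}$ while the mean‑zero condition \eqref{zero mean assumption} supplies a Poincaré inequality, I obtain $\tfrac{d}{dt}\|\Lambda^\alpha\theta^\kappa\|_{L^2}^2+\tfrac\kappa2\|\Lambda^\alpha\theta^\kappa\|_{L^2}^2\le\tfrac1\kappa\|S\|_{H^{\alpha-\gamma/2}}^2$, whence $\|\Lambda^\alpha\theta^\kappa(t)\|_{L^2}^2\le\|\Lambda^\alpha\theta_0\|_{L^2}^2+\frac2{\kappa^2}\|S\|_{H^{\alpha-\gamma/2}}^2$, which is $\le\varepsilon$ precisely by \eqref{smallness assumption on theta0 and S 2}. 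Combined with the uniform $L^2$ bound (clean, since the convective term vanishes in $L^2$ by $\divv u=0$) and interpolation, hypothesis \eqref{smallness assumption on theta0 and S 1} closes the bootstrap, giving $\sup_{t\ge0}\|\theta^\kappa(t)\|_{H^{s_0}}\le\frac{\kappa}{4C}$.

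For Step (ii) I return to the analytic energy identity \eqref{evolution identity in analytic norm} and re‑estimate the nonlinear term $\mathcal R$ with a Sobolev coefficient, namely $|\mathcal R|\le C\|\theta^\kappa\|_{H^{s_0}}\|\Lambda^{r+\frac{\gamma}{2}}e^{\tau\Lambda}\theta^\kappa\|_{L^2}^2$ plus a forcing‑absorbable remainder; this is the Gevrey analogue of the estimate above, obtained by splitting the convolution $j+k=l$ so that the lowest frequency carries no exponential weight. Holding the radius fixed at $\tau\equiv\tau_0$ (so $\dot\tau=0$, and $\tau\le\tau_0$ keeps $\|\Lambda^r e^{\tau_0\Lambda}S\|_{L^2}$ finite), the global smallness $C\|\theta^\kappa\|_{H^{s_0}}\le\frac\kappa4$ from Step (i) absorbs $\mathcal R$ into the dissipation and yields $\tfrac{d}{dt}\|\Lambda^r e^{\tau_0\Lambda}\theta^\kappa\|_{L^2}^2+\tfrac\kappa2\|\Lambda^r e^{\tau_0\Lambda}\theta^\kappa\|_{L^2}^2\le\frac2\kappa\|\Lambda^r e^{\tau_0\Lambda}S\|_{L^2}^2$, so the analytic norm stays bounded uniformly in time \emph{with the radius frozen at $\tau_0>0$}. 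Feeding this into the continuation criterion of Theorem~\ref{Local-in-time existence of analytic solutions thm} then extends the solution to all time.

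The main obstacle is the nonlinear estimate with the subcritical Sobolev coefficient in the \emph{analytic} norm, specifically the high--high to low frequency interactions: after distributing the weight $e^{2\tau|l|}$ one must prevent the low‑output factor from inheriting an exponential, and it is precisely the gain of at least half a derivative in the dissipation (the restriction $\gamma\in[1,2]$, so that $\|\Lambda^{r+\gamma/2}e^{\tau\Lambda}\cdot\|_{L^2}\ge\|\Lambda^{r+1/2}e^{\tau\Lambda}\cdot\|_{L^2}$) that renders the resulting terms absorbable. A secondary technical point is the bootstrap bookkeeping in Step (i): one must verify that the two quantities in \eqref{smallness assumption on theta0 and S 1}--\eqref{smallness assumption on theta0 and S 2}, which separately control the data and forcing contributions to the uniform $H^\alpha$ bound, together force $\|\theta^\kappa\|_{H^{s_0}}$ below the absorption threshold $\kappa/(4C)$ throughout the evolution.
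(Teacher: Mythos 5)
Your Step (i) is sound and coincides with the paper's own argument: it is exactly Theorem~\ref{Global-in-time existence of Sobolev solutions thm}, with your exponent $s_0=\frac{d+2}{2}+(1-\gamma)$ being the Sobolev-embedding reformulation of the paper's interpolation $\|\Lambda\theta^\kappa\|_{L^{2p/(p-2)}}\le C\|\theta^\kappa\|^{\beta}_{L^2}\|\Lambda^\alpha\theta^\kappa\|^{1-\beta}_{L^2}$, and it yields the same uniform bound \eqref{bound on H alpha norm}. The genuine gap is in Step (ii). The estimate you assert there,
\[
|\mathcal{R}|\le C\|\theta^\kappa\|_{H^{s_0}}\|\Lambda^{r+\frac{\gamma}{2}}e^{\tau\Lambda}\theta^\kappa\|^2_{L^2}+\text{(forcing-absorbable remainder)},
\]
is false. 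Removing the exponential weight from the lowest-frequency factor in the convolution has a price: using $e^{x}\le 1+xe^{x}$, which is precisely how the paper proceeds in \eqref{bound on R gevrey estimates}, the correct bound is
\[
|\mathcal{R}|\le C_r\|\Lambda^{r+\frac12}e^{\tau\Lambda}\theta^\kappa\|^2_{L^2}\Big[a(t)+\tau\|\Lambda^{r}e^{\tau\Lambda}\theta^\kappa\|_{L^2}\Big],
\]
so besides the Sobolev-coefficient term $a(t)$ (which your smallness from Step (i), via \eqref{smallness assumption on theta0 and S 2} and \eqref{bound on H alpha norm}, does absorb) there is an unavoidable trilinear piece whose coefficient is $\tau$ times the \emph{analytic} norm itself. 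With the radius frozen at $\tau\equiv\tau_0$, absorbing that piece into the dissipation would require $C_r\tau_0\|\Lambda^{r}e^{\tau_0\Lambda}\theta^\kappa(\cdot,t)\|_{L^2}\le\kappa/2$ along the whole evolution. But the hypotheses \eqref{smallness assumption on theta0 and S 1}--\eqref{smallness assumption on theta0 and S 2} impose smallness only on Sobolev norms: a datum with $\widehat{\theta_0}$ concentrated at high frequencies and decaying just faster than $e^{-\tau_0|k|}$ satisfies both conditions while $\|\Lambda^{r}e^{\tau_0\Lambda}\theta_0\|_{L^2}$ is arbitrarily large, so this bootstrap cannot even be initialized. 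Your remark that the dissipative gain from $\gamma\ge1$ "renders the resulting terms absorbable" applies only to the $a(t)$ term, not to this one.

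The repair is what the paper actually does: do not freeze the radius, but choose $\dot{\tau}+C_r\tau\|\Lambda^{r}e^{\tau\Lambda}\theta^\kappa\|_{L^2}=0$, which eliminates the problematic term identically instead of trying to absorb it. One then gets $\|\Lambda^{r}e^{\tau\Lambda}\theta^\kappa(\cdot,t)\|_{L^2}\le\|\Lambda^{r}e^{\tau\Lambda}\theta_0\|_{L^2}+t\|\Lambda^{r}e^{\tau\Lambda}S\|_{L^2}$, i.e.\ at most linear growth of the analytic norm, and consequently $\tau(t)=\tau_0\exp\big(-C_r\int_0^t\|\Lambda^{r}e^{\tau\Lambda}\theta^\kappa(\cdot,\tilde t)\|_{L^2}d\tilde t\big)>0$ for every finite $t$, even though $\tau(t)$ may decay to zero as $t\to\infty$. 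That positivity of the radius at each finite time, together with Theorem~\ref{Local-in-time existence of analytic solutions thm}, is what allows indefinite continuation. In short: your conclusion (uniform radius and uniformly bounded analytic norm) is strictly stronger than what the hypotheses support, and the step that would deliver it rests on a nonlinear estimate that does not hold.
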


Before we give the proof of Theorem~\ref{Global-in-time existence of analytic solutions thm}, we state and prove the following global-in-time existence theorem of $H^\alpha$-solution to \eqref{abstract active scalar eqn nu=0} under the smallness assumption \eqref{smallness assumption on theta0 and S 1}.

\begin{theorem}[Global-in-time existence of Sobolev solutions]\label{Global-in-time existence of Sobolev solutions thm}
Let $\kappa>0$, $\gamma\in[1,2]$ and $S\in H^{\alpha-\frac{\gamma}{2}}(\mathbb{T}^d)$, where $\alpha>\frac{d+2}{2} + (1-\gamma)$.
There exists a small enough constant $\varepsilon>0$ depending on $\kappa$, such that if $\theta_0$ satisfies \eqref{smallness assumption on theta0 and S 1}, then there exists a unique global-in-time $H^\alpha$-solution to \eqref{abstract active scalar eqn nu=0}. In particular, for all $t>0$, we have the following bound on $\theta^\kappa$:
\begin{equation}\label{bound on H alpha norm}
\|\Lambda^\alpha\theta^\kappa(\cdot,t)\|^2_{L^2}\le \|\Lambda^\alpha\theta_0\|^2_{L^2}+\frac{2}{\kappa^2}\|S\|^2_{H^{\alpha-\frac{\gamma}{2}}}.
\end{equation}
\end{theorem}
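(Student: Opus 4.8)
The plan is to establish the $H^\alpha$ bound \eqref{bound on H alpha norm} as an \emph{a priori} estimate and then upgrade it to a genuine global solution by a continuation argument. Local well-posedness in $H^\alpha$ for $\nu=0$ follows from the very same energy estimate below via a standard Galerkin/mollification scheme (as for the analytic solutions in Theorem~\ref{Local-in-time existence of analytic solutions thm} and the references therein), and the solution persists as long as $\|\Lambda^\alpha\theta^\kappa(\cdot,t)\|_{L^2}$ stays finite, so it suffices to produce a global bound on this quantity. First I would record the basic $L^2$ estimate: testing \eqref{abstract active scalar eqn nu=0} against $\theta^\kappa$ and using $\divv u^\kappa=0$ gives, exactly as in \eqref{energy inequality kappa>0} together with the mean-zero Poincar\'e inequality, the uniform bound $\|\theta^\kappa(\cdot,t)\|_{L^2}\le C(\|\theta_0\|_{L^2}+\kappa^{-1}\|S\|_{L^2})$. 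Next, testing \eqref{abstract active scalar eqn nu=0} against $\Lambda^{2\alpha}\theta^\kappa$ yields
\[
\tfrac12\tfrac{d}{dt}\|\Lambda^\alpha\theta^\kappa\|_{L^2}^2+\kappa\|\Lambda^{\alpha+\frac{\gamma}{2}}\theta^\kappa\|_{L^2}^2=-\mathcal R+\langle S,\Lambda^{2\alpha}\theta^\kappa\rangle,\qquad \mathcal R:=\langle u^\kappa\cdot\nabla\theta^\kappa,\Lambda^{2\alpha}\theta^\kappa\rangle,
\]
and the forcing is harmless: $\langle S,\Lambda^{2\alpha}\theta^\kappa\rangle=\langle\Lambda^{\alpha-\frac{\gamma}{2}}S,\Lambda^{\alpha+\frac{\gamma}{2}}\theta^\kappa\rangle$ is absorbed by Young's inequality into a fraction of the dissipation at the cost of $C\kappa^{-1}\|S\|_{H^{\alpha-\frac{\gamma}{2}}}^2$.

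The heart of the proof is the estimate on $\mathcal R$, and this is where the order-$1$ \emph{singular} nature of $u^\kappa$ (only A5/\eqref{one order singular for u when nu=0}, with no smoothing gain) makes the derivative count tight. Using $\divv u^\kappa=0$ to write $\mathcal R=\langle[\Lambda^\alpha,u^\kappa]\cdot\nabla\theta^\kappa,\Lambda^\alpha\theta^\kappa\rangle$ and passing to the Fourier side with $|\widehat{u^\kappa}(j)|\le C|j||\widehat{\theta^\kappa}(j)|$, I would estimate the resulting trilinear sum by the same Littlewood--Paley/paraproduct bookkeeping used to bound $\mathcal R$ in the proof of Theorem~\ref{Local-in-time existence of analytic solutions thm}, the crucial difference being that the surplus $\Lambda^{\gamma/2}$ is now supplied by the parabolic dissipation $\kappa\|\Lambda^{\alpha+\frac{\gamma}{2}}\theta^\kappa\|_{L^2}^2$ rather than by the $\dot\tau$ term. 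Concretely, one assigns the weight $|\cdot|^{\alpha+\frac{\gamma}{2}}$ to the two highest-frequency factors and leaves the lowest factor to be summed in $\ell^1$; a scaling count fixes the derivatives on that factor at the critical value $2-\gamma$, and controlling it in $\ell^1$ via Cauchy--Schwarz costs an extra $\tfrac d2$ derivatives, while the hypothesis $\gamma\ge1$ guarantees that the residual frequency ratios arising in the high-frequency regimes are non-positive powers and hence harmless. This yields $|\mathcal R|\le C\|\Lambda^{\sigma}\theta^\kappa\|_{L^2}\|\Lambda^{\alpha+\frac{\gamma}{2}}\theta^\kappa\|_{L^2}^2$ with $\sigma=\tfrac d2+(2-\gamma)=\tfrac{d+2}{2}+(1-\gamma)$. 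Since the hypothesis is exactly $\alpha>\sigma$, the interpolation $\|\Lambda^\sigma\theta^\kappa\|_{L^2}\le\|\theta^\kappa\|_{L^2}^{\beta}\|\Lambda^\alpha\theta^\kappa\|_{L^2}^{1-\beta}$ with $\beta=1-\sigma/\alpha$ reproduces precisely the exponent $\beta$ in \eqref{smallness assumption on theta0 and S 1}.

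To close, collecting the two estimates gives
\[
\tfrac12\tfrac{d}{dt}\|\Lambda^\alpha\theta^\kappa\|_{L^2}^2+\kappa\|\Lambda^{\alpha+\frac{\gamma}{2}}\theta^\kappa\|_{L^2}^2\le C\|\theta^\kappa\|_{L^2}^{\beta}\|\Lambda^\alpha\theta^\kappa\|_{L^2}^{1-\beta}\|\Lambda^{\alpha+\frac{\gamma}{2}}\theta^\kappa\|_{L^2}^2+\tfrac{\kappa}{4}\|\Lambda^{\alpha+\frac{\gamma}{2}}\theta^\kappa\|_{L^2}^2+\tfrac{C}{\kappa}\|S\|_{H^{\alpha-\frac{\gamma}{2}}}^2,
\]
so whenever the prefactor $C\|\theta^\kappa\|_{L^2}^{\beta}\|\Lambda^\alpha\theta^\kappa\|_{L^2}^{1-\beta}$ is $\le\kappa/4$ the nonlinear term is absorbed into the dissipation; the mean-zero Poincar\'e inequality $\|\Lambda^{\alpha+\frac{\gamma}{2}}\theta^\kappa\|_{L^2}\ge\|\Lambda^\alpha\theta^\kappa\|_{L^2}$ then converts the differential inequality into a dissipative one, integrating to \eqref{bound on H alpha norm}. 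I would run this as a continuity argument: the quantity $\|\theta^\kappa\|_{L^2}^{\beta}\|\Lambda^\alpha\theta^\kappa\|_{L^2}^{1-\beta}$ is $\le C\varepsilon$ at $t=0$ by \eqref{smallness assumption on theta0 and S 1}, and as long as it stays below the absorption threshold the bound \eqref{bound on H alpha norm} holds; but \eqref{bound on H alpha norm} together with the uniform $L^2$ bound forces this quantity to stay $\le C\varepsilon$, strictly below the threshold once $\varepsilon=\varepsilon(\kappa)$ is chosen small enough, so the maximal such time is infinite and the solution is global. Uniqueness follows from a standard $L^2$ estimate on the difference of two solutions using \eqref{one order singular for u when nu=0} and Gr\"onwall. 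The main obstacle is the trilinear estimate in the second paragraph: extracting the divergence-free cancellation at top order and then distributing derivatives so that the residual low-frequency factor costs exactly $\tfrac{d+2}{2}+(1-\gamma)$ derivatives --- it is this matching between the summability threshold and the hypothesis on $\alpha$ (and the role of $\gamma\ge1$) on which the whole argument hinges.
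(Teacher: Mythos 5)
Your overall skeleton coincides with the paper's proof: test against $\Lambda^{2\alpha}\theta^\kappa$, absorb the forcing by Young's inequality at cost $C\kappa^{-1}\|S\|^2_{H^{\alpha-\gamma/2}}$, bound the trilinear term by a low-regularity factor times $\|\Lambda^{\alpha+\frac{\gamma}{2}}\theta^\kappa\|_{L^2}^2$, interpolate that factor between $L^2$ and $H^\alpha$ with exponent $\beta=1-\sigma/\alpha$, $\sigma=\frac{d+2}{2}+(1-\gamma)$, and close with a continuity argument; the Poincar\'e step and bootstrap are exactly the paper's as well. The gap is in your implementation of the key trilinear estimate. You claim $|\mathcal{R}|\le C\|\Lambda^{\sigma}\theta^\kappa\|_{L^2}\|\Lambda^{\alpha+\frac{\gamma}{2}}\theta^\kappa\|_{L^2}^2$ by putting the low-frequency factor, carrying $2-\gamma$ derivatives, into $\ell^1$ ``via Cauchy--Schwarz at a cost of an extra $\tfrac{d}{2}$ derivatives.'' At cost exactly $\tfrac d2$ this diverges: Cauchy--Schwarz gives $\sum_{k\in\mathbb{Z}^d_*}|k|^{2-\gamma}|\hat{\theta}(k)|\le \big(\sum_{k\in\mathbb{Z}^d_*}|k|^{-d}\big)^{1/2}\|\Lambda^\sigma\theta\|_{L^2}$, and $\sum_{k\ne0}|k|^{-d}=\infty$ (logarithmic divergence over dyadic shells). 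So your $\ell^1$ factor is controlled only by $\|\Lambda^{\sigma+\epsilon}\theta\|_{L^2}$ for $\epsilon>0$, or sharply by the Besov norm $\|\theta\|_{B^\sigma_{2,1}}$, which $H^\sigma$ does not dominate; at $\gamma=1$ the passage through $H^\sigma$ is precisely a failing endpoint of the form $\sum_k|k||\hat\theta(k)|\le C\|\Lambda^{1+\frac d2}\theta\|_{L^2}$.

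This $\epsilon$ is not cosmetic, because the hypothesis \eqref{smallness assumption on theta0 and S 1} carries the exact exponent $\beta$ and the theorem demands $\varepsilon=\varepsilon(\kappa)$ only. With the lossy estimate your absorption condition becomes $C\|\theta\|_{L^2}^{\beta_\epsilon}\|\Lambda^\alpha\theta\|_{L^2}^{1-\beta_\epsilon}\le\kappa/4$ with $\beta_\epsilon=1-(\sigma+\epsilon)/\alpha<\beta$, and converting \eqref{smallness assumption on theta0 and S 1} into smallness of that quantity costs a factor of a positive power of $\|\Lambda^\alpha\theta_0\|_{L^2}+\kappa^{-1}\|S\|_{H^{\alpha-\frac{\gamma}{2}}}$, which is finite but in no way small; the continuity argument then no longer closes with $\varepsilon$ depending only on $\kappa$. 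The paper avoids the endpoint entirely by staying in Lebesgue spaces: after the duality step it uses the product estimate \eqref{product estimate} with the specific exponent $p=\frac{2d}{2(1-\gamma)+d}$, the singular bound \eqref{one order singular for u when nu=0} and the non-endpoint Sobolev embedding \eqref{Sobolev inequality} to reach \eqref{estimate on the trillinear term final}, i.e.\ $|\mathcal{R}|\le C\|\Lambda^{\alpha+\frac{\gamma}{2}}\theta^\kappa\|^2_{L^2}\|\Lambda\theta^\kappa\|_{L^{\frac{2p}{p-2}}}$, and then applies Gagliardo--Nirenberg \emph{directly} in the form $\|\Lambda\theta^\kappa\|_{L^{\frac{2p}{p-2}}}\le C\|\theta^\kappa\|^\beta_{L^2}\|\Lambda^\alpha\theta^\kappa\|^{1-\beta}_{L^2}$, which is legitimate with the exact $\beta$ precisely because $\beta>0$ keeps it off the endpoint. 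Your Fourier route can be repaired in the same spirit: bound the $\ell^1$ factor by $\|\theta\|_{B^\sigma_{2,1}}$ (dyadic blocks plus Bernstein) and use the valid interpolation $\|\theta\|_{B^\sigma_{2,1}}\le C\|\theta\|^\beta_{L^2}\|\theta\|^{1-\beta}_{H^\alpha}$; either repair restores the exact exponent needed for the bootstrap. (Incidentally, the commutator/divergence-free cancellation you invoke is unnecessary here: since $\gamma\ge1$, the worst product term $u^\kappa\,\Lambda^{\alpha-\frac{\gamma}{2}}\nabla\theta^\kappa$ carries at most $\alpha+\frac{\gamma}{2}$ derivatives of $\theta^\kappa$, which is why the paper estimates $u^\kappa\cdot\nabla\theta^\kappa$ directly without any cancellation.)
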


\begin{proof}[Proof of Theorem~\ref{Global-in-time existence of Sobolev solutions thm}]
The proof is similar to the one given in \cite[Theorem~3.2]{FRV12}. We multiply \eqref{abstract active scalar eqn nu=0}$_1$ by $\Lambda^{2\alpha}\theta^\kappa$, integrate by parts to obtain
\begin{align}\label{L2 inequality with alpha}
\frac{1}{2}\frac{d}{dt}\|\Lambda^\alpha\theta^\kappa\|^2_{L^2}+\kappa\|\Lambda^{\frac{\gamma}{2}+\alpha}\theta^\kappa\|^2_{L^2}\le\Big|\intox S\Lambda^{2\alpha}\theta^\kappa\Big|+\Big|\intox u^\kappa\cdot\nabla\theta^\kappa\Lambda^{2\alpha}\theta^\kappa\Big|.
\end{align}
The first term on the right side of \eqref{L2 inequality with alpha} is bounded by
\begin{align}\label{L2 estimates on the S term}
\Big|\intox S\Lambda^{2\alpha}\theta^\kappa\Big|\le \frac{1}{2\kappa}\|\Lambda^{\alpha-\frac{\gamma}{2}}S\|^2_{L^2}+\frac{\kappa}{2}\|\Lambda^{\alpha+\frac{\gamma}{2}}\theta^\kappa\|^2_{L^2}.
\end{align}
The second term on the right side of \eqref{L2 inequality with alpha} can be estimated as follows.
\begin{align}\label{estimate on the trillinear term}
\Big|\intox u^\kappa\cdot\nabla\theta^\kappa\Lambda^{2\alpha}\theta^\kappa\Big|\le \|\Lambda^{\alpha-\frac{\gamma}{2}}(u^\kappa\cdot\nabla\theta^\kappa)\|_{L^2}\|\Lambda^{\alpha+\frac{\gamma}{2}}\theta^\kappa\|_{L^2}.
\end{align}
To estimate the product term $\|\Lambda^{\alpha-\frac{\gamma}{2}}(u^\kappa\cdot\nabla\theta^\kappa)\|_{L^2}$, using \eqref{product estimate}, we readily have
\begin{align}\label{estimate on product term 1}
&\|\Lambda^{\alpha-\frac{\gamma}{2}}(u^\kappa\cdot\nabla\theta^\kappa)\|_{L^2}\notag\\
&\le C \Big(\|\Lambda^{\alpha-\frac{\gamma}{2}}u^\kappa\|_{L^p}\|\nabla\theta^\kappa\|_{L^\frac{2p}{p-2}}+\|\Lambda^{\alpha-\frac{\gamma}{2}}\nabla\theta^\kappa\|_{L^p}\|u^\kappa\|_{L^\frac{2p}{p-2}}\Big),
\end{align}
where $p=\frac{2d}{2(1-\gamma)+d}$, and notice that $p>2$ as $\gamma\in[1,2]$. Since by \eqref{one order singular for u when nu=0}, we have
\begin{align*}
\|u^\kappa\|_{L^p}\le C|\Lambda\theta^\kappa\|_{L^p}.
\end{align*}
and together with \eqref{Sobolev inequality}, we further get 
\begin{align*}
\|\Lambda^{\alpha-\frac{\gamma}{2}}u^\kappa\|_{L^p}&\le C|\Lambda^{\alpha-\frac{\gamma}{2}+1}\theta^\kappa\|_{L^p}\\
&\le C \|\Lambda^{\alpha-\frac{\gamma}{2}+1+s}\theta^\kappa\|_{L^2},
\end{align*}
where $s$ satisfies $\frac{1}{p}=\frac{1}{2}-\frac{s}{d}$ and gives $s=\gamma-1$. Therefore we obtain from \eqref{estimate on product term 1} that 
\begin{align}\label{estimate on product term 2}
&\|\Lambda^{\alpha-\frac{\gamma}{2}}(u^\kappa\cdot\nabla\theta^\kappa)\|_{L^2}\notag\\
&\le C \Big(\|\Lambda^{\alpha-\frac{\gamma}{2}+1+\gamma-1}\theta^\kappa\|_{L^2}\|\nabla\theta^\kappa\|_{L^\frac{2p}{p-2}}+\|\Lambda^{\alpha-\frac{\gamma}{2}+1+\gamma-1}\theta^\kappa\|_{L^p}\|\Lambda\theta^\kappa\|_{L^\frac{2p}{p-2}}\Big)\notag\\
&\le C\|\Lambda^{\alpha+\frac{\gamma}{2}}\theta^\kappa\|_{L^2}\|\Lambda\theta^\kappa\|_{L^\frac{2p}{p-2}}.
\end{align}
We apply \eqref{estimate on product term 2} on \eqref{estimate on the trillinear term} to get
\begin{align}\label{estimate on the trillinear term final}
\Big|\intox u^\kappa\cdot\nabla\theta^\kappa\Lambda^{2\alpha}\theta^\kappa\Big|\le C\|\Lambda^{\alpha+\frac{\gamma}{2}}\theta^\kappa\|_{L^2}^2\|\Lambda\theta^\kappa\|_{L^\frac{2p}{p-2}}.
\end{align}
Using the estimates \eqref{L2 estimates on the S term} and \eqref{estimate on the trillinear term final} on \eqref{L2 inequality with alpha} and applying Cauchy inequality,
\begin{align}\label{L2 inequality with alpha 2}
\frac{1}{2}\frac{d}{dt}\|\Lambda^\alpha\theta^\kappa\|^2_{L^2}+\frac{\kappa}{2}\|\Lambda^{\frac{\gamma}{2}+\alpha}\theta^\kappa\|^2_{L^2}\le\frac{1}{2\kappa}\|\Lambda^{\alpha-\frac{\gamma}{2}}S\|^2_{L^2}+C\|\Lambda^{\alpha+\frac{\gamma}{2}}\theta^\kappa\|_{L^2}^2\|\Lambda\theta^\kappa\|_{L^\frac{2p}{p-2}}.
\end{align}
For the term $\|\Lambda\theta^\kappa\|_{L^\frac{2p}{p-2}}$, we can bound it as follows. Take $\beta=1-\frac{1}{\alpha}[\frac{d+2}{2}+(1-\gamma)]$, then $\beta\in(0,1)$ and we have
\begin{align*}
\|\Lambda\theta^\kappa\|_{L^\frac{2p}{p-2}}\le C\|\theta^\kappa\|^\beta_{L^2}\|\Lambda^\alpha\theta^\kappa\|^{1-\beta}_{L^2}.
\end{align*}
If $\theta^\kappa$ satisfies
\begin{equation}\label{condition on theta kappa}
\|\theta^\kappa\|^\beta_{L^2}\|\Lambda^\alpha\theta^\kappa\|^{1-\beta}_{L^2}\le\frac{\kappa}{4C},
\end{equation}
then we obtain from equation \eqref{L2 inequality with alpha 2} that
\begin{align*}
\frac{1}{2}\frac{d}{dt}\|\Lambda^\alpha\theta^\kappa\|^2_{L^2}+\frac{\kappa}{2}\|\Lambda^{\frac{\gamma}{2}+\alpha}\theta^\kappa\|^2_{L^2}\le\frac{1}{2\kappa}\|\Lambda^{\alpha-\frac{\gamma}{2}}S\|^2_{L^2}+\frac{\kappa}{4}\|\Lambda^{\alpha+\frac{\gamma}{2}}\theta^\kappa\|_{L^2}^2,
\end{align*}
which implies
\begin{align}\label{L2 inequality with alpha 3}
\frac{d}{dt}\|\Lambda^\alpha\theta^\kappa\|^2_{L^2}+\frac{\kappa}{2}\|\Lambda^{\frac{\gamma}{2}+\alpha}\theta^\kappa\|^2_{L^2}\le\frac{1}{\kappa}\|S\|^2_{H^{\alpha-\frac{\gamma}{2}}}.
\end{align}
We conclude from \eqref{L2 inequality with alpha 3} that for all $t>0$,
\begin{equation}\label{energy bound on theta kappa final}
\left\{ \begin{array}{l}
\|\Lambda^\alpha\theta^\kappa(\cdot,t)\|^2_{L^2}\le\|\Lambda^\alpha\theta_0\|^2_{L^2}+\frac{2}{\kappa^2}\|S\|^2_{H^{\alpha-\frac{\gamma}{2}}}, \\
\dis\|\theta^\kappa(\cdot,t)\|^2_{L^2}+2\kappa\int_0^t\|\Lambda^\frac{\gamma}{2}\theta^\kappa(\cdot,\tilde t)\|^2_{L^2}d\tilde t\le\|\theta_0\|^2_{L^2}.
\end{array}\right.
\end{equation}
In view of \eqref{energy bound on theta kappa final}, we can see that condition \eqref{condition on theta kappa} is satisfied for all $t>0$ if \eqref{smallness assumption on theta0 and S 1} holds for $\varepsilon$ being sufficiently small. Hence we complete the proof of Theorem~\ref{Global-in-time existence of Sobolev solutions thm}.
\end{proof}

We are now ready to give the proof of Theorem~\ref{Global-in-time existence of analytic solutions thm}.

\begin{proof}[Proof of Theorem~\ref{Global-in-time existence of analytic solutions thm}]
We fix $r$ such that $r>\frac{d}{2}+\frac{5}{2}$. We multiply \eqref{abstract active scalar eqn nu=0}$_1$ by $\Lambda^{2r}e^{2\tau\Lambda}\theta^{\kappa}$ and integrate to obtain
\begin{align}\label{evolution eqn in gevrey norm}
&\frac{1}{2}\frac{d}{dt}\|\Lambda^r e^{\tau\Lambda}\theta^{\kappa}\|^2_{L^2}-\dot{\tau}\|\Lambda^{r+\frac{1}{2}}e^{\tau\Lambda}\theta^{\kappa}\|^2_{L^2}+\kappa\|\Lambda^{r+\frac{\gamma}{2}}e^{\tau\Lambda}\theta^{\kappa}\|^2_{L^2}\notag\\
&\le \mathcal{R}+\|\Lambda^r e^{\tau\Lambda}S\|_{L^2}\|\Lambda^r e^{\tau\Lambda}\theta^{\kappa}\|_{L^2},
\end{align}
where $\mathcal{R}$ is defined in the proof of Theorem~\ref{Local-in-time existence of analytic solutions thm}. We modify the way for estimating the term $\mathcal{R}$ as follows. Using the bound 
\begin{align*}
\mbox{$e^x\le1+xe^x$, \qquad for all $x\ge0$,}
\end{align*}
we can bound $\mathcal{R}$ by
\begin{align}\label{bound on R gevrey estimates}
\mathcal{R}&\le C\sum_{j+k=l;j,k,l\in\mathbb{Z}^d_*}\Big[|\hat{\theta^{\kappa}}(j)||j|^{r+\frac{1}{2}}e^{\tau|j|}\Big]\Big[|\hat{\theta^{\kappa}}(k)||k|^{1+\frac{1}{2}}e^{\tau|k|}\Big]\Big[|\hat{\theta^{\kappa}}(l)||l|^{r+\frac{1}{2}}e^{\tau|l|}\Big]\notag\\
&+C\sum_{j+k=l;j,k,l\in\mathbb{Z}^d_*}\Big[|\hat{\theta^{\kappa}}(j)||j|^{1+\frac{1}{2}}e^{\tau|j|}\Big]\Big[|\hat{\theta^{\kappa}}(k)||k|^{r+\frac{1}{2}}e^{\tau|k|}\Big]\Big[|\hat{\theta^{\kappa}}(l)||l|^{r+\frac{1}{2}}e^{\tau|l|}\Big]\notag\\
&\le C\Big[\sum_{m\in\mathbb{Z}^d_*}|\hat{\theta^{\kappa}}(m)||m|^{\frac{3}{2}}(1+\tau|m|e^{\tau|m|})\Big]\|\Lambda^{r+\frac{1}{2}}e^{\tau\Lambda}\theta^{\kappa}\|^2_{L^2}\notag\\
&\le C_r\|\Lambda^{r+\frac{1}{2}}e^{\tau\Lambda}\theta^{\kappa}\|^2_{L^2}\Big[a(t)+\tau\|\Lambda^r e^{\tau\Lambda}\theta^{\kappa}\|_{L^2}\Big],
\end{align}
where $\dis a(t):=\sum_{m\in\mathbb{Z}^d_*}|m|^{\frac{3}{2}}|\hat{\theta^{\kappa}}(m)|(t)$ and we used the fact that
\begin{align*}
&\sum_{m\in\mathbb{Z}^d_*}|m|^{1+\frac{3}{2}}|\hat{\theta^{\kappa}}(m)|e^{\tau|m|}\\
&\le \Big(\sum_{m\in\mathbb{Z}^d_*}|m|^{5-2r}\Big)^\frac{1}{2}\Big(\sum_{m\in\mathbb{Z}^d_*}|m|^{2r}|\hat{\theta^{\kappa}}(m)|^2e^{2\tau|m|}\Big)^\frac{1}{2}\le C_r\|\Lambda^r e^{\tau\Lambda}\theta^{\kappa}\|_{L^2},
\end{align*}
for $C_r$ being a large enough positive constant depending only on $r$ and $r>\frac{d}{2}+\frac{5}{2}$. We apply \eqref{bound on R gevrey estimates} to \eqref{evolution eqn in gevrey norm} and obtain
\begin{align}\label{evolution eqn in gevrey norm v2}
&\frac{1}{2}\frac{d}{dt}\|\Lambda^r e^{\tau\Lambda}\theta^{\kappa}\|^2_{L^2}+\kappa\|\Lambda^{r+\frac{\gamma}{2}}e^{\tau\Lambda}\theta^{\kappa}\|^2_{L^2}\notag\\
&\le \Big[\dot{\tau}+C_ra(t)+\tau C_r\|\Lambda^r e^{\tau\Lambda}\theta^{\kappa}\|_{L^2}\Big]\|\Lambda^{r+\frac{1}{2}}e^{\tau\Lambda}\theta^{\kappa}\|^2_{L^2}+\|\Lambda^r e^{\tau\Lambda}S\|_{L^2}\|\Lambda^r e^{\tau\Lambda}\theta^{\kappa}\|_{L^2}.
\end{align}
For $\gamma\ge1$, we have $\frac{\gamma}{2}\ge\frac{1}{2}$, hence
\begin{align*}
\|\Lambda^{r+\frac{\gamma}{2}}e^{\tau\Lambda^{r+\frac{1}{s}}}\theta^{\kappa}\|_{L^2}\ge \|\Lambda^{r+\frac{1}{2}}e^{\tau\Lambda}\theta^{\kappa}\|_{L^2}.
\end{align*}
On the other hand, for $\alpha>\frac{d+2}{2}+\frac{1}{2}$, we have 
\begin{align}\label{bound on a(t) in terms of theta}
a(t)\le\Big(\sum_{m\in\mathbb{Z}^d_*}|m|^{3-2\alpha}\Big)^\frac{1}{2}\Big(\sum_{m\in\mathbb{Z}^d_*}|m|^{2\alpha}|\hat{\theta^{\kappa}}(m)|^2\Big)^\frac{1}{2}\le C_{\alpha}\|\Lambda^\alpha\theta\|_{L^2}
\end{align}
for some large enough positive constant depending only on $\alpha$. If we assume that
\begin{align*}
\|\Lambda^\alpha\theta_0\|^2_{L^2}+\frac{2}{\kappa^2}\|S\|^2_{H^{\alpha-\frac{\gamma}{2}}}\le\frac{\kappa^2}{4C_rC_{\alpha}},
\end{align*}
then using the bound \eqref{bound on H alpha norm}, \eqref{bound on a(t) in terms of theta} implies that
\begin{equation}\label{bound on a(t)}
a(t)\le\frac{\kappa^2}{4C_r}.
\end{equation}
Applying the bound \eqref{bound on a(t)} to \eqref{evolution eqn in gevrey norm v2}, we obtain
\begin{align}\label{evolution eqn in analytic norm v3}
&\frac{1}{2}\frac{d}{dt}\|\Lambda^r e^{\tau\Lambda}\theta^{\kappa}\|^2_{L^2}+\frac{\kappa}{2}\|\Lambda^{r+\frac{\gamma}{2}}e^{\tau\Lambda}\theta^{\kappa}\|^2_{L^2}\notag\\
&\le \Big[\dot{\tau}+\tau C_r\|\Lambda^r e^{\tau\Lambda}\theta^{\kappa}\|_{L^2}\Big]\|\Lambda^{r+\frac{1}{2}}e^{\tau\Lambda}\theta^{\kappa}\|^2_{L^2}+\|\Lambda^r e^{\tau\Lambda}S\|_{L^2}\|\Lambda^r e^{\tau\Lambda}\theta^{\kappa}\|_{L^2}.
\end{align}
Choose $\dot{\tau}+\tau C_r\|\Lambda^r e^{\tau\Lambda}\theta^{\kappa}\|_{L^2}=0$ in \eqref{evolution eqn in analytic norm v3}, then it gives
\begin{align*}
\frac{1}{2}\frac{d}{dt}\|\Lambda^r e^{\tau\Lambda}\theta^{\kappa}\|^2_{L^2}+\frac{\kappa}{2}\|\Lambda^{r+\frac{\gamma}{2}}e^{\tau\Lambda}\theta^{\kappa}\|^2_{L^2}\le \|\Lambda^r e^{\tau\Lambda}S\|_{L^2}\|\Lambda^r e^{\tau\Lambda}\theta^{\kappa}\|_{L^2},
\end{align*}
hence 
\begin{align*}
\|\Lambda^r e^{\tau\Lambda}\theta^{\kappa}\|_{L^2}\le \|\Lambda^r e^{\tau\Lambda}\theta_0\|_{L^2}+t\|\Lambda^r e^{\tau\Lambda}S\|_{L^2}
\end{align*}
and for all $t>0$, $\tau=\tau(t)$ satisfies the bound
\begin{align*}
\tau(t)&=\tau_0\exp\Big(-C_r\int_0^t\|\Lambda^r e^{\tau\Lambda}\theta^{\kappa}(\cdot,\tilde t)\|_{L^2}d\tilde t\Big)\\
&\ge\tau_0\exp\Big(-C_r\int_0^t(\|\Lambda^r e^{\tau\Lambda}\theta_0\|_{L^2}+\tilde t\|\Lambda^r e^{\tau\Lambda}S\|_{L^2})d\tilde t\Big)>0,
\end{align*}
Therefore the local-in-time analytic solution as claimed by Theorem~\ref{Local-in-time existence of analytic solutions thm} can be extended to all time, thereby proving Theorem~\ref{Global-in-time existence of analytic solutions thm}.
\end{proof}

\begin{remark}
For the case when $\kappa>0$ and $\gamma=2$, we point out that the smallness assumption \eqref{smallness assumption on theta0 and S 1} on $\theta_0$ can be removed. The reason is that, we can obtain the bound \eqref{bound on a(t)} on $a(t)$ {\it without} the smallness assumption \eqref{smallness assumption on theta0 and S 1} (also refer to \cite{FV12} for more detailed discussions). The critical MG$^0$ equation is an example of equation \eqref{abstract active scalar eqn nu=0} with $\gamma = 2$.
\end{remark}

\begin{remark}
It is also worth mentioning that all the abstract results obtained in Theorem~\ref{Local-in-time existence of analytic solutions thm}, Theorem~\ref{Global-in-time existence of analytic solutions thm} and Theorem~\ref{Global-in-time existence of Sobolev solutions thm} can be applied to the critical SQG equation, which is a special example of \eqref{abstract active scalar eqn nu=0} with $\gamma = 1$.
\end{remark}

\subsection{Existence and convergence of Gevrey-class solutions with bounded symbols $\widehat{\partial_{x_i} T_{ij}^{0}}$}\label{Existence and convergence of Gevrey-class solutions with bounded operators}

In this subsection, we address the existence and convergence of Gevrey-class solutions to the equation \eqref{abstract active scalar eqn nu=0} under a stronger assumption on the operators $T_{ij}^{0}$. More precisely, for all $1\le i,j\le d$, we further assume that
\begin{align}\label{stronger assumption on the singular operators}
\sup_{\{k\in\mathbb{Z}^d_*\}}|\widehat{\partial_{x_i} T_{ij}^{0}}(k)|\le C_{*}
\end{align}
for some positive constant $C_{*}$. The condition \eqref{stronger assumption on the singular operators} implies that 
\begin{align}\label{stronger assumption on the singular operators u} 
\mbox{$|\hat{u^{\kappa}}(j)|\le C(C_{*})|\hat{\theta^{\kappa}}(j)|$,\qquad for all $j\in\mathbb{Z}^d_*$.}
\end{align}
The results are summarised in the following theorem:
\begin{theorem}[Existence and convergence of Gevrey-class solutions]\label{Existence and convergence of Gevrey-class solutions thm}
Let $\kappa\ge0$ and $\gamma\in(0,2]$ be fixed, and let $\theta_0$ and $S$ be the initial datum and forcing term respectively. Fix $s\ge1$ and $K_0>0$. Suppose $\theta_0$ and $S$ both belong to Gevrey-class $s$ with radius of convergence $\tau_0>0$ and
\begin{align}\label{bounds on Gevrey norm of S and theta0}
\|\Lambda^re^{\tau_0\Lambda^\frac{1}{s}}\theta^\kappa(\cdot,0)\|_{L^2}\le K_0,\qquad\|\Lambda^re^{\tau_0\Lambda^\frac{1}{s}}S\|_{L^2}\le K_0,
\end{align}
where $r>\frac{d}{2}+\frac{5}{2}$. Assume further that the condition \eqref{stronger assumption on the singular operators} holds for $1\le i,j\le d$. Then there exists $T_*=T_*(\tau_0, K_0) > 0$ and a unique Gevrey-class $s$ solution on $[0, T_*)$ to the initial value problem associated to \eqref{abstract active scalar eqn nu=0}. Moreover, there exists $T\le T_*$ and $\tau=\tau(t)<\tau_0$ such that, for $t\in[0,T]$, we have:
\begin{align}\label{Gevrey convergence}
\lim_{\kappa\to0}\|(\Lambda^re^{\tau\Lambda^\frac{1}{s}}\theta^{\kappa}-\Lambda^re^{\tau\Lambda^\frac{1}{s}}\theta^{0})(\cdot,t)\|_{L^2}=0.
\end{align}
\end{theorem}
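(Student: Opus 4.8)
The plan is to run the two analytic arguments of Theorem~\ref{Local-in-time existence of analytic solutions thm} and Theorem~\ref{analytic convergence kappa} almost verbatim, but with the analytic weight $\Lambda^r e^{\tau\Lambda}$ replaced by the Gevrey weight $\Lambda^r e^{\tau\Lambda^\frac{1}{s}}$ and with the order-one velocity bound \eqref{one order singular for u when nu=0} replaced by the order-zero bound \eqref{stronger assumption on the singular operators u}. The single new structural fact that makes the exponential weight behave well under the convolution coming from the nonlinearity is the subadditivity $|l|^\frac{1}{s}\le|j|^\frac{1}{s}+|k|^\frac{1}{s}$ for $j+k=l$, valid precisely because $0<\frac{1}{s}\le1$ for $s\ge1$; this yields $e^{\tau|l|^\frac{1}{s}}\le e^{\tau|j|^\frac{1}{s}}e^{\tau|k|^\frac{1}{s}}$, the exact replacement for the identity $e^{\tau|l|}\le e^{\tau|j|}e^{\tau|k|}$ used in the analytic case.

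For the local existence assertion I would first reproduce the energy identity \eqref{evolution identity in analytic norm} in the Gevrey weight, so that differentiating the weight in time produces the gain term $\dot{\tau}\|\Lambda^{r+\frac{1}{2s}}e^{\tau\Lambda^\frac{1}{s}}\theta^{\kappa}\|^2_{L^2}$, while $\kappa\|\Lambda^{r+\frac{\gamma}{2}}e^{\tau\Lambda^\frac{1}{s}}\theta^{\kappa}\|^2_{L^2}\ge0$ stays on the good side and the forcing contributes $\|\Lambda^re^{\tau\Lambda^\frac{1}{s}}S\|_{L^2}\|\Lambda^re^{\tau\Lambda^\frac{1}{s}}\theta^{\kappa}\|_{L^2}$. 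The heart of the matter is the nonlinear term $\mathcal{R}$. Using \eqref{stronger assumption on the singular operators u} to replace $\hat{u^{\kappa}}(j)$ by $\hat{\theta^{\kappa}}(j)$ and splitting the exponential as above, I would distribute the polynomial weight $|l|^{2r}|k|$ over the region decomposition $\{|k|\le|j|\}\cup\{|j|\le|k|\}$. On $\{|k|\le|j|\}$ one has $|l|\le2|j|$; I would place $|l|^{r+\frac{1}{2s}}$ on the output factor, transfer the surplus $|l|^{r-\frac{1}{2s}}\le C|j|^{r-\frac{1}{2s}}$ onto the $j$-factor, and use $|k|=|k|^{1-\frac{1}{s}}|k|^\frac{1}{s}\le|k|^{1-\frac{1}{s}}|j|^\frac{1}{s}$ to move exactly $|j|^\frac{1}{s}$ more onto $j$, bringing the $j$-factor up to level $r+\frac{1}{2s}$ as well. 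The remaining $k$-factor sits at level $1-\frac{1}{s}$ and is summed by Cauchy--Schwarz against $\|\Lambda^re^{\tau\Lambda^\frac{1}{s}}\theta^{\kappa}\|_{L^2}$, which converges since $r>\frac{d}{2}+\frac{5}{2}>\frac{d}{2}+1-\frac{1}{s}$. The symmetric region is identical, so altogether
\begin{equation*}
\mathcal{R}\le C\|\Lambda^{r+\frac{1}{2s}}e^{\tau\Lambda^\frac{1}{s}}\theta^{\kappa}\|^2_{L^2}\|\Lambda^re^{\tau\Lambda^\frac{1}{s}}\theta^{\kappa}\|_{L^2}.
\end{equation*}
As in the analytic case I would then let $\tau$ decrease with $\dot{\tau}+4CK_0=0$, $\tau(0)=\tau_0$, so that the bracket multiplying $\|\Lambda^{r+\frac{1}{2s}}e^{\tau\Lambda^\frac{1}{s}}\theta^{\kappa}\|^2_{L^2}$ stays negative while $\|\Lambda^re^{\tau\Lambda^\frac{1}{s}}\theta^{\kappa}\|_{L^2}\le3K_0$, giving the a priori bound $\|\Lambda^re^{\tau\Lambda^\frac{1}{s}}\theta^{\kappa}\|_{L^2}\le3K_0$ exactly as in \eqref{bound on solution nu=0} on a maximal interval $[0,T_*)$ with $T_*=\min\{\tau_0/(4CK_0),1\}$, uniformly in $\kappa$; existence and uniqueness then follow from the usual Galerkin scheme.

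For the convergence statement I would set $\varphi=\theta^{\kappa}-\theta^{0}$, which solves \eqref{energy identity for the difference convergence}, test against $\Lambda^{2r}e^{2\tau\Lambda^\frac{1}{s}}\varphi$, and bound the three terms $\mathcal{R}_1,\mathcal{R}_2,\mathcal{R}_3$ exactly as in the proof of Theorem~\ref{analytic convergence kappa}, reading $u^{\kappa}-u^{0}=u[\varphi]$ and its symbol through \eqref{stronger assumption on the singular operators u}. This gives $|\mathcal{R}_1|+|\mathcal{R}_2|\le C(\|\Lambda^{r+\frac{1}{2s}}e^{\tau\Lambda^\frac{1}{s}}\theta^{0}\|_{L^2}+\|\Lambda^{r+\frac{1}{2s}}e^{\tau\Lambda^\frac{1}{s}}\theta^{\kappa}\|_{L^2})\|\Lambda^{r+\frac{1}{2s}}e^{\tau\Lambda^\frac{1}{s}}\varphi\|^2_{L^2}$ and $|\mathcal{R}_3|\le\kappa\|\Lambda^{r+\frac{\gamma}{2}}e^{\tau\Lambda^\frac{1}{s}}\theta^{0}\|_{L^2}\|\Lambda^re^{\tau\Lambda^\frac{1}{s}}\varphi\|_{L^2}$. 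Choosing $\tau$ so that $\dot{\tau}+C\|\Lambda^{r+\frac{1}{2s}}e^{\tau\Lambda^\frac{1}{s}}\theta^{0}\|_{L^2}+C\|\Lambda^{r+\frac{1}{2s}}e^{\tau\Lambda^\frac{1}{s}}\theta^{\kappa}\|_{L^2}<0$ absorbs the gain-level terms and leaves $\frac{d}{dt}\|\Lambda^re^{\tau\Lambda^\frac{1}{s}}\varphi\|_{L^2}\le\kappa\|\Lambda^{r+\frac{\gamma}{2}}e^{\tau\Lambda^\frac{1}{s}}\theta^{0}\|_{L^2}$; integrating from $\varphi(\cdot,0)=0$ over a suitable $[0,T]$ with $T\le T_*$ and sending $\kappa\to0$ yields \eqref{Gevrey convergence}. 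The uniform-in-$\kappa$ Gevrey bounds needed to keep the coefficient functions above finite are precisely those furnished by the existence part.

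The main obstacle, and the reason the stronger hypothesis \eqref{stronger assumption on the singular operators} is imposed rather than merely A5, lies in the polynomial bookkeeping of $\mathcal{R}$. Because the Gevrey weight supplies only the gain $\Lambda^\frac{1}{2s}$ per factor --- weaker than the analytic gain $\Lambda^\frac{1}{2}$ when $s>1$ --- the transfer step can move at most $|j|^\frac{1}{s}=|j|^{2\cdot\frac{1}{2s}}$ from the low frequency onto the top frequency. This budget matches the single derivative of $\nabla\theta$ only when the velocity contributes no derivatives, i.e.\ when $u$ is of order zero; with the order-one bound \eqref{one order singular for u when nu=0} the top factor would be left carrying an unabsorbable surplus $|j|^{1-\frac{1}{s}}$, which vanishes exactly at $s=1$ (the analytic case) but is strictly positive for $s>1$. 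Verifying that the boundedness of $\widehat{\partial_{x_i}T^0_{ij}}$ is what closes this gap, together with the elementary frequency inequalities on each region, is the only genuinely new computation beyond transcribing the analytic proofs.
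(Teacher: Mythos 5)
Your overall skeleton (Gevrey weight, order-zero symbol bound, decreasing-radius ODE, then the difference estimate for $\varphi=\theta^\kappa-\theta^0$) matches the paper, but the central estimate of the nonlinear term has a genuine gap: the claim that ``the symmetric region is identical'' is false, and this is exactly where the paper's proof departs from yours. On $\{|j|\le|k|\}$ the derivative $|k|$ coming from $\nabla\theta^\kappa$ and the surplus weight $|l|^{r-\frac{1}{2s}}\le C|k|^{r-\frac{1}{2s}}$ (in that region one only has $|l|\le 2|k|$, not $|l|\le 2|j|$) both land on the high-frequency factor $k$, so after splitting the exponential the $k$-factor sits at level $r+1-\frac{1}{2s}$, while the gain norm controls only level $r+\frac{1}{2s}$; the excess $1-\frac{1}{s}$ is strictly positive for $s>1$ and cannot be moved onto $j$, because the transfer inequality $|k|^{\frac1s}\le|j|^{\frac1s}$ you use in the other region is now reversed. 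Concretely, testing your claimed bound $\mathcal{R}\le C\|\Lambda^{r+\frac{1}{2s}}e^{\tau\Lambda^{1/s}}\theta\|_{L^2}^2\|\Lambda^{r}e^{\tau\Lambda^{1/s}}\theta\|_{L^2}$ on a function whose spectrum lives on frequencies of size $1$, $N$, $N+1$ (with amplitudes chosen to normalize the right-hand side) shows the left-hand side exceeds the right by a factor of order $N^{1-\frac1s}$, so no absolute-value estimate of this shape can close for $s>1$. Your diagnostic paragraph has the right budget heuristic but locates the surplus in the wrong place: assuming $\widehat{\partial_{x_i}T^{0}_{ij}}$ bounded removes the derivative carried by $u$, but the derivative carried by $\nabla\theta^\kappa$ remains, and it is unabsorbable whenever it falls on the high frequency.

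What closes this gap in the paper is the divergence-free cancellation, which your proposal never invokes. Since $\divv(u^\kappa)=0$, one has $\langle u^{\kappa}\cdot\nabla \Lambda^r e^{\tau\Lambda^{1/s}}\theta^\kappa,\Lambda^r e^{\tau\Lambda^{1/s}}\theta^\kappa\rangle=0$, so $\mathcal{T}$ may be rewritten as a commutator whose Fourier kernel contains the differences $\big||l|^r-|k|^r\big|$ and $\big|e^{\tau|l|^{1/s}}-e^{\tau|k|^{1/s}}\big|$; by the mean value theorem both are bounded by $|j|$ times manageable factors (for the exponential one uses $\big||l|^{1/s}-|k|^{1/s}\big|\le|j|^{1/s}$ together with $\big||l|^{1/s}-|k|^{1/s}\big|\le |j|/(|l|^{1-1/s}+|k|^{1-1/s})$ when $|j|\le|k|$), i.e.\ the commutator automatically re-attaches the dangerous derivative to the low frequency, which is precisely what a direct splitting cannot do. This yields the paper's estimate $|\mathcal{T}|\le C(\|\Lambda^r\theta^{\kappa}\|_{L^2}+\tau\|\Lambda^r e^{\tau\Lambda^{1/s}}\theta^{\kappa}\|_{L^2})\|\Lambda^{r+\frac{1}{2s}} e^{\tau\Lambda^{1/s}}\theta^{\kappa}\|^2_{L^2}$, after which your radius ODE and the $\kappa\to0$ limit go through essentially as you wrote them. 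The same repair is needed in your convergence step, since $\langle u^{\kappa}\cdot\nabla\varphi,\Lambda^{2r}e^{2\tau\Lambda^{1/s}}\varphi\rangle$ must be handled by the identical commutator device; the hypothesis \eqref{stronger assumption on the singular operators} is indeed necessary for these commutator estimates to close, but it is not sufficient on its own.
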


\begin{proof}[Proof of Theorem~\ref{Existence and convergence of Gevrey-class solutions thm}]
We fix $r$ such that $r>\frac{d}{2}+\frac{5}{2}$ and multiply \eqref{abstract active scalar eqn nu=0}$_1$ by $\Lambda^r e^{2\tau\Lambda^{\frac{1}{s}}}\theta^{\kappa}$ and integrate to obtain
\begin{align}\label{evolution identity in gevrey norm}
&\frac{1}{2}\frac{d}{dt}\|\Lambda^r e^{\tau\Lambda^{\frac{1}{s}}}\theta^{\kappa}\|^2_{L^2}-\dot{\tau}\|\Lambda^{r+\frac{1}{2s}}e^{\tau\Lambda^{\frac{1}{s}}}\theta^{\kappa}\|^2_{L^2}+\kappa\|\Lambda^{r+\frac{\gamma}{2}}e^{\tau\Lambda^{\frac{1}{s}}}\theta^{\kappa}\|^2_{L^2}\notag\\
&= \mathcal{T}+\langle S,\Lambda^{2r}e^{2\tau\Lambda^{\frac{1}{s}}}\theta^{\kappa}\rangle,
\end{align}
where $\mathcal{T}$ is given by $\mathcal{T}:=\langle u^{\kappa}\cdot\nabla\theta^{\kappa},\Lambda^{2r} e^{2\tau\Lambda^{\frac{1}{s}}}\theta^{\kappa}\rangle$. The term $\langle S,\Lambda^{2r}e^{2\tau\Lambda^{\frac{1}{s}}}\theta^{\kappa}\rangle$ can be readily bounded by $\|\Lambda^r e^{\tau\Lambda^{\frac{1}{s}}}S\|_{L^2}\|\Lambda^r e^{\tau\Lambda^{\frac{1}{s}}}\theta^{\kappa}\|_{L^2}$. For $\mathcal{T}$, since $\divv (u^\kappa)=0$, we have $\langle u^{\kappa}\cdot\nabla \Lambda^r e^{\tau\Lambda^{\frac{1}{s}}}\theta^\kappa,\Lambda^r e^{\tau\Lambda^{\frac{1}{s}}}\theta^\kappa\rangle=0$ and hence
\begin{align*}
|\mathcal{T}|=|\langle u^{\kappa}\cdot\nabla\theta^{\kappa},\Lambda^{2r} e^{2\tau\Lambda^{\frac{1}{s}}}\theta^{\kappa}\rangle-\langle u^{\kappa}\cdot\nabla \Lambda^r e^{\tau\Lambda^{\frac{1}{s}}}\theta^\kappa,\Lambda^r e^{\tau\Lambda^{\frac{1}{s}}}\theta^\kappa\rangle|\le \mathcal{T}_1+\mathcal{T}_2
\end{align*}
where $\mathcal{T}_1$ and $\mathcal{T}_2$ are given by
\begin{align*}
\mathcal{T}_1&:=C\sum_{j+k=l;j,k,l\in\mathbb{Z}^d_*}||l|^r-|k|^r||\hat{u^{\kappa}}(j)||\hat{\theta^{\kappa}}(k)||k||l|^r|\hat{\theta^{\kappa}}(l)|e^{\tau|l|^\frac{1}{s}}e^{\tau|k|^\frac{1}{s}},\\
\mathcal{T}_2&:=C\sum_{j+k=l;j,k,l\in\mathbb{Z}^d_*}|l|^r|e^{\tau|l|^\frac{1}{s}-\tau|k|^\frac{1}{s}}-1||\hat{u^{\kappa}}(j)||\hat{\theta^{\kappa}}(k)||k||l|^re^{\tau|l|^\frac{1}{s}}e^{\tau|k|^\frac{1}{s}}.
\end{align*}
To estimate $\mathcal{T}_1$, we apply the similar method given in \cite{KV09} and \cite{MV11}. Using mean value theorem, there exists $\zeta_{k,l}\in(0,1)$ such that
\begin{align*}
|l|^r-|k|^r=((\zeta_{k,l}|l|+(1-\zeta_{k,l})|k|)^{r-1}-|k|^{r-1})+(|l|-|k|)|k|^{r-1}.
\end{align*}
Since $j+k=l$, we have $|(|l|-|k|)|k|^{r-1}|\le |j||k|^{r-1}$ as well as
\begin{align*}
|(\zeta_{k,l}|l|+(1-\zeta_{k,l})|k|)^{r-1}-|k|^{r-1}|\le C|j|^2(|j|^{r-2}+|k|^{r-2}).
\end{align*}
Together with \eqref{stronger assumption on the singular operators u}, we have
\begin{align*}
\mathcal{T}_1&\le C\sum_{j+k=l;j,k,l\in\mathbb{Z}^d_*}|j|^2(|j|^{r-2}+|k|^{r-2})|\hat{\theta^{\kappa}}(j)||\hat{\theta^{\kappa}}(k)||k||l|^r|\hat{\theta^{\kappa}}(l)|e^{\tau|l|^\frac{1}{s}}e^{\tau|k|^\frac{1}{s}}\\
&\qquad+C\sum_{j+k=l;j,k,l\in\mathbb{Z}^d_*}|j||k|^{r-1}|\hat{\theta^{\kappa}}(j)||\hat{\theta^{\kappa}}(k)||k||l|^r|\hat{\theta^{\kappa}}(l)|e^{\tau|l|^\frac{1}{s}}e^{\tau|k|^\frac{1}{s}}\\
&\le C\|\Lambda^r\theta^{\kappa}\|_{L^2}\|\Lambda^r e^{\tau\Lambda^{\frac{1}{s}}}\theta^{\kappa}\|^2_{L^2},
\end{align*}
where the last inequality holds for $r>\frac{d}{2}+\frac{5}{2}$. To estimate $\mathcal{T}_2$, we use \eqref{stronger assumption on the singular operators u} and the inequality $|e^{\tau|l|^\frac{1}{s}-\tau|k|^\frac{1}{s}}-1|\le |\tau|l|^\frac{1}{s}-\tau|k|^\frac{1}{s}|e^{|\tau|l|^\frac{1}{s}-\tau|k|^\frac{1}{s}|}$ to obtain
\begin{align*}
\mathcal{T}_2&\le C\tau\sum_{j+k=l;j,k,l\in\mathbb{Z}^d_*}|l|^r||l|^\frac{1}{s}-|k|^\frac{1}{s}|e^{|\tau|l|^\frac{1}{s}-\tau|k|^\frac{1}{s}|}|\hat{\theta^{\kappa}}(j)||\hat{\theta^{\kappa}}(k)||k||l|^re^{\tau|l|^\frac{1}{s}}e^{\tau|k|^\frac{1}{s}}\\
&\le C\tau\sum_{j+k=l;j,k,l\in\mathbb{Z}^d_*}|l|^r||l|^\frac{1}{s}-|k|^\frac{1}{s}|e^{\tau|j|^\frac{1}{s}}|\hat{\theta^{\kappa}}(j)||\hat{\theta^{\kappa}}(k)||k||l|^re^{\tau|l|^\frac{1}{s}}e^{\tau|k|^\frac{1}{s}},
\end{align*}
where the last inequality follows since $|\tau|l|^\frac{1}{s}-\tau|k|^\frac{1}{s}|\le\tau|j|^\frac{1}{s}$. Depending on the values of $|j|$ and $|k|$, we have the following estimates:
\begin{itemize}
\item If $|j|\le|k|$, using the inequality $||l|^\frac{1}{s}-|k|^\frac{1}{s}|\le\frac{|j|}{|l|^{1-\frac{1}{s}}+|k|^{1-\frac{1}{s}}}$ for $s\ge1$, we have
\begin{align*}
&|l|^r||l|^\frac{1}{s}-|k|^\frac{1}{s}|e^{\tau|j|^\frac{1}{s}}|\hat{\theta^{\kappa}}(j)||\hat{\theta^{\kappa}}(k)||k||l|^re^{\tau|l|^\frac{1}{s}}e^{\tau|k|^\frac{1}{s}}\\
&\le |k|^{r-\frac{1}{2s}} e^{\tau|k|^\frac{1}{s}} \frac{|j|}{|l|^{1-\frac{1}{s}}+|k|^{1-\frac{1}{s}}}|\hat{\theta^{\kappa}}(j)||\hat{\theta^{\kappa}}(k)||k||l|^{r+\frac{1}{2s}}e^{\tau|l|^\frac{1}{s}}e^{\tau|k|^\frac{1}{s}}\\
&\le\Big(|k|^{r+\frac{1}{2s}}e^{\tau|k|^\frac{1}{s}}|\hat{\theta^{\kappa}}(k)|\Big)\Big(|l|^{r+\frac{1}{2s}}e^{\tau|l|^\frac{1}{s}}|\hat{\theta^{\kappa}}(l)|\Big)\Big(|j|e^{\tau|j|^\frac{1}{s}}|\hat{\theta^{\kappa}}(j)|\Big).
\end{align*}
\item If $|j|\ge|k|$, using the inequality $|l|^\frac{1}{2s}\le C|j|^\frac{1}{2s}$ for $s\ge1$, we have
\begin{align*}
&|l|^r||l|^\frac{1}{s}-|k|^\frac{1}{s}|e^{\tau|j|^\frac{1}{s}}|\hat{\theta^{\kappa}}(j)||\hat{\theta^{\kappa}}(k)||k||l|^re^{\tau|l|^\frac{1}{s}}e^{\tau|k|^\frac{1}{s}}\\
&\le C|j|^{r+\frac{1}{2s}}e^{\tau|j|^\frac{1}{s}}(|l|^\frac{1}{s}+|k|^\frac{1}{s})|\hat{\theta^{\kappa}}(j)||\hat{\theta^{\kappa}}(k)||k||l|^{r-\frac{1}{2s}}e^{\tau|l|^\frac{1}{s}}e^{\tau|k|^\frac{1}{s}}\\
&\le C\Big(|k|e^{\tau|k|^\frac{1}{s}}|\hat{\theta^{\kappa}}(k)|\Big)\Big(|l|^{r+\frac{1}{2s}}e^{\tau|l|^\frac{1}{s}}|\hat{\theta^{\kappa}}(l)|\Big)\Big(|j|^{r+\frac{1}{2s}}e^{\tau|j|^\frac{1}{s}}|\hat{\theta^{\kappa}}(j)|\Big)\\
&\qquad+C\Big(|k|^{1+\frac{1}{s}}e^{\tau|k|^\frac{1}{s}}|\hat{\theta^{\kappa}}(k)|\Big)\Big(|l|^{r-\frac{1}{2s}}e^{\tau|l|^\frac{1}{s}}|\hat{\theta^{\kappa}}(l)|\Big)\Big(|j|^{r+\frac{1}{2s}}e^{\tau|j|^\frac{1}{s}}|\hat{\theta^{\kappa}}(j)|\Big).
\end{align*}
\end{itemize}
Hence for $r>\frac{d}{2}+\frac{5}{2}$, we have
\begin{align*}
\mathcal{T}_2\le C\tau\|\Lambda^r e^{\tau\Lambda^{\frac{1}{s}}}\theta^{\kappa}\|_{L^2}\|\Lambda^{r+\frac{1}{2s}} e^{\tau\Lambda^{\frac{1}{s}}}\theta^{\kappa}\|^2_{L^2}.
\end{align*}
Combining the estimates on $\mathcal{T}_1$ and $\mathcal{T}_2$, we conclude that
\begin{align}\label{bound on nonlinear term T}
|\mathcal{T}|\le C(\|\Lambda^r\theta^{\kappa}\|_{L^2}+\tau\|\Lambda^r e^{\tau\Lambda^{\frac{1}{s}}}\theta^{\kappa}\|_{L^2})\|\Lambda^{r+\frac{1}{2s}} e^{\tau\Lambda^{\frac{1}{s}}}\theta^{\kappa}\|^2_{L^2}.
\end{align}
We apply \eqref{bound on nonlinear term T} on \eqref{evolution identity in gevrey norm} to obtain that
\begin{align*}
&\frac{1}{2}\frac{d}{dt}\|\Lambda^r e^{\tau\Lambda^{\frac{1}{s}}}\theta^{\kappa}\|^2_{L^2}\notag\\
&\le (\dot{\tau}+C(\|\Lambda^r\theta^{\kappa}\|_{L^2}+\tau\|\Lambda^r e^{\tau\Lambda^{\frac{1}{s}}}\theta^{\kappa}\|_{L^2}))\|\Lambda^{r+\frac{1}{2s}}e^{\tau\Lambda^{\frac{1}{s}}}\theta^{\kappa}\|^2_{L^2}\\
&+\|\Lambda^r e^{\tau\Lambda^{\frac{1}{s}}}S\|_{L^2}\|\Lambda^r e^{\tau\Lambda^{\frac{1}{s}}}\theta^{\kappa}\|_{L^2}\notag\\
&\le (\dot{\tau}+C\|\Lambda^r e^{\tau\Lambda^{\frac{1}{s}}}\theta^{\kappa}\|_{L^2})\|\Lambda^{r+\frac{1}{2s}}e^{\tau\Lambda^{\frac{1}{s}}}\theta^{\kappa}\|^2_{L^2}+\|\Lambda^r e^{\tau\Lambda^{\frac{1}{s}}}S\|_{L^2}\|\Lambda^r e^{\tau\Lambda^{\frac{1}{s}}}\theta^{\kappa}\|_{L^2},
\end{align*}
where the last inequality follows provided that $\tau\le1$. We can then apply the same argument given in the proof of Theorem~\ref{Local-in-time existence of analytic solutions thm}, which implies the existence of a Gevrey-class $s$ solution $\theta^\kappa$ on $[0,T_*)$, where the maximal time of existence of the Gevrey-class $s$ solution is given by $T_*=\frac{\tau_0}{4CK_0}$.

To prove the convergence result \eqref{Gevrey convergence}, we let $\varphi=\theta^{\kappa}-\theta^{0}$. Then we multiply \eqref{energy identity for the difference convergence} by $\Lambda^{2r}e^{2\tau\Lambda^\frac{1}{s}}\theta^0$ to give
\begin{align}\label{differential equation for Gevrey norm of varphi} 
\frac{1}{2}\frac{d}{dt}\|\Lambda^r e^{\tau\Lambda^\frac{1}{s}}\varphi(\cdot,t)\|^2_{L^2}&=\dot{\tau}\|\Lambda^{r+\frac{1}{2s}} e^{\tau\Lambda^\frac{1}{s}}\varphi(\cdot,t)\|^2_{L^2}+\mathcal{T}_1+\mathcal{T}_2+\mathcal{T}_3,
\end{align}
where $\mathcal{T}_1$, $\mathcal{T}_2$ and $\mathcal{T}_3$ are given by
\begin{align*}
\mathcal{T}_1&=\langle(u^{\kappa}-u^{0})\cdot\nabla\theta^{0},\Lambda^{2r}e^{2\tau\Lambda^\frac{1}{2s}}\varphi\rangle,\\
\mathcal{T}_2&=\langle u^{\kappa}\cdot\nabla\varphi,\Lambda^{2r}e^{2\tau\Lambda^\frac{1}{2s}}\varphi\rangle,\\
\mathcal{T}_3&=-\langle\kappa\Lambda^{\gamma}\theta^{0},\Lambda^{2r}e^{2\tau\Lambda^\frac{1}{2s}}\varphi\rangle.
\end{align*}
Using the method for bounding the term $\mathcal{T}$, we have 
\begin{align*}
&|\mathcal{T}_1|+|\mathcal{T}_2|\\
&\le C\|\Lambda^{r+\frac{1}{2s}}e^{\tau\Lambda}\theta^{0}\|_{L^2}\|\Lambda^{r+\frac{1}{2s}}e^{\tau\Lambda}\varphi\|^2_{L^2}+C\|\Lambda^{r+\frac{1}{2s}}e^{\tau\Lambda}\theta^{\kappa}\|_{L^2}\|\Lambda^{r+\frac{1}{2s}}e^{\tau\Lambda}\varphi\|^2_{L^2},
\end{align*}
and $\mathcal{T}_3$ can be readily bounded by $\kappa\|\Lambda^{r+\frac{\gamma}{2}}e^{\tau\Lambda^\frac{1}{s}}\theta^{0}\|_{L^2}\|\Lambda^r e^{\tau\Lambda^\frac{1}{s}}\varphi\|_{L^2}$. We conclude from \eqref{differential equation for Gevrey norm of varphi} that
\begin{align*}
&\frac{1}{2}\frac{d}{dt}\|\Lambda^r e^{\tau\Lambda^\frac{1}{s}}\varphi(\cdot,t)\|^2_{L^2}\\
&\le \|\Lambda^{r+\frac{1}{2s}}e^{\tau\Lambda^\frac{1}{s}}\varphi\|^2_{L^2}\Big[\dot{\tau}+C\|\Lambda^{r+\frac{1}{2s}}e^{\tau\Lambda^\frac{1}{s}}\theta^{0}\|_{L^2}+C\|\Lambda^{r+\frac{1}{2s}}e^{\tau\Lambda^\frac{1}{s}}\theta^{\kappa}\|_{L^2}\Big]\\
&\qquad+\kappa\|\Lambda^{r+\frac{\gamma}{2}}e^{\tau\Lambda^\frac{1}{s}}\theta^{0}\|_{L^2}\|\Lambda^r e^{\tau\Lambda^\frac{1}{s}}\varphi\|_{L^2}.
\end{align*}
Choose $\tau>0$ such that 
$$\dot{\tau}+C\|\Lambda^{r+\frac{1}{2s}}e^{\tau\Lambda^\frac{1}{s}}\theta^{0}\|_{L^2}+C\|\Lambda^{r+\frac{1}{2s}}e^{\tau\Lambda^\frac{1}{s}}\theta^{\kappa}\|_{L^2}<0,$$ 
then we get
\begin{equation*}
\frac{d}{dt}\|\Lambda^r e^{\tau\Lambda^\frac{1}{s}}\varphi(\cdot,t)\|_{L^2}\le \kappa\|\Lambda^{r+\frac{\gamma}{2}}e^{\tau\Lambda^\frac{1}{s}}\theta^{0}\|_{L^2},
\end{equation*}
and \eqref{Gevrey convergence} follows by taking $\kappa\to0$.
\end{proof}

\begin{remark}
Theorem~\ref{Existence and convergence of Gevrey-class solutions thm} can be applied on critical SQG equation as well as incompressible porous media (IPM) equation; see \cite{CFG11}, \cite{CGO07}, \cite{FS19} for more discussions on IPM equation.
\end{remark}

\begin{remark}
In the same spirit as Theorem~\ref{Global-in-time existence of analytic solutions thm}, one can show that the Gevrey-class $s$ solutions obtained in Theorem~\ref{Existence and convergence of Gevrey-class solutions thm} can be extended globally in time provided that 
\begin{equation*}
\|\theta_0\|_{L^2}^\beta\|\theta_0\|_{H^r}^{1-\beta}+\|\theta_0\|_{L^2}^\beta\|S\|^{1-\beta}_{H^{r-\frac{\gamma}{2}}}\le\varepsilon
\end{equation*}
and
\begin{equation*}
\|\Lambda^\alpha\theta_0\|^2_{L^2}+\frac{2}{\kappa^2}\|S\|^2_{H^{r-\frac{\gamma}{2}}}\le\varepsilon
\end{equation*}
for some sufficiently small number $\varepsilon$. Here $r>\frac{d}{2}+\frac{5}{2}$ is the one given in Theorem~\ref{Existence and convergence of Gevrey-class solutions thm} and $\beta$ satisfies $\beta=1-\frac{1}{r}\Big[\frac{d+2}{2}+(1-\gamma)\Big]$. The key of the proof is to apply the estimate \eqref{bound on nonlinear term T} on $\mathcal{T}$, which gives
\begin{align}\label{evolution eqn in gevrey norm v3}
&\frac{1}{2}\frac{d}{dt}\|\Lambda^r e^{\tau\Lambda^{\frac{1}{s}}}\theta^{\kappa}\|^2_{L^2}+\frac{\kappa}{2}\|\Lambda^{r+\frac{\gamma}{2}}e^{\tau\Lambda^{\frac{1}{s}}}\theta^{\kappa}\|^2_{L^2}\notag\\
&\le \Big[\dot{\tau}+C(\|\Lambda^r\theta^{\kappa}\|_{L^2}+\tau\|\Lambda^r e^{\tau\Lambda^{\frac{1}{s}}}\theta^{\kappa}\|_{L^2})\Big]\|\Lambda^{r+\frac{1}{2s}}e^{\tau\Lambda^{\frac{1}{s}}}\theta^{\kappa}\|^2_{L^2}\notag\\
&\qquad+\|\Lambda^r e^{\tau\Lambda^{\frac{1}{s}}}S\|_{L^2}\|\Lambda^r e^{\tau\Lambda^{\frac{1}{s}}}\theta^{\kappa}\|_{L^2}.
\end{align}
Under the smallness assumption on $\theta_0$, we can apply the bound \eqref{bound on H alpha norm} on $C\|\Lambda^r\theta^{\kappa}\|_{L^2}$ so that the term $C\|\Lambda^r\theta^{\kappa}\|_{L^2}\|\Lambda^{r+\frac{1}{2s}}e^{\tau\Lambda^{\frac{1}{s}}}\theta^{\kappa}\|^2_{L^2}$ can be absorbed by the left side of \eqref{evolution eqn in gevrey norm v3}. The rest of the argument is almost identical to the proof of Theorem~\ref{Global-in-time existence of analytic solutions thm} and we omit the details here.
\end{remark}

\section{Long time behaviour for solutions when $\nu>0$ and $\kappa>0$}\label{long time behaviour and attractors section}

In this section, we study the long time behaviour for solutions to the active scalar equations \eqref{abstract active scalar eqn} when $\nu>0$ and $\kappa>0$. Based on the global-in-time existence results established in Theorem~\ref{global-in-time wellposedness in Sobolev}, for fixed $\nu>0$ and $\kappa>0$, we can define a solution operator $\pin(t)$ for the initial value problem \eqref{abstract active scalar eqn} via
\begin{align}\label{def of solution map nu>0 and kappa>0}
\pin(t): H^1\to H^1,\qquad \pin(t)\theta_0=\theta(\cdot,t),\qquad t\ge0.
\end{align}
We study the long-time dynamics of $\pin(t)$ on the phase space $H^1$. Specifically, we establish the existence of global attractors for $\pin(t)$ in $H^1$, which will be given in Subsection~\ref{existence of global attractors subsection}. Once we obtain the existence of global attractors, we further address some properties for the attractors which will be explained in Subsection~\ref{Further properties for the attractors subsection}. The results obtained in Subsection~\ref{existence of global attractors subsection} and Subsection~\ref{Further properties for the attractors subsection} will be sufficient for proving Theorem~\ref{existence of global attractor theorem}.

%For the sake of simplicity, throughout this section, we write $\theta=\theta^\kappa$ and $u=u[\theta]=u[\theta^\kappa]$ unless otherwise specified.

\subsection{Existence of global attractors in $H^1$-space}\label{existence of global attractors subsection}

The following theorem gives the main results for this subsection:

\begin{theorem}[Existence of $H^1$-global attractor]\label{Existence of H1 global attractor}
Let $S\in L^\infty\cap H^1$. For $\nu$, $\kappa>0$ and $\gamma\in(0,2]$, the solution map $\pin(t):H^1\to H^1$ associated to \eqref{abstract active scalar eqn} possesses a unique global attractor $\Gg$. Moreover, there exists $M_{\Gg}$ which depends only on $\nu$, $\kappa$, $\gamma$, $\|S\|_{L^\infty\cap H^1}$ and universal constants, such that if $\theta_0\in\Gg$, we have that
\begin{align}\label{uniform bound on theta from the attractor} 
\|\theta(\cdot,t)\|_{H^{1+\frac{\gamma}{2}}}\le M_{\Gg},\qquad\forall t\ge0,
\end{align}
and
\begin{align}\label{uniform bound on time integral of theta from the attractor}
\frac{1}{T}\int_t^{t+T}\|\theta(\cdot,\tau)\|_{H^{1+\gamma}}d\tau\le M_{\Gg},\qquad \mbox{$\forall t\ge0$ and $T>0$,}
\end{align}
where $\theta(\cdot,t)=\pin(t)\theta_0$.
\end{theorem}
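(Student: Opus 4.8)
The plan is to verify the three hypotheses of the classical theory of global attractors for the semigroup $\pin(t)$: that $\pin(t)$ is a well-defined continuous semigroup on $H^1$; that it is dissipative, i.e.\ admits a bounded absorbing set in $H^1$; and that it is asymptotically compact. The semigroup property and continuity in the phase space follow from the global well-posedness and uniqueness in Theorem~\ref{global-in-time wellposedness in Sobolev} and Lemma~\ref{local-in-time existence lemma} (uniqueness gives $\pin(t+s)=\pin(t)\pin(s)$, continuous dependence gives continuity). For asymptotic compactness I would produce a bounded absorbing set in the strictly higher space $H^{1+\frac{\gamma}{2}}$ and use the compact embedding $H^{1+\frac{\gamma}{2}}\hookrightarrow\hookrightarrow H^1$ (valid since $\tfrac{\gamma}{2}>0$) to turn it into a compact absorbing set in $H^1$. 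Granting these, the abstract theory of dissipative dynamical systems yields a unique global attractor $\Gg=\omega(\cB)$ for any absorbing set $\cB$; the quantitative bounds \eqref{uniform bound on theta from the attractor}--\eqref{uniform bound on time integral of theta from the attractor} will then be read off from the absorbing-ball estimates, using the full invariance $\pin(t)\Gg=\Gg$ (which forces $\Gg$ into each absorbing ball, so the bounds hold for \emph{all} $t\ge0$ when $\theta_0\in\Gg$).

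The heart of the matter is a bootstrap of absorbing-ball estimates in successively higher norms, driven by the master inequality
\begin{align*}
\frac{d}{dt}\|\Lambda^s\theta\|^2_{L^2}+2\kappa\|\Lambda^{s+\frac{\gamma}{2}}\theta\|^2_{L^2}\le C\|\theta\|_{L^\infty}\|\Lambda^s\theta\|^2_{L^2}+2\|\Lambda^{s-\frac{\gamma}{2}}S\|_{L^2}\|\Lambda^{s+\frac{\gamma}{2}}\theta\|_{L^2},
\end{align*}
valid for all $s\ge0$; this is exactly the advection estimate behind \eqref{differential inequality on theta final}, except that the forcing is re-paired against the dissipation so that only $S\in H^{s-\frac{\gamma}{2}}$ is required. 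First I would record the $L^2$-absorbing ball: testing \eqref{abstract active scalar eqn} against $\theta$ annihilates the advection term, and the Poincaré-type inequality $\|\Lambda^{\frac{\gamma}{2}}\theta\|_{L^2}\ge\|\theta\|_{L^2}$ (valid for mean-zero functions on $\T^d$ since $|k|\ge1$) turns \eqref{energy inequality kappa>0} into a genuine contraction, giving $\limsup_t\|\theta(t)\|^2_{L^2}\le C\kappa^{-2}\|S\|^2_{L^2}$. Together with the eventual bound $\|\theta(t)\|_{L^\infty}\le C\kappa^{-1}\|S\|_{L^\infty}$ from \eqref{infty bound on theta with kappa>0 and t>1}, this fixes a constant $C_*=C_*(\kappa,\|S\|_{L^\infty})$ with $\|\theta(t)\|_{L^\infty}\le C_*$ for all large $t$.

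With the $L^\infty$-bound in hand I would climb from $L^2$ to $H^1$ and then to $H^{1+\frac{\gamma}{2}}$. The obstruction at each stage is the advection contribution $C\|\theta\|_{L^\infty}\|\Lambda^s\theta\|^2_{L^2}$: it carries the \emph{wrong} sign and, because the available bound $C_*$ scales like $\kappa^{-1}$, it is not dominated outright by the $\kappa$-dissipation. The remedy is to interpolate $\|\Lambda^s\theta\|_{L^2}$ between a strictly lower norm and the dissipation norm $\|\Lambda^{s+\frac{\gamma}{2}}\theta\|_{L^2}$ and apply Young's inequality, so that $CC_*\|\Lambda^s\theta\|^2_{L^2}$ is absorbed into $\kappa\|\Lambda^{s+\frac{\gamma}{2}}\theta\|^2_{L^2}$ at the cost of a lower-order term already placed in an absorbing ball at the previous stage. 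Taking $s=1$ (lower norm $L^2$), one more use of Poincaré and Young on the forcing yields $\frac{d}{dt}\|\Lambda\theta\|^2_{L^2}+\frac{\kappa}{2}\|\Lambda\theta\|^2_{L^2}\le B_1$ with $B_1$ depending only on $\nu,\kappa,\gamma,\|S\|_{L^\infty\cap H^1}$, hence an $H^1$-absorbing ball. Taking $s=1+\frac{\gamma}{2}$ (lower norm $H^1$, interpolation exponent $\tfrac12$ against $\|\Lambda^{1+\gamma}\theta\|_{L^2}$, forcing involving only $\|\Lambda S\|_{L^2}$) yields an $H^{1+\frac{\gamma}{2}}$-absorbing ball, which both supplies the compactness and, via invariance, gives \eqref{uniform bound on theta from the attractor}. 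Finally, integrating the pre-Poincaré form of the $s=1+\frac{\gamma}{2}$ inequality over $[t,t+T]$ controls $\int_t^{t+T}\|\Lambda^{1+\gamma}\theta\|^2_{L^2}\,d\tau$, and Cauchy--Schwarz in time then delivers \eqref{uniform bound on time integral of theta from the attractor}.

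The step I expect to be the main obstacle is precisely this control of the wrong-signed advection term against a dissipation whose strength is fighting the $\kappa^{-1}$-growth of the $L^\infty$-bound: making the interpolation--Young absorption quantitatively consistent so that every constant depends only on the permitted quantities, and in particular so that the forcing enters only through $\|S\|_{H^1}$ rather than $\|S\|_{H^{1+\gamma/2}}$, is the delicate bookkeeping on which the whole bootstrap rests. The remaining ingredients—continuity of the semigroup, the compact embedding, and the passage from absorbing ball to attractor—are standard.
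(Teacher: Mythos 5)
Your core bootstrap is correct, but it takes a genuinely different route from the paper. The paper follows the critical-SQG playbook of \cite{CTV14}, \cite{CZV16}: it builds an $L^\infty$-absorbing set, then a $C^\alpha$-absorbing set by finite differences and the nonlinear maximum principle (Lemma~\ref{Estimates in C alpha-space lemma}), and obtains the $H^1$-absorbing set (Lemma~\ref{Existence of an H1 absorbing set lemma}) by absorbing the wrong-signed drift term \emph{pointwise} against the nonlinear lower bound $D[\nabla\theta]\gtrsim |\nabla\theta|^{2+\gamma/(1-\alpha)}/[\theta]_{C^\alpha}^{\gamma/(1-\alpha)}$ of \cite{CV12}; the $H^{1+\gamma/2}$ set then comes from the same master inequality you use (see \eqref{differential inequality on theta final} and \eqref{differential inequality on theta 1+gamma}) via the uniform Gr\"{o}nwall lemma. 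Your observation is that for $\nu>0$ the drift is two orders smoother than $\theta$ (assumption A3, which is what gives \eqref{estimate on convective term step 3}), so the advection contribution is $C\|\theta\|_{L^\infty}\|\Lambda^s\theta\|^2_{L^2}$ and Sobolev interpolation plus Young absorbs it directly into the fractional dissipation --- no H\"{o}lder regularity theory is needed. This absorption is quantitatively sound (all constants involve only $\kappa$, $\gamma$, $\|S\|_{L^\infty\cap H^1}$ and universal constants), and your re-pairing of the forcing against the dissipation is in fact cleaner than the paper's Lemma~\ref{Existence of an H 1+gamma/2 absorbing set lemma}, which invokes $\|S\|_{H^{1+\gamma/2}}$ despite the hypothesis $S\in L^\infty\cap H^1$. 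One point you leave implicit: to start the Gr\"{o}nwall argument at the level $H^{1+\gamma/2}$ you need the trajectory to lie in $H^{1+\gamma/2}$ at some positive time; this comes from the time-integrated dissipation retained in your $s=1$ inequality together with a mean-value (uniform Gr\"{o}nwall) argument, exactly as in the paper.

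The genuine gap is in your superstructure, not in the estimates. You invoke the classical theory of \emph{continuous} semigroups, and you deduce the bounds \eqref{uniform bound on theta from the attractor}--\eqref{uniform bound on time integral of theta from the attractor} from full invariance $\pin(t)\Gg=\Gg$. But continuity of $\pin(t)$ in the $H^1$ topology --- which both the classical existence theorem and the invariance of the attractor require --- does not follow from Lemma~\ref{local-in-time existence lemma}; in the paper it is proved only for $\gamma\in[1,2]$ (Lemma~\ref{continuity of solution map lemma}), and its proof genuinely needs $\gamma\ge1$ (e.g.\ the step $\|\Lambda^{2-\frac{\gamma}{2}}\tilde\theta\|_{L^2}\le C\|\Lambda^{1+\frac{\gamma}{2}}\tilde\theta\|_{L^2}$ fails when $\gamma<1$). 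This is precisely why Theorem~\ref{existence of global attractor theorem} asserts invariance, maximality and finite dimensionality only for $\gamma\in[1,2]$, while existence of $\Gg$ is claimed for all $\gamma\in(0,2]$. The paper sidesteps continuity by applying the minimal framework of \cite[Proposition~8]{CCP12} to the compact absorbing set $B_{1+\frac{\gamma}{2}}$, and it obtains \eqref{uniform bound on theta from the attractor}--\eqref{uniform bound on time integral of theta from the attractor} not from invariance but from the containment $\Gg=\omega(B_{1+\frac{\gamma}{2}})\subset B_{1+\frac{\gamma}{2}}$ (the $H^{1+\frac{\gamma}{2}}$ ball is closed under $H^1$ limits of sequences bounded in $H^{1+\frac{\gamma}{2}}$) combined with the uniform trajectory bounds \eqref{bound on theta in H 1+gamma/2} and \eqref{bound on time integral on theta in H 1+gamma}. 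Your proof becomes complete for the whole range $\gamma\in(0,2]$ once you replace the continuous-semigroup framework and the invariance argument by this containment argument.
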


Theorem~\ref{Existence of H1 global attractor} will be proved in a sequence of lemmas. In the following lemma, we first show the existence of an $L^\infty$-absorbing set by using the bound \eqref{infty bound on theta with kappa>0 and t>1} on $\|\theta(t)\|_{L^\infty}$ for $t\ge1$.

\begin{lemma}[Existence of an $L^\infty$-absorbing set]\label{Existence of an L infty absorbing set lemma}
Let $c_0>0$ be defined in Proposition~\ref{boundedness on theta with kappa>0 prop}. Then the set 
\begin{align*}
B_{\infty}=\left\{\phi\in L^\infty\cap H^1:\|\phi\|_{L^\infty}\le\frac{2}{c_0\kappa}\|S\|_{L^\infty}\right\}
\end{align*}
is an absorbing set for $\pin(t)$. Moreover, using \eqref{infty bound on theta with kappa>0}, we have
\begin{align}\label{bound on solution map in L infty}
\sup_{t\ge0}\sup_{\theta_0\in B_{\infty}}\|\pin(t)\theta_0\|_{L^\infty}\le\frac{3\|S\|_{L^\infty}}{c_0\kappa}.
\end{align}
\end{lemma}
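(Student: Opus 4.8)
The plan is to read off the conclusion directly from the two $L^\infty$ estimates already at our disposal: the decay bound \eqref{infty bound on theta with kappa>0}, valid when $\theta_0\in L^\infty$, and the $L^2$-to-$L^\infty$ smoothing bound \eqref{infty bound on theta with kappa>0 and t>1}, valid for $\theta_0\in H^1$. The statement has two parts, which I would treat separately: that $B_\infty$ absorbs bounded sets of the phase space $H^1$, and the uniform bound \eqref{bound on solution map in L infty}.

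First I would verify the absorbing property. Fix an $H^1$-bounded set $\mathcal{B}$, say $\|\theta_0\|_{H^1}\le R$ for all $\theta_0\in\mathcal{B}$, so in particular $\|\theta_0\|_{L^2}\le R$. Applying \eqref{infty bound on theta with kappa>0 and t>1}, for every $\theta_0\in\mathcal{B}$ and $t\ge1$ we have
$$\|\theta(\cdot,t)\|_{L^\infty}\le\frac{C}{\kappa}\Big[R+\frac{\|S\|_{L^2}}{\kappa^{1/2}}\Big]e^{-c_0\kappa t}+\frac{1}{c_0\kappa}\|S\|_{L^\infty}.$$
The prefactor of the exponential depends only on $R$, $\kappa$ and $S$, not on the particular $\theta_0$, so there is a time $t_0=t_0(R,\kappa,S)\ge1$ such that the first term is at most $\frac{1}{c_0\kappa}\|S\|_{L^\infty}$ for all $t\ge t_0$. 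Hence $\|\theta(\cdot,t)\|_{L^\infty}\le\frac{2}{c_0\kappa}\|S\|_{L^\infty}$ for $t\ge t_0$. Since $\pin(t)$ maps $H^1$ into $H^1$ by Theorem~\ref{global-in-time wellposedness in Sobolev}, we have $\theta(\cdot,t)\in L^\infty\cap H^1$, and therefore $\pin(t)\theta_0\in B_\infty$ for all $t\ge t_0$ and all $\theta_0\in\mathcal{B}$, which is exactly the absorbing property.

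For the uniform bound, take $\theta_0\in B_\infty$, so $\theta_0\in L^\infty$ with $\|\theta_0\|_{L^\infty}\le\frac{2}{c_0\kappa}\|S\|_{L^\infty}$. Now \eqref{infty bound on theta with kappa>0} applies, and since $e^{-c_0\kappa t}\le1$ for all $t\ge0$ we obtain
$$\|\pin(t)\theta_0\|_{L^\infty}\le\|\theta_0\|_{L^\infty}e^{-c_0\kappa t}+\frac{\|S\|_{L^\infty}}{c_0\kappa}\le\frac{2\|S\|_{L^\infty}}{c_0\kappa}+\frac{\|S\|_{L^\infty}}{c_0\kappa}=\frac{3\|S\|_{L^\infty}}{c_0\kappa}.$$
Taking the supremum over $t\ge0$ and $\theta_0\in B_\infty$ yields \eqref{bound on solution map in L infty}.

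There is essentially no analytic obstacle here, since both quantitative estimates are already established; the only points requiring care are bookkeeping ones. One must check that each estimate is invoked with admissible data — \eqref{infty bound on theta with kappa>0 and t>1} with $H^1$ (equivalently $L^2$-controlled) data for the absorbing step, and \eqref{infty bound on theta with kappa>0} with the $L^\infty$ data supplied by membership in $B_\infty$ for the uniform bound — and that the absorbing time $t_0$ depends only on the radius $R$ of $\mathcal{B}$, not on the individual initial datum, which is immediate from the displayed inequality.
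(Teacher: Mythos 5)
Your proof is correct and follows essentially the same route as the paper: the absorbing property is obtained from the decay estimate \eqref{infty bound on theta with kappa>0 and t>1} applied to an $H^1$-bounded set (with an absorbing time depending only on the radius $R$), and the uniform bound \eqref{bound on solution map in L infty} follows from \eqref{infty bound on theta with kappa>0} together with $\|\theta_0\|_{L^\infty}\le\frac{2}{c_0\kappa}\|S\|_{L^\infty}$ on $B_\infty$. The only cosmetic difference is that the paper invokes the Poincar\'e inequality to pass from the $H^1$ radius to the $L^2$ norm in the decay estimate, which in your write-up is handled by the trivial embedding $\|\theta_0\|_{L^2}\le\|\theta_0\|_{H^1}$.
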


\begin{proof}
The proof follows by the same argument given in \cite[Theorem~3.1]{CZV16}. For a fixed bounded set $B\subset H^1$, we let 
\[R=\sup_{\phi\in B}\|\phi\|_{H^1}.\]
Using the bound \eqref{infty bound on theta with kappa>0 and t>1} and the Poincar\'{e} inequality, we conclude that if $\theta_0\in B$, then
\begin{align}
\|\pin(t)\theta_0\|_{L^\infty}\le\frac{C}{\kappa}\left[R+\frac{\|S\|_{L^\infty}}{\kappa^\frac{1}{2}}\right]e^{-c_0\kappa t}+\frac{\|S\|_{L^\infty}}{c_0\kappa},\qquad\forall t\ge1.
\end{align}
Choose $t_B=t_B(R,\|S\|_{L^2\cap L^\infty})\ge1$ such that
\begin{align*}
\frac{C}{\kappa}\left[R+\frac{\|S\|_{L^\infty}}{\kappa^\frac{1}{2}}\right]e^{-c_0\kappa t_B}\le\frac{\|S\|_{L^\infty}}{c_0\kappa},
\end{align*}
then we have $\pin(t)\theta_0\in B_{\infty}$ and hence $B_\infty$ is absorbing.
\end{proof}

Next we prove the following lemma which gives the necessary {\it a priori} estimate in $C^{\alpha}$-space with some appropriate exponent $\alpha\in(0,1)$. As pointed out in \cite{CZV16}, in view of Lemma~\ref{Existence of an L infty absorbing set lemma}, we can see that the solutions to \eqref{abstract active scalar eqn} emerging from data in a bounded subset of $H^1$ are absorbed in finite time by $B_{\infty}$. Hence in proving Lemma~\ref{Estimates in C alpha-space lemma}, we can assume that $\theta_0\in L^\infty$ and derive {\it a priori} bounds in terms of $\|\theta_0\|_{L^\infty}$.

\begin{lemma}[Estimates in $C^\alpha$-space]\label{Estimates in C alpha-space lemma}
Assume that $\theta_0\in H^1\cap L^\infty$ and fix $\nu$, $\kappa>0$. There exists $\alpha\in(0,\frac{\gamma}{3+\gamma}]$ which depends on $\|\theta_0\|_{L^\infty}$, $\|S\|_{L^\infty}$, $\nu$, $\kappa$, $\gamma$ such that
\begin{align}
\|\theta(\cdot,t)\|_{C\alpha}\le C(\K+\bK),\qquad\forall t\ge t_{\alpha}:=\frac{3}{2\gamma(1-\alpha)},
\end{align}
where $C>0$ is a positive constant, $\K$ and $\bK$ are given respectively by
\begin{align}\label{def of K and bar K}
 \left\{ \begin{array}{l}
\K:=\|\theta_0\|_{L^\infty}+\frac{1}{c_0\kappa}\|S\|_{L^\infty},\\
\bK:=\kappa^{-\frac{1}{\gamma}}\K+\|S\|^\frac{2+\gamma}{2(1+\gamma)}_{L^\infty}\kappa^{-\frac{1}{2(1+\gamma)}}\K^{\frac{\gamma}{2(1+\gamma)}}+\kappa^{-\frac{1}{\gamma}}\K^{\frac{6+\gamma}{4}}
\end{array}\right.
\end{align}
\end{lemma}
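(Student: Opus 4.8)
The plan is to deduce the $C^\alpha$ estimate from the De Giorgi regularity theory for fractional advection--diffusion equations, in the spirit of \cite{CV10} and \cite{CZV16}, exploiting that the positivity of $\nu$ renders the drift velocity Lipschitz. First I would record the two structural facts that drive the argument. On the one hand, since $\theta_0\in L^\infty$, the decay estimate \eqref{infty bound on theta with kappa>0} gives $\|\theta(\cdot,t)\|_{L^\infty}\le\K$ for all $t\ge0$, so that $\K$ is precisely the ambient $L^\infty$-bound. On the other hand, because $\nu>0$, the two-order smoothing \eqref{two order smoothing for u when nu>0} together with the Sobolev embedding \eqref{L infty bound 2} yields, exactly as in \eqref{L infty bound on nabla u}, a control of the form $\|u(\cdot,t)\|_{L^\infty}+\|\Lambda u(\cdot,t)\|_{L^\infty}\le C_\nu\|\theta(\cdot,t)\|_{L^\infty}\le C_\nu\K$. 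Thus the transporting velocity is uniformly Lipschitz, with Lipschitz constant governed by $\K$; this is the feature that will let one treat the drift perturbatively at all small scales, even when $\gamma<1$.

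Second, I would run the localized two-step De Giorgi scheme. Having already passed from $L^2$ to $L^\infty$ in Lemma~\ref{De Giorgi iteration lemma}, the remaining task is the $L^\infty\to C^\alpha$ improvement, which I would obtain through a diminishing-oscillation lemma. Rescaling $\theta$ around an arbitrary point at a dyadic scale $\rho$ under the natural parabolic scaling of $\Lambda^\gamma$ (so that $x\mapsto\rho x$, $t\mapsto\rho^\gamma t$) and normalizing the oscillation by $\K+\bK$, one shows that if the rescaled solution takes values in $[-1,1]$ on a unit cylinder, then it is trapped in $[-1+\eta,1-\eta]$ on a smaller cylinder, for a fixed $\eta>0$. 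The gain $\eta$ is produced by the energy inequality for the truncated functions, where the dissipative term $-\kappa\Lambda^\gamma\theta$ supplies a coercive $\dot H^{\gamma/2}$ contribution---exactly as in the level-set computation \eqref{bound on Qn}---while the forcing contributes the $\|S\|_{L^\infty}$-dependent pieces of $\bK$, and the De Giorgi intermediate-value lemma upgrades smallness of the energy to a pointwise drop. Iterating the oscillation decay across the dyadic scales $\rho_k=2^{-k}$ then gives $\mathrm{osc}\,\theta\lesssim C(\K+\bK)\rho^\alpha$, i.e. the claimed $C^\alpha$-bound, with $\alpha$ fixed by $\eta$ and the scaling exponents so that $\alpha\le\frac{\gamma}{3+\gamma}$; counting the number of steps needed before the first oscillation drop becomes effective produces the threshold time $t_\alpha=\frac{3}{2\gamma(1-\alpha)}$.

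The main obstacle is the oscillation lemma itself, and within it the interplay between transport and the nonlocal dissipation. Two points require care. First, the fractional Laplacian is nonlocal, so the energy estimate for a truncation localized to a ball carries a tail term recording the values of $\theta$ far away; this must be absorbed using the uniform bound $\K$, which is where the $\kappa^{-1/\gamma}$ factors in $\bK$ enter as the diffusive length scale. Second, when $\gamma<1$ the transport operator $u\cdot\nabla$ is of higher differential order than $\Lambda^\gamma$, so naive scaling makes the drift supercritical; the resolution is to subtract the value of $u$ at the center of the cylinder (a Galilean change of variables), after which the Lipschitz bound on $u$ leaves a residual drift of size $O(\rho)$ at scale $\rho$, which is subcritical for every $\gamma\in(0,2]$ and is dominated by the dissipation. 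The bookkeeping that converts these competing contributions into the explicit combination $\K+\bK$---in particular the mixed power $\|S\|_{L^\infty}^{\frac{2+\gamma}{2(1+\gamma)}}\kappa^{-\frac{1}{2(1+\gamma)}}\K^{\frac{\gamma}{2(1+\gamma)}}$ and the exponent $\frac{6+\gamma}{4}$ in the last term of $\bK$---is the remaining technical work; it follows by interpolating the energy estimate against the coercive $\dot H^{\gamma/2}$ term and optimizing the truncation height, exactly as in \cite{CZV16}.
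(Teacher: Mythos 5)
Your route is genuinely different from the paper's. The paper does \emph{not} prove the $C^\alpha$ bound by a second De Giorgi (oscillation-decay) step: following \cite{CTV14} and \cite{CZV16}, it works with the finite difference $\delta_h\theta$ and the weighted quantity $v(x,t;h)=|\delta_h\theta|/(\xi(t)^2+|h|^2)^{\alpha/2}$, where $\xi$ is an explicit time-dependent cutoff solving $\dot\xi=-\xi^a$ with $a=1-\tfrac{2}{3}\gamma(1-\alpha)$ as in \eqref{def of xi 1}; the restriction $t\ge t_\alpha$ in the conclusion is nothing but the extinction time of $\xi$. The regularizing mechanism is not an oscillation lemma but the C\'ordoba--C\'ordoba/Constantin--Vicol pointwise lower bound \eqref{lower bound for Dh alpha}, $D[\delta_h\theta]\gtrsim |v|^{2+\gamma}/\big(\K^\gamma(\xi^2+|h|^2)^{\gamma(1-\alpha)(2+\gamma)/6}\big)$, after which every error term (the $\dot\xi$ term, the $\delta_h u$ term---controlled by $\|\nabla u\|_{L^\infty}\le C\K$ from \eqref{L infty bound on nabla u general}, i.e.\ the same Lipschitz-drift fact you isolate---and the forcing term) is absorbed by Young's inequality, leaving the scalar ODE \eqref{diff ineq for psi} for $\psi(t)=\|v(t)\|^2_{L^\infty_{x,h}}$, whose absorbing ball is $C(\K+\bK)$. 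Your Caffarelli--Vasseur/Silvestre-style scheme (parabolic rescaling, Galilean subtraction of the drift at the center to cure supercriticality when $\gamma<1$, tail estimates, dyadic iteration) is a legitimate alternative mechanism, and you correctly identify both the role of the Lipschitz drift and the supercriticality obstruction. What the paper's method buys is that the quantitative conclusion falls out in a few lines from the ODE and the explicit $\xi$; what your method would buy is robustness (it does not need the solution to be smooth enough to evaluate $v$ pointwise, and it localizes).

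The genuine gap is in your final step. The lemma is not merely qualitative: it asserts the bound $C(\K+\bK)$ with the explicit exponents in $\bK$ and the explicit threshold $t_\alpha=\frac{3}{2\gamma(1-\alpha)}$. These quantities are artifacts of the paper's construction: each of the three terms of $\bK$ is literally the square root of the Young-conjugate constant generated when the corresponding error term is absorbed into the damping $|v|^{2+\gamma}$ (compare \eqref{bound on the term with derivative on xi}, \eqref{bound on the term with S}, \eqref{bound on the term with nabla u}), and $t_\alpha$ comes from integrating \eqref{def of xi 1}. A dyadic oscillation iteration produces a different, iteration-counting time threshold and different joint dependence on $(\kappa,\K,\|S\|_{L^\infty})$, and there is no reason its output is dominated by these particular expressions without a separate tracking argument that you have not supplied. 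Moreover, you cannot outsource this bookkeeping to \cite{CZV16}: that paper obtains its $C^\alpha$ estimate by the same finite-difference/nonlinear-maximum-principle computation the present paper follows, not by a De Giorgi oscillation scheme, so ``optimizing the truncation height exactly as in \cite{CZV16}'' does not correspond to anything in that reference. To prove the lemma as stated you would either have to run your iteration quantitatively and verify that the resulting constant and waiting time are controlled by $C(\K+\bK)$ and $t_\alpha$, or switch to the weighted finite-difference argument.
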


\begin{remark}
Using the bound \eqref{infty bound on theta with kappa>0}, under the assumption that $\theta_0\in L^\infty$, for $\kappa>0$, we have
\begin{align}\label{infty bound on theta using K for kappa>0}
\|\theta(\cdot,t)\|_{L^\infty}\le \K,\qquad \forall t\ge0.
\end{align}
\end{remark}

\begin{proof}[Proof of Lemma~\ref{Estimates in C alpha-space lemma}]
The idea of the proof follows from \cite[Theorem~4.1]{CZV16}. As suggested in \cite{CTV14}, \cite{CZV16}, we introduce the following finite difference
\begin{align*}
\Dh\theta(x,t)=\theta(x+h)-\theta(x,t),
\end{align*}
where $x$, $h\in\T^d$. Then $\Dh\theta$ satisfies
\begin{align}\label{eqn for theta with operator Lh}
\Lh(\Dh\theta)^2+D[\Dh\theta]=2(\delta_h S)(\delta_h \theta),
\end{align}
where $\Lh$ is the operator given by $\Lh:=\dt+u\cdot\nabla+(\Dh u)\cdot\nabla_h+\Lambda^\gamma$, and $D[\Dh\theta](x)$ is the function
\begin{align*}
D[\Dh\theta](x)=\int_{\R^d}\frac{[\Dh\theta(x)-\Dh\theta(x+y)]^2}{|y|^{d+\gamma}}dy.
\end{align*}
Let $\xi(t)=\xi:[0,\infty)\to[0,\infty)$ be a bounded decreasing differentiable function which will be defined later, and for $\alpha\in(0,\frac{\gamma}{3+\gamma}]$, we define $v$ by
\begin{align*}
v(x,t;h)=\frac{|\Dh\theta(x,t)|}{(\xi(t)^2+|h|^2)^\frac{\alpha}{2}}.
\end{align*}
Using \eqref{eqn for theta with operator Lh}, we can see that $v$ satisfies the following inequality
\begin{align}\label{ineq for v with operator Lh}
&\Lh v^2+\kappa\cdot\frac{D[\Dh\theta(x,t)]}{(\xi^2+|h|^2)^\alpha}\notag\\
&\le 2\alpha|\dot{\xi}|\frac{\xi}{\xi^2+|h|^2}v^2+2\alpha\frac{|h|}{\xi^2+|h|^2}|\Dh u|v^2+\frac{2\|S\|_{L^\infty}}{(\xi^2+|h|^2)^\frac{\alpha}{2}}v.
\end{align}
We estimate the term in \eqref{ineq for v with operator Lh} as follows. First, using the similar argument given in \cite{CTV14}, it can be shown that for $r\ge4|h|$, there exists some constant $C\ge1$ such that
\begin{align*}
D[\Dh\theta](x)\ge\frac{1}{2r^\gamma}|\Dh\theta(x)|^2-C|\Dh\theta(x)|\|\theta\|_{L^\infty}\frac{|h|}{r^{1+\gamma}}.
\end{align*}
Choose $r\ge4(\xi^2+|h|^2)^\frac{1}{2}\ge 4|h|$ such that
\begin{align*}
r=\frac{4C\|\theta\|_{L^\infty}}{|\Dh\theta(x)|}(\xi^2+|h|^2)^{\frac{(1-\alpha)(2+\gamma)}{6}+\frac{\alpha}{2}},
\end{align*}
then we have
\begin{align}\label{lower bound for Dh alpha}
\frac{D[\Dh\theta](x,t)}{(\xi^2+|h|^2)^\alpha}\ge\frac{|v(x,t;h)|^{2+\gamma}}{c_2\|\theta\|^\gamma_{L^\infty}(\xi^2+|h|^2)^\frac{\gamma(1-\alpha)(2+\gamma)}{6}},
\end{align}
for some positive constant $c_2$. For the term $2\alpha|\dot{\xi}|\frac{\xi}{\xi^2+|h|^2}v^2$, we take $\xi$ such that $\xi$ solves the following ordinary differential equation
\begin{align}\label{def of xi 1}
\dot{\xi}=-\xi^a,\qquad \xi(0)=1,
\end{align}
where $a=1-\frac{2}{3}\gamma(1-\alpha)$. More explictly, $\xi$ can be given by
\begin{equation} \label{def of xi 2}
			\xi(t)=\begin{cases}
				[1-t(1-a)]^\frac{1}{1-a},\qquad &t\in[0,t_{\alpha}] \\ 
				0,\qquad &t\in(t_{\alpha},\infty),
			\end{cases}
\end{equation}
where $t_{\alpha}$ is given by
\begin{align}\label{def of t alpha}
t_{\alpha}=\frac{3}{2\gamma(1-\alpha)}.
\end{align}
Hence for $\alpha\le\frac{\gamma}{3+\gamma}$, we have
\begin{align*}
2\alpha|\dot{\xi}|\frac{\xi}{\xi^2+|h|^2}v^2\le \frac{2\gamma}{3+\gamma}\frac{v^2}{(\xi^2+|h|^2)^\frac{\gamma(1-\alpha)}{3}}
\end{align*}
Using Young's inequality, we can further obtain
\begin{align}\label{bound on the term with derivative on xi}
2\alpha|\dot{\xi}|\frac{\xi}{\xi^2+|h|^2}v^2\le \frac{\kappa|v|^{2+\gamma}}{8c_2\|\theta\|^\gamma_{L^\infty}(\xi^2+|h|^2)^\frac{\gamma(1-\alpha)(2+\gamma)}{6}}+C\kappa^{-\frac{2}{\gamma}}\|\theta\|^2_{L^\infty},
\end{align}
for some positive constant $C$. For the term $\frac{2\|S\|_{L^\infty}}{(\xi^2+|h|^2)^\frac{\alpha}{2}}v$, using Young's inequality, we have
\begin{align*}
&\frac{2\|S\|_{L^\infty}}{(\xi^2+|h|^2)^\frac{\alpha}{2}}v\\
&\le\frac{\kappa|v|^{2+\gamma}}{8c_2\|\theta\|^\gamma_{L^\infty}(\xi^2+|h|^2)^\frac{\gamma(1-\alpha)(2+\gamma)}{6}}+C(\xi^2+|h|^2)^b\|S\|^\frac{2+\gamma}{1+\gamma}_{L^\infty}\kappa^{-\frac{1}{1+\gamma}}\|\theta\|^{\frac{\gamma}{1+\gamma}}_{L^\infty},
\end{align*}
where $b=\Big(\frac{\gamma(1-\alpha)}{6}-\frac{\alpha}{2}\Big)\Big(\frac{2+\gamma}{1+\gamma}\Big)$ and note that $b\ge0$ since $\alpha\le\frac{\gamma}{3+\gamma}$ and $\gamma\in(0,2]$. Since $\xi$, $|h|\le1$, we infer that
\begin{align}\label{bound on the term with S}
\frac{2\|S\|_{L^\infty}}{(\xi^2+|h|^2)^\frac{\alpha}{2}}v\le\frac{\kappa|v|^{2+\gamma}}{8c_2\|\theta\|^\gamma_{L^\infty}(\xi^2+|h|^2)^\frac{\gamma(1-\alpha)(2+\gamma)}{6}}+C\|S\|^\frac{2+\gamma}{1+\gamma}_{L^\infty}\kappa^{-\frac{1}{1+\gamma}}\|\theta\|^{\frac{\gamma}{1+\gamma}}_{L^\infty}.
\end{align}
Similarly, for the term $2\alpha\frac{|h|}{\xi^2+|h|^2}|\Dh u|v^2$, we apply Young's inequality again to obtain
\begin{align}\label{bound on the term with nabla u}
2\alpha\frac{|h|}{\xi^2+|h|^2}|\Dh u|v^2&\le2\alpha\frac{|h|^2}{\xi^2+|h|^2}\|\nabla u\|_{L^\infty}v^2\notag\\
&\le \frac{\kappa|v|^{2+\gamma}}{8c_2\|\theta\|^\gamma_{L^\infty}(\xi^2+|h|^2)^\frac{\gamma(1-\alpha)(2+\gamma)}{6}}+C\|\nabla v\|^\frac{2+\gamma}{2}_{L^\infty}\kappa^{-\frac{2}{\gamma}}\|\theta\|^2_{L^\infty}.
\end{align}
Apply the bounds \eqref{lower bound for Dh alpha}, \eqref{bound on the term with derivative on xi}, \eqref{bound on the term with S} and \eqref{bound on the term with nabla u} on \eqref{ineq for v with operator Lh}, we obtain
\begin{align*}
&\Lh v^2+\kappa\cdot\frac{D[\Dh\theta(x,t)]}{(\xi^2+|h|^2)^\alpha}+\frac{\kappa|v|^{2+\gamma}}{8c_2\|\theta\|^\gamma_{L^\infty}(\xi^2+|h|^2)^\frac{\gamma(1-\alpha)(2+\gamma)}{6}}\\
&\le C\Big[\kappa^{-\frac{2}{\gamma}}\|\theta\|^2_{L^\infty}+\|S\|^\frac{2+\gamma}{1+\gamma}_{L^\infty}\kappa^{-\frac{1}{1+\gamma}}\|\theta\|^{\frac{\gamma}{1+\gamma}}_{L^\infty}+\|\nabla v\|^\frac{2+\gamma}{2}_{L^\infty}\kappa^{-\frac{2}{\gamma}}\|\theta\|^2_{L^\infty}\Big].
\end{align*}
Applying the bound \eqref{infty bound on theta using K for kappa>0} on $\|\theta\|_{L^\infty}$, we have
\begin{align*}
\|\theta\|_{L^\infty}\le\K,
\end{align*}
and following the estimate \eqref{L infty bound on nabla u} on $\Lambda u$, we further obtain
\begin{align}\label{L infty bound on nabla u general}
\|\nabla u\|_{L^\infty}\le C\|\theta\|_{L^\infty}\le C\K.
\end{align}
Together with the fact that $\|S\|_{L^\infty}\le c_0\kappa\K$, it implies that
\begin{align*}
\Lh v^2+\frac{\kappa|v|^{2+\gamma}}{8c_2\|\theta\|^\gamma_{L^\infty}(\xi^2+|h|^2)^\frac{\gamma(1-\alpha)(2+\gamma)}{6}}\le C\bK^2,
\end{align*}
where $\bK$ is defined in \eqref{def of K and bar K}. Since $(\xi^2+|h|^2)^\frac{\gamma(1-\alpha)(2+\gamma)}{6}\le 2^\frac{\gamma(1-\alpha)(2+\gamma)}{6}\le 4$, we conclude that
\begin{align}\label{ineq for v with operator Lh step 2}
\Lh v^2+\frac{\kappa|v|^{2+\gamma}}{32c_2\K^\gamma}\le C\bK^2.
\end{align}
Take $\psi(t)=\|v(t)\|^2_{L^\infty_{x,h}}$, then because of \eqref{ineq for v with operator Lh step 2}, $\psi$ satisfies the following differential inequality
\begin{align}\label{diff ineq for psi}
\frac{d}{dt}\psi+\frac{\kappa\psi^{1+\frac{\gamma}{2}}}{32c_2\K^\gamma}\le C\bK^2.
\end{align}
Moreover, $\psi(0)$ satisfies the bound given by
\begin{align*}
\psi(0)\le\frac{4\|\theta_0\|^2_{L^\infty}}{\xi(0)^{2\alpha}}\le 4\K^2,
\end{align*}
and hence it follows from \eqref{diff ineq for psi} that 
\begin{align}\label{bound on psi all time}
\psi(t)\le C(\K+\bK),\qquad\forall t\ge0,
\end{align}
for some sufficiently large constant $C>0$. in view of \eqref{bound on psi all time}, we prove that 
\begin{align}
[\theta(t)]^2_{C^\alpha}\le C(\K+\bK),\qquad \forall t\ge t_{\alpha},
\end{align}
where $t_{\alpha}$ is given by \eqref{def of t alpha}. Finally, together with the bound \eqref{infty bound on theta using K for kappa>0}, we obtain
\begin{align}
\|\theta(t)\|_{C^\alpha}=\|\theta(t)\|_{L^\infty}+[\theta(t)]^2_{C^\alpha}\le C(\K+\bK),\qquad \forall t\ge t_{\alpha},
\end{align}
which finishes the proof of Lemma~\ref{Estimates in C alpha-space lemma}.
\end{proof}

\begin{remark}
Given $\alpha\in(0,\frac{\gamma}{3+\gamma}]$, if we further assume that $\theta_0\in C^{\alpha}$, then following the similar argument given in the proof of Lemma~\ref{Estimates in C alpha-space lemma} as shown above (also see \cite{CTV14} for reference), we have
\begin{align}\label{bound C alpha norm of theta for all time}
\|\theta(t)\|_{C^\alpha}\le [\theta_0]_{C^\alpha}+C(\K+\bK),\qquad\forall t\ge0.
\end{align}
\end{remark}

With the help of Lemma~\ref{Estimates in C alpha-space lemma}, we obtain the following result which can be regarded as an improvement of the regularity of the absorbing set $B_{\infty}$ defined in Lemma~\ref{Existence of an L infty absorbing set lemma}. 

\begin{lemma}[Existence of an $C^\alpha$-absorbing set]\label{Existence of an C alpha absorbing set lemma}
There exists $\alpha\in(0,\frac{\gamma}{3+\gamma}]$ and a constant $C_{\alpha}=C_{\alpha}(\|S\|_{L^\infty},\alpha,\nu,\kappa,\gamma,\K,\bK)\ge1$ such that the set
\begin{align*}
B_{\alpha}=\left\{\phi\in C^{\alpha}\cap H^1:\|\phi\|_{C^\alpha}\le C_{\alpha}\right\}
\end{align*}
is an absorbing set for $\pin(t)$. Moreover, we have
\begin{align}\label{bound on solution map in C alpha}
\sup_{t\ge0}\sup_{\theta_0\in B_{\alpha}}\|\pin(t)\theta_0\|_{C^\alpha}\le 2C_{\alpha}.
\end{align}
\end{lemma}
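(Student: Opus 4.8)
The plan is to combine the two preceding lemmas: the $L^\infty$-absorbing set $B_\infty$ of Lemma~\ref{Existence of an L infty absorbing set lemma} supplies a region whose forward orbit stays trapped in an $L^\infty$-ball of radius $\tfrac{3}{c_0\kappa}\|S\|_{L^\infty}$, and the gain of $C^\alpha$-regularity from Lemma~\ref{Estimates in C alpha-space lemma} then upgrades $B_\infty$ to a $C^\alpha$-bounded absorbing set. The decisive observation is that, once a trajectory has entered $B_\infty$, the quantities $\K$ and $\bK$ in Lemma~\ref{Estimates in C alpha-space lemma} cease to depend on the trajectory: by \eqref{bound on solution map in L infty} we may replace $\|\theta_0\|_{L^\infty}$ everywhere by $\tfrac{3}{c_0\kappa}\|S\|_{L^\infty}$, so that $\K$, $\bK$ and all ensuing constants depend only on $\|S\|_{L^\infty}$, $\nu$, $\kappa$, $\gamma$ and $d$.

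Concretely, I would first fix $\alpha\in(0,\tfrac{\gamma}{3+\gamma}]$ by applying Lemma~\ref{Estimates in C alpha-space lemma} to data with $\|\theta_0\|_{L^\infty}\le\tfrac{3}{c_0\kappa}\|S\|_{L^\infty}$, and then set $C_\alpha:=\max\{1,\,C(\K+\bK)\}$ with $C$, $\K$, $\bK$ frozen at this $L^\infty$ level. This defines $B_\alpha$ and already records $C_\alpha\ge1$ with the asserted dependence on $\|S\|_{L^\infty}$, $\alpha$, $\nu$, $\kappa$, $\gamma$, $\K$, $\bK$.

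For the absorbing property I take an arbitrary bounded $B\subset H^1$. By Lemma~\ref{Existence of an L infty absorbing set lemma} there is $t_B\ge0$ with $\pin(t_B)\theta_0\in B_\infty$ for every $\theta_0\in B$. Since \eqref{abstract active scalar eqn} is autonomous, $\pin$ is a semigroup and $\pin(t)\theta_0=\pin(t-t_B)\big(\pin(t_B)\theta_0\big)$; applying Lemma~\ref{Estimates in C alpha-space lemma} to the datum $\pin(t_B)\theta_0\in B_\infty\subset L^\infty$, and noting that $t_\alpha$ depends only on $\alpha$ and $\gamma$, I obtain $\|\pin(t)\theta_0\|_{C^\alpha}\le C(\K+\bK)\le C_\alpha$ for all $t\ge t_B+t_\alpha$. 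As $\pin(t)\theta_0$ stays in $H^1$ by Theorem~\ref{global-in-time wellposedness in Sobolev}, this gives $\pin(t)B\subseteq B_\alpha$ for $t\ge t_B+t_\alpha$, so $B_\alpha$ is absorbing.

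Finally, for the uniform bound \eqref{bound on solution map in C alpha} I would invoke the all-time estimate \eqref{bound C alpha norm of theta for all time}: for data lying in the trapped $L^\infty$-layer one has $\|\theta(t)\|_{C^\alpha}\le[\theta_0]_{C^\alpha}+C(\K+\bK)$ for all $t\ge0$, where by \eqref{bound on solution map in L infty} the constants $\K,\bK$ remain at their $B_\infty$ values along the entire orbit. Bounding $[\theta_0]_{C^\alpha}\le\|\theta_0\|_{C^\alpha}\le C_\alpha$ and $C(\K+\bK)\le C_\alpha$ then yields $\sup_{t\ge0}\|\pin(t)\theta_0\|_{C^\alpha}\le 2C_\alpha$. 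The main obstacle is precisely this freezing of $\K$ and $\bK$: the estimate of Lemma~\ref{Estimates in C alpha-space lemma} is uniform only because the $L^\infty$ size along the orbit is controlled a priori by $B_\infty$ rather than by the (much larger) $C^\alpha$-radius $C_\alpha$. Were one to feed the raw $C^\alpha$ size back into $\K,\bK$, the forcing $\bK\sim\K^{(6+\gamma)/4}$ would grow superlinearly and the factor-$2$ bound would fail; keeping every estimate anchored to the $L^\infty$ layer $B_\infty$ is therefore the crux of the argument.
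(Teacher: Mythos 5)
Your proposal follows essentially the same route as the paper's own proof: reduce the problem to showing that $B_\infty$ is absorbed by $B_\alpha$, freeze $\K$ and $\bK$ at the level $\frac{3\|S\|_{L^\infty}}{c_0\kappa}$ supplied by \eqref{bound on solution map in L infty}, define $C_\alpha$ from these frozen quantities, absorb after the fixed time $t_\alpha$, and obtain \eqref{bound on solution map in C alpha} from the all-time estimate \eqref{bound C alpha norm of theta for all time}. Your write-up is if anything slightly more explicit than the paper's (spelling out the semigroup/transitivity step and the factor-$2$ bookkeeping), but the argument, the lemmas invoked, and even the mild looseness in the final uniform bound are the same.
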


\begin{proof}
It is enough to prove that the $L^\infty$-absorbing set $B_\infty$ given in Lemma~\ref{Existence of an L infty absorbing set lemma} is itself absorbed by $B_\alpha$. Take $\theta_0\in B_{\infty}$, then by using \eqref{bound on solution map in L infty}, we have
\begin{align*}
\|\pin(t)\theta_0\|_{L^\infty}\le\frac{3\|S\|_{L^\infty}}{c_0\kappa}\qquad\forall t\ge0.
\end{align*}
Therefore, if we define $C_\alpha$ by
\begin{align*}
&C_\alpha:=\max\left\{1,\frac{3\|S\|_{L^\infty}}{c_0\kappa}+\kappa^{-\frac{1}{\gamma}}\Big(\frac{3\|S\|_{L^\infty}}{c_0\kappa}\Big)\right.\\
&\qquad\qquad\qquad\left.+\|S\|^\frac{2+\gamma}{2(1+\gamma)}_{L^\infty}\kappa^{-\frac{1}{2(1+\gamma)}}\Big(\frac{3\|S\|_{L^\infty}}{c_0\kappa}\Big)^{\frac{\gamma}{2(1+\gamma)}}+\kappa^{-\frac{1}{\gamma}}\Big(\frac{3\|S\|_{L^\infty}}{c_0\kappa}\Big)^{\frac{6+\gamma}{4}}\right\}
\end{align*}
then $\pin(t)\theta_0\in B_{\alpha}$ for all $t\ge t_\alpha$. Since $t_\alpha$ only depends on $\kappa$, $\gamma$ and $\|S\|_{L^\infty}$, we conclude that $\pin(t)B_{\infty}\subset B_{\alpha}$ for all $t\ge t_{\alpha}$. Finally, the bound \eqref{bound on solution map in C alpha} follows immediately from \eqref{bound C alpha norm of theta for all time} and our choice of $C_\alpha$, and we finish the proof.
\end{proof}

We now proceed to prove the existence of a bounded absorbing set in $H^1$:

\begin{lemma}[Existence of an $H^1$-absorbing set]\label{Existence of an H1 absorbing set lemma}
There exists $\alpha\in(0,\frac{\gamma}{3+\gamma}]$ and a constant $R_1=R_1(\|S\|_{L^\infty\cap H^1},\alpha,\nu,\kappa,\gamma)\ge1$ such that the set
\begin{align*}
B_1=\{\phi\in C^{\alpha}\cap H^1:\|\phi\|^2_{H^1}+\|\phi\|^2_{C^\alpha}\le R^2_{1}\}
\end{align*}
is an absorbing set for $\pin(t)$. Moreover, we have
\begin{align}\label{bound on time integral on theta in H 1+gamma/2}
\sup_{t\ge0}\sup_{\theta_0\in B_1}\left[\|\pin(t)\theta_0\|^2_{H^1}+\|\pin(t)\theta_0\|^2_{C^\alpha}+\int_t^{t+1}\|\pin(\tau)\theta_0\|^2_{H^{1+\frac{\gamma}{2}}}d\tau\right]\le 2R_1^2.
\end{align}
\end{lemma}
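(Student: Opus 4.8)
The plan is to promote the $C^\alpha$-absorbing set $B_\alpha$ produced by Lemma~\ref{Existence of an C alpha absorbing set lemma} to a set that is in addition bounded in $H^1$. Since every bounded subset of $H^1$ is first absorbed into $B_\infty$ by Lemma~\ref{Existence of an L infty absorbing set lemma} and then into $B_\alpha$, it suffices to show that for any $\theta_0\in B_\alpha$ the orbit $\pin(t)\theta_0$ enters a fixed $H^1$-ball (whose radius depends only on $\|S\|_{L^\infty\cap H^1}$, $\nu$, $\kappa$, $\gamma$) after a finite time. Throughout I would use the uniform bound $\|\theta(\cdot,t)\|_{L^\infty}\le\K$ from \eqref{infty bound on theta using K for kappa>0}, where for $\theta_0\in B_\alpha\subset B_\infty$ the quantity $\K$ is itself controlled by $\|S\|_{L^\infty}$ and $\kappa$ alone.

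First I would test \eqref{abstract active scalar eqn} with $\Lambda^2\theta=-\Delta\theta$ to obtain
\begin{align*}
\frac12\frac{d}{dt}\|\Lambda\theta\|_{L^2}^2+\kappa\|\Lambda^{1+\frac\gamma2}\theta\|_{L^2}^2=-\langle u\cdot\nabla\theta,\Lambda^2\theta\rangle+\langle S,\Lambda^2\theta\rangle.
\end{align*}
Using $\divv u=0$ and one integration by parts, the convective term equals $\intox(\partial_k u_j)(\partial_j\theta)(\partial_k\theta)$, so by the bound \eqref{L infty bound on nabla u general} it is controlled by $C\|\nabla u\|_{L^\infty}\|\Lambda\theta\|_{L^2}^2\le C\K\|\Lambda\theta\|_{L^2}^2$. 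The forcing term I would split as $\langle\Lambda^{1-\frac\gamma2}S,\Lambda^{1+\frac\gamma2}\theta\rangle$ and absorb via Young's inequality into $\tfrac\kappa8\|\Lambda^{1+\frac\gamma2}\theta\|_{L^2}^2+C_\kappa\|S\|_{H^1}^2$, which is legitimate because $\|\Lambda^{1-\frac\gamma2}S\|_{L^2}\le\|S\|_{H^1}$ for mean-zero $S$.

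The key step is to defeat the positive feedback $C\K\|\Lambda\theta\|_{L^2}^2$ using the dissipation. By the interpolation inequality $\|\Lambda\theta\|_{L^2}^2\le\varepsilon\|\Lambda^{1+\frac\gamma2}\theta\|_{L^2}^2+C_\varepsilon\|\Lambda^{\frac\gamma2}\theta\|_{L^2}^2$ (Gagliardo--Nirenberg followed by Young), choosing $\varepsilon$ so that $C\K\varepsilon\le\kappa/8$ and invoking $\|\Lambda^{1+\frac\gamma2}\theta\|_{L^2}^2\ge\|\Lambda\theta\|_{L^2}^2$ (valid for mean-zero functions since $|k|\ge1$), I would arrive at a differential inequality of the form
\begin{align*}
\frac{d}{dt}\|\Lambda\theta\|_{L^2}^2+\frac\kappa4\|\Lambda\theta\|_{L^2}^2+\frac\kappa4\|\Lambda^{1+\frac\gamma2}\theta\|_{L^2}^2\le H(t),\qquad H(t):=C'\|\Lambda^{\frac\gamma2}\theta\|_{L^2}^2+C\|S\|_{H^1}^2,
\end{align*}
where $C'$ depends on $\K,\kappa,\gamma$. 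The running integral $\int_t^{t+1}H$ is bounded uniformly in $t$: integrating the $L^2$-energy identity \eqref{energy inequality kappa>0} over $[t,t+1]$ and using $\|\theta(\cdot,t)\|_{L^2}\le C\K$ gives $\int_t^{t+1}\|\Lambda^{\frac\gamma2}\theta\|_{L^2}^2\,d\tau\le C_2$. A standard Gronwall argument then forces $\|\Lambda\theta(\cdot,t)\|_{L^2}^2$ below a universal bound $R_1^2/2$ for all large $t$, and integrating the displayed inequality once more over $[t,t+1]$ yields the companion time-average control of $\|\theta\|_{H^{1+\gamma/2}}^2$ in \eqref{bound on time integral on theta in H 1+gamma/2}. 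Defining $B_1$ with $R_1$ large enough to also contain the $C^\alpha$-bound from Lemma~\ref{Existence of an C alpha absorbing set lemma} completes the construction.

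The main obstacle is precisely that the nonlinear term produces the sign-indefinite (effectively destabilizing) contribution $C\K\|\Lambda\theta\|_{L^2}^2$ rather than something directly absorbable; the resolution is the interpolation-plus-Young maneuver above, which trades it against the supercritical dissipation $\|\Lambda^{1+\frac\gamma2}\theta\|_{L^2}^2$ at the cost of the subcritical quantity $\|\Lambda^{\frac\gamma2}\theta\|_{L^2}^2$, the latter being exactly what the basic $L^2$ energy budget controls on average. I note that the estimate $\|\nabla u\|_{L^\infty}\le C\K$ relies on the order-$2$ smoothing of $\partial_{x_i}T_{ij}^\nu$ available only for $\nu>0$, which is why $R_1$ is permitted to depend on $\nu$.
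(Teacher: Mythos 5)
Your proposal is correct, but it takes a genuinely different route from the paper. The paper works pointwise: it derives an evolution identity for $|\nabla\theta|^2$ involving the nonlocal dissipation functional $D[\nabla\theta]$, and its key ingredient is the Constantin--Vicol nonlinear maximum principle \cite{CV12}, which yields $D[\nabla\theta]\ge |\nabla\theta|^{2+\frac{\gamma}{1-\alpha}}/\bigl(C[\theta]_{C^\alpha}^{\frac{\gamma}{1-\alpha}}\bigr)$; the $C^\alpha$-absorbing set of Lemma~\ref{Existence of an C alpha absorbing set lemma} is therefore \emph{essential} in that argument, because this superquadratic dissipation is what beats the quadratic convective term via Young's inequality, after which Poincar\'e and Gr\"onwall give decay. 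You instead run a standard $H^1$ energy estimate and defeat the convective contribution $C\K\|\Lambda\theta\|_{L^2}^2$ by interpolating $\|\Lambda\theta\|_{L^2}^2$ between $\|\Lambda^{\gamma/2}\theta\|_{L^2}^2$ and the dissipation $\|\Lambda^{1+\gamma/2}\theta\|_{L^2}^2$, paying with a term whose sliding-window time integrals are controlled by the basic $L^2$ energy budget \eqref{energy inequality kappa>0}, and then conclude with a uniform-Gr\"onwall argument. Your route is more elementary: it never uses $C^\alpha$ regularity for the $H^1$ part, only the $L^\infty$ absorbing set of Lemma~\ref{Existence of an L infty absorbing set lemma} together with $\|\nabla u\|_{L^\infty}\le C\|\theta\|_{L^\infty}$ from \eqref{L infty bound on nabla u general} --- and, as you correctly note, that bound is precisely where $\nu>0$ (the order-two smoothing of assumption A3) enters. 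What the paper's heavier scheme buys is robustness: it is modelled on the critical SQG blueprint of \cite{CTV14}, \cite{CZV16} and survives when the constitutive law has no smoothing (e.g.\ $\nu=0$, $\gamma=1$), where $\|\nabla u\|_{L^\infty}$ cannot be controlled by $\|\theta\|_{L^\infty}$ and only the H\"older route works. Both arguments still need Lemma~\ref{Existence of an C alpha absorbing set lemma} for the $C^\alpha$ component in the definition of $B_1$ and in \eqref{bound on time integral on theta in H 1+gamma/2}, which you invoke at the end.

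Two bookkeeping points, at the same level of implicitness as the paper's own write-up rather than genuine gaps. First, your reduction ``it suffices to consider $\theta_0\in B_\alpha$'' needs rephrasing: $B_\alpha$ is not $H^1$-bounded, so entry times cannot be uniform over $B_\alpha$; absorption only needs to be uniform over $H^1$-bounded sets $B$, and for those the $H^1$ norm at the moment of absorption into $B_\infty$ (the quantity $y(0)$ when you restart the clock for Gr\"onwall) is finite and controlled by the global-in-time estimates of Theorem~\ref{global-in-time wellposedness in Sobolev}, so the absorption time is uniform over $B$. Second, the specific constant $2R_1^2$ in \eqref{bound on time integral on theta in H 1+gamma/2} requires choosing $R_1$ large enough to dominate the $\kappa^{-1}$-weighted bound obtained by integrating your differential inequality over $(t,t+1)$; this is a final enlargement of $R_1$, exactly as in the paper.
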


\begin{proof}
As pointed out in \cite{CZV16}, it is enough to establish an {\it a priori} estimate for initial data in $H^1\cap C^\alpha$. Suppose that $\theta_0\in H^1\cap C^{\alpha}$. We apply $\nabla$ to \eqref{abstract active scalar eqn}$_1$ and take the inner product with $\nabla\theta$ to obtain
\begin{align}\label{eqn for nabla u with D[nabla theta]}
(\dt+u\cdot\nabla+\kappa\Lambda^\gamma)|\nabla\theta|^2+\kappa D[\nabla\theta]=-2\partial_{x_l} u_j\partial_{x_j}\theta\partial_{x_j}\theta+2\nabla S\cdot\nabla\theta,
\end{align}
where $D[\nabla\theta]$ is given by
\begin{align*}
D[\nabla\theta](x)=\int_{\R^d}\frac{[\nabla\theta(x)-\nabla\theta(x+y)]^2}{|y|^{d+\gamma}}dy.
\end{align*}
Since by \eqref{bound on solution map in C alpha}, we have $\|\theta(t)\|_{C^\alpha}\le 2C_\alpha$ where $C_\alpha$ is defined in Lemma~\ref{Existence of an C alpha absorbing set lemma}, hence we can apply \cite[Theorem~2.2]{CV12} to obtain
\begin{align}\label{lower bound for D[nabla theta]}
D[\nabla\theta]\ge\frac{|\nabla\theta|^{2+\frac{\gamma}{1-\alpha}}}{C[\theta]^\frac{\gamma}{1-\alpha}_{C^\alpha}}\ge\frac{|\nabla\theta|^{2+\frac{\gamma}{1-\alpha}}}{C(2C_\alpha)^\frac{\gamma}{1-\alpha}}.
\end{align}
Also, using Young's inequality, we have
\begin{align}\label{bound on terms involving nabla u}
|-2\partial_{x_l} u_j\partial_{x_j}\theta\partial_{x_j}\theta|\le \frac{\kappa}{4}\frac{|\nabla\theta|^{2+\frac{\gamma}{1-\alpha}}}{C(2C_\alpha)^\frac{\gamma}{1-\alpha}}+\Big(\frac{4C}{\kappa}\Big)^\frac{2-2\alpha}{\gamma}(2C_\alpha)^2|\nabla u|^{1+\frac{2(1-\alpha)}{\gamma}}.
\end{align}
We apply \eqref{lower bound for D[nabla theta]} and \eqref{bound on terms involving nabla u} on \eqref{eqn for nabla u with D[nabla theta]} to get
\begin{align}\label{ineq for nabla u with D[nabla theta]}
&(\dt+u\cdot\nabla+\kappa\Lambda^\gamma)|\nabla\theta|^2+\frac{\kappa}{2}D[\nabla\theta]\notag\\&\le\Big(\frac{4C}{\kappa}\Big)^\frac{2-2\alpha}{\gamma}(2C_\alpha)^2|\nabla u|^{1+\frac{2(1-\alpha)}{\gamma}}+2|\nabla S||\nabla \theta|.
\end{align}
Integrate \eqref{ineq for nabla u with D[nabla theta]} over $\T^d$ and using the identity
$\dis\frac{1}{2}\intox D[\nabla \theta](x)dx=\|\theta\|^2_{H^{1+\frac{\gamma}{2}}},$
we obtain
\begin{align}\label{ineq for nabla u with D[nabla theta] final}
&\frac{d}{dt}\|\theta\|^2_{H^1}+\frac{\kappa}{2}\|\theta\|^2_{H^{1+\frac{\gamma}{2}}}\notag\\
&\le\Big(\frac{4C}{\kappa}\Big)^\frac{2-2\alpha}{\gamma}(2C_\alpha)^2\intox|\nabla u|^{1+\frac{2(1-\alpha)}{\gamma}}+2\|S\|_{H^1}\|\theta\|_{H^1}\notag\\
&\le\Big(\frac{4C}{\kappa}\Big)^\frac{2-2\alpha}{\gamma}(2C_\alpha)^2\|\nabla u\|^{1+\frac{2(1-\alpha)}{\gamma}}_{L^\infty}+2\|S\|_{H^1}\|\theta\|_{H^1}\notag\\
&\le\Big(\frac{4C}{\kappa}\Big)^\frac{2-2\alpha}{\gamma}(2C_\alpha)^2\K^{1+\frac{2(1-\alpha)}{\gamma}}+\frac{8}{c_{\gamma,d}\kappa}\|S\|^2_{H^1}+\frac{\kappa}{4}\|\theta\|^2_{H^{1+\frac{\gamma}{2}}},
\end{align}
where the last inequality follows by the bound \eqref{L infty bound on nabla u general} and Young's inequality, and $c_{\gamma,d}>0$ is the dimensional constant for which it satisfies the bound
\begin{align}\label{def of c gamma d}
\|\theta\|^2_{H^1}\le(c_{d,\gamma})^{-1}\|\theta\|^2_{H^{1+\frac{\gamma}{2}}}.
\end{align}
If we choose $K_1\ge1$ such that
\begin{align*}
K_1=\frac{8}{c_{\gamma,d}\kappa}\Big[\Big(\frac{4C}{\kappa}\Big)^\frac{2-2\alpha}{\gamma}(2C_\alpha)^2\K^{1+\frac{2(1-\alpha)}{\gamma}}+\frac{8}{c_{\gamma,d}\kappa}\|S\|^2_{H^1}\Big],
\end{align*}
then by applying Gr\"{o}nwall's inequality on \eqref{ineq for nabla u with D[nabla theta] final}, we conclude that
\begin{align}\label{bound on H^1 in terms of initial H^1 data}
\|\theta(t)\|^2_{H^1}\le\|\theta_0\|^2_{H^1}e^{-\frac{c_{\gamma,d}}{8}\kappa t}+K_1.
\end{align}
Now we define $R_1=2K_1$, then it is straight forward to see that the set $B_1$ is an absorbing set for $\pin(t)$. Moreover, upon integrating \eqref{ineq for nabla u with D[nabla theta] final} on the time interval $(t,t+1)$ and applying \eqref{bound on H^1 in terms of initial H^1 data}, we further obtain \eqref{bound on time integral on theta in H 1+gamma/2} and the proof is complete.
\end{proof}

Once Lemma~\ref{Existence of an H1 absorbing set lemma} is established, we can further improve the regularity of the absorbing set $B_1$ to $H^{1+\frac{\gamma}{2}}$, which is illustrated in the next lemma. 

\begin{lemma}[Existence of an $H^{1+\frac{\gamma}{2}}$-absorbing set]\label{Existence of an H 1+gamma/2 absorbing set lemma}
There exists a constant $R_{1+\frac{\gamma}{2}}\ge1$ which depends on $\|S\|_{L\infty\cap H^1}$, $\nu$, $\kappa$, $\gamma$ such that the set
\begin{align*}
B_{1+\frac{\gamma}{2}}=\left\{\phi\in H^{1+\frac{\gamma}{2}}:\|\phi\|_{H^{1+\frac{\gamma}{2}}}\le R_{1+\frac{\gamma}{2}}\right\}
\end{align*}
is an absorbing set for $\pin(t)$. Moreover
\begin{align}\label{bound on theta in H 1+gamma/2}
\sup_{t\ge0}\sup_{\theta_0\in B_{1+\frac{\gamma}{2}}}\|\pin(t)\theta_0\|_{H^{1+\frac{\gamma}{2}}}\le 2R_{1+\frac{\gamma}{2}}.
\end{align}
\end{lemma}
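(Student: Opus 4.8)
The plan is to upgrade the $H^1$-bound of Lemma~\ref{Existence of an H1 absorbing set lemma} to an $H^{1+\frac{\gamma}{2}}$-bound by combining a single energy estimate at the regularity level $s:=1+\frac{\gamma}{2}$ with the uniform Gr\"onwall lemma. Since $B_1$ already absorbs every bounded subset of $H^1$, it suffices to show that $B_1$ is itself absorbed into a bounded ball of $H^{1+\frac{\gamma}{2}}$; I would therefore assume throughout that $\theta_0\in B_1\subset C^\alpha\cap H^1\subset L^\infty$, so that both the pointwise bound \eqref{infty bound on theta using K for kappa>0} and the time-average bound \eqref{bound on time integral on theta in H 1+gamma/2} are at my disposal.

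First I would multiply \eqref{abstract active scalar eqn} by $\Lambda^{2s}\theta$ and integrate over $\T^d$, exactly as in the proof of Theorem~\ref{global-in-time wellposedness in Sobolev}, to obtain
\begin{align*}
\frac{1}{2}\frac{d}{dt}\|\Lambda^{s}\theta\|^2_{L^2}+\kappa\|\Lambda^{s+\frac{\gamma}{2}}\theta\|^2_{L^2}\le\Big|\intox S\Lambda^{2s}\theta\Big|+\Big|\intox u\cdot\nabla\theta\,\Lambda^{2s}\theta\Big|.
\end{align*}
For the forcing I would split $\Lambda^{2s}=\Lambda^{s-\frac{\gamma}{2}}\Lambda^{s+\frac{\gamma}{2}}$ and use self-adjointness; since $s-\frac{\gamma}{2}=1$, this gives $\big|\intox S\Lambda^{2s}\theta\big|\le\|S\|_{H^1}\|\Lambda^{s+\frac{\gamma}{2}}\theta\|_{L^2}$, which is precisely where the hypothesis $S\in H^1$ is exactly enough, and Young's inequality absorbs $\frac{\kappa}{2}\|\Lambda^{s+\frac{\gamma}{2}}\theta\|^2_{L^2}$ into the dissipation at the cost of $\frac{1}{2\kappa}\|S\|^2_{H^1}$. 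The convective term is controlled by the estimate \eqref{estimate on convective term step 3}, which (through the commutator estimate \eqref{commutator estimate} and the two-order smoothing \eqref{two order smoothing for u when nu>0}, valid since $\nu>0$) yields $\big|\intox u\cdot\nabla\theta\,\Lambda^{2s}\theta\big|\le C\|\theta\|_{L^\infty}\|\Lambda^{s}\theta\|^2_{L^2}$. Replacing $\|\theta\|_{L^\infty}$ by $\K$ via \eqref{infty bound on theta using K for kappa>0}, I arrive at
\begin{align*}
\frac{d}{dt}\|\Lambda^{s}\theta\|^2_{L^2}+\kappa\|\Lambda^{s+\frac{\gamma}{2}}\theta\|^2_{L^2}\le C\K\|\Lambda^{s}\theta\|^2_{L^2}+\frac{1}{\kappa}\|S\|^2_{H^1},
\end{align*}
in which both the coefficient $C\K$ and the inhomogeneity $\frac{1}{\kappa}\|S\|^2_{H^1}$ are \emph{constants} independent of $t$.

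Setting $y(t):=\|\Lambda^{s}\theta(\cdot,t)\|^2_{L^2}$ and discarding the nonnegative dissipation term, the above reads $\dot y\le (C\K)\,y+\frac{1}{\kappa}\|S\|^2_{H^1}$. The time-average bound \eqref{bound on time integral on theta in H 1+gamma/2} supplies $\int_t^{t+1}y(\tau)\,d\tau\le 2R_1^2$ for all $t\ge0$, so I would invoke the uniform Gr\"onwall lemma (Temam \cite{T01}): its hypotheses hold with $\int_t^{t+1}C\K\,d\tau=C\K$ and $\int_t^{t+1}\frac{1}{\kappa}\|S\|^2_{H^1}\,d\tau=\frac{1}{\kappa}\|S\|^2_{H^1}$, and it gives $y(t+1)\le\big(2R_1^2+\frac{1}{\kappa}\|S\|^2_{H^1}\big)e^{C\K}$ for all $t\ge0$. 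Since $\theta$ is mean-zero, the Poincar\'e inequality \eqref{zero mean assumption} turns this homogeneous bound into control of the full norm $\|\theta(\cdot,t)\|_{H^{1+\frac{\gamma}{2}}}$, and this fixes a radius $R_{1+\frac{\gamma}{2}}$ depending only on $\|S\|_{L^\infty\cap H^1}$, $\nu$, $\kappa$, $\gamma$ for which $\pin(t)\theta_0\in B_{1+\frac{\gamma}{2}}$ for all sufficiently large $t$; re-running the same argument with $\theta_0\in B_{1+\frac{\gamma}{2}}$ then yields the invariant bound \eqref{bound on theta in H 1+gamma/2}.

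The only genuine obstacle is that Lemma~\ref{Existence of an H1 absorbing set lemma} provides merely a \emph{time-averaged} control of $\|\theta\|_{H^{1+\frac{\gamma}{2}}}$ rather than a pointwise-in-time one, so a naive application of Gr\"onwall's inequality to the differential inequality above would be circular. The uniform Gr\"onwall lemma is precisely the device that converts the integral bound \eqref{bound on time integral on theta in H 1+gamma/2} into the desired pointwise bound, and the heart of the argument is verifying its hypotheses---namely the constancy of the coefficient $C\K$ and the finiteness of $\int_t^{t+1}y$. Everything else is a repetition of estimates already established for Theorem~\ref{global-in-time wellposedness in Sobolev}, the $\nu>0$ smoothing being what keeps the convective term at the clean, $\K$-controlled form.
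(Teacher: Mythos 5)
Your proof is correct and follows essentially the same route as the paper's: derive the energy inequality at the level $H^{1+\frac{\gamma}{2}}$ exactly as in Theorem~\ref{global-in-time wellposedness in Sobolev}, with the convective term bounded by $C\K\|\Lambda^{1+\frac{\gamma}{2}}\theta\|_{L^2}^{2}$ via \eqref{estimate on convective term step 3} and \eqref{infty bound on theta using K for kappa>0}, and then feed the time-averaged bound \eqref{bound on time integral on theta in H 1+gamma/2} for data in $B_1$ into the uniform Gr\"onwall lemma to conclude that $B_1$ is absorbed into an $H^{1+\frac{\gamma}{2}}$-ball.

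The one genuine difference is your treatment of the forcing term, and it is an improvement. The paper estimates this term by $2\|\Lambda^{1+\frac{\gamma}{2}}S\|_{L^2}\|\Lambda^{1+\frac{\gamma}{2}}\theta\|_{L^2}$, which tacitly requires $S\in H^{1+\frac{\gamma}{2}}$ and makes the radius $R_{1+\frac{\gamma}{2}}$ depend on $\|S\|_{H^{1+\frac{\gamma}{2}}}$, even though the lemma (and Theorem~\ref{existence of global attractor theorem}) assumes only $S\in L^\infty\cap H^1$. Your integration by parts, $\intox S\Lambda^{2+\gamma}\theta=\intox\Lambda S\,\Lambda^{1+\gamma}\theta$, followed by Young's inequality to absorb $\|\Lambda^{1+\gamma}\theta\|_{L^2}$ into the dissipation, uses only $\|S\|_{H^1}$, so your constant is consistent with the stated hypotheses; this removes a (minor) inconsistency in the paper's own argument. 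One cosmetic slip: the inequality you invoke for mean-zero functions is the Poincar\'e inequality, not \eqref{zero mean assumption} itself, which is merely the mean-zero property. Finally, both you and the paper are equally terse about the supremum over small times $t\in[0,1)$ in \eqref{bound on theta in H 1+gamma/2}; a short direct Gr\"onwall estimate for initial data in $B_{1+\frac{\gamma}{2}}$ closes this, so your proof is no less complete than the original on that point.
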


\begin{proof}
Similar to the previous cases, it is enough to show that $B_{1+\frac{\gamma}{2}}$ absorbs the $H^1$-absorbing set $B_1$ obtained in Lemma~\ref{Existence of an H1 absorbing set lemma}. Suppose $\theta_0\in B_{1}$, then from \eqref{bound on time integral on theta in H 1+gamma/2}, we have
\begin{align}\label{bound on time integral for theta in H 1+gamma/2 from t to t+1}
\sup_{t\ge0}\int_{t}^{t+1}\|\pin(\tau)\theta\|^2_{H^{1+\frac{\gamma}{2}}}d\tau\le 2R_1^2.
\end{align}
Following the argument given in the proof of Theorem~\ref{global-in-time wellposedness in Sobolev} and using the bound \eqref{infty bound on theta using K for kappa>0} on $\|\theta^\kappa\|_{L^\infty}$, for $t\ge0$, we arrive at
\begin{align}\label{differential inequality on theta 1+gamma}
&\frac{d}{dt}\|\Lambda^{1+\frac{\gamma}{2}}\theta^\kappa\|^2_{L^2}+\kappa\|\Lambda^{1+\gamma}\theta^\kappa\|^2_{L^2}\notag\\
&\le C\|\theta^\kappa\|_{L^\infty}\|\Lambda^{1+\frac{\gamma}{2}}\theta^\kappa\|^2_{L^2}+2\|\Lambda^{1+\frac{\gamma}{2}}S\|_{L^2}\|\Lambda^{1+\frac{\gamma}{2}}\theta^\kappa\|_{L^2}\notag\\
&\le C\K\|\Lambda^{1+\frac{\gamma}{2}}\theta^\kappa\|^2_{L^2}+\frac{4}{c_{\gamma,d}\kappa}\|S\|^2_{H^{1+\frac{\gamma}{2}}}+\frac{\kappa}{2}\|\theta\|^2_{H^{1+\gamma}},
\end{align}
where $c_{\gamma,d}$ is defined in \eqref{def of c gamma d}. Hence using the local integrability \eqref{bound on time integral for theta in H 1+gamma/2 from t to t+1} and the uniform Gr\"{o}nwall's lemma (see \cite[Lemma~C.1]{CTV14} for example), we obtain
\begin{align}\label{bound on theta in H 1+gamma/2 for t>1}
\|\pin(t)\theta_0\|^2_{H^{1+\frac{\gamma}{2}}}\le\Big[2C\K R_1^2+\frac{4}{c_{\gamma,d}\kappa}\|S\|^2_{H^{1+\frac{\gamma}{2}}}\Big]e^{C\K R_1^2},\qquad \forall t\ge1.
\end{align}
By setting 
\begin{align*}
R_{1+\frac{\gamma}{2}}:=\Big[2C\K R_1^2+\frac{4}{c_{\gamma,d}\kappa}\|S\|^2_{H^{1+\frac{\gamma}{2}}}\Big]^\frac{1}{2}e^{\frac{C\K R_1^2}{2}},
\end{align*}
we conclude that $\pin(t) B_1\subset B_{1+\frac{\gamma}{2}}$ for all $t\ge1$. Since $B_{1+\frac{\gamma}{2}}$ absorbs $B_1$, the bound \eqref{bound on theta in H 1+gamma/2} then follows  by \eqref{bound on time integral on theta in H 1+gamma/2} and \eqref{bound on theta in H 1+gamma/2 for t>1}, and we finish the proof.
\end{proof}

\begin{remark}
By combining \eqref{differential inequality on theta 1+gamma} with \eqref{bound on theta in H 1+gamma/2 for t>1}, if $\theta_0\in B_1$, then for $t\ge1$ and $T>0$, we further obtain
\begin{align}\label{bound on time integral on theta in H 1+gamma}
\frac{1}{T}\int_{t}^{t+T}\|\pin(\tau)\theta_0\|^2_{H^{1+\gamma}}d\tau\le R_{1+\gamma}^2,
\end{align}
where
\begin{align*}
R_{1+\gamma}:=\Big[C\K R_{1+\frac{\gamma}{2}}^2+\frac{4}{c_{\gamma,d}\kappa}\|S\|^2_{H^{1+\frac{\gamma}{2}}}\Big]^\frac{1}{2}.
\end{align*}
\end{remark}

The existence and regularity of the global attractor claimed by Theorem~\ref{Existence of H1 global attractor} now follows from Lemma~\ref{Existence of an H 1+gamma/2 absorbing set lemma} by applying the argument given in \cite[Proposition~8]{CCP12}, and the bounds \eqref{uniform bound on theta from the attractor}-\eqref{uniform bound on time integral of theta from the attractor} are immediate consequences of \eqref{bound on theta in H 1+gamma/2} and \eqref{bound on time integral on theta in H 1+gamma} by taking $M_{\Gg}:=\max\{2R_{1+\frac{\gamma}{2}},R_{1+\gamma}\}$. We summarise the properties of the global attractor $\Gg$ as claimed by Theorem~\ref{Existence of H1 global attractor}:
\begin{corollary}\label{Unique attractor general corollary}
The solution map $\pin(t):H^1\to H^1$ associated to \eqref{abstract active scalar eqn} possesses a unique global attractor $\Gg$ with the following properties:
\begin{itemize}
\item $\Gg\subset H^{1+\frac{\gamma}{2}}$ and is the $\omega$-limit set of $B_{1+\frac{\gamma}{2}}$, namely
\begin{align*}
\Gg=\omega(B_{1+\frac{\gamma}{2}})=\bigcap_{t\ge0}\overline{\bigcup_{\tau\ge t}\pin(\tau) B_{1+\frac{\gamma}{2}}}.
\end{align*}
\item For every bounded set $B\subset H^1$, we have
\begin{align*}
\lim_{t\to\infty}\dist(\pin(t)B,\Gg)=0,
\end{align*}
where $\dist$ stands for the usual Hausdorff semi-distance between sets given by the $H^1$-norm.
\item $\Gg$ is minimal in the class of $H^1$-closed attracting set.
\end{itemize}
\end{corollary}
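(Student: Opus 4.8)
The plan is to deduce the Corollary from the classical theory of global attractors for a continuous semigroup that possesses a \emph{compact} absorbing set, with all the analytic input already supplied by Lemmas~\ref{Existence of an L infty absorbing set lemma}--\ref{Existence of an H 1+gamma/2 absorbing set lemma}. First I would verify that $\{\pin(t)\}_{t\ge0}$ is a strongly continuous semigroup on the phase space $H^1$. The semigroup identity $\pin(t+s)=\pin(t)\pin(s)$ is immediate from the uniqueness part of Theorem~\ref{global-in-time wellposedness in Sobolev}, while continuity of $t\mapsto\pin(t)\theta_0$ in $H^1$ follows from the continuity in time established in Lemma~\ref{local-in-time existence lemma}. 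The continuous dependence of $\pin(t)\theta_0$ on $\theta_0$ in the $H^1$ norm can be read off from an energy estimate for the difference of two solutions, of the same type as the one carried out for $\varphi=\theta^\kappa-\theta^0$ in the proof of Theorem~\ref{Hs convergence}, using the two-order smoothing bound \eqref{two order smoothing for u when nu>0} valid for $\nu>0$.

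Next comes the decisive structural point. The set $B_{1+\frac{\gamma}{2}}$ produced in Lemma~\ref{Existence of an H 1+gamma/2 absorbing set lemma} is bounded in $H^{1+\frac{\gamma}{2}}$, and since $1+\frac{\gamma}{2}>1$ the embedding $H^{1+\frac{\gamma}{2}}\hookrightarrow H^1$ is compact on $\T^d$ by the Rellich--Kondrachov theorem. Hence the $H^1$-closure of $B_{1+\frac{\gamma}{2}}$ is a compact subset of $H^1$. Chaining together the absorbing properties in the sequence of lemmas---$B_\infty$ absorbs every bounded subset of $H^1$ (Lemma~\ref{Existence of an L infty absorbing set lemma}), $B_\alpha$ absorbs $B_\infty$ (Lemma~\ref{Existence of an C alpha absorbing set lemma}), $B_1$ absorbs $B_\alpha$ (Lemma~\ref{Existence of an H1 absorbing set lemma}), and $B_{1+\frac{\gamma}{2}}$ absorbs $B_1$ (Lemma~\ref{Existence of an H 1+gamma/2 absorbing set lemma})---shows that $B_{1+\frac{\gamma}{2}}$ is itself a compact absorbing set for $\pin(t)$ in $H^1$.

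With a continuous semigroup and a compact absorbing set in hand, I would invoke the classical existence theorem for global attractors (as in \cite[Proposition~8]{CCP12}) to conclude that
\begin{align*}
\Gg=\omega\big(B_{1+\frac{\gamma}{2}}\big)=\bigcap_{t\ge0}\overline{\bigcup_{\tau\ge t}\pin(\tau) B_{1+\frac{\gamma}{2}}}
\end{align*}
is a nonempty compact set which is the global attractor: it attracts every bounded subset of $H^1$ in the $H^1$ metric, it is fully invariant under $\pin(t)$, and it is simultaneously the minimal $H^1$-closed attracting set and the maximal bounded invariant set, whence unique. The regularity claim $\Gg\subset H^{1+\frac{\gamma}{2}}$ then follows at once from invariance: since $B_{1+\frac{\gamma}{2}}$ is absorbing there is a time $t_0$ with $\pin(t_0)B_{1+\frac{\gamma}{2}}\subset B_{1+\frac{\gamma}{2}}$, and using $\Gg=\pin(t_0)\Gg\subset\pin(t_0)B_{1+\frac{\gamma}{2}}$ together with the uniform bound \eqref{bound on theta in H 1+gamma/2} places $\Gg$ inside the $H^{1+\frac{\gamma}{2}}$-ball of radius $2R_{1+\frac{\gamma}{2}}$.

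I expect the only genuinely delicate step to be the verification that the semigroup is $H^1$-continuous in the initial datum, since every other ingredient---the absorbing sets and the compactness---has already been prepared in the preceding lemmas. Because the map $\theta\mapsto u[\theta]$ gains two derivatives when $\nu>0$, the difference estimate closes in $H^1$ without loss of regularity, so I anticipate no real obstruction there; once continuity is in place, the abstract attractor machinery delivers the three listed assertions essentially mechanically.
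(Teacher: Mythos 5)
Your overall architecture matches the paper's: chain the absorbing sets of Lemmas~\ref{Existence of an L infty absorbing set lemma}--\ref{Existence of an H 1+gamma/2 absorbing set lemma}, observe that $B_{1+\frac{\gamma}{2}}$ is compact in $H^1$, and invoke \cite[Proposition~8]{CCP12}. However, there is a genuine gap in your continuity step, and it is not a minor one --- it is precisely the issue the paper is structured to avoid. You assert that $H^1$-continuity of $\theta_0\mapsto\pin(t)\theta_0$ holds for all $\gamma\in(0,2]$ because ``the map $\theta\mapsto u[\theta]$ gains two derivatives when $\nu>0$, so the difference estimate closes in $H^1$ without loss of regularity.'' This is false as stated. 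Writing $\bar\theta$ for the difference of two solutions, the two-derivative gain \eqref{two order smoothing for u when nu>0} only helps in terms where $u$ is evaluated at $\bar\theta$; the derivative burden in the cross term
\begin{align*}
\int_{\T^d} \bigl(u[\bar\theta]\cdot\nabla\tilde\theta\bigr)\,\Delta\bar\theta\,dx
=\int_{\T^d} u[\bar\theta]\,\tilde\theta\cdot\nabla\Delta\bar\theta\,dx
\end{align*}
falls on the scalar fields, not on $u$. After pairing $\Lambda^{1+\frac{\gamma}{2}}$ (the most the dissipation controls) against $\bar\theta$, the remaining $2-\frac{\gamma}{2}$ derivatives must land on $u[\bar\theta]\tilde\theta$, and in the worst case on $\tilde\theta$; since trajectories emanating from $B_{1+\frac{\gamma}{2}}$ are only uniformly bounded in $H^{1+\frac{\gamma}{2}}$ by \eqref{bound on theta in H 1+gamma/2}, closing the estimate requires $2-\frac{\gamma}{2}\le 1+\frac{\gamma}{2}$, i.e.\ $\gamma\ge1$. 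This is exactly the inequality \eqref{bound on Lambda theta for gamma>1} and exactly why the paper proves its continuity result (Lemma~\ref{continuity of solution map lemma}) only under the restriction \eqref{conditions on gamma}, in the \emph{next} subsection, not here.

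This gap propagates into a second discrepancy: because continuity is unavailable for $\gamma<1$, the paper invokes the \emph{minimal approach} of \cite{CCP12}, whose point is precisely that a compact attracting set yields a unique minimal closed attracting set $\Gg=\omega(B_{1+\frac{\gamma}{2}})$ with \emph{no} continuity hypothesis on the semigroup; accordingly the corollary claims only minimality, not invariance or maximality (those appear later, for $\gamma\in[1,2]$, after Lemma~\ref{continuity of solution map lemma} and the backwards-uniqueness Lemma~\ref{Backwards uniqueness lemma}). You instead invoke the classical continuity-based theorem and conclude full invariance and maximality --- conclusions that are neither part of the statement nor justified for $\gamma\in(0,1)$ by your argument. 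Two repairs are available: either (i) drop the continuity requirement entirely and quote \cite{CCP12} as the paper does, in which case your derivation of $\Gg\subset H^{1+\frac{\gamma}{2}}$ must also be redone without invariance (this is easy: $B_{1+\frac{\gamma}{2}}$ absorbs itself, so $\omega(B_{1+\frac{\gamma}{2}})$ lies in the $H^1$-closure of an $H^{1+\frac{\gamma}{2}}$-ball, which is contained in that ball); or (ii) salvage continuity for all $\gamma\in(0,2]$ by proving an $L^2$-Lipschitz estimate for the difference --- which does close, since $\|u[\bar\theta]\|_{L^\infty}\le C_\nu\|\bar\theta\|_{L^2}$ for $d\in\{2,3\}$ by the two-order smoothing --- and then interpolating with the uniform $H^{1+\frac{\gamma}{2}}$ bound, in the spirit of the proof of Theorem~\ref{Hs convergence}. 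As written, your proposal does neither, and the $H^1$ difference estimate you gesture at does not exist for $\gamma<1$.
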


\subsection{Further properties for the global attractors}\label{Further properties for the attractors subsection}

In this subsection, we prove some additional properties for the global attractors obtained in Theorem~\ref{Existence of H1 global attractor}. Recall that we have $d=2$ or $d=3$, and throughout this subsection, we impose an extra condition on the exponent $\gamma$, namely
\begin{align}\label{conditions on gamma}
\gamma\in[1,2].
\end{align}
%Notice that assumption \eqref{conditions on gamma} the cases when $\gamma\in[1,2]$ and $d\in\{1,2,3\}$, which are applicable to many physical models. 
Under the assumption \eqref{conditions on gamma}, our goal is to prove the following theorem: 
\begin{theorem}
Let $S\in L^\infty\cap H^1$. For $\nu$, $\kappa>0$, assume that the exponent $\gamma$ satisfies \eqref{conditions on gamma}. Then the global attractor $\Gg$ of $\pin(t)$ further enjoys the following properties:
\begin{itemize}
\item $\Gg$ is fully invariant, namely
\begin{align*}
\pin(t)\Gg=\Gg,\qquad \forall t\ge0.
\end{align*}
\item $\Gg$ is maximal in the class of $H^1$-bounded invariant sets.
\item $\Gg$ has finite fractal dimension.
\end{itemize}
\end{theorem}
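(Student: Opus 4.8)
The plan is to treat the three assertions in order of increasing difficulty, relying throughout on the compact absorbing set $B_{1+\frac{\gamma}{2}}$ produced in Lemma~\ref{Existence of an H 1+gamma/2 absorbing set lemma} (compact in $H^1$ by Rellich, being bounded in $H^{1+\frac{\gamma}{2}}$) and on the $H^1$-continuity of $\pin(t)$, which follows from the Lipschitz estimate $\frac{d}{dt}\|w\|_{L^2}^2\le C_\nu M_{\Gg}\|w\|_{L^2}^2$ for the difference $w$ of two solutions. This is obtained by testing the difference equation with $w$, using $\divv u=0$ to kill the transport term and $\|u[w]\|_{L^\infty}\le C_\nu\|w\|_{L^2}$, the latter valid since A3 makes the symbol of $u$ decay like $|k|^{-2}$, which is square-summable for $d\le 3$.

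\textbf{Full invariance and maximality.} Both follow from the general theory once $\Gg=\omega(B_{1+\frac{\gamma}{2}})$ is used. Forward invariance $\pin(t)\Gg\subseteq\Gg$ is automatic for an $\omega$-limit set of a continuous semigroup. For the reverse inclusion I fix $x\in\Gg$, choose $b_n\in B_{1+\frac{\gamma}{2}}$ and $t_n\to\infty$ with $\pin(t_n)b_n\to x$, and note that $\pin(t_n-t)b_n\in B_{1+\frac{\gamma}{2}}$ for $n$ large; passing to a convergent subsequence $\pin(t_n-t)b_n\to y\in\Gg$ and using continuity of $\pin(t)$ gives $\pin(t)y=x$, so $x\in\pin(t)\Gg$. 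For maximality, if $X\subset H^1$ is bounded and invariant then $X=\pin(t)X$, while $\dist(\pin(t)X,\Gg)\to0$ by attraction; since the left side is independent of $t$, $\dist(X,\Gg)=0$, and $\Gg$ being closed forces $X\subseteq\Gg$.

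\textbf{Finite fractal dimension.} Here $\gamma\in[1,2]$ enters, and I would follow the volume-contraction (global Lyapunov exponent) scheme of Constantin--Foias--Temam, adapted to active scalars as in \cite{CTV14}, \cite{CZV16}. Two ingredients are required. The first is uniform differentiability of $\pin(t)$ on $\Gg$, the derivative being the solution operator of the linearised problem
\begin{equation*}
\partial_t\Theta+u[\theta]\cdot\nabla\Theta+u[\Theta]\cdot\nabla\theta=-\kappa\Lambda^\gamma\Theta .
\end{equation*}
The second is the trace bound: for an $L^2$-orthonormal family $\{\phi_j\}_{j=1}^n$ spanning the range of a projection $Q_n$, the trace of the linearised operator $L$ is
\begin{equation*}
\Tr(Q_nLQ_n)=\sum_{j=1}^n\Big(-\kappa\|\Lambda^{\frac{\gamma}{2}}\phi_j\|_{L^2}^2-\langle u[\theta]\cdot\nabla\phi_j,\phi_j\rangle-\langle u[\phi_j]\cdot\nabla\theta,\phi_j\rangle\Big).
\end{equation*}
The transport term vanishes since $\divv u[\theta]=0$. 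For the stretching term I use $\|u[\phi_j]\|_{L^\infty}\le C_\nu\|\phi_j\|_{L^2}$ together with $\sum_j|\phi_j|\le\sqrt{n}\,\rho^{1/2}$, where $\rho:=\sum_j|\phi_j|^2$ satisfies $\|\rho^{1/2}\|_{L^2}^2=n$, giving
\begin{equation*}
\Big|\sum_{j=1}^n\langle u[\phi_j]\cdot\nabla\theta,\phi_j\rangle\Big|\le C_\nu\|\nabla\theta\|_{L^2}\,n\le C_\nu M_{\Gg}\,n
\end{equation*}
by \eqref{uniform bound on theta from the attractor}. A generalised Lieb--Thirring inequality bounds the dissipation from below, $\sum_{j=1}^n\|\Lambda^{\frac{\gamma}{2}}\phi_j\|_{L^2}^2\ge c\,n^{1+\frac{\gamma}{d}}$, so that
\begin{equation*}
\Tr(Q_nLQ_n)\le-\kappa c\,n^{1+\frac{\gamma}{d}}+C_\nu M_{\Gg}\,n .
\end{equation*}
Since $\gamma>0$ the superlinear term dominates, the uniform Lyapunov exponents satisfy $q_n<0$ once $n>\big(C_\nu M_{\Gg}/(\kappa c)\big)^{d/\gamma}$, and the Constantin--Foias--Temam criterion yields $\dimf(\Gg)<\infty$.

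The main obstacle is the uniform differentiability step. The trace estimate above uses only the $H^1$-bound on $\Gg$ and $\gamma>0$, but to justify that the linearised flow is genuinely the Fréchet derivative one must show that the remainder $z:=\big(\pin(t)\bar\theta_0-\pin(t)\theta_0\big)-\Theta(t)$ obeys $\|z(t)\|=o(\|\bar\theta_0-\theta_0\|)$. The equation for $z$ carries a quadratic forcing $-u[w]\cdot\nabla\Theta$, and closing the energy estimate requires propagating an $H^1$ (and time-integrated $H^{1+\frac{\gamma}{2}}$) bound on $\Theta$ against the stretching term built from $\nabla\theta$ on the attractor. It is precisely here that the extra dissipation afforded by $\gamma\ge1$, combined with the $H^{1+\frac{\gamma}{2}}$-regularity of trajectories in \eqref{uniform bound on theta from the attractor}, is needed to absorb the advective and stretching contributions and obtain the quadratic control that makes the derivative uniform.
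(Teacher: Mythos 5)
Your handling of the first two bullets is essentially sound: given that $\Gg=\omega(B_{1+\frac{\gamma}{2}})$ with $B_{1+\frac{\gamma}{2}}$ compact in $H^1$ (Lemma~\ref{Existence of an H 1+gamma/2 absorbing set lemma}), your subsequence argument for $\Gg\subseteq\pin(t)\Gg$ and your attraction argument for maximality are the standard ones, and they are a legitimate alternative to the paper's route (which feeds $H^1$-continuity, Lemma~\ref{continuity of solution map lemma}, together with backward uniqueness, Lemma~\ref{Backwards uniqueness lemma}, into the argument of \cite{CZV16}). But there is already a gap here: the estimate $\frac{d}{dt}\|w\|^2_{L^2}\le C_\nu M_{\Gg}\|w\|^2_{L^2}$ gives Lipschitz continuity of $\pin(t)$ in $L^2$, not in $H^1$, and every subsequent step (convergence of subsequences, attraction, closedness) is taken in the $H^1$ topology. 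This can be repaired by interpolation, $\|w\|_{H^1}\le\|w\|_{L^2}^{\sigma}\|w\|_{H^{1+\frac{\gamma}{2}}}^{1-\sigma}$ with $\sigma=\frac{\gamma}{2+\gamma}$, using that both trajectories remain bounded in $H^{1+\frac{\gamma}{2}}$, but you never say this; the paper instead proves $H^1$-continuity directly by an $H^1$ energy estimate, and it is there (not only later) that the hypothesis \eqref{conditions on gamma} is genuinely used.

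The third bullet, however, is not proved in your proposal, and this is the substantive gap. The Constantin--Foias volume-contraction criterion needs (i) uniform differentiability of $\pin(t)$ on $\Gg$ in the sense of Definition~\ref{uniform differentiable def}, and (ii) the negative trace bound, both \emph{in the same Hilbert space in which $\dimf$ is measured}. You compute the trace with $L^2$-orthonormal families, whereas the fractal dimension asserted in the theorem is with respect to the $H^1$ metric of the phase space; the paper's Lemma~\ref{Contractivity of large dimensional volume elements lemma} accordingly works with $H^1$-orthonormal families and the pairing $\int_{\T^d}(-\Delta\varphi_j)A_{\theta}[\varphi_j]\,dx$. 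Your $L^2$ computation is correct as far as it goes (and it is a nice observation that it needs only $\gamma>0$), but to conclude you would have to either redo it in $H^1$ or add an interpolation argument converting an $L^2$ dimension bound into an $H^1$ one; neither appears. More importantly, you explicitly defer the uniform differentiability of $\pin(t)$, calling it ``the main obstacle,'' and only indicate why $\gamma\ge1$ should help. That step is precisely the bulk of the paper's proof (Lemma~\ref{uniform differentiable lemma}): closing $H^1$ energy estimates for the linearised flow $\psi$, the difference $w$, and the remainder $\eta$, via the auxiliary bounds \eqref{bound on Lambda theta for gamma>1}--\eqref{bound on nabla u in terms of nabla theta} (valid only for $\gamma\in[1,2]$) and the attractor bounds \eqref{uniform bound on theta from the attractor}--\eqref{uniform bound on time integral of theta from the attractor}, yielding $\|\eta(\cdot,t)\|^2_{H^1}\le\tilde K(t,M_{\Gg})\|\psi_0\|^4_{H^1}$ and the compactness of $D\pin(t,\theta_0)$. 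Without that lemma the criterion of Proposition~\ref{volume decay prop} cannot be invoked, so finite dimensionality remains unestablished.
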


We first give the following auxiliary estimates on $u$ and $\theta$ under the assumption \eqref{conditions on gamma}. These estimates will be useful for the later analysis.

\begin{lemma}
Assume that $\gamma$ satisfies the assumption \eqref{conditions on gamma}, then if $f\in H^{1+\frac{\gamma}{2}}$ and $g$, $\theta\in H^1$, we have 
\begin{align}\label{bound on Lambda theta for gamma>1}
\|\Lambda^{2-\frac{\gamma}{2}}f\|_{L^2}\le C\|\Lambda^{1+\frac{\gamma}{2}}f\|_{L^2},
\end{align}
\begin{align}\label{bound on L4 using H1}
\|g\|_{L^4}\le C\|\Lambda g\|_{L^2},
\end{align}
\begin{align}\label{bound on u in terms of nabla theta}
\|u\|_{L^\infty}\le C\|\Lambda^\frac{\gamma}{2}\theta\|_{L^2}\le C_\nu\|\Lambda\theta\|_{L^2},
\end{align}
\begin{align}\label{bound on nabla u in terms of nabla theta}
\|\nabla u\|_{L^\infty}\le C_\nu\|\Lambda^{1+\frac{\gamma}{2}}\theta\|_{L^2},
\end{align}
where $u=u[\theta]$, $C$ is a positive constant which depends on $d$ only, and $C_\nu$ is a positive constant which depends on $d$ and $\nu$.
\end{lemma}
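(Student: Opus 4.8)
The plan is to treat the four estimates separately: \eqref{bound on Lambda theta for gamma>1} and \eqref{bound on L4 using H1} are elementary consequences of the mean-zero condition \eqref{zero mean assumption} combined with Plancherel's identity and a Sobolev embedding, while \eqref{bound on u in terms of nabla theta} and \eqref{bound on nabla u in terms of nabla theta} rely on the order-two smoothing of the constitutive operator encoded in assumption A3, applied on the Fourier side.

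For \eqref{bound on Lambda theta for gamma>1} I would argue directly on the Fourier coefficients. Since every function considered has zero mean, its spectrum is supported on $\mathbb{Z}^d_*$, where $|k|\ge1$. Because $\gamma\ge1$ we have $2-\frac{\gamma}{2}\le1+\frac{\gamma}{2}$, hence $|k|^{2(2-\gamma/2)}\le|k|^{2(1+\gamma/2)}$ for every such $k$; summing against $|\hat f(k)|^2$ and applying Plancherel gives \eqref{bound on Lambda theta for gamma>1} with a dimensional constant (in fact $C=1$). For \eqref{bound on L4 using H1} I would invoke the Sobolev inequality \eqref{Sobolev inequality} with $q=2$, $q'=4$ and $s=\frac{d}{4}$ (so that $\frac14=\frac12-\frac{s}{d}$), which is admissible since $d\in\{2,3\}$ forces $s=\frac d4\le\frac34<1$; this yields $\|g\|_{L^4}\le C\|\Lambda^{d/4}g\|_{L^2}$, and the same Plancherel comparison as above (using $\frac d4\le1$ and $|k|\ge1$) bounds $\|\Lambda^{d/4}g\|_{L^2}\le\|\Lambda g\|_{L^2}$, giving \eqref{bound on L4 using H1}.

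The estimates \eqref{bound on u in terms of nabla theta} and \eqref{bound on nabla u in terms of nabla theta} both follow from the symbol bound in A3 together with the absolute convergence of Fourier series $\|F\|_{L^\infty}\le\sum_{k\in\mathbb{Z}^d_*}|\hat F(k)|$. Writing $\hat u_j(k)=ik_i\widehat{T^{\nu}_{ij}}(k)\hat\theta(k)$ and using $|\widehat{T^{\nu}_{ij}}(k)|\le C_\nu|k|^{-3}$ yields $|\hat u(k)|\le C_\nu|k|^{-2}|\hat\theta(k)|$ and $|\widehat{\partial_{x_l} u_j}(k)|\le C_\nu|k|^{-1}|\hat\theta(k)|$. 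For \eqref{bound on nabla u in terms of nabla theta} I would split $|k|^{-1}=|k|^{-2-\gamma/2}\cdot|k|^{1+\gamma/2}$ and apply Cauchy--Schwarz:
\[
\|\nabla u\|_{L^\infty}\le C_\nu\sum_{k\in\mathbb{Z}^d_*}|k|^{-2-\gamma/2}\big(|k|^{1+\gamma/2}|\hat\theta(k)|\big)\le C_\nu\Big(\sum_{k\in\mathbb{Z}^d_*}|k|^{-4-\gamma}\Big)^{1/2}\|\Lambda^{1+\frac{\gamma}{2}}\theta\|_{L^2}.
\]
The identical splitting, pairing instead with $|k|^{\gamma/2}|\hat\theta(k)|$ via $|k|^{-2}=|k|^{-2-\gamma/2}\cdot|k|^{\gamma/2}$, gives the first inequality in \eqref{bound on u in terms of nabla theta}, namely $\|u\|_{L^\infty}\le C_\nu(\sum_{k}|k|^{-4-\gamma})^{1/2}\|\Lambda^{\gamma/2}\theta\|_{L^2}$; the second inequality there is again a Plancherel comparison $\|\Lambda^{\gamma/2}\theta\|_{L^2}\le\|\Lambda\theta\|_{L^2}$, valid since $\frac\gamma2\le1$.

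The only point requiring care, and the main obstacle, is the convergence of the series $\sum_{k\in\mathbb{Z}^d_*}|k|^{-4-\gamma}$, which holds precisely when $4+\gamma>d$. Since $d\le3$ and $\gamma>0$ this is always satisfied with room to spare, so each such sum is a finite constant depending only on $d$ and $\gamma$; this is exactly where the restriction $d\in\{2,3\}$ enters. I would also note that the smoothing constant $C_\nu$ from A3 propagates into the first inequality of \eqref{bound on u in terms of nabla theta}, so the intermediate constant there is genuinely $\nu$-dependent; this is harmless, since the composed estimate $\|u\|_{L^\infty}\le C_\nu\|\Lambda\theta\|_{L^2}$ is the one used in the subsequent arguments.
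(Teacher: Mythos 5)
Your proposal is correct. For \eqref{bound on Lambda theta for gamma>1} and \eqref{bound on L4 using H1} you argue exactly as the paper does: the first is the Plancherel comparison using $2-\frac{\gamma}{2}\le1+\frac{\gamma}{2}$ for $\gamma\ge1$, and the second is the Sobolev inequality \eqref{Sobolev inequality} with $s=\frac{d}{4}$ followed by $\|\Lambda^{d/4}g\|_{L^2}\le\|\Lambda g\|_{L^2}$. For \eqref{bound on u in terms of nabla theta} and \eqref{bound on nabla u in terms of nabla theta}, however, you take a genuinely different route: the paper stays in physical space, combining the embedding \eqref{L infty bound 2} with $q=d+1$, the $L^p$-smoothing estimate \eqref{two order smoothing for u when nu>0} (itself a consequence of A3 via multiplier theory), and the Sobolev inequality, checking the exponent condition $\frac{d}{2}-\frac{d}{d+1}\le1+\frac{\gamma}{2}$; you instead work entirely on the Fourier side, using A3's symbol decay $|\widehat{T^{\nu}_{ij}}(k)|\le C_\nu|k|^{-3}$, the trivial bound $\|F\|_{L^\infty}\le\sum_k|\hat F(k)|$, and Cauchy--Schwarz, with convergence of $\sum_{k\in\mathbb{Z}^d_*}|k|^{-4-\gamma}$ guaranteed by $4+\gamma>d$ for $d\in\{2,3\}$. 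Your version is more elementary and self-contained (it bypasses the $L^p$ multiplier machinery behind \eqref{two order smoothing for u when nu>0} and makes the role of the dimension restriction completely explicit), at the cost of being tied to the periodic Fourier setting, whereas the paper's argument recycles estimates already established in Section 4 and carries over verbatim wherever \eqref{two order smoothing for u when nu>0} holds. You are also right, and it is worth flagging as you do, that the first inequality in \eqref{bound on u in terms of nabla theta} inherits a $\nu$-dependent constant from A3 — the paper's own proof produces $C_\nu$ there as well, despite the lemma's statement attributing only a dimensional constant to that step — and this discrepancy is harmless since only the composite bound $\|u\|_{L^\infty}\le C_\nu\|\Lambda\theta\|_{L^2}$ is used later.
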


\begin{proof}
The bound \eqref{bound on Lambda theta for gamma>1} follows immediately by observing that $2-\frac{\gamma}{2}\le1+\frac{\gamma}{2}$ when $\gamma\ge1$. To prove \eqref{bound on L4 using H1}, using \eqref{Sobolev inequality} and the assumption $d\in\{2,3\}$, we have
\begin{align*}
\|g\|_{L^4}\le C\|\Lambda^\frac{d}{4}g\|_{L^2}\le C\|\Lambda g\|_{L^2}.
\end{align*}
To prove \eqref{bound on u in terms of nabla theta}, by using the bound \eqref{L infty bound 2} for $q=d+1$ and together with \eqref{two order smoothing for u when nu>0}, we have
\begin{align*}
\|u\|_{L^\infty}=\|u[\theta]\|_{L^\infty}&\le C\|u\|_{W^{1,d+1}}\\
&\le C\|\Lambda^{\frac{d}{2}-\frac{d}{d+1}+1}u\|_{L^{d+1}}\\
&\le C_\nu\|\Lambda^{\frac{d}{2}-\frac{d}{d+1}-1}\theta\|_{L^{d+1}}\\
&\le C_\nu\|\Lambda^\frac{\gamma}{2}\theta\|_{L^2}\le C_\nu\|\Lambda\theta\|_{L^2},
\end{align*}
where the last inequality follows since $\frac{d}{2}-\frac{d}{d+1}\le1+\frac{\gamma}{2}\le2$ for $\gamma\le2$ and $d\in\{2,3\}$. To prove \eqref{bound on nabla u in terms of nabla theta} we have
\begin{align*}
\|\nabla u\|_{L^\infty}&\le C\|\nabla u\|_{W^{1,d+1}}\notag\\
&\le C\|\Lambda^{\frac{d}{2}-\frac{d}{d+1}+2}u\|_{L^{d+1}}\notag\\
&\le C_\nu\|\Lambda^{\frac{d}{2}-\frac{d}{d+1}}\theta\|_{L^{d+1}}\le C_\nu\|\Lambda^{1+\frac{\gamma}{2}}\theta\|_{L^2},
\end{align*}
provided that $\frac{d}{2}-\frac{d}{d+1}\le1+\frac{\gamma}{2}$ holds.
\end{proof}

We now show that the solution map $\pin(t)$ is indeed continuous in the $H^1$-topology. More precisely, we have the following lemma:

\begin{lemma}[Continuity of $\pin(t)$]\label{continuity of solution map lemma}
Let $B_{1+\frac{\gamma}{2}}$ be the absorbing set for $\pin$ defined in Lemma~\ref{Existence of an H 1+gamma/2 absorbing set lemma}. For every $t>0$, the solution map $\pin(t):B_{1+\frac{\gamma}{2}}\to H^1$ is continuous in the topology of $H^1$.
\end{lemma}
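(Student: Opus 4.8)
The plan is to prove a quantitative Lipschitz bound on $B_{1+\frac{\gamma}{2}}$: given $\theta_{1,0},\theta_{2,0}\in B_{1+\frac{\gamma}{2}}$ with solutions $\theta_1=\pin(\cdot)\theta_{1,0}$ and $\theta_2=\pin(\cdot)\theta_{2,0}$, I will show $\|(\theta_1-\theta_2)(\cdot,t)\|_{H^1}\le C(t)\|\theta_{1,0}-\theta_{2,0}\|_{H^1}$ with $C(t)$ finite and uniform over $B_{1+\frac{\gamma}{2}}$. Since all data and solutions are mean-zero, by the Poincaré inequality it suffices to control $\|\nabla\varphi\|_{L^2}$, where $\varphi=\theta_1-\theta_2$. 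Writing $u_1=u[\theta_1]$, $u_2=u[\theta_2]$ and using linearity of the constitutive law, so that $u_1-u_2=u[\varphi]$, the difference solves $\dt\varphi+u_1\cdot\nabla\varphi+u[\varphi]\cdot\nabla\theta_2=-\kappa\Lambda^{\gamma}\varphi$. First I would apply $\partial_k$ to this equation, pair with $\partial_k\varphi$ in $L^2$ and sum over $k$, producing $\frac12\frac{d}{dt}\|\nabla\varphi\|_{L^2}^2+\kappa\|\Lambda^{1+\frac{\gamma}{2}}\varphi\|_{L^2}^2$ on the left and two nonlinear contributions on the right.

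In the transport contribution $-\langle\partial_k(u_1\cdot\nabla\varphi),\partial_k\varphi\rangle$, the top-order piece $\langle u_1\cdot\nabla\partial_k\varphi,\partial_k\varphi\rangle$ vanishes since $\divv u_1=0$, leaving $-\langle\partial_k u_1\cdot\nabla\varphi,\partial_k\varphi\rangle$, bounded by $\|\nabla u_1\|_{L^\infty}\|\nabla\varphi\|_{L^2}^2$; by \eqref{bound on nabla u in terms of nabla theta} together with the uniform pointwise bound \eqref{bound on theta in H 1+gamma/2} this is $\le C_\nu M\|\nabla\varphi\|_{L^2}^2$, with $M:=2R_{1+\frac{\gamma}{2}}$ uniform on $B_{1+\frac{\gamma}{2}}$. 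The delicate contribution is $-\langle\partial_k(u[\varphi]\cdot\nabla\theta_2),\partial_k\varphi\rangle$, which I split by the product rule into $J_1=-\langle\partial_k u[\varphi]\cdot\nabla\theta_2,\partial_k\varphi\rangle$ and $J_2=-\langle u[\varphi]\cdot\nabla\partial_k\theta_2,\partial_k\varphi\rangle$. For $J_1$ I place the $L^\infty$ norm on the velocity: by \eqref{bound on nabla u in terms of nabla theta} applied to $v=u[\varphi]$ one has $\|\nabla v\|_{L^\infty}\le C_\nu\|\Lambda^{1+\frac{\gamma}{2}}\varphi\|_{L^2}$, so $|J_1|\le C_\nu\|\Lambda^{1+\frac{\gamma}{2}}\varphi\|_{L^2}\|\nabla\theta_2\|_{L^2}\|\nabla\varphi\|_{L^2}$, and Young's inequality absorbs the high-order factor into the dissipation at the price of a term $\frac{C_\nu^2M^2}{\kappa}\|\nabla\varphi\|_{L^2}^2$.

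The main obstacle is $J_2$. Here I would use \eqref{bound on u in terms of nabla theta} to obtain $\|u[\varphi]\|_{L^\infty}\le C_\nu\|\nabla\varphi\|_{L^2}$, giving $|J_2|\le C_\nu\|\nabla\varphi\|_{L^2}^2\,\|\Lambda^2\theta_2\|_{L^2}$. This is exactly where the restriction $\gamma\in[1,2]$ is essential: since then $1+\gamma\ge2$, the embedding $H^{1+\gamma}\hookrightarrow H^2$ yields $\|\Lambda^2\theta_2\|_{L^2}\le C\|\theta_2\|_{H^{1+\gamma}}$. Unlike all the previous quantities, $\|\theta_2(\cdot,t)\|_{H^{1+\gamma}}$ is not controlled pointwise in time, only in time-average; both $\nu>0$ (through the $C_\nu$ smoothing in \eqref{bound on u in terms of nabla theta}--\eqref{bound on nabla u in terms of nabla theta}) and $\gamma\ge1$ are used crucially in this step.

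To close, I would collect the above into $\frac{d}{dt}\|\nabla\varphi\|_{L^2}^2\le\beta(t)\|\nabla\varphi\|_{L^2}^2$, where $\beta(t)=C\big(C_\nu M+\frac{C_\nu^2M^2}{\kappa}+C_\nu\|\theta_2(\cdot,t)\|_{H^{1+\gamma}}\big)$, and then invoke Grönwall. The key point is that $\int_0^t\beta(\tau)\,d\tau$ is finite and uniform over $B_{1+\frac{\gamma}{2}}$: the first two summands are constant in $t$, while $\int_0^t\|\theta_2\|_{H^{1+\gamma}}\,d\tau\le t^{1/2}\big(\int_0^t\|\theta_2\|_{H^{1+\gamma}}^2\,d\tau\big)^{1/2}$ is finite by the local regularity $\theta_2\in L^2_{\loc}([0,\infty);H^{1+\gamma})$ from Theorem~\ref{global-in-time wellposedness in Sobolev} (with $s=1+\frac{\gamma}{2}$) and, for large $t$, by the uniform time-integral bound \eqref{bound on time integral on theta in H 1+gamma}. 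Grönwall then gives $\|\nabla\varphi(\cdot,t)\|_{L^2}^2\le\|\nabla\varphi(\cdot,0)\|_{L^2}^2\exp\big(\int_0^t\beta\big)$, and Poincaré upgrades this to the stated $H^1$-continuity (in fact $H^1$-Lipschitz continuity) of $\pin(t)$ on $B_{1+\frac{\gamma}{2}}$ for every $t>0$.
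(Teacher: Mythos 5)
Your proof is correct, and its skeleton (difference equation, $H^1$-energy estimate, Gr\"onwall) matches the paper's; but you handle the decisive cross term $u[\varphi]\cdot\nabla\theta_2$ in a genuinely different way. The paper integrates by parts once more, writing $\intox(\bar u\cdot\nabla\tilde\theta)(-\Delta\bar\theta)=\intox\bar u\tilde\theta\cdot\nabla\Delta\bar\theta$, then splits the three derivatives as $\Lambda^{2-\frac{\gamma}{2}}$ against $\Lambda^{1+\frac{\gamma}{2}}$ and uses the product estimate \eqref{product estimate} together with \eqref{bound on Lambda theta for gamma>1} (this is where the paper uses $\gamma\ge1$, via $2-\frac{\gamma}{2}\le1+\frac{\gamma}{2}$) and the $\nu$-smoothing of $u$; in this way \emph{every} factor of $\tilde\theta$ is controlled pointwise in time by the absorbing-set bound \eqref{bound on theta in H 1+gamma/2}, and the Gr\"onwall coefficient is a constant, giving a Lipschitz constant $\exp\bigl(Ct(R_{1+\frac{\gamma}{2}}+\tfrac{2}{\kappa}R^2_{1+\frac{\gamma}{2}})\bigr)$ uniform on $B_{1+\frac{\gamma}{2}}$. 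You instead distribute the derivative by the product rule, putting $L^\infty$ on the velocity via \eqref{bound on nabla u in terms of nabla theta} and \eqref{bound on u in terms of nabla theta}, which forces two full derivatives onto $\theta_2$; you then use $\gamma\ge1$ in the form $1+\gamma\ge2$ and pay with the norm $\|\theta_2\|_{H^{1+\gamma}}$, which is only time-integrable, and close with an $L^1_t$-Gr\"onwall. Both uses of $\gamma\ge1$ are legitimate, and your route is arguably more elementary (no fractional product estimate needed), at the price of needing the extra input $\theta_2\in L^2_{\loc}([0,\infty);H^{1+\gamma})$; the paper's route buys a clean, manifestly uniform Lipschitz bound from pointwise-in-time information alone.

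Two small points of care in your last step. First, \eqref{bound on time integral on theta in H 1+gamma} is stated in the paper for $\theta_0\in B_1$ (the set of Lemma~\ref{Existence of an H1 absorbing set lemma}) and $t\ge1$, not for $\theta_0\in B_{1+\frac{\gamma}{2}}$; the bound you want for $B_{1+\frac{\gamma}{2}}$ does hold, either by citing Theorem~\ref{global-in-time wellposedness in Sobolev} with $s=1+\frac{\gamma}{2}$ (which you also do, and which gives $\theta_2\in L^2([0,\infty);H^{1+\gamma})$), or by integrating \eqref{differential inequality on theta 1+gamma} in time and using \eqref{bound on theta in H 1+gamma/2}, but you should not cite \eqref{bound on time integral on theta in H 1+gamma} verbatim. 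Second, note that for the statement of the lemma (continuity of $\pin(t)$) uniformity of $\int_0^t\|\theta_2\|_{H^{1+\gamma}}\,d\tau$ over the absorbing set is not even needed: your Gr\"onwall factor depends only on the fixed target trajectory $\theta_2$, so finiteness of the integral already gives continuity at each point of $B_{1+\frac{\gamma}{2}}$; uniform Lipschitz continuity, as in the paper, requires the uniform version of the integral bound.
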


\begin{proof}
We fix $t>0$. Let $\theta_0$, $\tilde\theta_0\in B_{1+\frac{\gamma}{2}}$ be arbitrary such that $\theta=\pin(t)\theta_0$ and $\tilde\theta=\pin(t)\tilde\theta_0$ with
\begin{align}\label{bound on theta and tilde theta}
\|\theta(t)\|_{H^{1+\frac{\gamma}{2}}}\le 2R_{1+\frac{\gamma}{2}}, \qquad\|\tilde\theta(t)\|_{H^{1+\frac{\gamma}{2}}}\le 2R_{1+\frac{\gamma}{2}},\qquad\forall t\ge0.
\end{align}
Denote the difference by $\bar{\theta}=\theta-\tilde\theta$ with $\bar{\theta}_0=\theta_0-\tilde\theta_0$, then $\bar{\theta}$ satisfies the following equation
\begin{align}\label{differential eqn for bar theta}
\dt\bar{\theta}+\kappa\Lambda^\gamma\bar{\theta}-\bar{u}\cdot\nabla\bar{\theta}+\tilde u\cdot\nabla\bar{\theta}+\bar{u}\cdot\nabla\tilde\theta=0,
\end{align}
where $\tilde u=u[\tilde\theta]$ and $\bar{u}=u[\bar{\theta}]$. Multiply \eqref{differential eqn for bar theta} by $-\Delta\bar{\theta}$ and integrate,
\begin{align}\label{differential ineq for bar theta step 1}
&\frac{1}{2}\frac{d}{dt}\|\bar\theta\|^2_{H^1}+\kappa\|\bar\theta\|^2_{H^{1+\frac{\gamma}{2}}}\notag\\
&\le\Big|\intox(\nabla\bar{u}\cdot\nabla\bar{\theta})\cdot\nabla\bar{\theta}\Big|+\Big|\intox(\nabla\tilde u\cdot\nabla\bar{\theta})\cdot\nabla\bar{\theta}\Big|+\Big|\intox(\bar{u}\cdot\nabla\tilde\theta)(-\Delta\bar{\theta})\Big|\notag\\
&\le(\|\nabla\bar{u}\|_{L^\infty}+\|\nabla\tilde{u}\|_{L^\infty})\|\nabla\bar{\theta}\|^2_{L^2}+\Big|\intox(\bar{u}\cdot\nabla\tilde\theta)(-\Delta\bar{\theta})\Big|.
\end{align}
Using \eqref{bound on nabla u in terms of nabla theta} and \eqref{bound on theta and tilde theta}, we can bound $\|\nabla\bar{u}\|_{L^\infty}$ and $\|\nabla\tilde{u}\|_{L^\infty}$ as follows:
\begin{align*}
(\|\nabla\bar{u}\|_{L^\infty}+\|\nabla\tilde{u}\|_{L^\infty})\le C\Big(\|\Lambda^{1+\frac{\gamma}{2}}\theta\|_{L^2}+\|\Lambda^{1+\frac{\gamma}{2}}\tilde\theta\|_{L^2}\Big)\le CR_{1+\frac{\gamma}{2}}.
\end{align*}
Next we estimate the last term on the right side of \eqref{differential ineq for bar theta step 1}. Upon integrating by parts and using the product estimate \eqref{product estimate}, it can be bounded by
\begin{align}\label{bound on the cross term bar theta}
\Big|\intox(\bar{u}\cdot\nabla\tilde\theta)(-\Delta\bar{\theta})\Big|&=\Big|\intox\bar{u}\tilde\theta\cdot\nabla\Delta\bar{\theta}\Big|\notag\\
&\le\|\Lambda^{2-\frac{\gamma}{2}}(\bar{u}\tilde\theta)\|_{L^2}\|\Lambda^{1+\frac{\gamma}{2}}\bar{\theta}\|_{L^2}\notag\\
&\le C\Big(\|\bar{u}\|_{L^\infty}\|\Lambda^{2-\frac{\gamma}{2}}\tilde\theta\|_{L^2}+\|\tilde\theta\|_{L^4}\|\Lambda^{2-\frac{\gamma}{2}}\bar{u}\|_{L^4}\Big)\|\Lambda^{1+\frac{\gamma}{2}}\bar{\theta}\|_{L^2}.
\end{align}
Using \eqref{two order smoothing for u when nu>0} and \eqref{bound on L4 using H1}, together with the bounds given in \eqref{bound on theta and tilde theta}, we have
\begin{align*}
&\|\tilde\theta\|_{L^4}\|\Lambda^{2-\frac{\gamma}{2}}\bar{u}\|_{L^4}\|\Lambda^{1+\frac{\gamma}{2}}\bar{\theta}\|_{L^2}\\
&\le C\|\Lambda\tilde\theta\|_{L^2}\|\bar\theta\|_{L^4}\|\Lambda^{1+\frac{\gamma}{2}}\bar{\theta}\|_{L^2}\\
&\le C\|\Lambda\tilde\theta\|_{L^2}\|\Lambda\bar\theta\|_{L^2}\|\Lambda^{1+\frac{\gamma}{2}}\bar{\theta}\|_{L^2}\le CR_{1+\frac{\gamma}{2}}\|\Lambda\bar\theta\|_{L^2}\|\Lambda^{1+\frac{\gamma}{2}}\bar{\theta}\|_{L^2}.
\end{align*}
On the other hand, using \eqref{bound on Lambda theta for gamma>1}, \eqref{bound on u in terms of nabla theta} and \eqref{bound on theta and tilde theta}, we readily have 
\begin{align*}
\|\Lambda^{2-\frac{\gamma}{2}}\tilde\theta\|_{L^2}\le C\|\Lambda^{1+\frac{\gamma}{2}}\tilde\theta\|_{L^2}\le CR_{1+\frac{\gamma}{2}},
\end{align*}
and
\begin{align*}
\|\bar u\|_{L^\infty}\le C\|\Lambda\bar\theta\|_{L^2},
\end{align*}
Hence we obtain from \eqref{bound on the cross term bar theta} that
\begin{align}\label{bound on the cross term bar theta final}
\Big|\intox(\bar{u}\cdot\nabla\tilde\theta)(-\Delta\bar{\theta})\Big|&\le CR_{1+\frac{\gamma}{2}}\|\Lambda\bar\theta\|_{L^2}\|\Lambda^{1+\frac{\gamma}{2}}\bar{\theta}\|_{L^2}
\end{align}
Apply \eqref{bound on the cross term bar theta final} on \eqref{differential ineq for bar theta step 1}, we deduce that
\begin{align}\label{differential ineq for bar theta final}
\frac{d}{dt}\|\bar\theta\|^2_{H^1}+\kappa\|\bar\theta\|^2_{H^{1+\frac{\gamma}{2}}}\le C\Big[R_{1+\frac{\gamma}{2}}+\frac{2}{\kappa}R_{1+\frac{\gamma}{2}}^2\Big]\|\nabla\bar{\theta}\|^2_{L^2}.
\end{align}
Hence we conclude from \eqref{differential ineq for bar theta final} that
\begin{align*}
\|\bar\theta(t)\|^2_{H^1}\le K_t\|\bar\theta_0\|^2_{H^1},
\end{align*}
where $K_t=\exp\Big(Ct\Big(R_{1+\frac{\gamma}{2}}+\frac{2}{\kappa}R_{1+\frac{\gamma}{2}}^2\Big)\Big)$, which implies that $\pin(t)$ is continuous in the $H^1$-topology.
\end{proof}

Following the argument given in \cite[Proposition 5.5]{CTV14} and using the log-convexity method introduced by \cite{AN67}, we can also prove that the solution map $\pin$ is injective on the absorbing set $B_{1+\frac{\gamma}{2}}$, which is illustrated in the following lemma:

\begin{lemma}[Backwards uniqueness]\label{Backwards uniqueness lemma}
Fix $\nu$, $\kappa>0$ and assume that $\gamma$ satisfies \eqref{conditions on gamma}. Let $\tsup[1]{\varphi}_0$, $\tsup[2]{\varphi}_0\in H^1$ be two initial data, and let
\begin{align*}
\tsup[1]{\varphi},\tsup[2]{\varphi}\in C([0, \infty); H^1)\cap L^2([0, \infty); H^{1+\frac{\gamma}{2}})
\end{align*}
be the corresponding solutions of the initial value problem \eqref{abstract active scalar eqn} for $\tsup[1]{\varphi}_0$ and $\tsup[2]{\varphi}_0$ respectively. If there exists $T > 0$ such that $\tsup[1]{\varphi}(\cdot,T) = \tsup[2]{\varphi}(\cdot,T)$, then $\tsup[1]{\varphi}_0= \tsup[2]{\varphi}_0$ holds.
\end{lemma}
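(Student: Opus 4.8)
The plan is to set $w := \tsup[1]{\varphi} - \tsup[2]{\varphi}$ and study the evolution of its $H^1$ (or $L^2$) norm via the log-convexity method of Agmon--Nirenberg. Writing $u_i = u[\tsup[i]{\varphi}]$ and $\bar u = u_1 - u_2 = u[w]$, the difference $w$ satisfies
\begin{align}\label{eq:bu-diff}
\partial_t w + \kappa \Lambda^\gamma w = -\,u_1\cdot\nabla w - \bar u\cdot\nabla\, \tsup[2]{\varphi}.
\end{align}
Since both solutions lie in $C([0,\infty);H^1)\cap L^2([0,\infty);H^{1+\frac{\gamma}{2}})$ and are absorbed into $B_{1+\frac{\gamma}{2}}$, I may assume the uniform bound $\|\tsup[i]{\varphi}(t)\|_{H^{1+\frac{\gamma}{2}}}\le M$ for $t$ in the relevant interval. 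The structure here is that of a dissipative equation $\partial_t w = -Aw + R(t)w$ where $A=\kappa\Lambda^\gamma$ is self-adjoint positive and $R(t)$ is a lower-order perturbation controlled by the uniform regularity of the two solutions.

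First I would introduce the quantity $\Phi(t) := \tfrac12\|w(t)\|_{L^2}^2$ (or $\tfrac12\|\nabla w(t)\|_{L^2}^2$, depending on which norm closes most cleanly) and compute the standard logarithmic-derivative functional
\begin{align}
Q(t) := \frac{\langle A w, w\rangle}{\langle w, w\rangle},
\end{align}
which is the Rayleigh quotient measuring the ``instantaneous frequency'' of $w$. The Agmon--Nirenberg argument proceeds by showing that, as long as $w\not\equiv 0$, the function $Q(t)$ satisfies a differential inequality of the form $\dot Q \le C(1+Q)$, where $C$ depends only on $\kappa$, $\gamma$, $\nu$ and the uniform bound $M$ on $\|\tsup[i]{\varphi}\|_{H^{1+\frac{\gamma}{2}}}$. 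The key computation is to differentiate $Q$ and bound the contributions of the nonlinear remainder $R(t)w = -u_1\cdot\nabla w - \bar u\cdot\nabla\,\tsup[2]{\varphi}$; here I would use the auxiliary estimates \eqref{bound on u in terms of nabla theta}, \eqref{bound on nabla u in terms of nabla theta}, the product estimate \eqref{product estimate}, and \eqref{bound on L4 using H1}, exactly as in the continuity proof of Lemma~\ref{continuity of solution map lemma}, to absorb $\bar u$ in terms of $\|\Lambda w\|_{L^2}$ and transfer derivatives onto the smooth factor $\tsup[2]{\varphi}$, which is controlled by $M$.

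Once $\dot Q \le C(1+Q)$ holds, Gr\"onwall gives a finite bound $Q(t)\le Q_* $ on $[0,T]$, so the frequency cannot blow up backward in time. Combining this with the energy identity $\tfrac{d}{dt}\log\|w\|_{L^2}^2 = -2Q(t) + (\text{lower order})$ shows that $t\mapsto \log\|w(t)\|_{L^2}^2$ is essentially convex, which forces the logarithmic convexity inequality $\|w(t)\|_{L^2} \le \|w(0)\|_{L^2}^{1-\theta(t)}\,\|w(T)\|_{L^2}^{\theta(t)}$ for some $\theta(t)\in(0,1)$. Since $w(T)=\tsup[1]{\varphi}(\cdot,T)-\tsup[2]{\varphi}(\cdot,T)=0$ by hypothesis, the right-hand side vanishes, hence $w(t)\equiv 0$ on $[0,T]$ and in particular $\tsup[1]{\varphi}_0=\tsup[2]{\varphi}_0$. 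The main obstacle I anticipate is the careful justification that $Q(t)$ is well-defined and differentiable where $w\ne 0$, together with controlling the nonlinear term in $\dot Q$ without losing more than half a derivative; the requirement $\gamma\in[1,2]$ in \eqref{conditions on gamma} is exactly what makes the lower-order estimates \eqref{bound on u in terms of nabla theta}--\eqref{bound on nabla u in terms of nabla theta} available, so the dissipation $\kappa\Lambda^\gamma$ dominates the advective perturbation and the differential inequality closes.
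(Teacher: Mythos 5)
Your proposal follows the classical Agmon--Nirenberg frequency-function route, which is genuinely different from what the paper does. The paper symmetrizes: with $\theta=\tsup[1]{\varphi}-\tsup[2]{\varphi}$ and $\bar\theta=\tfrac12(\tsup[1]{\varphi}+\tsup[2]{\varphi})$, the difference solves $\partial_t\theta+\kappa\Lambda^\gamma\theta+u[\bar\theta]\cdot\nabla\theta+u[\theta]\cdot\nabla\bar\theta=0$, and one argues by contradiction on $v(t)=\log\bigl(2m/\|\theta(t)\|_{L^2}\bigr)$, where $\tau$ is the first time $\|\theta\|_{L^2}$ vanishes and $m=\max_{[0,\tau)}\|\theta\|_{L^2}$. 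The point of this symmetrization is that the only pairing ever needed is $\langle\partial_t\theta,\theta\rangle$: there the transport term $u[\bar\theta]\cdot\nabla\theta$ integrates to zero by incompressibility, and the remaining term obeys $|\langle u[\theta]\cdot\nabla\bar\theta,\theta\rangle|\le\|u[\theta]\|_{L^\infty}\|\nabla\bar\theta\|_{L^2}\|\theta\|_{L^2}\le C\|\Lambda^{\gamma/2}\theta\|_{L^2}\|\nabla\bar\theta\|_{L^2}\|\theta\|_{L^2}$ by \eqref{bound on u in terms of nabla theta}. No norm of the perturbation, and hence no Rayleigh quotient, ever appears; only $\int_0^\tau\|\bar\theta\|^2_{H^1}\,ds<\infty$ is used.

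The genuine gap in your proposal is its pivotal step, $\dot Q\le C(1+Q)$: you assert it without confronting the term that makes it hard, and with the tools you cite it fails for $\gamma\in[1,2)$. The standard computation gives $\tfrac12\dot Q=\bigl(-\|Aw-Qw\|^2_{L^2}+\langle Rw,Aw-Qw\rangle\bigr)/\|w\|^2_{L^2}$, so after Cauchy--Schwarz one needs $\|Rw\|_{L^2}\lesssim\|\Lambda^{\gamma/2}w\|_{L^2}+\|w\|_{L^2}$, since $Q$ controls only $\gamma/2$ derivatives of $w$ relative to $\|w\|_{L^2}$. The stretching term $u[w]\cdot\nabla\tsup[2]{\varphi}$ is of this form, precisely by the first inequality in \eqref{bound on u in terms of nabla theta}. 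The transport term $u_1\cdot\nabla w$ is not: any direct estimate costs a full derivative, $\|u_1\cdot\nabla w\|_{L^2}\le\|u_1\|_{L^\infty}\|\Lambda w\|_{L^2}$, and $\|\Lambda w\|_{L^2}$ is \emph{not} dominated by $Q^{1/2}\|w\|_{L^2}\sim\|\Lambda^{\gamma/2}w\|_{L^2}$ unless $\gamma=2$ --- indeed your plan to ``absorb $\bar u$ in terms of $\|\Lambda w\|_{L^2}$'' and your closing claim that \eqref{conditions on gamma} makes the dissipation dominate the advection are exactly where the argument breaks; $\gamma\ge 1$ buys the estimates \eqref{bound on u in terms of nabla theta}--\eqref{bound on nabla u in terms of nabla theta}, but only $\gamma=2$ makes $\Lambda^{\gamma/2}$ a full gradient. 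A genuine repair must use the skew-symmetry $\langle u_1\cdot\nabla w,w\rangle=0$, so that only the commutator pairing $\langle u_1\cdot\nabla w,\Lambda^\gamma w\rangle=\langle[\Lambda^{\gamma/2},u_1\cdot\nabla]w,\Lambda^{\gamma/2}w\rangle$ enters $\dot Q$, together with a fractional Calder\'on-type commutator bound $\|[\Lambda^{\gamma/2},u_1\cdot\nabla]w\|_{L^2}\le C(u_1)\|\Lambda^{\gamma/2}w\|_{L^2}$, which exploits that $u_1$ is two orders smoother than $\tsup[1]{\varphi}$ (this is where $\nu>0$ and \eqref{two order smoothing for u when nu>0} enter); note that \eqref{commutator estimate} does \emph{not} give this, as its second term again costs $\|\nabla w\|_{L^{p_4}}$. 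Two further remarks: for arbitrary $H^1$ data you cannot assume absorption into $B_{1+\frac{\gamma}{2}}$ within $[0,T]$, so the constants should be taken as $L^1_t$ functions built from $\tsup[i]{\varphi}\in L^2(0,T;H^{1+\frac{\gamma}{2}})$; and your machinery is not gratuitous --- in the paper's displayed inequality for $dv/dt$ the dissipation actually enters with a plus sign (dissipation decreases $\|\theta\|_{L^2}$ and so increases $v$), and once that sign is corrected the paper's own absorption step faces exactly the Dirichlet quotient your $Q$ is designed to control, so a fully airtight proof for $\gamma\in[1,2)$ does need the commutator ingredient above.
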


\begin{proof}
Define $\theta=\tsup[1]{\varphi}-\tsup[2]{\varphi}$ and $\bar{\theta}=\frac{1}{2}(\tsup[1]{\varphi}+\tsup[2]{\varphi})$. Then $\theta$ and $\bar\theta$ satisfy
\begin{align*}
\dt \theta+\kappa\Lambda^{\gamma}\theta+\bar{u}\cdot\nabla \theta+u\cdot\nabla\bar{\theta}=0,\qquad \theta(\cdot,0)=\theta_0:=\tsup[1]{\varphi}_0-\tsup[2]{\varphi}_0,
\end{align*}
where $\bar u=u[\bar \theta]$ and $u=u[\theta]$. We argue by contradiction: suppose that $\theta_0\neq0$, then by continuity in time, we have $\|\theta(\cdot,t)\|_{L^2}>0$ for a sufficiently small $t>0$, and we define $\tau>0$ to be the minimal time such that 
\begin{align}\label{zero for bar theta}
\lim_{t\to\tau^{-}}\|\theta(\cdot,t)\|_{L^2}=0.
\end{align}
Let $\dis m=\max_{t\in[0,\tau)}\|\theta(\cdot,t)\|_{L^2}$, and we write $v(t)=\log\Big[\frac{2m}{\|\theta(\cdot,t)\|_{L^2}}\Big]$, then $v(t)$ is well-defined and positive on $[0,\tau)$ with $v(0)<\infty$, and $v(t)\to\infty$ as $t\to\tau^{-}$ by \eqref{zero for bar theta}. Notice that $v$ satisfies
\begin{align}\label{eqn for v}
\frac{d}{dt}v&=-\frac{1}{\|\theta\|^2_{L^2}}\intox\theta\dt\theta\le-\frac{\kappa}{\|\theta\|^2_{L^2}}\|\Lambda^\frac{\gamma}{2}\theta\|^2_{L^2}+\frac{1}{\|\theta\|^2_{L^2}}\Big|\intox u\cdot\nabla\bar\theta \theta\Big|.
\end{align}
Using \eqref{bound on u in terms of nabla theta}, we can bound $\Big|\intox u\cdot\nabla\bar\theta \theta\Big|$ as follows.
\begin{align*}
\Big|\intox u\cdot\nabla\bar\theta \theta\Big|&\le\|u\|_{L^\infty}\|\nabla\bar\theta\|_{L^2}\|\theta\|_{L^2}\\
&\le C\|\Lambda^\frac{\gamma}{2}\theta\|_{L^2}\|\nabla\bar\theta\|_{L^2}\|\theta\|_{L^2}.
\end{align*}
Hence using Cauchy-Schwartz inequality and integrating \eqref{eqn for v} in time, we obtain
\begin{align*}
v(t)\le v(0)+C\int_0^\tau\|\bar\theta(\cdot,s)\|^2_{H^1}ds<\infty,\qquad\forall t\in[0,\tau),
\end{align*}
Hence $\dis\lim_{t\to\tau^{-}}v(t)<\infty$, which leads to a contradiction. Therefore we must have $\theta_0=0$ and $\tsup[1]{\varphi}_0=\tsup[2]{\varphi}_0$ as desired.
\end{proof}

\begin{remark}
In view of the results obtained from Lemma~\ref{Backwards uniqueness lemma}, the solution map $\pin(t)$ is injective on $\Gg$. The dynamics, when restricted to $\Gg$, actually defines a dynamical system. Hence $\pin(t)\Big|_{\Gg}$ makes sense for ${\it all}$ $t\in\R$, not just for $t\ge0$.
\end{remark}

Applying Lemma~\ref{continuity of solution map lemma}-\ref{Backwards uniqueness lemma} and the argument given in \cite[Proposition~6.4]{CZV16}, we can obtain the invariance and the maximality of the attractor $\Gg$ stated in Theorem~\ref{Existence of H1 global attractor}. We summarise the results in the following corollary:

\begin{corollary}
The global attractor $\Gg$ of $\pin(t)$ is fully invariant, namely
\begin{align*}
\pin(t)\Gg=\Gg,\qquad \forall t\ge0.
\end{align*}
In particular, $\Gg$ is maximal among the class of bounded invariant sets in $H^1$.
\end{corollary}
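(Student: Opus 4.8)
The plan is to deduce both the full invariance $\pin(t)\Gg=\Gg$ and the maximality of $\Gg$ from the abstract theory of global attractors for continuous semigroups, feeding in the two lemmas just established (continuity of $\pin(t)$ on $B_{1+\frac{\gamma}{2}}$ and backwards uniqueness) together with the compactness of $\Gg$ that is built into its construction in Corollary~\ref{Unique attractor general corollary}. First I would record that $\Gg$ is compact in the $H^1$-topology: by Corollary~\ref{Unique attractor general corollary} it is the $\omega$-limit set of $B_{1+\frac{\gamma}{2}}$ and is bounded in $H^{1+\frac{\gamma}{2}}$ by \eqref{uniform bound on theta from the attractor}, and since $\gamma>0$ the embedding $H^{1+\frac{\gamma}{2}}\hookrightarrow H^1$ is compact. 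In particular $\Gg\subset B_{1+\frac{\gamma}{2}}$, so Lemma~\ref{continuity of solution map lemma} guarantees that $\pin(t)$ is continuous on a set containing $\Gg$ for every $t>0$, which is the hypothesis needed to run the abstract machinery.

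For the invariance I would split $\pin(t)\Gg=\Gg$ into its two inclusions in the standard way. Positive invariance $\pin(t)\Gg\subseteq\Gg$ follows from the continuity of $\pin(t)$ and the semigroup identity $\pin(t)\pin(s)=\pin(t+s)$ applied to the $\omega$-limit characterization of $\Gg$. The reverse inclusion $\Gg\subseteq\pin(t)\Gg$ is obtained by the usual backward-orbit compactness argument: given $\phi\in\Gg$, one uses the uniform $H^{1+\frac{\gamma}{2}}$-bound and compactness in $H^1$ to extract a $\psi\in\Gg$ with $\pin(t)\psi=\phi$. This is precisely the packaging carried out in \cite[Proposition~6.4]{CZV16}, whose hypotheses (continuous semigroup, compact invariant absorbing set) have just been verified here.

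For the maximality I would argue directly. Let $X\subset H^1$ be any bounded set that is invariant, $\pin(t)X=X$ for all $t\ge0$. Since $\Gg$ attracts every bounded subset of $H^1$ by Corollary~\ref{Unique attractor general corollary}, we have $\dist(\pin(t)X,\Gg)\to0$ as $t\to\infty$, where $\dist$ is the $H^1$ Hausdorff semidistance; but $\pin(t)X=X$ gives $\dist(X,\Gg)=\dist(\pin(t)X,\Gg)\to0$, so $\dist(X,\Gg)=0$, and since $\Gg$ is closed this forces $X\subseteq\Gg$. Hence $\Gg$ is maximal among bounded invariant sets. Finally, Lemma~\ref{Backwards uniqueness lemma} shows $\pin(t)$ is injective on $\Gg$; combined with $\pin(t)\Gg=\Gg$, the restriction $\pin(t)\big|_{\Gg}$ is a bijection of $\Gg$, so it extends to a genuine flow for all $t\in\R$, as noted in the preceding remark.

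The routine part is the maximality inclusion, which is a one-line consequence of attraction together with invariance. The step requiring care is the reverse inclusion $\Gg\subseteq\pin(t)\Gg$ in the invariance statement: it is not automatic from continuity alone and rests on the compactness of $\Gg$ in $H^1$, which in turn is where the higher regularity \eqref{uniform bound on theta from the attractor} and the restriction $\gamma\in[1,2]$ (entering through the auxiliary velocity estimates \eqref{bound on u in terms of nabla theta}--\eqref{bound on nabla u in terms of nabla theta} that underpin Lemmas~\ref{continuity of solution map lemma} and \ref{Backwards uniqueness lemma}) are genuinely used.
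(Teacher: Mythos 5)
Your proposal is correct and takes essentially the same route as the paper: the paper obtains this corollary exactly by combining Lemma~\ref{continuity of solution map lemma} and Lemma~\ref{Backwards uniqueness lemma} with the argument of \cite[Proposition~6.4]{CZV16}, and your write-up simply spells out that standard argument (positive invariance from continuity and the $\omega$-limit characterization, negative invariance from the $H^{1+\frac{\gamma}{2}}$-bound giving compactness in $H^1$, maximality from attraction plus closedness of $\Gg$, and the flow extension from backwards uniqueness). No gaps.
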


Finally, we address the fractal dimensions for the global attractors $\Gg$ obtained in Theorem~\ref{Existence of H1 global attractor}. Given a compact set $X$, we give the following definition for fractal dimension $\dimf(X)$, which is based on counting the number of closed balls of a fixed radius $\varepsilon$ needed to cover $X$; see \cite{R13} for further explanation. 

\begin{definition}\label{fractal dimension def}
Given a compact set $X$, let $N(X,\varepsilon)$ be the minimum number of balls of radius $\varepsilon$ that cover $X$. The {\it fractal dimension} $\dimf (X)$ of $X$ is given by
\begin{align*}
\dimf(X) := \limsup_{\varepsilon\to0}\frac{\log N(X, \varepsilon)}{-\log\varepsilon}.
\end{align*}
\end{definition}

In order to prove that $\dimf (\Gg)$ is finite, we need to show that the solution map $\pin$ given in \eqref{def of solution map nu>0 and kappa>0} is uniform differentiable. More precisely, we have the following definition:

\begin{definition}\label{uniform differentiable def}
We say that $\pin(t)$ is {\it uniform differentiable} on $\Gg$ if for every $\theta_0\in\Gg$, there exists a linear operator $D\pin(t, \theta_0)$ such that
\begin{align}\label{condition 1 for uniform differentiable}
\mbox{$\dis\lim_{\varepsilon\to0}\sup_{\theta_0,\varphi_0\in\Gg;0<\|\theta_0-\varphi_0\|_{H^1}\le\varepsilon}\frac{\|\pin(t)(\varphi_0)-\pin(t)(\theta_0)-D\pin(t)(\varphi_0-\theta_0)\|_{H^1}}{\|\theta_0-\varphi_0\|_{H^1}}=0$,}
\end{align}
and 
\begin{align}\label{condition 2 for uniform differentiable}
\mbox{$\dis\sup_{\theta_0\in\Gg}\sup_{\psi_0\in H^1}\frac{\|D\pin(t,\theta_0)(\psi_0)\|_{H^1}}{\|\psi_0\|_{H^1}}<\infty$,\qquad for all $t\ge0$.}
\end{align}
\end{definition}
The next lemma proves that $\pin(t)$ is indeed uniform differentiable, and the associated linear operator $D\pin(t, \theta_0)$ is given by 
\begin{align*}
D\pin(t, \theta_0)[\psi_0]:=\psi(t),
\end{align*}
where $\psi$ is the solution of the linearised problem with respect to \eqref{abstract active scalar eqn}
\begin{align}\label{linearised active scalar with gamma}
\left\{ \begin{array}{l}
\frac{d\psi}{dt}=A_{\theta}[\psi], \\
\psi(x,0)=\psi_0(x),
\end{array}\right.
\end{align}
with $\theta=\pin(t)\theta_0$ and $A_{\theta}$ is the elliptic operator given by
\begin{equation}\label{def of operator A theta}
A_{\theta}[\psi]=A_{\theta_0}(t)[\psi]:=-\kappa\Lambda^\gamma\psi-u[\theta]\cdot\nabla\psi-u[\psi]\cdot\nabla\theta.
\end{equation}
\begin{lemma}\label{uniform differentiable lemma}
For fixed $\nu>0$, if $\gamma$ satisfies the condition \eqref{conditions on gamma}, then the solution map $\pin(t)$ is uniform differentiable on $\Gg$ in the sense of Definition~\ref{uniform differentiable def}, and the associated linear operator $D\pin(t, \theta_0)$ is given by \eqref{linearised active scalar with gamma}. Furthermore, the linear operator $D\pin(t, \theta_0)$ is compact.
\end{lemma}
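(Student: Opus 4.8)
The plan is to establish the three assertions—existence of the derivative operator via the linearised flow, the two conditions of Definition~\ref{uniform differentiable def}, and compactness—through a sequence of $H^1$ energy estimates paralleling those in Lemma~\ref{continuity of solution map lemma}, exploiting throughout the uniform bound $\|\theta(\cdot,t)\|_{H^{1+\frac{\gamma}{2}}}\le M_{\Gg}$ guaranteed by \eqref{uniform bound on theta from the attractor} for trajectories on $\Gg$, together with the auxiliary estimates \eqref{bound on Lambda theta for gamma>1}--\eqref{bound on nabla u in terms of nabla theta}.

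First I would solve the linearised problem \eqref{linearised active scalar with gamma}. For $\theta_0\in\Gg$ and $\psi_0\in H^1$, testing \eqref{linearised active scalar with gamma} with $-\Delta\psi$ yields
\begin{align*}
\frac{1}{2}\frac{d}{dt}\|\nabla\psi\|^2_{L^2}+\kappa\|\Lambda^{1+\frac{\gamma}{2}}\psi\|^2_{L^2}=\langle u[\theta]\cdot\nabla\psi,\Delta\psi\rangle+\langle u[\psi]\cdot\nabla\theta,\Delta\psi\rangle.
\end{align*}
The first term is bounded, after an integration by parts using $\divv u[\theta]=0$, by $\|\nabla u[\theta]\|_{L^\infty}\|\nabla\psi\|^2_{L^2}\le C_\nu M_{\Gg}\|\psi\|^2_{H^1}$ via \eqref{bound on nabla u in terms of nabla theta}. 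The second is the genuinely higher-order term; it is handled exactly as the cross term in \eqref{bound on the cross term bar theta}--\eqref{bound on the cross term bar theta final}, using the product estimate \eqref{product estimate}, the smoothing \eqref{two order smoothing for u when nu>0}, and \eqref{bound on Lambda theta for gamma>1}, \eqref{bound on L4 using H1}, to produce $CM_{\Gg}\|\Lambda\psi\|_{L^2}\|\Lambda^{1+\frac{\gamma}{2}}\psi\|_{L^2}$, which Young's inequality splits into the dissipation plus $C\kappa^{-1}M_{\Gg}^2\|\psi\|^2_{H^1}$. Gr\"onwall's inequality then gives $\|\psi(\cdot,t)\|_{H^1}\le e^{Ct}\|\psi_0\|_{H^1}$ with $C=C(\nu,\kappa,\gamma,M_{\Gg})$, which both yields well-posedness in $C([0,\infty);H^1)$ and establishes the uniform bound \eqref{condition 2 for uniform differentiable}, so that $D\pin(t,\theta_0)[\psi_0]:=\psi(t)$ is a well-defined bounded linear operator. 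It is precisely here that the restriction $\gamma\in[1,2]$ enters, since \eqref{bound on Lambda theta for gamma>1} and the velocity bounds \eqref{bound on u in terms of nabla theta}--\eqref{bound on nabla u in terms of nabla theta} require $\gamma\ge1$.

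Next, for \eqref{condition 1 for uniform differentiable}, set $\eta=\varphi-\theta$ with $\varphi=\pin(t)\varphi_0$, and let $w=\eta-\psi$ where $\psi$ solves \eqref{linearised active scalar with gamma} with $\psi_0=\varphi_0-\theta_0$. Subtracting the equations and using the linearity of $u[\cdot]$, one finds that $w(0)=0$ and
\begin{align*}
\dt w+\kappa\Lambda^\gamma w+u[\theta]\cdot\nabla w+u[w]\cdot\nabla\varphi=-u[\psi]\cdot\nabla\eta.
\end{align*}
The left-hand side has exactly the structure of the linearised equation, with $\varphi$ (also bounded by $M_{\Gg}$ on $\Gg$) in place of $\theta$, so the same energy estimate applies. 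The forcing $-u[\psi]\cdot\nabla\eta$ is quadratically small: by Lemma~\ref{continuity of solution map lemma} and the $H^1$-bound just obtained, $\|\eta(\cdot,t)\|_{H^1}$ and $\|\psi(\cdot,t)\|_{H^1}$ are each $O(\|\theta_0-\varphi_0\|_{H^1})$, and pairing the forcing with $-\Delta w$ and estimating $\|\Lambda^{2-\frac{\gamma}{2}}(u[\psi]\eta)\|_{L^2}$ as in \eqref{bound on the cross term bar theta} (using the $H^{1+\frac{\gamma}{2}}$-regularity on $\Gg$) contributes at order $\|\theta_0-\varphi_0\|^2_{H^1}$ after Young's inequality absorbs the top-order factor into the dissipation. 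Gr\"onwall with zero initial data then gives $\|w(\cdot,t)\|_{H^1}\le C_t\|\theta_0-\varphi_0\|^2_{H^1}$, whence the ratio in \eqref{condition 1 for uniform differentiable} is $O(\|\theta_0-\varphi_0\|_{H^1})\to0$ uniformly over $\Gg$, since all constants depend only on $M_{\Gg}$, $\nu$, $\kappa$, $\gamma$.

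Finally, for compactness I would upgrade the regularity of $\psi(t)$. The $H^1$ energy identity already yields $\kappa\int_0^t\|\Lambda^{1+\frac{\gamma}{2}}\psi\|^2_{L^2}\,d\tau\le C_t\|\psi_0\|^2_{H^1}$, so $\psi\in L^2_{\loc}(H^{1+\frac{\gamma}{2}})$. Performing the higher-order estimate (testing with $\Lambda^{2+\gamma}\psi$) and invoking the uniform Gr\"onwall lemma exactly as in the passage from $B_1$ to $B_{1+\frac{\gamma}{2}}$ in Lemma~\ref{Existence of an H 1+gamma/2 absorbing set lemma}, one obtains, for each fixed $t>0$, a pointwise bound $\|\psi(\cdot,t)\|_{H^{1+\frac{\gamma}{2}}}\le C_t\|\psi_0\|_{H^1}$. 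Thus $D\pin(t,\theta_0)$ maps the unit ball of $H^1$ into a bounded subset of $H^{1+\frac{\gamma}{2}}$, which is relatively compact in $H^1$ by the Rellich--Kondrachov embedding \eqref{Kondrachov embedding theorem} (here $\gamma>0$ is essential), so $D\pin(t,\theta_0)$ is compact. The main obstacle throughout is the control of the top-order cross term $\langle u[\psi]\cdot\nabla\theta,\Delta\psi\rangle$ and its analogue in the $w$-equation: because $\theta$ is only $H^{1+\frac{\gamma}{2}}$-regular on the attractor, closing these estimates forces a delicate balance between the fractional product estimate and the order-two smoothing of $u[\cdot]$, and it is exactly this balance that necessitates the assumption $\gamma\in[1,2]$.
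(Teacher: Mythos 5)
Your proposal is correct and follows essentially the same route as the paper: $H^1$ energy estimates for the linearised flow using the attractor bound $M_{\Gg}$ and the auxiliary estimates \eqref{bound on Lambda theta for gamma>1}--\eqref{bound on nabla u in terms of nabla theta}, a quadratically forced equation for the remainder closed via Gr\"onwall together with the $L^2_t H^{1+\frac{\gamma}{2}}$ control of the nonlinear difference, and compactness via a higher-order estimate (testing with $\Lambda^{2+\gamma}\psi$) combined with the time-integrated $H^{1+\frac{\gamma}{2}}$ bound. The only deviations are cosmetic: your remainder equation carries the forcing $-u[\psi]\cdot\nabla\eta$ rather than the paper's $-u[w]\cdot\nabla w$ (an algebraically equivalent rearrangement, both quadratically small), and you invoke the uniform Gr\"onwall lemma where the paper uses the mean value theorem to select a good intermediate time, which amounts to the same device.
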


\begin{proof}
For $\theta_0$, $\varphi_0\in\Gg$, we let $\theta(t)=\pin(t)\theta_0$ and $\varphi(t)=\pin(t)\varphi_0$. We denote $\psi_0=\varphi_0-\theta_0$ and define $\psi(t)=D\pin(t, \theta_0)[\psi_0]$, where $\psi(t)$ satisfies \eqref{linearised active scalar with gamma}. We also define 
\begin{align*}
\eta(t)=\varphi(t)-\theta(t)-\psi(t)=\pin(t)\varphi_0-\pin(t)\theta_0-D\pin(t, \theta_0)[\psi_0].
\end{align*}
Then $\eta(t)$ satisfies
\begin{align}\label{eqn for eta}
\left\{ \begin{array}{l}
\dt\eta+\kappa\Lambda^\gamma\eta+u[\eta]\cdot\nabla\theta+u[\theta]\cdot\nabla\eta=-u[w]\cdot w, \\
\eta(x,0)=0,
\end{array}\right.
\end{align}
where $w(t)=\varphi(t)-\theta(t)=\pin(t)\psi_0$. We take $L^2$-inner product of \eqref{eqn for eta}$_1$ with $-\Delta\eta$ and obtain
\begin{align}\label{differential eqn for eta}
&\frac{1}{2}\frac{d}{dt}\|\eta\|^2_{H^1}+\kappa\|\eta\|^2_{H^{1+\frac{\gamma}{2}}}\notag\\
&=\intox u[\eta]\cdot\nabla\theta\Delta\eta-\intox\partial_{x_k}u[\theta]\cdot\nabla\eta\partial_{x_k}\eta+\intox u[w]\cdot\nabla w\Delta\eta.
\end{align}
Using \eqref{bound on nabla u in terms of nabla theta} on $\nabla u$, the second integrand of \eqref{differential eqn for eta} can be bounded by 
\begin{align*}
\Big|\intox\partial_{x_k}u[\theta]\cdot\nabla\eta\partial_{x_k}\eta\Big|\le C\|\nabla u\|_{L^\infty}\|\nabla\eta\|^2_{L^2}\le C\|\Lambda^{1+\frac{\gamma}{2}}\theta\|_{L^2}\|\nabla\eta\|^2_{L^2}.
\end{align*}
To estimate the first integrand appeared in \eqref{differential eqn for eta}, using the fact that $\divv(u[\eta])=0$ and integrating by parts, we have
\begin{align*}
\Big|\intox u[\eta]\cdot\nabla\theta\Delta\eta\Big|&=\Big|\intox u[\eta]\theta\cdot\nabla\Delta\eta\Big|\\
&=\Big|\intox\Lambda^{2-\frac{\gamma}{2}}(u[\eta]\theta)\cdot\Lambda^\frac{\gamma}{2}\nabla\eta\Big|\le \|\Lambda^{2-\frac{\gamma}{2}}(u[\eta]\theta)\|_{L^2}\|\Lambda^{1+\frac{\gamma}{2}}\eta\|_{L^2}.
\end{align*} 
Using \eqref{bound on Lambda theta for gamma>1}, we have
\begin{align*}
\|\Lambda^{2-\frac{\gamma}{2}}(u[\eta]\theta)\|_{L^2}\le C\|\Lambda^{1+\frac{\gamma}{2}}(u[\eta]\theta)\|_{L^2},
\end{align*}
and using the product estimate \eqref{product estimate}, 
\begin{align*}
\|\Lambda^{1+\frac{\gamma}{2}}(u[\eta]\theta)\|_{L^2}\le C\Big(\|\Lambda^{1+\frac{\gamma}{2}}u[\eta]\|_{L^4}\|\theta\|_{L^4}+\|u[\eta]\|_{L^\infty}\|\Lambda^{1+\frac{\gamma}{2}}\theta\|_{L^2}\Big).
\end{align*}
Using \eqref{two order smoothing for u when nu>0} and \eqref{bound on L4 using H1}, the term $\|\Lambda^{1+\frac{\gamma}{2}}u[\eta]\|_{L^4}\|\theta\|_{L^4}$ can be bounded by 
\begin{align*}
\|\Lambda^{1+\frac{\gamma}{2}}u[\eta]\|_{L^4}\|\theta\|_{L^4}&\le C\|\Lambda^{2+\frac{\gamma}{2}}u[\eta]\|_{L^2}\|\Lambda\theta\|_{L^2}\\
&\le C\|\Lambda^{\frac{\gamma}{2}}\eta\|_{L^2}\|\Lambda\theta\|_{L^2}\le C\|\Lambda\eta\|_{L^2}\|\Lambda\theta\|_{L^2},
\end{align*}
for $\gamma\le2$, while the term $\|u[\eta]\|_{L^\infty}\|\Lambda^{1+\frac{\gamma}{2}}\theta\|_{L^2}$ can be bounded by $C\|\Lambda \eta\|_{L^2}\|\Lambda^{1+\frac{\gamma}{2}}\theta\|_{L^2}$ with the help of \eqref{bound on u in terms of nabla theta}. Therefore, we have
\begin{align*}
\Big|\intox u[\eta]\cdot\nabla\theta\Delta\eta\Big|\le C\Big(\|\Lambda\eta\|_{L^2}\|\Lambda\theta\|_{L^2}+\|\Lambda \eta\|_{L^2}\|\Lambda^{1+\frac{\gamma}{2}}\theta\|_{L^2}\Big)\|\Lambda^{1+\frac{\gamma}{2}}\eta\|_{L^2}.
\end{align*} 
Similarly, we can obtain the following estimates on $\dis\intox u[w]\cdot\nabla w\Delta\eta$:
\begin{align*}
\Big|\intox u[w]\cdot\nabla w\Delta\eta\Big|\le\|\Lambda^{1+\frac{\gamma}{2}}(u[w]w)\|_{L^2}\|\Lambda^{1+\frac{\gamma}{2}}\eta\|_{L^2}.
\end{align*}
Using \eqref{product estimate} again, we have
\begin{align*}
\|\Lambda^{1+\frac{\gamma}{2}}(u[w]w)\|_{L^2}\le C\Big(\|\Lambda^{1+\frac{\gamma}{2}}u[w]\|_{L^4}\|w\|_{L^4}+\|u[w]\|_{L^\infty}\|\Lambda^{1+\frac{\gamma}{2}}w\|_{L^2}\Big),
\end{align*}
and we apply \eqref{bound on L4 using H1} and \eqref{bound on u in terms of nabla theta} to obtain
\begin{align*}
&\|\Lambda^{1+\frac{\gamma}{2}}u[w]\|_{L^4}\|w\|_{L^4}+\|u[w]\|_{L^\infty}\|\Lambda^{1+\frac{\gamma}{2}}w\|_{L^2}\\
&\le C\Big(\|\Lambda^{2+\frac{\gamma}{2}}u[w]\|_{L^2}\|\Lambda w\|_{L^2}+\|\Lambda w\|_{L^2}\|\Lambda^{1+\frac{\gamma}{2}}w\|_{L^2}\Big)\\
&\le C\|\Lambda w\|_{L^2}\|\Lambda^{1+\frac{\gamma}{2}}w\|_{L^2},
\end{align*}
where the last inequality follows by \eqref{two order smoothing for u when nu>0} and the assumption that $\gamma\le2$. Hence we have
\begin{align*}
\Big|\intox u[w]\cdot\nabla w\Delta\eta\Big|\le C\|\Lambda w\|_{L^2}\|\Lambda^{1+\frac{\gamma}{2}}w\|_{L^2}\|\Lambda^{1+\frac{\gamma}{2}}\eta\|_{L^2},
\end{align*}
and we infer from \eqref{differential eqn for eta} that
\begin{align}\label{differential eqn for eta step 2}
&\frac{1}{2}\frac{d}{dt}\|\eta\|^2_{H^1}+\kappa\|\eta\|^2_{H^{1+\frac{\gamma}{2}}}\notag\\
&\le C\|\Lambda^{1+\frac{\gamma}{2}}\theta\|_{L^2}\|\nabla\eta\|^2_{L^2}+C\Big(\|\Lambda\theta\|_{L^2}+\|\Lambda^{1+\frac{\gamma}{2}}\theta\|_{L^2}\Big)\|\Lambda \eta\|_{L^2}\|\Lambda^{1+\frac{\gamma}{2}}\eta\|_{L^2}\notag\\
&\qquad+C\|\Lambda w\|_{L^2}\|\Lambda^{1+\frac{\gamma}{2}}w\|_{L^2}\|\Lambda^{1+\frac{\gamma}{2}}\eta\|_{L^2}.
\end{align}
Next we focus on the function $w=w(t)$ which satisfies the following equations:
\begin{align}\label{eqn for w}
\left\{ \begin{array}{l}
\dt w+\kappa\Lambda^\gamma w+u[\varphi]\cdot\nabla w+u[w]\cdot\nabla\theta=0, \\
w(x,0)=\psi_0.
\end{array}\right.
\end{align}
We take $L^2$-inner product of \eqref{eqn for w}$_1$ with $-\Delta w$ to give
\begin{align}\label{differential eqn for w}
\frac{1}{2}\frac{d}{dt}\|w\|^2_{H^1}+\kappa\|w\|^2_{H^{1+\frac{\gamma}{2}}}=\intox u[\varphi]\cdot\nabla w\Delta w+\intox u[w]\cdot\nabla\theta\Delta w.
\end{align}
Similar to the previous case for $\eta$, we have
\begin{align*}
\Big|\intox u[\varphi]\cdot\nabla w\Delta w\Big|\le C\|\Lambda^{1+\frac{\gamma}{2}}\varphi\|_{L^2}\|\nabla w\|^2_{L^2},
\end{align*}
and 
\begin{align*}
\Big|\intox u[w]\cdot\nabla\theta\Delta w\Big|\le C\Big(\|\Lambda w\|_{L^2}\|\Lambda\theta\|_{L^2}+\|\Lambda w\|_{L^2}\|\Lambda^{1+\frac{\gamma}{2}}\theta\|_{L^2}\Big)\|\Lambda^{1+\frac{\gamma}{2}}w\|_{L^2}.
\end{align*} 
Hence \eqref{differential eqn for w} implies
\begin{align}\label{differential eqn for w step 2}
&\frac{1}{2}\frac{d}{dt}\|w\|^2_{H^1}+\kappa\|w\|^2_{H^{1+\frac{\gamma}{2}}}\notag\\
&\le C\|\Lambda^{1+\frac{\gamma}{2}}\varphi\|_{L^2}\|\nabla w\|^2_{L^2}+C\Big(\|\Lambda w\|_{L^2}\|\Lambda\theta\|_{L^2}+\|\Lambda w\|_{L^2}\|\Lambda^{1+\frac{\gamma}{2}}\theta\|_{L^2}\Big)\|\Lambda^{1+\frac{\gamma}{2}}w\|_{L^2}\notag\\
&\le\frac{\kappa}{2}\|w\|^2_{H^{1+\frac{\gamma}{2}}}+C\Big[\|\Lambda^{1+\frac{\gamma}{2}}\varphi\|_{L^2}+\frac{1}{\kappa}(\|\Lambda\theta\|^2_{L^2}+\|\Lambda^{1+\frac{\gamma}{2}}\varphi\|^2_{L^2})\Big]\|w\|^2_{H^1}.
\end{align}
Since $\theta_0$, $\varphi_0\in\Gg$, $\theta$ and $\varphi$ both satisfy the bounds \eqref{uniform bound on theta from the attractor}-\eqref{uniform bound on time integral of theta from the attractor}, namely,
\begin{align}\label{uniform bound on theta and varphi}
\|\theta\|_{H^{1+\frac{\gamma}{2}}}\le M_{\Gg},\qquad \|\varphi\|_{H^{1+\frac{\gamma}{2}}}\le M_{\Gg},
\end{align}
and
\begin{align}\label{uniform bound on time integral of theta and varphi}
\frac{1}{T}\int_0^{T}\|\theta(\cdot,\tau)\|_{H^{1+\gamma}}d\tau\le M_{\Gg},\qquad \frac{1}{T}\int_0^{T}\|\varphi(\cdot,\tau)\|_{H^{1+\gamma}}d\tau\le M_{\Gg},\qquad\forall T>0.
\end{align}
We apply the bounds \eqref{uniform bound on theta and varphi} on \eqref{differential eqn for w step 2} and use Gr\"{o}nwall's inequality to obtain
\begin{align}\label{H1 bound on w}
\|w(\cdot,t)\|^2_{H^1}\le\|\psi_0\|^2_{H^1}K(t,M_{\Gg}),\qquad\forall t\ge0,
\end{align}
as well as 
\begin{align}\label{time integral on H 1+gamma/2 norm of w}
\int_0^t\|w(\cdot,\tau)\|^2_{H^{1+\frac{\gamma}{2}}}d\tau\le\|\psi_0\|^2_{H^1}K(t,M_{\Gg}),\qquad\forall t\ge0.
\end{align}
where $K(t,M_{\Gg})$ is a positive function in $t$. We combine \eqref{differential eqn for eta step 2} with \eqref{uniform bound on theta and varphi} and \eqref{H1 bound on w} to get
\begin{align}\label{differential eqn for eta step 3}
&\frac{d}{dt}\|\eta\|^2_{H^1}+\kappa\|\eta\|^2_{H^{1+\frac{\gamma}{2}}}\notag\\
&\le CM_{\Gg}\|\eta\|^2_{H^1}+\frac{CM^2_{\Gg}}{\kappa}\|\eta\|^2_{H^1}+\frac{C}{\kappa}K(t,M_{\Gg})\|\psi_0\|^2_{H^1}\|w\|^2_{H^{1+\frac{\gamma}{2}}}.
\end{align}
Using Gr\"{o}nwall's inequality on \eqref{differential eqn for eta step 3} and recalling the fact that $\eta(0)=0$, we conclude that
\begin{align}
\|\eta(\cdot,t)\|^2_{H^1}&\le \exp\Big((CM_{\Gg}+\frac{CM^2_{\Gg}}{\kappa})t\Big)\frac{C}{\kappa}K(t,M_{\Gg})\|\psi_0\|^2_{H^1}\int_0^t\|w(\cdot,\tau)\|^2_{H^{1+\frac{\gamma}{2}}}d\tau\notag\\
&\le \tilde K(t,M_{\Gg})\|\psi_0\|^4_{H^1},
\end{align}
where $\tilde K(t,M_{\Gg}):=\exp\Big((CM_{\Gg}+\frac{CM^2_{\Gg}}{\kappa})t\Big)\frac{C}{\kappa}K^2(t,M_{\Gg})$. Hence we prove that
\begin{align*}
\lim_{\varepsilon\to0}\left(\sup_{\theta_0,\varphi_0\in\Gg;0<\|\psi_0\|_{H^1}\le\varepsilon}\frac{\|\eta(\cdot,t)\|_{H^1}}{\|\psi_0\|_{H^1}}\right)\le \lim_{\varepsilon\to0}\tilde K(t,M_{\Gg})\varepsilon^2=0,
\end{align*}
and \eqref{condition 1 for uniform differentiable} follows.

To prove \eqref{condition 2 for uniform differentiable}, it suffices to consider $\psi_0$ to be normalised so that $\|\psi_0\|_{H^1}=1$. Let $\theta_0\in\Gg$ be arbitrary, then using the similar estimates as given above, we have
\begin{align}\label{differential eqn for psi step 1}
&\frac{1}{2}\frac{d}{dt}\|\psi\|^2_{H^1}+\kappa\|\psi\|^2_{H^{1+\frac{\gamma}{2}}}\notag\\
&\le\Big|\intox u[\theta]\cdot\nabla\psi\Delta\psi\Big|+\Big|\intox u[\psi]\cdot\nabla\theta\Delta\psi\Big|\notag\\
&\le C\|\Lambda^{1+\frac{\gamma}{2}}\theta\|_{L^2}\|\nabla \psi\|^2_{L^2}+C\Big(\|\Lambda \psi\|_{L^2}\|\Lambda\theta\|_{L^2}+\|\Lambda \psi\|_{L^2}\|\Lambda^{1+\frac{\gamma}{2}}\theta\|_{L^2}\Big)\|\Lambda^{1+\frac{\gamma}{2}}\psi\|_{L^2},
\end{align} 
which gives 
\begin{align}\label{H1 bound on psi} 
\|\psi(\cdot,t)\|^2_{H^1}\le K(t,M_{\Gg}),\qquad\forall t\ge0,
\end{align}
and \eqref{condition 2 for uniform differentiable} holds as well.

Finally, we show that for any $t>0$ and $\theta_0\in\Gg$, the linear operator $D\pin(t, \theta_0)$ is compact. Following the argument given in \cite{CTV14}, it suffices to show that if $U_1$ is the unit ball in $H^1$, then $D\pin(t, \theta_0)U_1\subset H^{1+\frac{\gamma}{2}}$. In view of \eqref{differential eqn for psi step 1} and \eqref{H1 bound on psi}, we obtain
\begin{align}
\int_0^t\|\psi(\cdot,\tau)\|^2_{H^{1+\frac{\gamma}{2}}}d\tau\le K(t,M_{\Gg}),\qquad\forall t\ge0.
\end{align}
Hence using the mean value theorem, for $t>0$, there exists $\tau\in(0,t)$ such that
\begin{align}\label{bound on psi in H 1+gamma/2}
\|\psi(\cdot,\tau)\|^2_{H^{1+\frac{\gamma}{2}}}\le \frac{1}{t}K(t,M_{\Gg}).
\end{align}
We take the $L^2$-inner product of \eqref{linearised active scalar with gamma}$_1$ with $\Lambda^{2+\gamma}\psi$ and obtain
\begin{align*}
\frac{1}{2}\frac{d}{dt}\|\psi\|^2_{H^{1+\frac{\gamma}{2}}}+\kappa\|\psi\|^2_{H^{1+\gamma}}=\intox u[\theta]\cdot \nabla \psi\Lambda^{2+\gamma}\psi+\intox u[\psi]\cdot\nabla\theta\Lambda^{2+\gamma}\psi.
\end{align*}
Using the commutator estimate \eqref{commutator estimate} and the bound \eqref{bound on L4 using H1}, the term $\Big|\intox u[\theta]\cdot \nabla \psi\Lambda^{2+\gamma}\psi\Big|$ can be bounded by
\begin{align*}
&\Big|\intox u[\theta]\cdot \nabla \psi\Lambda^{2+\gamma}\psi\Big|\\
&\le\Big|\intox(\Lambda^{1+\frac{\gamma}{2}}(u[\theta]\nabla \psi)-u[\theta]\cdot\Lambda^{1+\frac{\gamma}{2}}\nabla \psi)\Lambda^{1+\frac{\gamma}{2}}\psi\Big|\\
&\le \|\Lambda^{1+\frac{\gamma}{2}}(u[\theta]\nabla \psi)-u[\theta]\cdot\Lambda^{1+\frac{\gamma}{2}}\nabla \psi\|_{L^2}\|\Lambda^{1+\frac{\gamma}{2}} \psi\|_{L^2}\\
&\le C\Big(\|\nabla(u[\theta])\|_{L^\infty}\|\Lambda^{1+\frac{\gamma}{2}}\psi\|_{L^2}+\|\Lambda^{1+\frac{\gamma}{2}}u[\theta]\|_{L^4}\|\nabla \psi\|_{L^4}\Big)\|\Lambda^{1+\frac{\gamma}{2}} \psi\|_{L^2}\\
&\le C\Big(\|\Lambda\theta\|_{L^2}\|\Lambda^{1+\frac{\gamma}{2}} \psi\|_{L^2}+\|\Lambda^\frac{\gamma}{2}\theta\|_{L^2}\|\Lambda^{1+\gamma} \psi\|_{L^2}\Big)\|\Lambda^{1+\frac{\gamma}{2}} \psi\|_{L^2},
\end{align*}
and similarly, we also have
\begin{align*}
&\Big|\intox u[\psi]\cdot \nabla \theta\Lambda^{2+\gamma}\psi\Big|\\
&=\Big|\intox\Lambda^2(u[\psi]\theta)\Lambda^{\gamma}\nabla \psi\Big|\\
&\le \|\Lambda^2(u[\psi]\theta)\|_{L^2}\|\Lambda^{1+\gamma} \psi\|_{L^2}\\
&\le C\Big(\|\Lambda^2(u[\psi])\|_{L^4}\|\theta\|_{L^4}+\|u[\psi]\|_{L^\infty}\|\Lambda^2 \theta\|_{L^2}\Big)\|\Lambda^{1+\gamma} \psi\|_{L^2}\\
&\le C\Big(\|\Lambda \psi\|_{L^2}\|\Lambda \theta\|_{L^2}+\|\Lambda \psi\|_{L^2}\|\Lambda^{1+\gamma} \theta\|_{L^2}\Big)\|\Lambda^{1+\gamma} \psi\|_{L^2},
\end{align*}
Therefore, using Young's inequality we obtain
\begin{align}\label{differential eqn for psi higher order}
&\frac{1}{2}\frac{d}{dt}\|\psi\|^2_{H^{1+\frac{\gamma}{2}}}+\frac{\kappa}{2}\|\psi\|^2_{H^{1+\gamma}}\notag\\&\le \frac{C}{\kappa}\Big(\|\Lambda\theta\|_{L^2}+\|\Lambda^\frac{\gamma}{2}\theta\|^2_{L^2}+\|\Lambda\theta\|^2_{L^2}+\|\Lambda^{1+\gamma}\theta\|^2_{L^2}\Big)\|\psi\|^2_{H^{1+\frac{\gamma}{2}}}
\end{align}
Integrating \eqref{differential eqn for psi higher order} from $\tau$ to $t$, applying Gr\"{o}nwall's inequality and using the bounds \eqref{uniform bound on time integral of theta and varphi} and \eqref{bound on psi in H 1+gamma/2}, we arrive at
\begin{align}\label{H 1+gamma/2 bound on psi}
\|\psi(\cdot,t)\|^2_{H^{1+\frac{\gamma}{2}}}&\le \|\psi(\cdot,\tau)\|^2_{H^{1+\frac{\gamma}{2}}}\exp\Big(\frac{C}{\kappa}\int_{\tau}^t\|\theta(\cdot,s)\|^2_{H^{1+\gamma}}ds\Big)\notag\\
&\le \frac{1}{t}K(t,M_{\Gg})\exp\Big(\frac{C}{\kappa}M_{\Gg}t\Big),
\end{align}
hence $\psi(t)\in H^{1+\frac{\gamma}{2}}$ and we finish the proof of lemma~\ref{uniform differentiable lemma}.
\end{proof}

Next we show that there is an $N$ such that volume elements which are carried by the flow of $\pin(t)\theta_0$, with $\theta_0\in\Gg$, decay exponentially for dimensions larger than $N$. We recall the following proposition for which the proof can be found in \cite{CF85}, \cite{CF88}.

\begin{proposition}\label{volume decay prop}
Consider $\theta_0\in\Gg$, and an initial orthogonal set of infinitesimal displacements $\{\psi_{1,0},\dots,\psi_{n,0}\}$ for some $n\ge1$. Suppose that $\psi_i$ obey the following equation:
\begin{align}
\dt\psi_i=A_{\theta_0}(t)[\psi_i],\qquad \psi_i(0)=\psi_{i,0},
\end{align}
for all $i\in\{1,\dots, n\}$ and $t\ge0$, where $A_{\theta_0}(t)$ is given by \eqref{def of operator A theta}. Then the volume elements
\begin{align*}
V_n(t)=\|\psi_1(t)\wedge\dots\wedge\psi_n(t)\|_{H^1}
\end{align*}
satsify
\begin{align*}
V_n(t)=V_n(0)\exp\left(\int_0^t \Tr(P_n(s)A_{\theta_0}(s))ds\right),
\end{align*}
where the orthogonal projection $P_n(s)$ is onto the linear span of $\{\psi_1(s),\dots,\psi_n(s)\}$ in the Hilbert space $H^1$, and $\Tr(P_n(s)A_{\theta})$ is defined by 
\[\Tr(P_n(s)A_{\theta}) = \sum_{j=1}^n\int_{\T^d}(-\Delta\varphi_j(s))A_{\theta}[\varphi_j(s)]dx\]
for $n\ge1$, with $\{\varphi_1(s),\dots,\varphi_n(s)\}$ an orthornormal set spanning the linear span of $\{\psi_1(s),\dots,\psi_n(s)\}$. If we define
\begin{align*}
\langle P_nA_{\theta_0}\rangle:=\limsup_{T\to\infty}\frac{1}{T}\int_0^T\Tr(P_n(t)A_{\theta_0}(t))dt,
\end{align*}
then we further obtain
\begin{align}\label{volume decay}
V_n(t)\le V_n(0)\left(t\sup_{\theta_0\in\Gg}\sup_{P_n(0)}\langle P_nA_{\theta_0}\rangle\right),\qquad \forall t\ge0,
\end{align}
where the supremum over $P_n(0)$ is a supremum over all choices of initial $n$ orthogonal set of infinitesimal displacements that we take around $\theta_0$.
\end{proposition}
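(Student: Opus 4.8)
The plan is to carry out the Constantin--Foias computation of the evolution of volume elements under a linear flow, working in $H^1$ equipped with the homogeneous inner product $\langle f,g\rangle_{H^1}=\intox\nabla f\cdot\nabla g\,dx=\intox(-\Delta f)g\,dx$, which is legitimate on the mean-zero subspace. First I would lift the linearized system $\dt\psi_i=A_{\theta_0}(t)[\psi_i]$ to the $n$-th exterior power $\Lambda^nH^1$. Setting $\Psi(t)=\psi_1(t)\wedge\cdots\wedge\psi_n(t)$, the Leibniz rule shows that $\Psi$ solves the linear equation $\dt\Psi=A^{(n)}_{\theta_0}(t)\Psi$, where $A^{(n)}_{\theta_0}$ is the canonical derivation extension $A^{(n)}_{\theta_0}=\sum_{j=1}^n I\wedge\cdots\wedge A_{\theta_0}\wedge\cdots\wedge I$ acting in the $j$-th slot. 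Since $V_n(t)=\|\Psi(t)\|_{\Lambda^nH^1}$, differentiating $V_n^2=\langle\Psi,\Psi\rangle$ and using that $A^{(n)}_{\theta_0}$ generates the flow gives $\frac{d}{dt}V_n^2=2\langle A^{(n)}_{\theta_0}(t)\Psi,\Psi\rangle$, with the $\Lambda^n$-inner product computed via Gram determinants $\langle a_1\wedge\cdots\wedge a_n,b_1\wedge\cdots\wedge b_n\rangle=\det(\langle a_i,b_k\rangle_{H^1})$.

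The central step is to identify the logarithmic derivative with the projected trace. At a fixed time $s$ with $V_n(s)>0$, I would choose an orthonormal set $\{\varphi_1(s),\dots,\varphi_n(s)\}$ in $H^1$ spanning $\mathrm{span}\{\psi_1(s),\dots,\psi_n(s)\}$, so that $\Psi(s)=V_n(s)\,\omega(s)$ with $\omega=\varphi_1\wedge\cdots\wedge\varphi_n$ a unit vector in $\Lambda^nH^1$. Evaluating $\langle A^{(n)}_{\theta_0}(s)\omega,\omega\rangle$ slot by slot, each term is the determinant of the identity matrix with its $j$-th row replaced by $(\langle A_{\theta_0}\varphi_j,\varphi_k\rangle_{H^1})_{k}$, whose value is exactly the diagonal entry $\langle A_{\theta_0}(s)\varphi_j,\varphi_j\rangle_{H^1}$; the off-diagonal contributions drop out. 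Summing over $j$ yields $\langle A^{(n)}_{\theta_0}(s)\omega,\omega\rangle=\sum_{j=1}^n\langle A_{\theta_0}(s)\varphi_j,\varphi_j\rangle_{H^1}=\sum_{j=1}^n\intox(-\Delta\varphi_j)A_{\theta_0}[\varphi_j]\,dx=\Tr(P_n(s)A_{\theta_0}(s))$, matching the stated normalization. Hence $\frac{d}{dt}\log V_n=\Tr(P_n(s)A_{\theta_0}(s))$, and integrating in time gives the identity $V_n(t)=V_n(0)\exp\big(\int_0^t\Tr(P_n(s)A_{\theta_0}(s))\,ds\big)$; the degenerate case $V_n(0)=0$ is immediate, and the possibly exceptional times where $V_n$ vanishes are handled by continuity.

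For the bound \eqref{volume decay} I would estimate the exponent by inserting the suprema, $\int_0^t\Tr(P_n(s)A_{\theta_0}(s))\,ds\le t\,\sup_{\theta_0\in\Gg}\sup_{P_n(0)}\langle P_nA_{\theta_0}\rangle$, where finiteness of the time-averaged trace on $\Gg$ comes from the same mechanism as in Lemma~\ref{uniform differentiable lemma}: the dissipative contribution $-\kappa\|\Lambda^{1+\frac{\gamma}{2}}\varphi_j\|_{L^2}^2$ dominates the two advection terms of $A_{\theta_0}$, estimated by the product estimate \eqref{product estimate} and the bounds \eqref{uniform bound on theta from the attractor}--\eqref{uniform bound on time integral of theta from the attractor}. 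The main obstacle is analytic rather than algebraic: $A_{\theta_0}$ is unbounded (its principal part is $-\kappa\Lambda^\gamma$), so the formal exterior-algebra manipulation and the interchange of differentiation with the projection must be justified by checking that the $\varphi_j(s)$ lie in the domain where $\intox(-\Delta\varphi_j)A_{\theta_0}[\varphi_j]\,dx$ is a convergent integral. This is exactly where I would invoke the parabolic smoothing of the linearized problem established in Lemma~\ref{uniform differentiable lemma}: for $\gamma\in[1,2]$ the solutions instantaneously reach $H^{1+\frac{\gamma}{2}}$, which renders $-\kappa\intox(-\Delta\varphi_j)\Lambda^\gamma\varphi_j\,dx=-\kappa\|\Lambda^{1+\frac{\gamma}{2}}\varphi_j\|_{L^2}^2$ finite and the advective contributions integrable. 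With this regularity in hand the trace is a genuine convergent sum and the Constantin--Foias argument of \cite{CF85}, \cite{CF88} applies verbatim.
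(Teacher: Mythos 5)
The paper offers no proof of this proposition at all: it is quoted with the remark that the proof ``can be found in \cite{CF85}, \cite{CF88}''. Your reconstruction is therefore attempting more than the paper does, and its core is sound and faithful to the cited argument: lifting the linearized flow to the $n$-th exterior power of $H^1$ (with the homogeneous inner product $\langle f,g\rangle_{H^1}=\intox(-\Delta f)g\,dx$), computing $\frac{d}{dt}V_n^2=2\langle A^{(n)}_{\theta_0}\Psi,\Psi\rangle$ via Gram determinants, and identifying $\langle A^{(n)}_{\theta_0}\omega,\omega\rangle=\sum_{j}\langle A_{\theta_0}\varphi_j,\varphi_j\rangle_{H^1}=\Tr(P_n(s)A_{\theta_0}(s))$ is exactly the Constantin--Foias computation, and invoking the smoothing of the linearized flow from Lemma~\ref{uniform differentiable lemma} is the right way to make the trace well defined despite $A_{\theta_0}$ being unbounded. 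One point you treat too lightly: to get the exponential identity on all of $[0,t]$ you need that $V_n(0)\neq 0$ forces $V_n(s)\neq 0$ for all $s$ (backward uniqueness / injectivity of the linear propagator), not mere continuity.

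The genuine gap is in your final step. The inequality $\int_0^t\Tr(P_n(s)A_{\theta_0}(s))\,ds\le t\,\sup_{\theta_0\in\Gg}\sup_{P_n(0)}\langle P_nA_{\theta_0}\rangle$ does not follow, because $\langle P_nA_{\theta_0}\rangle$ is a \emph{limsup of long-time averages}, and such a quantity does not dominate finite-time averages: if $\Tr(P_n(s)A_{\theta_0}(s))=e^{-s}$ along every trajectory, the left side is $1-e^{-t}>0$ while the right side is $0$. Worse, the inequality goes the wrong way structurally: by the cocycle property and the invariance of $\Gg$, the function $F(t)=\sup_{\theta_0\in\Gg}\sup_{P_n(0)}\int_0^t\Tr(P_nA_{\theta_0})\,ds$ is subadditive, so Fekete's lemma gives $F(t)/t\ge\lim_{T\to\infty}F(T)/T\ge\sup_{\theta_0\in\Gg}\sup_{P_n(0)}\langle P_nA_{\theta_0}\rangle$ for every $t$. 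The correct Constantin--Foias statement is asymptotic: setting $\bar q_n:=\limsup_{T\to\infty}\sup_{\theta_0\in\Gg}\sup_{P_n(0)}\frac{1}{T}\int_0^T\Tr(P_nA_{\theta_0})\,ds$, for every $\varepsilon>0$ one has $V_n(t)\le V_n(0)\,e^{t(\bar q_n+\varepsilon)}$ for all sufficiently large $t$, which is what the dimension estimate actually uses. In fairness, the target \eqref{volume decay} as printed is itself garbled --- the exponential is missing, and once Lemma~\ref{Contractivity of large dimensional volume elements lemma} makes the supremum negative its right-hand side would be negative while $V_n(t)\ge0$ --- so the inequality cannot be proved as literally stated; but your derivation of it is also invalid, and the repair is the asymptotic formulation above rather than a pointwise-in-$t$ insertion of suprema.
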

Using Proposition~\ref{volume decay prop}, we show that the $n$-dimensional volume elements actually {\it decay exponentially in time} for $n$ is sufficiently large, which is based on the following lemma:

\begin{lemma}[Contractivity of large dimensional volume elements]\label{Contractivity of large dimensional volume elements lemma}
There exists $N$ such that for any $\theta_0\in\Gg$ and any set of initial orthogonal displacements $\{\psi_{i,0}\}_{i=1}^n$, we have
\begin{align}\label{negativity of <Pn A theta>}
\langle P_nA_{\theta_0}\rangle<0,
\end{align}
whenever $n\ge N$. Here $N$ can be chosen explicitly from \eqref{choice of N} below.
\end{lemma}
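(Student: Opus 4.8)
The plan is to deduce the lemma from Proposition~\ref{volume decay prop}: since \eqref{volume decay} controls the $n$-dimensional volume elements by $\langle P_nA_{\theta_0}\rangle$, it suffices to show that the time-averaged trace $\langle P_nA_{\theta_0}\rangle$ is strictly negative once $n$ is large, uniformly over $\theta_0\in\Gg$ and over the choice of initial orthogonal displacements. Fixing $t$, let $\{\varphi_j\}_{j=1}^n$ be an orthonormal family in $H^1$ spanning the span of $\{\psi_1(t),\dots,\psi_n(t)\}$; since all functions are mean-zero we take the inner product to be $\langle f,g\rangle_{H^1}=\langle\Lambda f,\Lambda g\rangle$, so that $\|\Lambda\varphi_j\|_{L^2}=1$. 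Expanding $\Tr(P_n(t)A_\theta)=\sum_{j=1}^n\intox(-\Delta\varphi_j)A_\theta[\varphi_j]$ with $A_\theta$ given by \eqref{def of operator A theta}, the dissipative part contributes the good leading term $-\kappa\sum_{j=1}^n\|\Lambda^{1+\frac{\gamma}{2}}\varphi_j\|_{L^2}^2$, and the task is to dominate the two transport contributions by it.

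First I would treat $-\sum_j\intox(-\Delta\varphi_j)\,(u[\theta]\cdot\nabla\varphi_j)$. Using $\divv u[\theta]=0$ the top-order piece integrates to zero, leaving $\sum_j\intox\nabla\varphi_j\cdot(\nabla u[\theta]\cdot\nabla\varphi_j)$, which is at most $\|\nabla u[\theta]\|_{L^\infty}\sum_j\|\Lambda\varphi_j\|_{L^2}^2=n\|\nabla u[\theta]\|_{L^\infty}$; by \eqref{bound on nabla u in terms of nabla theta} this is bounded by $C_\nu n\|\Lambda^{1+\frac{\gamma}{2}}\theta\|_{L^2}\le C_\nu M_{\Gg}\,n$ on the attractor via \eqref{uniform bound on theta from the attractor}. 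The genuinely delicate term is the stretching contribution $-\sum_j\intox(-\Delta\varphi_j)\,(u[\varphi_j]\cdot\nabla\theta)$. I would write each summand as $\langle\Lambda^{1+\frac{\gamma}{2}}\varphi_j,\Lambda^{1-\frac{\gamma}{2}}(u[\varphi_j]\cdot\nabla\theta)\rangle$, apply Cauchy--Schwarz and Young to peel off $\frac{\kappa}{4}\|\Lambda^{1+\frac{\gamma}{2}}\varphi_j\|_{L^2}^2$, and estimate $\|\Lambda^{1-\frac{\gamma}{2}}(u[\varphi_j]\cdot\nabla\theta)\|_{L^2}$ through the product estimate \eqref{product estimate} together with the two-derivative smoothing \eqref{two order smoothing for u when nu>0} (available since $\nu>0$) and the embeddings \eqref{bound on L4 using H1}, \eqref{bound on u in terms of nabla theta}. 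This reorganises $\sum_j\|\Lambda^{1-\frac{\gamma}{2}}(u[\varphi_j]\cdot\nabla\theta)\|_{L^2}^2$ as an integral of the spectral density $\rho:=\sum_{j=1}^n|\varphi_j|^2$ (and of $\sum_j|\Lambda\varphi_j|^2$) against suitable powers of $\theta$.

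The final step is a spectral argument. Since $\{\Lambda\varphi_j\}$ is $L^2$-orthonormal, the generalised Lieb--Thirring inequality bounds $\int_{\T^d}\rho^{p}$, for the appropriate exponent $p$, by $C\sum_j\|\Lambda^{1+\frac{\gamma}{2}}\varphi_j\|_{L^2}^2$, allowing the remainder to be absorbed into $\frac{\kappa}{4}\sum_j\|\Lambda^{1+\frac{\gamma}{2}}\varphi_j\|_{L^2}^2$ at the cost of an additive $C_{\Gg}\,n$, where $C_{\Gg}$ depends only on $\nu,\kappa,\gamma,d$ and on $M_{\Gg}$ through \eqref{uniform bound on theta from the attractor}--\eqref{uniform bound on time integral of theta from the attractor}. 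Combining this with the min--max lower bound $\sum_{j=1}^n\|\Lambda^{1+\frac{\gamma}{2}}\varphi_j\|_{L^2}^2\ge c_{LT}\,n^{1+\frac{\gamma}{d}}$ (the $L^2$-orthonormal family $\{\Lambda\varphi_j\}$ cannot undercut the sum $\sum_{j\le n}\lambda_j^{\gamma/2}\sim n^{1+\gamma/d}$ of eigenvalues of $\Lambda^{\gamma}$ on $\T^d$) yields, pointwise in $t$ and uniformly in $\theta_0\in\Gg$,
\begin{align*}
\Tr(P_n(t)A_\theta)\le -\frac{c_{LT}\kappa}{4}\,n^{1+\frac{\gamma}{d}}+C_{\Gg}\,n.
\end{align*}
Averaging in time gives the same bound for $\langle P_nA_{\theta_0}\rangle$, which is therefore strictly negative as soon as
\begin{align}\label{choice of N}
n\ge N:=\left\lceil\Big(\tfrac{4C_{\Gg}}{c_{LT}\kappa}\Big)^{d/\gamma}\right\rceil+1,
\end{align}
establishing \eqref{negativity of <Pn A theta>}. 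The main obstacle is the stretching term: one must extract a full factor $\|\Lambda^{1+\frac{\gamma}{2}}\varphi_j\|_{L^2}$ out of $u[\varphi_j]\cdot\nabla\theta$ while keeping the leftover in a shape to which Lieb--Thirring applies, and then confirm that the dissipative power $n^{1+\gamma/d}$ strictly beats the linear transport growth $n$. This is where the standing hypotheses enter: the auxiliary bounds \eqref{bound on Lambda theta for gamma>1}--\eqref{bound on nabla u in terms of nabla theta} require $\gamma\ge1$, while the super-linearity $1+\gamma/d>1$ (together with $d\in\{2,3\}$) is what ultimately forces the negative term to dominate.
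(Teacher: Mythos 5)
Your proposal is correct and follows essentially the same route as the paper's proof: the identical splitting of $\Tr(P_n A_{\theta_0})$ into the dissipative part, the transport term (killed at top order by $\divv u[\theta]=0$ and bounded by $C M_{\Gg}\, n$ via $\|\nabla u[\theta]\|_{L^\infty}\le C_\nu\|\Lambda^{1+\frac{\gamma}{2}}\theta\|_{L^2}$ and \eqref{uniform bound on theta from the attractor}), and the stretching term (a Young peel-off of a fraction of the dissipation plus an additive $Cn$), followed by the eigenvalue-sum lower bound $\sum_j\|\Lambda^{1+\frac{\gamma}{2}}\varphi_j\|_{L^2}^2\ge \frac{1}{C}n^{1+\frac{\gamma}{d}}$ for the $L^2$-orthonormal family $\{\Lambda\varphi_j\}$ (the paper cites \cite{YY13}) and the same choice of $N$. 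The one deviation — invoking a generalised Lieb--Thirring bound on the density $\rho=\sum_j|\varphi_j|^2$ to absorb the stretching remainder — is superfluous and would itself need justification (the $\varphi_j$ are orthonormal in $H^1$, not $L^2$): since $\|\Lambda\varphi_j\|_{L^2}=1$ and $u[\cdot]$ has two orders of smoothing \eqref{two order smoothing for u when nu>0}, each remainder summand is bounded by a constant depending only on $M_{\Gg},\nu,\kappa$, which is exactly how the paper obtains its additive $C(M_{\Gg}+M_{\Gg}^2)n$ term.
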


\begin{proof}
Let $\xi\in H^1$ be arbitrary. For $\theta_0\in\Gg$, using the definition of $A_{\theta}$ in \eqref{def of operator A theta} and the fact that $\divv u[\theta]=0$, we have
\begin{align*}
\intox\Lambda^2\xi A_{\theta}[\xi]dx\le-\kappa\|\xi\|^2_{H^{1+\frac{\gamma}{2}}}+\Big|\intox\partial_{x_k}u[\theta]\cdot\nabla\xi\partial_{x_k}\xi dx\Big|+\Big|\intox(u[\xi]\cdot\nabla\theta)\Lambda^2\xi dx\Big|. 
\end{align*}
Using the bound \eqref{bound on u in terms of nabla theta}, we readily have
\begin{align*}
\Big|\intox\partial_{x_k}u[\theta]\cdot\nabla\xi\partial_{x_k}\xi dx\Big|\le C\|\nabla u[\theta]\|_{L^\infty}\|\nabla \xi\|^2_{L^2}\le C\|\Lambda\theta\|_{L^2}\|\nabla \xi\|^2_{L^2}.
\end{align*}
And using the product estimate \eqref{product estimate} and the bounds \eqref{bound on L4 using H1}-\eqref{bound on u in terms of nabla theta}, we have
\begin{align*}
\Big|\intox(u[\xi]\cdot\nabla\theta)\Lambda^2\xi dx\Big|&=\Big|\intox(u[\xi]\theta)\cdot\nabla\Lambda^2\xi dx\Big|\\
&\le\|\Lambda^{2-\frac{\gamma}{2}}(u[\xi]\theta)\|_{L^2}\|\Lambda^{1+\frac{\gamma}{2}}\xi\|_{L^2}\\
&\le C\Big(\|\Lambda^{2-\frac{\gamma}{2}}u[\xi]\|_{L^4}\|\theta\|_{L^4}+\|u[\xi]\|_{L^\infty}\|\Lambda^{2-\frac{\gamma}{2}}\theta\|_{L^2}\Big)\|\Lambda^{1+\frac{\gamma}{2}}\xi\|_{L^2}\\
&\le C\|\theta\|_{H^{1+\frac{\gamma}{2}}}\|\Lambda\xi\|_{L^2}\|\Lambda^{1+\frac{\gamma}{2}}\xi\|_{L^2}.
\end{align*}
Hence we deduce that
\begin{align}\label{bound on integrand in A theta}
\intox\Lambda^2\xi A_{\theta}[\xi]dx\le-\frac{\kappa}{2}\|\xi\|^2_{H^{1+\frac{\gamma}{2}}}+C\Big(\|\Lambda\theta\|_{L^2}+\frac{1}{\kappa}\|\theta\|^2_{H^{1+\frac{\gamma}{2}}}\Big)\|\xi\|^2_{H^1}.
\end{align}
On the other hand, for $T>0$, we have
\begin{align}\label{identity for integrand in A theta}
\frac{1}{T}\int_0^T\Tr(P_n(t)A_{\theta_0}(t))dt=\frac{1}{T}\int_0^T\sum_{j=1}^n\intox(\Lambda^2\varphi_j(t))A_{\theta}[\varphi_j(t)]dxdt,
\end{align}
hence we apply \eqref{bound on integrand in A theta} on \eqref{identity for integrand in A theta} and together with the bound \eqref{uniform bound on theta from the attractor} on $\theta$,
\begin{align*}
&\frac{1}{T}\int_0^T\Tr(P_n(t)A_{\theta_0}(t))dt\\
&\le -\frac{\kappa}{2T}\int_0^T\sum_{j=1}^n\|\varphi_j(t)\|^2_{H^{1+\frac{\gamma}{2}}}+\frac{C}{T}\int_0^T\Big(\|\Lambda\theta\|_{L^2}+\frac{1}{\kappa}\|\theta\|^2_{H^{1+\frac{\gamma}{2}}}\Big)\sum_{j=1}^n\|\varphi_j\|^2_{H^1}\\
&\le -\frac{\kappa}{2T}\int_0^T\Tr(P_n(t)\Lambda^\gamma)+C(M_{\Gg}+M_{\Gg}^2)n\le -\frac{\kappa}{C}n^{1+\frac{\gamma}{d}}+C(M_{\Gg}+M_{\Gg}^2)n,
\end{align*}
where the last inequality follows from the fact that the eigenvalues $\{\lambda_{j}^{(\gamma)}\}$ of $\Lambda^\gamma$ obey the following estimate (see \cite[Theorem~1.1]{YY13}):
\begin{align*}
\sum_{j=1}^n\lambda_{j}^{(\gamma)}\ge\frac{1}{C}n^{1+\frac{\gamma}{d}},
\end{align*}
for some universal constant $C>0$ which depends only on $d$ and $\gamma$. We choose $N>0$ such that
\begin{align}\label{choice of N}
 -\frac{\kappa}{C}N^{1+\frac{\gamma}{d}}+C(M_{\Gg}+M_{\Gg}^2)N<0,
\end{align}
then \eqref{negativity of <Pn A theta>} holds whenever $n\ge N$.
\end{proof}

From the results obtained in Lemma~\ref{uniform differentiable lemma} and Lemma~\ref{Contractivity of large dimensional volume elements lemma}, together with Proposition~\ref{volume decay prop}, we obtain:
\begin{itemize}
\item the solution map $\pin(t)$ is uniform differentiable on $\Gg$;
\item the linearisation $D\pin(t, \theta_0)$ of $\pin(t)$ is compact;
\item the large-dimensional volume elements which are carried by the flow of $\pin(t)\theta_0$, with $\theta_0\in\Gg$, have exponential decay in time.
\end{itemize}
Therefore, following the lines of the argument in \cite[pp. 115--130, and Chapter 14]{CF88}, we can finally conclude that $\dimf(\Gg)$ is finite. The results can be summarised in the following corollary:

\begin{corollary}[Finite dimensionality of the attractor]\label{Finite dimensionality of the attractor corollary}
Let $N$ be as defined in Lemma~\ref{Contractivity of large dimensional volume elements lemma}. Then the fractal dimension of $\Gg$ is finite, and we have $\dimf(\Gg)\le N$.
\end{corollary}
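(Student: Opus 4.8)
The plan is to obtain the bound $\dimf(\Gg)\le N$ as a direct application of the abstract finite-dimensionality theory of Constantin, Foias and Temam \cite{CF85}, \cite{CF88}, assembling the three structural facts established above. First I would verify that $\Gg$ is compact in the phase space $H^1$: by Corollary~\ref{Unique attractor general corollary} the attractor is uniformly bounded in $H^{1+\frac{\gamma}{2}}$, and since $\gamma\in[1,2]$ the embedding $H^{1+\frac{\gamma}{2}}\hookrightarrow H^1$ is compact, so $\Gg$ is a compact, fully invariant subset of $H^1$. This is precisely the setting in which the box-counting dimension is controlled by the contraction of volume elements.

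Next I would invoke Lemma~\ref{uniform differentiable lemma}, which supplies the two analytic prerequisites: the solution map $\pin(t)$ is uniformly differentiable on $\Gg$ in the sense of Definition~\ref{uniform differentiable def}, with linearisation $D\pin(t,\theta_0)$ given by the linearised flow \eqref{linearised active scalar with gamma}, and $D\pin(t,\theta_0)$ is compact. Compactness guarantees finiteness of the $n$-dimensional volume-expansion coefficients of the linearisation, while uniform differentiability is exactly what allows one to replace the nonlinear map by its derivative when tracking how $\varepsilon$-balls on $\Gg$ are distorted along the flow.

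The core input is the volume-contraction estimate. Proposition~\ref{volume decay prop} expresses the evolution of the $n$-dimensional volume elements $V_n(t)$ through the trace of $P_n A_{\theta_0}$, and Lemma~\ref{Contractivity of large dimensional volume elements lemma} shows that the uniform time-averaged trace obeys $\langle P_n A_{\theta_0}\rangle<0$ for every $n\ge N$, with $N$ fixed by \eqref{choice of N}. Hence the $N$-dimensional volume elements carried by the linearised flow decay exponentially, uniformly in $\theta_0\in\Gg$ and in the choice of initial orthogonal displacements. Feeding this uniform contraction into the covering argument of \cite[pp.~115--130, Chapter~14]{CF88}---after a fixed elapsed time a ball of radius $\varepsilon$ on $\Gg$ is carried into a set coverable by a controlled number of balls of radius $\delta\varepsilon$ for some fixed $\delta\in(0,1)$, and one iterates---produces the estimate $N(\Gg,\varepsilon)\lesssim\varepsilon^{-N}$ and therefore $\dimf(\Gg)\le N$ by Definition~\ref{fractal dimension def}.

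I expect the genuine difficulty of the whole program to lie not in this final step, which is an application of standard theory, but in having secured its hypotheses: the uniform differentiability and compactness of $D\pin(t,\theta_0)$ from Lemma~\ref{uniform differentiable lemma}, and the uniformity over $\Gg$ of the trace bound \eqref{bound on integrand in A theta}. The latter rests on the attractor-wide estimates \eqref{uniform bound on theta from the attractor}--\eqref{uniform bound on time integral of theta from the attractor} and on the Weyl-type lower bound $\sum_{j=1}^n\lambda_j^{(\gamma)}\ge C^{-1}n^{1+\frac{\gamma}{d}}$ for $\Lambda^\gamma$, which is what drives $\langle P_n A_{\theta_0}\rangle$ negative for large $n$; the hypothesis $\gamma\in[1,2]$ enters throughout via the auxiliary bounds \eqref{bound on Lambda theta for gamma>1}--\eqref{bound on nabla u in terms of nabla theta} used to close the trace estimate.
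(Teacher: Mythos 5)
Your proposal is correct and follows essentially the same route as the paper: it assembles the uniform differentiability and compactness of $D\pin(t,\theta_0)$ from Lemma~\ref{uniform differentiable lemma}, the trace negativity $\langle P_nA_{\theta_0}\rangle<0$ for $n\ge N$ from Lemma~\ref{Contractivity of large dimensional volume elements lemma} via Proposition~\ref{volume decay prop}, and then invokes the Constantin--Foias covering argument of \cite[pp.~115--130, Chapter~14]{CF88} to conclude $\dimf(\Gg)\le N$. The only additions beyond the paper's proof are your explicit verification that $\Gg$ is compact in $H^1$ and the sketch of the ball-covering iteration, both of which are consistent with (and implicit in) the cited theory.
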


\begin{proof}[Proof of Theorem~\ref{existence of global attractor theorem}]
For $\nu$, $\kappa>0$ and $\gamma\in(0,2]$, the existence and regularity of unique global attractor $\Gg$ follows by Theorem~\ref{Existence of H1 global attractor} and Corollary~\ref{Unique attractor general corollary}. For the case $\gamma\in[1,2]$, the finite dimensionality of the attractor follows by Corollary~\ref{Finite dimensionality of the attractor corollary}.
\end{proof}

\section{Applications to magneto-geostrophic equations}\label{Applications to magneto-geostrophic equations section}

\subsection{The MG equations in the class of drift-diffusion equations}\label{MG equations in the class of drift-diffusion equations}

We now apply our results claimed by Section~\ref{main results} to the magnetogeostrophic (MG) active scalar equation. Specifically, we are interested in the following active scalar equation in the domain $\mathbb{T}^3\times(0,\infty)=[0,2\pi]^3\times(0,\infty)$ (with periodic boundary conditions):
\begin{align}
\label{MG active scalar} \left\{ \begin{array}{l}
\partial_t\theta+u\cdot\nabla\theta=\kappa\Delta\theta+S, \\
u=M^{\nu}[\theta],\theta(x,0)=\theta_0(x)
\end{array}\right.
\end{align}
via a Fourier multiplier operator $M^{\nu}$ which relates $u$ and $\theta$. More precisely,
\begin{align}\label{def of u by Fourier transform}
u_j=M^{\nu}_j [\theta]=(\widehat{M^{\nu}_j}\hat\theta)^\vee
\end{align}
for $j\in\{1,2,3\}$. The explicit expression for the components of $\widehat M^{\nu}$ as functions of the Fourier variable $k=(k_1,k_2,k_3)\in\Z^3$ with $k_3\neq0$ are given by \eqref{MG Fourier symbol_1 intro}-\eqref{MG Fourier symbol_4 intro}, as discussed in the introduction, in particular $\widehat M^{\nu}$ is not defined on the set $\{k_3 = 0\}$. Since for self-consistency of the model, we assume that $\theta$ and $u$ have zero vertical mean, and we take $\widehat M^{\nu}_j(k) = 0$ on $\{k_3=0\}$ for all $j=1,2,3$ and $\nu\ge0$. We write $M^\nu_j=\partial_{x_i}T_{ij}^{\nu}$ for convenience. We also refer to \eqref{MG active scalar} as the MG$^\nu$ equation when $\nu>0$, and to the case when $\nu=0$ as the MG$^0$ equation (see \cite{FS15}, \cite{FS18}, \cite{FS19} for related discussions).

To apply the results from Section~\ref{main results}, it suffices to show that the sequence of operators $\{T^{\nu}_{ij}\}_{\nu\ge0}$ satisfy the assumptions A1 to A5 given in Section~\ref{introduction}. 
\begin{proposition}\label{assumption checked}
We define $T_{ij}^{\nu}$ by $M^\nu_j=\partial_{x_i}T_{ij}^{\nu}$. Then $T_{ij}^{\nu}$ satisfy the assumptions A1 to A5 given in Section~\ref{introduction}.
\end{proposition}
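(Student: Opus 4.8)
The plan is to verify assumptions A1 through A5 directly from the explicit Fourier symbols $\widehat M^\nu_j(k)$ given in \eqref{MG Fourier symbol_1 intro}--\eqref{MG Fourier symbol_4 intro}, recalling that $M^\nu_j = \partial_{x_i} T^\nu_{ij}$, i.e. $\widehat{M^\nu_j}(k) = i k_i \widehat{T^\nu_{ij}}(k)$. The strategy in each case is to extract the symbol $\widehat{T^\nu_{ij}}(k)$ and bound it, keeping careful track of the $\nu$-dependence. Since $\widehat M^\nu_j$ is homogeneous of degree zero (for $\nu=0$) in $k$, I would recover $\widehat{T^\nu_{ij}}$ by essentially dividing by a factor of $|k|$, which lowers the homogeneity by one and is the source of the smoothing claimed in A3 and the critical degree-one singularity in A5.

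First I would dispose of A1 and A4, which are structural. For A4, since $\widehat M^\nu_j(k)=0$ on $\{k_3=0\}$ by the model's self-consistency convention, and in particular at $k=0$, one chooses the representation $T^\nu_{ij}$ so that $\widehat{T^\nu_{ij}}(k)=0$ for $|k|=0$; this is compatible with the mean-zero setting and is essentially a normalization. For A1, the identity $\partial_{x_i}\partial_{x_j}T^\nu_{ij}f=0$ corresponds on the Fourier side to $-k_ik_j\widehat{T^\nu_{ij}}(k)=0$, i.e. to $k_j\widehat{M^\nu_j}(k)=0$, which is precisely the divergence-free condition $\nabla\cdot u=0$ built into \eqref{MG equation linear system 3 intro}; I would check this by summing $k_j\widehat{M^\nu_j}(k)$ over $j$ using \eqref{MG Fourier symbol_1 intro}--\eqref{MG Fourier symbol_3 intro} and observing the numerators cancel.

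Next I would treat the quantitative bounds A3 and A5, which are the heart of the matter. For A5 (the case $\nu=0$), the denominator reduces to $D(k)=|k|^2k_3^2+k_2^4$, and I would show that each $\widehat M^0_j(k)$ is bounded uniformly on $\mathbb Z^3_*$ with $k_3\neq0$ — that is, the operator relating $u$ and $\theta$ is of order zero — so that $\widehat{T^0_{ij}}(k)=\widehat{M^0_j}(k)/(ik_i)$ is bounded by $C_0|k|^{-1}$, giving the degree-one singular operator claimed in \eqref{one order singular for u when nu=0}. The uniform constant $C_0$ over $\nu\in[0,1]$ in A5 requires showing the estimates degenerate monotonically as $\nu\to 0$ rather than blowing up. For A3 (the case $\nu>0$), the extra term $\nu|k|^4$ inside $D(k)$ forces the denominator to grow like $\nu^2|k|^8$ for large $|k|$, while the numerators grow at most like $|k|^6$; dividing by $|k|$ to pass from $M^\nu$ to $T^\nu$, one gains the decay $|\widehat{T^\nu_{ij}}(k)|\le C_\nu|k|^{-3}$, with $C_\nu\sim\nu^{-2}$ blowing up as $\nu\to0$, consistent with the order-two smoothing in \eqref{two order smoothing for u when nu>0}.

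The main obstacle I anticipate is A2, the uniform boundedness $T^\nu_{ij}:L^\infty\to BMO$, since this is not a pointwise Fourier-symbol estimate but a genuine boundedness statement for a singular integral operator, and it must hold \emph{uniformly} in $\nu$. Here I would appeal to a Calder\'on--Zygmund or H\"ormander--Mikhlin type criterion: I would show that the symbols $\widehat{M^\nu_j}(k)$ (the degree-zero parts) satisfy the Mikhlin--type derivative bounds $|\partial_k^\alpha \widehat M^\nu_j(k)|\le C|k|^{-|\alpha|}$ with $C$ independent of $\nu\in[0,1]$, for $|\alpha|$ up to the required order, which yields uniform $L^p\to L^p$ bounds and, at the endpoint, the $L^\infty\to BMO$ bound. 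The delicate point is the anisotropy: because $D(k)$ degenerates near $\{k_3=0\}$, the standard isotropic estimates fail, and one must exploit the specific structure of the numerators (which also vanish appropriately near $\{k_3=0\}$) to control the derivatives uniformly. I would therefore handle the region near $\{k_3=0\}$ separately, using the vanishing of $\widehat M^\nu_j$ there to absorb the degeneracy of the denominator, and cite the analogous uniform-in-$\nu$ symbol analysis carried out in \cite{FS18} for the MG$^\nu$ equation.
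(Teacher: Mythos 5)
Your plan of verifying A1--A5 directly from the explicit symbols is legitimate, and in fact more self-contained than the paper's own proof, which simply cites \cite[Lemma~5.1--5.2]{FS18} and \cite[Section~4]{FV11a} and omits all details. Your handling of A1 and A4 (cancellation of the numerators in $\sum_j k_j\widehat{M^\nu_j}(k)$, vanishing on $\{k_3=0\}$) and of A3 (denominator $\gtrsim\nu^2|k|^8$, numerator $\lesssim |k|^4+\nu|k|^6$) is sound. However, your verification of A5 rests on a false claim, and this is a genuine gap: you assert that the symbols $\widehat{M^0_j}$ are bounded on $\{k_3\neq0\}$, i.e.\ that the map $\theta\mapsto u$ is of order \emph{zero} when $\nu=0$. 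It is of order exactly \emph{one}, as the paper itself emphasizes (``when $\nu=0$ the relation between $u$ and $\theta$ is given by a singular operator of order 1''). Concretely, take $k=(n^2,n,1)$ in \eqref{MG Fourier symbol_3 intro}--\eqref{MG Fourier symbol_4 intro} with $\nu=0$:
\begin{equation*}
\widehat{M}^0_3(k)=\frac{(k_1^2+k_2^2)\,k_2^2}{|k|^2k_3^2+k_2^4}
=\frac{(n^4+n^2)\,n^2}{2n^4+n^2+1}\ge\frac{n^2}{4}\qquad(n\ge1),
\end{equation*}
which is unbounded and indeed comparable to $|k|$. Consequently your stronger conclusion $|\widehat{T^0_{ij}}(k)|\le C_0|k|^{-1}$ is also false: it would force $|\widehat{M^0_j}(k)|=|ik_i\widehat{T^0_{ij}}(k)|\le C$, a contradiction. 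What A5 actually asserts --- and all that is true --- is boundedness of $\widehat{T^\nu_{ij}}$ uniformly in $\nu\in[0,1]$, and the correct route is the uniform order-one bound $|\widehat{M^\nu_j}(k)|\le C|k|$ on $\{k_3\neq0\}$: writing $D(k)=B+A^2$ with $B=|k|^2k_3^2$ and $A=k_2^2+\nu|k|^4$, estimate the numerator terms not containing $A$ against $D\ge B$, and the terms proportional to $A$ against $D\ge 2\sqrt{B}\,A$, so the factor $A$ cancels and no negative power of $\nu$ ever appears.

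Two further corrections are needed to make the argument rigorous. First, the reconstruction $\widehat{T^\nu_{ij}}=\widehat{M^\nu_j}/(ik_i)$ is not meaningful: $M^\nu_j=\partial_{x_i}T^\nu_{ij}$ is summed over $i$, and individual components $k_i$ may vanish; the standard choice is $\widehat{T^\nu_{ij}}(k)=-ik_i|k|^{-2}\widehat{M^\nu_j}(k)$, which converts the order-one bound on $\widehat{M^\nu_j}$ into the boundedness required by A5, and the bound $|\widehat{M^\nu_j}|\le C_\nu|k|^{-2}$ into A3. Second, for A2 the Mikhlin/Calder\'on--Zygmund derivative bounds must be proved for the degree-zero symbols $\widehat{T^\nu_{ij}}$, not for $\widehat{M^\nu_j}$ (which, being of order one, cannot satisfy $|\partial_k^\alpha\widehat{M^\nu_j}(k)|\le C|k|^{-|\alpha|}$). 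With these corrections your strategy --- including the separate treatment of the degenerate region near $\{k_3=0\}$ and the appeal to \cite{FS18} for the uniform-in-$\nu$ analysis --- is exactly the route taken in the references the paper cites in place of a proof.
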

\begin{proof}
The details for the proof can be found in \cite[Lemma~5.1--5.2]{FS18} and from the discussion in \cite[Section 4]{FV11a}. We omit the details here. 
\end{proof} 
In view of Proposition~\ref{assumption checked}, the abstract Theorem~\ref{Hs convergence}, Theorem~\ref{analytic convergence kappa} and Theorem~\ref{existence of global attractor theorem} can then be applied to the MG equations \eqref{MG active scalar}. More precisely, we have

\begin{theorem}[$H^s$-convergence as $\kappa\rightarrow0$ for MG equations]\label{Hs convergence MG}
Let $\nu>0$ be given as in \eqref{MG active scalar}, and let $\theta_0,S\in C^\infty$ be the initial datum and forcing term respectively with zero mean. If $\theta^\kappa$ and $\theta^0$ are smooth solutions to \eqref{MG active scalar} for $\kappa>0$ and $\kappa=0$ respectively, then 
\begin{align*} 
\lim_{\kappa\rightarrow0}\|(\theta^\kappa-\theta^0)(\cdot,t)\|_{H^s}=0,
\end{align*}
for all $s\ge0$ and $t\ge0$.
\end{theorem}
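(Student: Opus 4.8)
The plan is to recognise that the MG system \eqref{MG active scalar} is precisely an instance of the abstract active scalar equation \eqref{abstract active scalar eqn}. Taking $d=3$ and using the identity $\kappa\Delta\theta=-\kappa\Lambda^2\theta$, equation \eqref{MG active scalar} coincides with \eqref{abstract active scalar eqn} for the parameter value $\gamma=2\in(0,2]$, the drift being given by $u_j=M^\nu_j[\theta]=\partial_{x_i}T^\nu_{ij}[\theta]$. Thus the statement is an application of the abstract Theorem~\ref{Hs convergence} rather than a fresh computation, and the only thing to verify is that its hypotheses are met.

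First I would invoke Proposition~\ref{assumption checked}, which asserts that the operators $\{T^\nu_{ij}\}_{\nu\ge0}$ associated with the MG Fourier symbols \eqref{MG Fourier symbol_1 intro}--\eqref{MG Fourier symbol_4 intro} satisfy assumptions A1--A5. This is the structural input that lets the abstract machinery apply; concretely it guarantees the smoothing estimate \eqref{two order smoothing for u when nu>0} of order $2$ for the drift $u=u[\theta]$ when $\nu>0$, which is the property driving both the uniform $H^s$-bound and the $L^2$-convergence in the proof of Theorem~\ref{Hs convergence}.

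Next I would check the remaining data hypotheses: $\nu>0$ is given, $\theta_0,S\in C^\infty$ have zero mean (which is \eqref{zero mean assumption on data and forcing}), and $\theta^\kappa$, $\theta^0$ are the smooth solutions for $\kappa>0$ and $\kappa=0$ respectively. With $d=3$ and $\gamma=2$ all hypotheses of Theorem~\ref{Hs convergence} are in force, so the conclusion $\lim_{\kappa\to0}\|(\theta^\kappa-\theta^0)(\cdot,t)\|_{H^s}=0$ for all $s\ge0$ and $t\ge0$ follows verbatim.

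The argument presents no genuine obstacle: the entire analytic content has already been absorbed into the abstract Theorem~\ref{Hs convergence} and into the verification of A1--A5 carried out in Proposition~\ref{assumption checked} (with the MG-specific symbol analysis deferred to \cite{FS18} and \cite{FV11a}). The only points requiring a line of care are the bookkeeping identification $\kappa\Delta=-\kappa\Lambda^2$ and the confirmation that $\gamma=2$ lies in the admissible range $(0,2]$ used throughout Section~\ref{Existence and convergence of Hs-solutions section}.
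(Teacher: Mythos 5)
Your proposal matches the paper's own treatment: the paper proves Theorem~\ref{Hs convergence MG} exactly by invoking Proposition~\ref{assumption checked} (deferring the symbol analysis of \eqref{MG Fourier symbol_1 intro}--\eqref{MG Fourier symbol_4 intro} to \cite{FS18} and \cite{FV11a}) and then applying the abstract Theorem~\ref{Hs convergence} with $d=3$ and $\gamma=2$, noting $\kappa\Delta\theta=-\kappa\Lambda^{2}\theta$. Your verification of the hypotheses, including the zero-mean condition \eqref{zero mean assumption on data and forcing} and the admissibility of $\gamma=2\in(0,2]$, is correct and complete.
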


\begin{theorem}[Analytic convergence as $\kappa\rightarrow0$ for MG equations]\label{Analytic convergence kappa MG}
Let $\nu=0$ be given as in \eqref{MG active scalar}, and let $\theta_0,S$ the initial datum and forcing term respectively. Suppose that $\theta_0$ and $S$ are both analytic functions with zero mean. Then if $\theta^{\kappa}$, $\theta^{0}$ are analytic solutions to \eqref{MG active scalar} for $\kappa>0$ and $\kappa=0$ respectively with initial datum $\theta_0$ and with radius of convergence at least $\bar{\tau}$, then there exists $T\le\bar{T}$ and $\tau=\tau(t)<\bar{\tau}$ such that, for $t\in[0,T]$, we have:
\begin{align*}
\lim_{\kappa\to0}\|(\Lambda^re^{\tau\Lambda}\theta^{\kappa}-\Lambda^re^{\tau\Lambda}\theta^{0})(\cdot,t)\|_{L^2}=0.
\end{align*}
\end{theorem}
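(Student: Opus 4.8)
The plan is to recognize the MG$^0$ equation \eqref{MG active scalar} as a special instance of the abstract system \eqref{abstract active scalar eqn} and then to invoke the already-established abstract convergence result, Theorem~\ref{analytic convergence kappa}, essentially verbatim. Concretely, I would first match the parameters: for the MG equation the dimension is $d=3$, the thermal diffusion enters as $\kappa\Delta\theta=-\kappa\Lambda^{2}\theta$ so that $\gamma=2\in(0,2]$, and the constitutive law $u=M^{\nu}[\theta]$ is written in divergence form $u_j=\partial_{x_i}T^{\nu}_{ij}[\theta]$ as in Subsection~\ref{MG equations in the class of drift-diffusion equations}. Here we are in the singular regime $\nu=0$, for which the symbols $\widehat{T^{0}_{ij}}$ are those obtained from \eqref{MG Fourier symbol_1 intro}--\eqref{MG Fourier symbol_4 intro} at $\nu=0$.

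The substantive input is the verification of the structural hypotheses. By Proposition~\ref{assumption checked}, the family $\{T^{\nu}_{ij}\}_{\nu\ge0}$ satisfies assumptions A1--A5; in particular A5 holds, so that at $\nu=0$ the operator $u_j=\partial_{x_i}T^{0}_{ij}$ is a singular operator of order $1$ and the pointwise Fourier bound $|\widehat{u}(j)|\le C|j|\,|\widehat{\theta}(j)|$ underlying \eqref{one order singular for u when nu=0} is available. Together with the standing mean-zero assumption \eqref{zero mean assumption on data and forcing}, imposed on $\theta_0$ and $S$ in the statement, this places the MG$^0$ equation exactly within the hypotheses of Theorem~\ref{analytic convergence kappa}. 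I would also record that the local-in-time existence of the analytic solutions $\theta^{\kappa}$, $\theta^{0}$ appearing in the statement is guaranteed by Theorem~\ref{Local-in-time existence of analytic solutions thm}, whose proof relies only on the same order-$1$ bound.

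With the hypotheses in place the conclusion is immediate: applying Theorem~\ref{analytic convergence kappa} with $d=3$, $\gamma=2$, $\nu=0$ yields a time $T\le\bar T$ and a radius $\tau=\tau(t)<\bar\tau$ such that $\lim_{\kappa\to0}\|(\Lambda^{r}e^{\tau\Lambda}\theta^{\kappa}-\Lambda^{r}e^{\tau\Lambda}\theta^{0})(\cdot,t)\|_{L^2}=0$ for $t\in[0,T]$ and $r>\tfrac{d}{2}+\tfrac{5}{2}$, which is precisely the asserted analytic convergence. Since the argument is a direct specialization, there is no genuine computational obstacle; the only real work is the check that A1--A5 hold for the explicit MG symbols, and this is exactly the content of Proposition~\ref{assumption checked} (with the detailed verification in \cite[Lemma~5.1--5.2]{FS18} and \cite[Section~4]{FV11a}). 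The conceptual point worth emphasizing is that it is precisely the order-$1$ singularity of the drift at $\nu=0$ that forces the convergence to be formulated in the analytic (Gevrey-class $1$) scale rather than in $H^s$, exactly as in the abstract theorem.
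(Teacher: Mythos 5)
Your proposal is correct and takes essentially the same route as the paper: the paper also obtains Theorem~\ref{Analytic convergence kappa MG} by citing Proposition~\ref{assumption checked} (which verifies A1--A5 for the MG symbols, via \cite{FS18} and \cite{FV11a}) and then specializing the abstract Theorem~\ref{analytic convergence kappa} with $d=3$, $\gamma=2$, $\nu=0$. Your additional remarks on the parameter matching and on local existence via Theorem~\ref{Local-in-time existence of analytic solutions thm} are consistent with the paper and introduce no gap.
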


\begin{theorem}[Existence of global attractors for MG equations]\label{existence of global attractor MG theorem}
Let $S\in L^\infty\cap H^1$. For $\nu$, $\kappa>0$, let $\pin(t)$ be solution operator for the initial value problem \eqref{MG active scalar} via \eqref{def of solution map nu>0 and kappa>0}. Then the solution map $\pin(t):H^1\to H^1$ associated to \eqref{abstract active scalar eqn} possesses a unique global attractor $\Gg$ for all $\nu>0$. In particular, for each $\nu>0$, the global attractor $\Gg$ of $\pin(t)$ enjoys the following properties:
\begin{itemize}
\item $\Gg$ is fully invariant, namely
\begin{align*}
\pin(t)\Gg=\Gg,\qquad \forall t\ge0.
\end{align*}
\item $\Gg$ is maximal in the class of $H^1$-bounded invariant sets.
\item $\Gg$ has finite fractal dimension.
\end{itemize}
\end{theorem}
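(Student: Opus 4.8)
The plan is to recognize the MG system \eqref{MG active scalar} as a concrete instance of the abstract family \eqref{abstract active scalar eqn} and then to invoke the abstract attractor theorem essentially verbatim. Writing $\Delta = -\Lambda^2$, the diffusion term $\kappa\Delta\theta$ becomes $-\kappa\Lambda^2\theta$, so \eqref{MG active scalar} is exactly \eqref{abstract active scalar eqn} in dimension $d = 3$ with fractional exponent $\gamma = 2$ and drift operators $u_j = M^\nu_j[\theta] = \partial_{x_i} T^\nu_{ij}[\theta]$. The crucial observation is that $\gamma = 2$ lies in both the existence range $(0,2]$ and the refined range $[1,2]$ required for finite fractal dimension, so no part of the abstract conclusion is forfeited under this specialization.

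First I would verify that the hypotheses of Theorem~\ref{existence of global attractor theorem} are satisfied. The forcing $S \in L^\infty \cap H^1$ with zero mean matches the standing assumptions, so the only genuinely structural requirement is that the sequence $\{T^\nu_{ij}\}_{\nu \ge 0}$ obey A1--A5. This is precisely the content of Proposition~\ref{assumption checked}, and I would simply cite it. At this step I would also flag the consistency point that $\widehat{M}^\nu$ is undefined on $\{k_3 = 0\}$, which is resolved by setting $\widehat{M}^\nu_j(k) = 0$ there; together with the zero vertical mean convention this delivers A4 and is compatible with the mean-zero propagation \eqref{zero mean assumption}.

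With A1--A5 in force, the remainder is immediate: Theorem~\ref{existence of global attractor theorem} applies with $d = 3$ and $\gamma = 2$, so for each $\nu, \kappa > 0$ the solution map $\pin(t)\colon H^1 \to H^1$ associated to \eqref{MG active scalar} admits a unique global attractor $\Gg$, and since $\gamma = 2 \in [1,2]$ the attractor $\Gg$ is fully invariant, maximal among the $H^1$-bounded invariant sets, and of finite fractal dimension. This would complete the argument.

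The only real substance lies outside the statement, in the verification underlying Proposition~\ref{assumption checked}, and I expect that to be the sole obstacle worth comment. Concretely, from $D(k) = |k|^2 k_3^2 + (k_2^2 + \nu|k|^4)^2$ one must read off, for fixed $\nu > 0$, the decay $|\widehat{T^\nu_{ij}}(k)| \lesssim_\nu |k|^{-3}$ that yields A3 and the order-two smoothing, while the uniform-in-$\nu$ boundedness on $\mathbb{Z}^d_*$ of A5 (and the order-one singularity at $\nu = 0$) must be extracted from the $\nu = 0$ symbols \eqref{MG Fourier symbol_1 intro}--\eqref{MG Fourier symbol_4 intro}. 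These symbol estimates are carried out in \cite{FS18} and \cite{FV11a}, so I would invoke them rather than reprove them; everything else is a transparent specialization of the abstract theory and introduces no new difficulty.
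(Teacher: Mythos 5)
Your proposal is correct and follows essentially the same route as the paper: the paper's own proof of Theorem~\ref{existence of global attractor MG theorem} consists precisely of verifying A1--A5 via Proposition~\ref{assumption checked} (citing \cite{FS18} and \cite{FV11a} for the symbol estimates on $\widehat{M}^\nu$) and then specializing Theorem~\ref{existence of global attractor theorem} to $d=3$, $\gamma=2$, exactly as you do. Your additional remarks on the convention $\widehat{M}^\nu_j(k)=0$ on $\{k_3=0\}$ and on $\gamma=2$ lying in both ranges $(0,2]$ and $[1,2]$ match the paper's discussion and close the argument.
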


In the coming subsection, we will further address the limiting properties of $\Gg$ which are related to the critical MG$^{0}$ equation.

\subsection{Behaviour of global attractors for varying $\nu\ge0$}\label{Behaviour of global attractors section}

In the work \cite{FS18}, the authors proved the existence of a compact global attractor $\A$ in $L^2(\mathbb{T}^3)$ for the MG$^0$ equations, namely the equations \eqref{MG active scalar} when $\kappa>0$, $\nu=0$ and $S\in L^\infty\cap H^1$. More precisely, $\A$ is the global attractor generated by the solution map $\pio$ via
\begin{align}\label{def of solution map nu=0 and kappa>0}
\pio(t): L^2\to L^2,\qquad \pio(t)\theta_0=\theta(\cdot,t),\qquad t\ge0,
\end{align}
where $\theta$ is the solution to the MG$^0$ equation with $\theta(\cdot,0)=\theta_0$. In this subsection, we obtain results when $\nu$ is varying, which can be summarised in the following theorem:
\begin{theorem}\label{varying nu theorem}
Let $\kappa>0$ be fixed in \eqref{MG active scalar}. Then we have:
\begin{enumerate}
\item If $\Gg$ are the global attractors for the MG$^\nu$ equations \eqref{MG active scalar} as obtained by Theorem~\ref{existence of global attractor MG theorem}, then $\Gg$ and $\A$ satisfy
\begin{align}\label{upper semi-continuity at nu=0}
\mbox{$\dis\sup_{\phi\in\Gg}\inf_{\psi\in\A}\|\phi-\psi\|_{L^2}\rightarrow0$ as $\nu\rightarrow0$.}
\end{align}
\item Let $\nu^*>\nu_*>0$ be arbitrary. For each $\nu_0\in[\nu_*,\nu^*]$, the collection $\dis\{\Gg\}_{\nu\in[\nu_*,\nu^*]}$ is {\it upper semicontinuous} at $\nu_0$ in the following sense: 
\begin{align}\label{upper semi-continuity at fixed nu>0}
\mbox{$\dis\sup_{\phi\in\Gg}\inf_{\psi\in\Ggo}\|\phi-\psi\|_{H^1}\rightarrow0$ as $\nu\rightarrow\nu_0$.}
\end{align}
\end{enumerate}
\end{theorem}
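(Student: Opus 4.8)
The plan is to establish both statements through the standard upper--semicontinuity mechanism for perturbed global attractors: if all the attractors of a family lie inside one common bounded set that is absorbing for every member of the family, if the limiting solution semigroup attracts that set, and if the solution operators converge (uniformly on the common set, at a fixed time) to the limiting one, then the attractors are upper semicontinuous. Concretely, I fix $\varepsilon>0$, choose a time $t_\varepsilon$ at which the limiting semigroup has pushed the common absorbing set $B$ to within $\varepsilon/2$ of the limiting attractor, and then use the full invariance $\pin(t_\varepsilon)\Gg=\Gg$ to write each $\phi\in\Gg$ as $\phi=\pin(t_\varepsilon)\psi$ with $\psi\in\Gg\subset B$. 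The triangle inequality $\dist(\phi,\,\text{limit})\le\|(\pin(t_\varepsilon)-\pi^{\mathrm{lim}}(t_\varepsilon))\psi\|+\dist(\pi^{\mathrm{lim}}(t_\varepsilon)\psi,\,\text{limit})$, taken to the supremum over $\phi\in\Gg$ and then letting $\nu$ approach its limiting value, delivers the claim.

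For statement (1) I would take $B=B_\infty$, the $L^\infty$--absorbing set of Lemma~\ref{Existence of an L infty absorbing set lemma}. The $L^\infty$ bound \eqref{infty bound on theta with kappa>0} is independent of $\nu$, so $B_\infty$ is absorbing for $\pin$ for every $\nu\ge0$; in particular $\Gg\subset B_\infty$ for all $\nu>0$ and $\A\subset B_\infty$. Since $\A$ is the global $L^2$--attractor of $\pio$ it attracts the $L^2$--bounded set $B_\infty$, which furnishes $t_\varepsilon$ with $\sup_{\psi\in B_\infty}\dist_{L^2}(\pio(t_\varepsilon)\psi,\A)<\varepsilon/2$. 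The remaining ingredient is the convergence $\sup_{\psi\in B_\infty}\|(\pin(t_\varepsilon)-\pio(t_\varepsilon))\psi\|_{L^2}\to0$ as $\nu\to0$, which is precisely the vanishing--viscosity convergence of the forced MG$^\nu$ equations established in \cite{FS18}. Assembling these with invariance yields \eqref{upper semi-continuity at nu=0}.

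For statement (2) the constants $C_\nu$ of A3 are bounded on the compact parameter interval $[\nu_*,\nu^*]$ (for MG this is read off the explicit symbols \eqref{MG Fourier symbol_1 intro}--\eqref{MG Fourier symbol_4 intro}), so the absorbing radii of Lemma~\ref{Existence of an H 1+gamma/2 absorbing set lemma} are uniform in $\nu\in[\nu_*,\nu^*]$; hence a single ball $B=B_{1+\frac{\gamma}{2}}$, bounded in $H^{1+\frac{\gamma}{2}}$, is absorbing for $\pin$ for every such $\nu$ and contains every $\Gg$. For $\nu_0\in[\nu_*,\nu^*]$ its attractor $\Ggo$ attracts $B$ in $H^1$, giving $t_\varepsilon$ with $\sup_{\psi\in B}\dist_{H^1}(\pi^{\nu_0}(t_\varepsilon)\psi,\Ggo)<\varepsilon/2$. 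For the convergence of operators I would subtract the two equations, set $\varphi=\pin(t)\psi-\pi^{\nu_0}(t)\psi$, and rerun the $H^1$ energy estimate of Lemma~\ref{continuity of solution map lemma}, the only new term being the one driven by $(M^\nu-M^{\nu_0})[\theta^{\nu_0}]$; since the MG symbols depend continuously on $\nu>0$, one has $\|(M^\nu-M^{\nu_0})[\theta^{\nu_0}]\|_{H^1}\le\omega(|\nu-\nu_0|)\|\theta^{\nu_0}\|_{H^{1+\frac{\gamma}{2}}}$ with $\omega\to0$, and Gr\"onwall on $[0,t_\varepsilon]$ (using the uniform $H^{1+\frac{\gamma}{2}}$ bound on $B$) gives $\sup_{\psi\in B}\|\varphi(t_\varepsilon)\|_{H^1}\to0$. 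Assembling as above yields \eqref{upper semi-continuity at fixed nu>0}.

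The main obstacle is the convergence of the solution operators uniformly over the absorbing set, most delicately in case (1). There the data in $B_\infty$ are only bounded in $L^\infty$, and the error term $(M^\nu-M^0)[\theta^\nu]$ must be shown small in $L^2$ even though $\widehat{M^\nu}-\widehat{M^0}$ tends to $0$ only pointwise in $k$; controlling its high--frequency tail uniformly in $\nu$ requires the $\nu$--uniform instantaneous smoothing of the $\kappa>0$ flow, which rests on the $\nu$--independent $L^\infty\to BMO$ bound A2 (the Caffarelli--Vasseur mechanism) rather than on the $\nu$--dependent estimate \eqref{L infty bound on nabla u general}. This uniform--in--$\nu$ smoothing, together with the uniform $L^\infty$ bound \eqref{infty bound on theta with kappa>0}, is exactly what makes the convergence of \cite{FS18} hold uniformly over $B_\infty$, and hence what closes the argument; by contrast, in case (2) the positivity of $\nu_*$ keeps all constants bounded, so the corresponding convergence is a routine continuous--dependence estimate.
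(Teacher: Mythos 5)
Your proposal is correct in both parts, but the two parts sit very differently relative to the paper. For part (2) you are essentially reproducing the paper's argument: a $\nu$-uniform absorbing set bounded in $H^2$ (the paper's $B_2$ of Remark~\ref{uniform bounded set in H1 remark}, built from the uniform smoothing estimate \eqref{two order smoothing for u when nu inside I*} on $\I=[\nu_*,\nu^*]$), together with continuity of $\nu\mapsto\pin(t)\theta_0$ in $H^1$ obtained by subtracting the two equations and running an $H^1$ energy/Gr\"onwall estimate in which the symbol perturbation is the only new term. The paper's Lemma~\ref{continuity in nu lemma} does exactly this, using the quantitative bound $|k|^2|\widehat M^{\nu_1}_j(k)-\widehat M^{\nu_2}_j(k)|\le C_*|\nu_1-\nu_2|$ of \eqref{estimates on the difference on Fourier symbols}, so the modulus $\omega(|\nu-\nu_0|)$ you postulate is in fact Lipschitz with a two-derivative gain. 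The only structural difference is packaging: the paper feeds these two facts (its conditions L1 and L2) into the abstract continuity-of-attractors theorem of \cite{HOR15}, whereas you inline that theorem as the invariance-plus-triangle-inequality argument; both are valid, yours being self-contained and the paper's shorter on the page.

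Part (1) is where you genuinely diverge, and where your route carries a burden the paper deliberately avoids. The paper never compares $\pin(t)$ with $\pio(t)$: it recalls from \cite{FS18} that the $L^2$ solution maps $\pino$, $\nu\in[0,1]$, possess $L^2$ global attractors $\An$ with $\An$ at $\nu=0$ equal to $\A$, cites Proposition~\ref{Upper semi-continuity proposition} (proved in \cite{FS18}) that this family is upper semicontinuous at $\nu=0$, and then simply observes $\Gg\subset\An$ (since $\pino$ restricted to $H^1$ is $\pin$, so $\Gg$ is a bounded invariant set for $\pino$), whence $\sup_{\phi\in\Gg}\inf_{\psi\in\A}\|\phi-\psi\|_{L^2}\le\sup_{\phi\in\An}\inf_{\psi\in\A}\|\phi-\psi\|_{L^2}\rightarrow0$. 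Your argument instead needs $\sup_{\psi\in B_{\infty}}\|(\pin(t_\varepsilon)-\pio(t_\varepsilon))\psi\|_{L^2}\rightarrow0$, i.e.\ vanishing-viscosity convergence \emph{uniform over an $L^\infty$-ball of rough data}. That is stronger than the solution convergence one can quote verbatim from \cite{FS18}, and, as you yourself note, securing it requires the $\nu$-uniform De Giorgi/H\"older smoothing coming from A2; in effect you would be re-proving the content of Proposition~\ref{Upper semi-continuity proposition} rather than invoking it. So your part (1) closes only modulo that uniformity, while the paper's inclusion trick yields \eqref{upper semi-continuity at nu=0} with no solution-operator estimate at all --- which is also why that statement is formulated only in $L^2$ and only at $\nu=0$.
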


\begin{remark}
Here are some relevant remarks regarding Theorem~\ref{varying nu theorem}:
\begin{itemize}
\item Since $\A\subset L^2$ and $\Gg\subset H^1\subset L^2$ for all $\nu>0$, it makes sense to address the $L^2$-difference $\|\phi-\psi\|_{L^2}$ for $\phi\in\Gg$ and $\psi\in\A$.
\item Although $\Gg$ has finite fractal dimension for all $\nu>0$, it is unknown whether $\A$ has finite fractal dimension.
\end{itemize}
\end{remark}

\subsubsection{Convergence of attractors as $\nu\to0$} We first prove the convergence result as claimed by \eqref{upper semi-continuity at nu=0}. We recall from \cite[Theorem~6.3]{FS18} that for $\kappa>0$, $\nu\in[0,1]$ and $S\in L^\infty\cap H^2$, there exists global attractor $\An$ in $L^2$ generated by the solution map $\pino$ via
\begin{align}\label{def of solution map nu=0 and kappa>0 in L2}
\pino(t): L^2\to L^2,\qquad \pino(t)\theta_0=\theta(\cdot,t),\qquad t\ge0,
\end{align}
where $\theta$ is the solution to \eqref{MG active scalar} with $\theta(\cdot,0)=\theta_0$, and in particular $\An\Big|_{\nu=0}=\A$. Furthermore, $\An$ is {\it upper semicontinuous} at $\nu=0$ in the following sense (a proof can be found in \cite{FS18}):

\begin{proposition}\label{Upper semi-continuity proposition}
For $\nu\in[0,1]$, the global attractors $\An\subset L^2$ as mentioned above are upper semicontinuous with respect to $\nu$ at $\nu=0$, which means that
\begin{align}\label{uc_0}
\mbox{$\dis\sup_{\phi\in\An}\inf_{\psi\in\mathcal{A}}\|\phi-\psi\|_{L^2}\rightarrow0$ as $\nu\rightarrow0$.}
\end{align}
\end{proposition}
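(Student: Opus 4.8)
\section*{Proof proposal for Proposition~\ref{Upper semi-continuity proposition}}

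The plan is to obtain \eqref{uc_0} from the standard parameter--perturbation principle for global attractors (of Hale--Raugel / Temam type): if each $\pino(t)$ possesses a global attractor $\An$, if all the $\An$ with $\nu\in[0,1]$ lie in one $\nu$-independent bounded set $B\subset L^2$, and if for every fixed $t\ge0$ one has $\sup_{\theta_0\in B}\|\pino(t)\theta_0-\pio(t)\theta_0\|_{L^2}\to0$ as $\nu\to0$, then the upper semidistance $\sup_{\phi\in\An}\inf_{\psi\in\A}\|\phi-\psi\|_{L^2}\to0$. The reduction itself is short: given $\varepsilon>0$, use that $\A$ attracts the bounded set $B$ under $\pio$ to fix $T$ with $\dist(\pio(T)B,\A)<\varepsilon/2$; for $\nu$ small the operator convergence gives $\dist(\pino(T)B,\A)<\varepsilon$; and since $\An=\pino(T)\An\subset\pino(T)B$ by invariance, every $\phi\in\An$ lies within $\varepsilon$ of $\A$. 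Thus it suffices to verify the two hypotheses: a $\nu$-uniform absorbing set, and the convergence of the solution operators on it.

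For the uniform absorbing set I would first test \eqref{MG active scalar} against $\theta$: since $\divv\unu=0$ by A1 the nonlinear term drops, and the basic $L^2$ energy balance is independent of $\nu$, producing an $L^2$-absorbing ball whose radius does not depend on $\nu$. The higher regularity needed below rests on the crucial point that A5 yields the order-one velocity bound \eqref{one order singular for u when nu=0} with a constant $C_0$ independent of $\nu\in[0,1]$. Running the De Giorgi and $C^\alpha$ estimates of Section~\ref{long time behaviour and attractors section} with this $\nu$-uniform constant in place of the $\nu$-dependent smoothing \eqref{two order smoothing for u when nu>0}, followed by the $H^s$ estimates, furnishes a bounded set $B$, uniform in $\nu\in[0,1]$ and bounded in $H^{s_0}$ for some $s_0>1+\frac{d}{2}$, that absorbs all $L^2$-bounded sets and contains every $\An$; the technical details are those of \cite{FS18}.

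For the operator convergence, fix $\theta_0\in B$ and $T>0$, and let $\thetanu$, $\theta^{(0)}$ be the corresponding solutions of \eqref{MG active scalar} for parameters $\nu$ and $0$, with $\varphi=\thetanu-\theta^{(0)}$. Decomposing the advection difference as
\[
\unu[\thetanu]\cdot\nabla\thetanu-u^{(0)}[\theta^{(0)}]\cdot\nabla\theta^{(0)}
=D^{(\nu)}\cdot\nabla\thetanu+u^{(0)}[\varphi]\cdot\nabla\thetanu+u^{(0)}[\theta^{(0)}]\cdot\nabla\varphi,
\]
with $D^{(\nu)}:=(\unu-u^{(0)})[\thetanu]$, I would test the $\varphi$-equation against $\varphi$. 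The last term vanishes since $\divv u^{(0)}[\theta^{(0)}]=0$; the middle term is absorbed into the dissipation $\kappa\|\nabla\varphi\|_{L^2}^2$ and a Gr\"onwall term using $\|u^{(0)}[\varphi]\|_{L^2}\le C_0\|\nabla\varphi\|_{L^2}$ and the uniform control of $\|\nabla\thetanu\|_{L^\infty}$ on $B$; and $D^{(\nu)}\cdot\nabla\thetanu$ is the genuine source. Since $\varphi(0)=0$, Gr\"onwall gives
\[
\|\varphi(T)\|_{L^2}^2\le C(T,B)\int_0^T\|D^{(\nu)}(s)\|_{L^2}^2\,ds,
\]
so everything reduces to showing this integral tends to $0$ uniformly in $\theta_0\in B$. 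Writing $\|D^{(\nu)}\|_{L^2}^2=\sum_{k}|\widehat{M^{\nu}}(k)-\widehat{M^{0}}(k)|^2\,|\widehat{\thetanu}(k)|^2$, I would split at a frequency $R$: on $|k|>R$ the symbol bound $|\widehat{M^{\nu}}(k)-\widehat{M^{0}}(k)|\le C|k|$ (from A5) together with the $\nu$-uniform $H^{s_0}$ bound makes the tail arbitrarily small uniformly in $\nu$, while on the finite set $|k|\le R$ the explicit formulas \eqref{MG Fourier symbol_1 intro}--\eqref{MG Fourier symbol_4 intro} give $\widehat{M^{\nu}}(k)\to\widehat{M^{0}}(k)$, so that part vanishes as $\nu\to0$ for fixed $R$. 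Combining the two yields $\sup_{\theta_0\in B}\int_0^T\|D^{(\nu)}\|_{L^2}^2\to0$, which is the required convergence.

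The main obstacle is the uniform regularity of the second step: because the smoothing constants $C_\nu$ of A3 blow up as $\nu\to0$, the $H^{s_0}$ bound on the absorbing set cannot be read off from the $\nu>0$ theory and must instead be closed purely at the critical order-one scaling provided by A5, with constants uniform in $\nu\in[0,1]$ --- precisely the delicate analysis carried out in \cite{FS18}. Once this $\nu$-uniform bound is in hand, both the Fourier-split control of $D^{(\nu)}$ and the abstract reduction are routine, and \eqref{uc_0} follows.
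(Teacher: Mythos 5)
The paper itself does not prove this proposition: it is quoted verbatim from the earlier work and the text says only that ``a proof can be found in \cite{FS18}.'' So the comparison is against that cited argument, and your reconstruction is, in outline, exactly the standard scheme that \cite{FS18} implements: a $\nu$-uniform absorbing set, uniform-on-bounded-sets convergence of the solution operators $\pino(t)\to\pio(t)$ at a fixed time, and the routine reduction (attraction of $B$ by $\A$ under $\pio$, plus $\An\subset\pino(T)B$) yielding \eqref{uc_0}. Your energy argument for the operator convergence is sound: the decomposition of the advection difference is algebraically correct, the term $u^{(0)}[\theta^{(0)}]\cdot\nabla\varphi$ dies by incompressibility, the term $u^{(0)}[\varphi]\cdot\nabla\thetanu$ is absorbed using the $\nu$-independent bound \eqref{one order singular for u when nu=0} and the dissipation (available since $\gamma=2$ here), and Gr\"onwall reduces everything to $\int_0^T\|D^{(\nu)}\|_{L^2}^2$. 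Two points deserve emphasis. First, your frequency-splitting treatment of $D^{(\nu)}$ is not just one option but essentially forced: the quantitative symbol estimate the paper proves later for $\nu_1,\nu_2\in[\nu_*,\nu^*]$, namely \eqref{estimates on the difference on Fourier symbols}, carries a constant of order $\nu_*^{-4}$ and so degenerates precisely at $\nu=0$; replacing it by the uniform $O(|k|)$ bound from A5 at high frequencies (paid for by uniform higher regularity) together with pointwise convergence of the explicit symbols \eqref{MG Fourier symbol_1 intro}--\eqref{MG Fourier symbol_4 intro} at low frequencies is the correct fix, and mirrors exactly why the paper treats the cases $\nu\to0$ and $\nu\to\nu_0>0$ by different methods. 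Second, the one genuinely hard ingredient --- the $\nu$-uniform absorbing set with enough regularity to control $\|\nabla\thetanu\|_{L^\infty}$, which cannot come from A3 since $C_\nu$ blows up --- you correctly isolate and defer to the critical-scaling analysis of \cite{FS18}; since the paper defers the \emph{entire} proposition to that reference, this deferral is consistent rather than a gap, though of course it means your write-up is a proof scheme conditional on the uniform regularity theory rather than a self-contained argument.
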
 

In view of Proposition~\ref{Upper semi-continuity proposition}, the result \eqref{upper semi-continuity at nu=0} now follows easily from \eqref{uc_0} by observing that $\Gg\subset \An$ for all $\nu\in(0,1]$, which gives 
\begin{align*}
\sup_{\phi\in\Gg}\inf_{\psi\in\mathcal{A}}\|\phi-\psi\|_{L^2}\le \sup_{\phi\in\An}\inf_{\psi\in\mathcal{A}}\|\phi-\psi\|_{L^2},\qquad \forall \nu\in(0,1].
\end{align*}

\begin{remark}
Concerning $\pino$ and $\An$:
\begin{itemize}
\item We readily have $\pino\Big|_{H^1}=\pin$, where $\pin$ is the solution map defined in \eqref{def of solution map nu>0 and kappa>0} for the initial value problem \eqref{MG active scalar} with $\theta(\cdot,0)=\theta_0$.
\item For each $\nu>0$, it is not clear whether the two attractors $\Gg$ and $\An$ coincide. In particular, it is unknown whether $\An$ has finite fractal dimension.
\end{itemize}
\end{remark}

\subsubsection{Upper semicontinuity of global attractors at $\nu>0$} For fixed $\nu^*>\nu_*>0$, we define 
\begin{align}\label{def of I*}
\I=[\nu_*,\nu^*].
\end{align}
In order to obtain the convergence result claimed by \eqref{upper semi-continuity at fixed nu>0}, we need to prove that
\begin{itemize}
\item[L1] there is a compact subset $\U$ of $H^1$ such that $\Gg\subset\U$ for every $\nu\in\I$; and
\item[L2] for $t > 0$, $\pin\theta_0$ is continuous in $\I$, uniformly for $\theta_0$ in compact subsets of $H^1$.
\end{itemize}
Once the conditions L1 and L2 are fufilled, we can apply the result from \cite{HOR15} to conclude that \eqref{upper semi-continuity at fixed nu>0} holds as well. 

Conditions L1 and L2 will be proved in the subsequent lemmas. We first give the following bound on $u^{(\nu)}$ in terms of $\theta^{(\nu)}$ for $\nu\in\I$.

\begin{lemma}
We fix $\nu^*>\nu_*>0$ and define $\I$ by \eqref{def of I*}. Then for any $\nu\in\I$, $s\in[0,2]$ and $f\in L^p$ with $p>1$, we have
\begin{align}\label{two order smoothing for u when nu inside I*}
\|\Lambda^s u^{(\nu)}[f]\|_{L^p}\le C_*\|f\|_{L^p},
\end{align}
where $u^{(\nu)}$ is given by \eqref{def of u by Fourier transform} and \eqref{MG Fourier symbol_1 intro}-\eqref{MG Fourier symbol_4 intro} for $\nu\in\I$, and $C_*$ is a positive constant which depends only on $p$, $\nu_*$ and $\nu^*$.
\end{lemma}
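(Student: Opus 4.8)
\emph{The plan is} to reduce the uniform-in-$\nu$ bound to a single scaling identity for the MG symbols, combined with a Fourier-multiplier transference argument. The starting observation is that the symbols in \eqref{MG Fourier symbol_1 intro}--\eqref{MG Fourier symbol_4 intro} enjoy an exact homogeneity: writing $\eta=\nu^{1/2}\xi$, one checks by inspecting the degrees of the numerators and of $D$ that replacing $(\xi,1)$ by $(\nu^{1/2}\xi,\nu)$ scales both numerator and denominator by a common factor $\nu^2$, so that
\[
\widehat M^\nu_j(\xi)=\widehat M^1_j(\nu^{1/2}\xi),\qquad \xi\in\mathbb{R}^3\setminus\{0\},\ j=1,2,3.
\]
I would establish this identity first, since it transfers all the $\nu$-dependence onto the single fixed symbol $\widehat M^1_j$ and is the mechanism that yields a constant depending only on $\nu_*,\nu^*$.

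Next I would set up the operator framework. Because $\widehat M^\nu_j$ is set to zero on $\{k_3=0\}$ while the closed-form expression does not vanish there for $j=3$, I first split $f=P_{\ne0}f+P_0f$, where $P_0$ is the average over $x_3$ and $P_{\ne0}=I-P_0$; since $P_0$ is a contraction on every $L^p(\mathbb{T}^3)$, $P_{\ne0}$ is bounded. As the multiplier annihilates the $\{k_3=0\}$ and $\{k=0\}$ modes, one has $\Lambda^s u^{(\nu)}_j[f]=T^\nu_{j,s}[P_{\ne0}f]$, where $T^\nu_{j,s}$ is the operator with symbol $m^\nu_{j,s}(\xi):=|\xi|^s\widehat M^\nu_j(\xi)$. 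It therefore suffices to bound $\|T^\nu_{j,s}\|_{L^p\to L^p}$ uniformly in $\nu\in\I$.

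For that I would invoke the transference principle for periodic multipliers: a function on $\mathbb{R}^3$ satisfying the Mikhlin--H\"ormander condition (derivatives up to order $\lfloor 3/2\rfloor+1=2$) restricts to a bounded multiplier on $L^p(\mathbb{T}^3)$, $1<p<\infty$, with norm controlled by its Mikhlin constant. Since lattice frequencies satisfy $|k|\ge1$, I am free to regularize near the origin: take $\tilde m^\nu_{j,s}=(1-\chi)m^\nu_{j,s}$ with $\chi$ a fixed cutoff equal to $1$ on $\{|\xi|\le1/2\}$ and supported in $\{|\xi|\le1\}$; then $\tilde m^\nu_{j,s}$ agrees with $m^\nu_{j,s}$ on $\mathbb{Z}^3\setminus\{0\}$ and is smooth on all of $\mathbb{R}^3$ (note $D>0$ away from the origin when $\nu>0$). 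The scaling identity then gives, for $\eta=\nu^{1/2}\xi$ and $|\alpha|\le2$,
\[
|\xi|^{|\alpha|}\,\bigl|\partial^\alpha_\xi m^\nu_{j,s}(\xi)\bigr|=\nu^{-s/2}\,|\eta|^{|\alpha|}\,\bigl|\partial^\alpha_\eta m^1_{j,s}(\eta)\bigr|,
\]
so the Mikhlin constant of $m^\nu_{j,s}$ is $\nu^{-s/2}\le\nu_*^{-s/2}$ times that of the fixed symbol $m^1_{j,s}$, taken over $\{|\eta|\ge\nu_*^{1/2}\}$ (for $|\xi|\ge1$) and over the compact annulus $\nu_*^{1/2}/2\le|\eta|\le(\nu^*)^{1/2}$ (coming from the cutoff region $1/2\le|\xi|\le1$).

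The only remaining task, and the one I expect to require the most care, is to verify that $m^1_{j,s}(\eta)=|\eta|^s\widehat M^1_j(\eta)$ is smooth on $\mathbb{R}^3\setminus\{0\}$ with $|\eta|^{|\alpha|}|\partial^\alpha m^1_{j,s}(\eta)|$ bounded on $\{|\eta|\ge\nu_*^{1/2}\}$ and on the fixed annulus. Smoothness and the annulus bound are immediate from $D^1(\eta)\ge(\eta_2^2+|\eta|^4)^2\ge|\eta|^8>0$; the large-$|\eta|$ bound follows by the same differentiation-of-a-rational-function bookkeeping that underlies A3, each derivative lowering the numerator degree by one while $D^1(\eta)\ge|\eta|^8$ absorbs it and produces the required $|\eta|^{-|\alpha|}$ gain (for $s=2$ the symbol is genuinely bounded, and for $s<2$ it decays). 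Collecting these yields a Mikhlin constant for $m^1_{j,s}$ depending only on $\nu_*,\nu^*$ through the frequency ranges, whence $\|T^\nu_{j,s}\|_{L^p\to L^p}\le C_*(p,\nu_*,\nu^*)$ and, via $\|P_{\ne0}f\|_{L^p}\le C\|f\|_{L^p}$, the claimed estimate \eqref{two order smoothing for u when nu inside I*}.
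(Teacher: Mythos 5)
Your proposal is correct, and I verified its crux: the dilation identity $\widehat M^\nu_j(\xi)=\widehat M^1_j(\nu^{1/2}\xi)$ does hold, because substituting $\eta=\nu^{1/2}\xi$ into \eqref{MG Fourier symbol_1 intro}--\eqref{MG Fourier symbol_4 intro} with $\nu=1$ scales every numerator term and the denominator $D$ by the common factor $\nu^{2}$. Your backbone — explicit symbol estimates fed into the standard Fourier multiplier theorem — is the same as the paper's, whose proof is only a two-sentence sketch ("examining the explicit expression\dots the symbols can be bounded in terms of $\nu_*$ and $\nu^*$\dots the rest follows by the standard Fourier multiplier theorem" with a citation to Stein). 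Where you genuinely differ is the mechanism producing uniformity in $\nu$: the paper implicitly estimates the symbols and their derivatives directly, tracking constants over $\nu\in[\nu_*,\nu^*]$, whereas you transfer all the $\nu$-dependence onto a single fixed symbol via the scaling identity, so the Mikhlin--H\"ormander condition needs to be checked only once, with the interval $[\nu_*,\nu^*]$ entering only through the dilation parameter and the excluded neighbourhood of the origin. Your route also makes explicit two points the paper glosses over: the transference of a Euclidean Mikhlin multiplier to $\mathbb{T}^3$ (after the harmless cutoff near $\xi=0$, legitimate since nonzero lattice frequencies satisfy $|k|\ge1$), and the convention $\widehat M^\nu_j=0$ on $\{k_3=0\}$, which you absorb into the bounded projection $P_{\neq 0}$. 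The paper's route is shorter if one is willing to assert the uniform derivative bounds by inspection; note, though, that mere boundedness of the symbols (all the paper literally claims) would not suffice for $p\neq 2$, so the derivative bookkeeping you carry out is not superfluous but is exactly what the citation to the multiplier theorem requires.
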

\begin{proof}
By examining the explicit expression for the components of $\widehat M^{\nu}$ given by \eqref{MG Fourier symbol_1 intro}-\eqref{MG Fourier symbol_4 intro}, it is not hard to see that for $s\in[0,2]$, the Fourier symbols of $\Lambda^s u^{(\nu)}[\cdot]$ can be bounded in terms of $\nu_*$ and $\nu^*$ but independent of $s$. The rest follows by the standard Fourier multiplier theorem (see \cite{S70} for example) and we omit the details here.
\end{proof}

\begin{remark}\label{uniform bounded set in H1 remark}
Using the estimate \eqref{two order smoothing for u when nu inside I*} on $\unu[\cdot]$ and following lines by lines the arguments and proofs given in Subsection~\ref{existence of global attractors subsection}, we can obtain parallel results given in Subsection~\ref{existence of global attractors subsection} for all $\nu\in\I$. In particular, as claimed by Lemma~\ref{Existence of an H 1+gamma/2 absorbing set lemma}, there exists a constant $R_{2}\ge1$ which depends only on $\nu_*$, $\nu^*$, $\kappa$, $\|S\|_{L^\infty\cap H^1}$ such that the set
\begin{align*}
B_{2}=\left\{\phi\in H^{2}:\|\phi\|_{H^{2}}\le R_{2}\right\}
\end{align*}
enjoys the following properties:
\begin{itemize}
\item $B_{2}$ is a compact set in $H^1$ which depends only on $\nu_*$, $\nu^*$, $\kappa$, $\|S\|_{L^\infty\cap H^1}$;
\item $\Gg\subset B_{2}$ for all $\nu\in\I$.
\end{itemize}
\end{remark}

The following lemma gives the necessary $H^1$-estimates which will be useful to our analysis.

\begin{lemma}
We fix $\nu^*>\nu_*>0$ and define $\I$ by \eqref{def of I*}. Define $\U=\{\phi\in H^1:\|\phi\|^2_{H^1}\le R_{\U}\}$ where $R_{\U}>0$. For any $\theta_0\in\U$ and $\nu\in\I$, if $\thetanu(t)=\pin(t)\theta_0$, then $\thetanu(t)$ satisfies
\begin{align}\label{H1 estimate in compact set of H1}
\sup_{0\le \tau\le t}\|\thetanu(\cdot,\tau)\|^2_{H^1}+\int_0^t\|\thetanu(\cdot,\tau)\|^2_{H^2}d\tau\le M_*(t),\qquad\forall t>0,
\end{align}
where $M_*(t)$ is a positive function in $t$ which depends only on $t$, $\kappa$, $\nu_*$, $\nu^*$, $\|S\|_{H^1}$ and $R_{\U}$.
\end{lemma}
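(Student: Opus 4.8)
The plan is to run a two-tier energy estimate on the MG$^\nu$ equation \eqref{MG active scalar} (for which $\gamma=2$, so $\Lambda^\gamma=-\Delta$), writing $\thetanu=\pin(t)\theta_0$ and $\unu=\unu[\thetanu]$, and to keep every constant uniform in $\nu\in\I$ by invoking only the $\nu$-uniform smoothing bound \eqref{two order smoothing for u when nu inside I*}. As in Theorem~\ref{global-in-time wellposedness in Sobolev}, I would justify the computations on smooth solutions (parabolic smoothing makes $\thetanu(\cdot,t)$ instantaneously regular for $t>0$) and then pass to $H^1$ data by approximation; the resulting a priori bound is exactly what is asserted.

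\emph{First tier ($L^2$).} Pairing \eqref{MG active scalar}$_1$ with $\thetanu$ and using $\divv\unu=0$ to annihilate the transport term gives precisely the energy inequality \eqref{energy inequality kappa>0} with $\gamma=2$, namely
$$\|\thetanu(\cdot,t)\|_{L^2}^2+2\kappa\int_0^t\|\nabla\thetanu(\cdot,\tau)\|_{L^2}^2\,d\tau\le\|\theta_0\|_{L^2}^2+\frac{t}{c_0\kappa}\|S\|_{L^2}^2.$$
Since $\theta_0\in\U$ gives $\|\theta_0\|_{L^2}^2\le R_{\U}$, this furnishes both a pointwise $L^2$ bound and, crucially, a finite bound on $\int_0^t\|\nabla\thetanu\|_{L^2}^2\,d\tau$ depending only on $t,\kappa,\|S\|_{L^2},R_{\U}$.

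\emph{Second tier ($H^1$).} Pairing with $-\Delta\thetanu$ yields
$$\frac{d}{dt}\|\nabla\thetanu\|_{L^2}^2+2\kappa\|\Delta\thetanu\|_{L^2}^2=2\langle\unu\cdot\nabla\thetanu,\Delta\thetanu\rangle-2\langle\nabla S,\nabla\thetanu\rangle.$$
For the nonlinear term I bound $|\langle\unu\cdot\nabla\thetanu,\Delta\thetanu\rangle|\le\|\unu\|_{L^\infty}\|\nabla\thetanu\|_{L^2}\|\Delta\thetanu\|_{L^2}$ and control $\|\unu\|_{L^\infty}$ uniformly in $\nu$: by \eqref{L infty bound 2} with $q=d+1$ together with \eqref{two order smoothing for u when nu inside I*} at $s=0,1\in[0,2]$, I get $\|\unu\|_{L^\infty}\le C\|\unu\|_{W^{1,d+1}}\le C_*\|\thetanu\|_{L^{d+1}}\le C_*\|\thetanu\|_{H^1}$ since $d\in\{2,3\}$. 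A Young splitting absorbs the top-order factor $\|\Delta\thetanu\|_{L^2}$ into $\kappa\|\Delta\thetanu\|_{L^2}^2$, and after $2|\langle\nabla S,\nabla\thetanu\rangle|\le\|\nabla S\|_{L^2}^2+\|\nabla\thetanu\|_{L^2}^2$ I arrive at
$$\frac{d}{dt}y+\kappa\|\Delta\thetanu\|_{L^2}^2\le g(t)\,y+\|\nabla S\|_{L^2}^2,\qquad y:=\|\nabla\thetanu\|_{L^2}^2,\quad g:=\tfrac{2C_*^2}{\kappa}\|\thetanu\|_{H^1}^2+1.$$

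The main point — and the step that needs care — is that the Gronwall coefficient $g$ itself contains $y$ (through $\|\thetanu\|_{H^1}^2=\|\thetanu\|_{L^2}^2+y$), so the inequality is formally of Riccati type and a naive argument threatens finite-time blow-up. The resolution is that the \emph{time integral} of $g$ is already controlled a priori: by the first tier, $\int_0^t\|\thetanu\|_{H^1}^2\,d\tau=\int_0^t\|\thetanu\|_{L^2}^2\,d\tau+\int_0^t y\,d\tau$ is finite with a bound depending only on $t,\kappa,\|S\|_{L^2},R_{\U}$ and uniform in $\nu$. Hence $\int_0^t g\,d\tau$ is an a priori finite number, and the standard integral form of Gronwall's inequality gives
$$y(t)\le\big(R_{\U}+t\|\nabla S\|_{L^2}^2\big)\exp\Big(\int_0^t g\,d\tau\Big)=:M_1(t),$$
with $M_1$ of the advertised dependence; since $y(0)\le R_{\U}$ and the exponent is monotone in the upper limit, the same bound holds at every $\tau\in[0,t]$, yielding the $\sup$-part of \eqref{H1 estimate in compact set of H1}. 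Finally, integrating the displayed differential inequality over $[0,t]$ and inserting the now-known pointwise bound $y\le M_1$ together with the finiteness of $\int_0^t g\,d\tau$ controls $\kappa\int_0^t\|\Delta\thetanu\|_{L^2}^2\,d\tau$; combined with the first-tier $L^2$ bound and the equivalence $\|\cdot\|_{H^2}^2\simeq\|\cdot\|_{L^2}^2+\|\Delta\cdot\|_{L^2}^2$ for mean-zero functions, this produces the $\int_0^t\|\thetanu\|_{H^2}^2\,d\tau$ bound and completes \eqref{H1 estimate in compact set of H1}. Every constant traces back to $C_*$ in \eqref{two order smoothing for u when nu inside I*}, so the resulting $M_*(t)$ is independent of $\nu\in\I$.
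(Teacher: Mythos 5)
Your proof is correct, and it closes the estimate by a genuinely different mechanism than the paper's own argument, even though both share the same two-tier skeleton ($L^2$ energy estimate, then pairing with $-\Delta\thetanu$, then Gr\"{o}nwall). The paper treats the nonlinear term by first exploiting $\divv(\unu)=0$ to integrate by parts (the commutator estimate \eqref{commutator estimate}), then applying H\"{o}lder and \eqref{bound on L4 using H1} to get $|\intox \unu\cdot\nabla\thetanu\,\Delta\thetanu|\le C\|\Lambda^2\unu\|_{L^2}\|\Lambda^2\thetanu\|_{L^2}\|\Lambda\thetanu\|_{L^2}$, and finally invoking the \emph{full two-derivative} gain of \eqref{two order smoothing for u when nu inside I*} at $s=2$, namely $\|\Lambda^2\unu\|_{L^2}\le C_*\|\thetanu\|_{L^2}$; after Young's inequality this produces the Gr\"{o}nwall coefficient $\frac{C_*}{\kappa}\|\thetanu\|^2_{L^2}$, which is \emph{pointwise} bounded in time by the first tier, so the plain differential form of Gr\"{o}nwall closes the argument with no further input. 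You instead keep the trilinear term as it stands, estimate it by $\|\unu\|_{L^\infty}\|\nabla\thetanu\|_{L^2}\|\Delta\thetanu\|_{L^2}$, and use the smoothing bound only at $s\in\{0,1\}$ through $\|\unu\|_{L^\infty}\le C_*\|\thetanu\|_{H^1}$; your Gr\"{o}nwall coefficient then involves $\|\thetanu\|^2_{H^1}$, which is not pointwise controlled a priori, and you correctly observe that this is not circular because its \emph{time integral} is already bounded by the dissipation term in \eqref{energy inequality kappa>0}, so the integral form of Gr\"{o}nwall applies. Both routes yield an $M_*(t)$ with exactly the stated dependence on $t,\kappa,\nu_*,\nu^*,\|S\|_{H^1},R_{\U}$, uniformly in $\nu\in\I$. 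What each buys: the paper's argument is shorter at the Gr\"{o}nwall stage and showcases the distinguishing feature of the MG$^\nu$ constitutive law on $\I$ (a uniform order-2 smoothing), whereas yours is the classical Navier--Stokes-style argument that survives with much weaker operators (only $\|u\|_{L^\infty}\lesssim\|\theta\|_{H^1}$ is needed) at the price of leaning on the $\kappa$-dissipation and the slightly more delicate Riccati-type bookkeeping, which you handle correctly.
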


\begin{proof}
First of all, we recall the the following $L^2$-estimate on $\thetanu$ from \eqref{energy inequality kappa>0}, namely
\begin{align}\label{energy inequality varying nu}
\|\thetanu(\cdot,t)\|^2_{L^2}+\kappa\int_0^t\|\Lambda\thetanu(\cdot,\tau)\|^2_{L^2}d\tau\le\|\theta_0\|^2_{L^2}+\frac{t}{c_0\kappa}\|S\|^2_{L^2}, \qquad\forall t\ge0.
\end{align}
Following the argument given in the proof of Theorem~\ref{global-in-time wellposedness in Sobolev}, for all $\nu\in\I$, we readily have
\begin{align}\label{differential eqn for H1 estimate in compact set of H1}
\frac{1}{2}\frac{d}{dt}\|\nabla\thetanu\|^2_{L^2}+\kappa\|\Delta\thetanu\|^2_{L^2}\le\|\Lambda S\|_{L^2}\|\nabla\thetanu\|_{L^2}+\Big|\intox \unu\cdot\nabla\thetanu\Delta\thetanu\Big|.
\end{align}
Using the fact that $\divv (\unu)=0$ and the commutator estimate given in \eqref{commutator estimate}, we obtain
\begin{align*}
\Big|\intox \unu\cdot\nabla\thetanu\Delta\thetanu\Big|&\le C\|\Lambda\unu\|_{L^4}\|\nabla\thetanu\|_{l^4}\|\Lambda\thetanu\|_{L^2}\\
&\le C\|\Lambda^2\unu\|_{L^2}\|\Lambda^2\thetanu\|_{L^2}\|\Lambda\thetanu\|_{L^2},
\end{align*}
where the last inequality follows from \eqref{bound on L4 using H1}. With the help of \eqref{two order smoothing for u when nu inside I*}, we can further bound $\|\Lambda^2\unu\|_{L^2}$ by $C_*\|\thetanu\|_{L^2}$. Hence we conclude from \eqref{differential eqn for H1 estimate in compact set of H1} that 
\begin{align}\label{differential eqn for H1 estimate in compact set of H1 final}
\frac{1}{2}\frac{d}{dt}\|\nabla\thetanu\|^2_{L^2}+\frac{\kappa}{2}\|\Delta\thetanu\|^2_{L^2}\le\|\Lambda S\|_{L^2}\|\nabla\thetanu\|_{L^2}+\frac{C_*}{\kappa}\|\thetanu\|^2_{L^2}\|\nabla\thetanu\|^2_{L^2},
\end{align}
and the result \eqref{H1 estimate in compact set of H1} follows from \eqref{energy inequality varying nu}, \eqref{differential eqn for H1 estimate in compact set of H1 final} and Gr\"{o}nwall's inequality.
\end{proof}

Next we estimate the difference between the Fourier symbols of $u^{(\nu_1)}$ and $u^{(\nu_2)}$ for $\nu_1$, $\nu_2\in\I$:

\begin{lemma}
We fix $\nu^*>\nu_*>0$ and define $\I$ by \eqref{def of I*}. There exists $C_*>0$ which depends on $\nu^*$ and $\nu_*$ only such that for any $\nu_1$, $\nu_2\in\I$ and $k=(k_1,k_2,k_3)\in\Z^3\backslash\{(0,0,0)\}$, we have
\begin{align}\label{estimates on the difference on Fourier symbols}
|k|^2|\widehat M^{\nu_1}_j(k)-\widehat M^{\nu_2}_j(k)|\le C_*|\nu_1-\nu_2|,\qquad\forall j\in\{1,2,3\}.
\end{align}
\end{lemma}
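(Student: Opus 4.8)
The plan is to prove the bound \eqref{estimates on the difference on Fourier symbols} by direct inspection of the explicit formulas \eqref{MG Fourier symbol_1 intro}--\eqref{MG Fourier symbol_4 intro} for the components of $\widehat M^\nu$, treating each symbol as a rational function of the continuous variable $\nu$ on the fixed interval $\I=[\nu_*,\nu^*]$ and applying the mean value theorem in $\nu$. The key observation is that for fixed $k=(k_1,k_2,k_3)\in\Z^3$ with $k_3\neq0$, each $\widehat M^\nu_j(k)$ has the form $N_j^\nu(k)\,D(k)^{-1}$ where the numerators $N_j^\nu$ and the denominator $D(k)=|k|^2k_3^2+(k_2^2+\nu|k|^4)^2$ are polynomials in $\nu$ of degree at most $1$ and $2$ respectively, with coefficients that are polynomials in $k$. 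On the set $\{k_3=0\}$ all symbols vanish for every $\nu$, so the difference is zero there and the estimate is trivial; hence I restrict attention to $k_3\neq0$.

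First I would fix $k$ with $k_3\neq0$ and compute, for $1\le j\le3$,
\begin{align*}
\widehat M^{\nu_1}_j(k)-\widehat M^{\nu_2}_j(k)=\int_{\nu_2}^{\nu_1}\frac{d}{d\nu}\widehat M^{\nu}_j(k)\,d\nu,
\end{align*}
so that $|\widehat M^{\nu_1}_j(k)-\widehat M^{\nu_2}_j(k)|\le|\nu_1-\nu_2|\sup_{\nu\in\I}\big|\partial_\nu\widehat M^\nu_j(k)\big|$. The main technical step is then to show the uniform bound
\begin{align*}
\sup_{\nu\in\I}\big|\partial_\nu\widehat M^\nu_j(k)\big|\le C_*\,|k|^{-2},\qquad\forall k\in\Z^3\backslash\{0\},
\end{align*}
with $C_*$ depending only on $\nu_*,\nu^*$. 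By the quotient rule, $\partial_\nu\widehat M^\nu_j=(\partial_\nu N_j^\nu)D^{-1}-N_j^\nu(\partial_\nu D)D^{-2}$, where $\partial_\nu N_j^\nu$ and $\partial_\nu D=2|k|^4(k_2^2+\nu|k|^4)$ are again explicit polynomials in $k$ and $\nu$. I would then carry out the degree-counting: the numerators of the two terms are homogeneous in $k$ of degrees matching those of $N_j^\nu\partial_\nu D$ (highest order), so the crucial lower bound is the coercivity of the denominator, namely $D(k)\ge c\,|k|^6$ uniformly for $\nu\in\I$ (using $\nu\ge\nu_*>0$ so that $(k_2^2+\nu|k|^4)^2\ge\nu_*^2|k|^8$ dominates), which is precisely the kind of estimate already invoked to establish the smoothing property \eqref{two order smoothing for u when nu inside I*}.

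The main obstacle, and the part requiring genuine care, is the homogeneity bookkeeping: one must verify that after differentiating in $\nu$ the worst-case growth of the numerator relative to $D$ or $D^2$ leaves exactly $|k|^{-2}$ decay and no worse. Concretely, the dominant contribution comes from $-N_j^\nu(\partial_\nu D)D^{-2}$, whose numerator is of top degree $\sim|k|$ (from $N_j^\nu$, cf.\ the $k_1k_3\nu|k|^4$ type terms) times $|k|^4(k_2^2+\nu|k|^4)\sim|k|^8$, giving degree $\sim|k|^{13}$, against $D^2\sim|k|^{12}$ in the worst regime $k_2^2\gtrsim|k|^2$; one checks this still yields $O(|k|^{-2})$ after accounting for the cancellations present in the specific form of $N_j^\nu$, using that the genuinely top-order pieces of $\widehat M^\nu_j$ are themselves $O(|k|^{-1})$ as reflected in assumption A5 and the $O(|k|^{-3})$ bound of A3. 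I would organize this by splitting into the two regimes $k_2^2\le|k|^2$ and $k_2^2\ge|k|^2$ (equivalently, comparing the two summands defining $D(k)$) and bounding numerator and denominator separately in each, exactly as in the verification of A3 and A5 referenced in Proposition~\ref{assumption checked}. Since this is a finite, explicit computation uniform over the compact parameter interval $\I$, no new analytic input is needed beyond the already-established multiplier estimates, and I would relegate the routine polynomial estimates to a remark citing \cite[Lemma~5.1--5.2]{FS18} and \cite[Section 4]{FV11a}.
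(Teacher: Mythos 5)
Your overall route---writing $\widehat M^{\nu_1}_j(k)-\widehat M^{\nu_2}_j(k)=\int_{\nu_2}^{\nu_1}\partial_\nu\widehat M^{\nu}_j(k)\,d\nu$ and bounding $\sup_{\nu\in\I}|\partial_\nu\widehat M^\nu_j(k)|\le C_*|k|^{-2}$ by the quotient rule---is sound, and it is really just the differential form of the paper's proof, which subtracts the two fractions over the common denominator $D_{\nu_1}(k)D_{\nu_2}(k)$, extracts the factor $(\nu_1-\nu_2)|k|^4$ from $(k_2^2+\nu_1|k|^4)-(k_2^2+\nu_2|k|^4)$, and then bounds numerator and denominator crudely. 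The problem is that your quantitative bookkeeping, as written, does not close. Your stated ``crucial lower bound'' $D(k)\ge c|k|^6$ is too weak: with it, the term $|N_j^\nu|\,|\partial_\nu D|\,D^{-2}$ is only $O(|k|^{6}\cdot|k|^{8}/|k|^{12})=O(|k|^2)$, which misses the needed $O(|k|^{-2})$ by a factor $|k|^4$. Your subsequent count ($|k|^{13}$ against $D^2\sim|k|^{12}$ in the regime $k_2^2\gtrsim|k|^2$) likewise leaves a positive power of $|k|$, which you then propose to remove by ``cancellations present in the specific form of $N_j^\nu$''. You never exhibit such cancellations, and if they were genuinely required they would constitute the entire content of the proof; as written, this is the gap.

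In fact no cancellations, and no splitting into regimes, are needed, and the repair is already contained in your own parenthetical remark: for every $k\in\Z^3\backslash\{0\}$ and $\nu\in\I$ one has $D(k)\ge(k_2^2+\nu|k|^4)^2\ge\nu_*^2|k|^8$. With this single bound, the crude estimates $|\partial_\nu N_j^\nu|\le|k|^6$, $|N_j^\nu|\le(2+\nu^*)|k|^6$ and $|\partial_\nu D|=2|k|^4(k_2^2+\nu|k|^4)\le2(1+\nu^*)|k|^8$ give
\begin{align*}
|\partial_\nu\widehat M^\nu_j(k)|\le\frac{|\partial_\nu N_j^\nu|}{D}+\frac{|N_j^\nu|\,|\partial_\nu D|}{D^2}\le\Big(\frac{1}{\nu_*^2}+\frac{2(2+\nu^*)(1+\nu^*)}{\nu_*^4}\Big)|k|^{-2},
\end{align*}
and \eqref{estimates on the difference on Fourier symbols} follows upon integrating in $\nu$. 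This is exactly the paper's estimate in integrated form: there the numerator of the difference is bounded by $C(\nu^*)|k|^{14}$ and the product of the two denominators from below by $\nu_*^4|k|^{16}$. Two smaller points: the case $k_3=0$ is indeed trivial, but only because the paper \emph{defines} $\widehat M^\nu_j=0$ there (for $j=3$ the algebraic formula would not vanish); and your appeal to A5 (``top-order pieces of $\widehat M^\nu_j$ are $O(|k|^{-1})$'') is misstated---A5 only yields $|\widehat M^\nu_j(k)|\le C_0|k|$, while A3 yields $O_\nu(|k|^{-2})$ for fixed $\nu>0$---though neither assumption is needed once the correct lower bound on $D$ is used.
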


\begin{proof}
By direct computation, for each $k=(k_1,k_2,k_3)\in\Z^3\backslash\{(0,0,0)\}$ and $\nu_1$, $\nu_2\in\I$,
\begin{align*}
&|k|^2|\widehat M^{\nu_1}_1(k)-\widehat M^{\nu_2}_1(k)|\\
&=|\nu_1-\nu_2||k|^2\\
&\times\frac{\Big|k_1k_3^3|k|^6+k_1k_3|k|^4(k_2^2+\nu_1|k|^4)(k_2^2+\nu_2|k|^4)-k_2k_3|k|^6(2k_2^2+\nu_1|k|^4+\nu_2|k|^4)\Big|}{\Big[|k|^2k_3^2+(k_2^2+\nu_1|k|^4)^2\Big]\Big[|k|^2k_3^2+(k_2^2+\nu_2|k|^4)^2\Big]}\\
&\le|\nu_1-\nu_2||k|^2\frac{|k|^{10}+|k|^{10}(1+\nu^*|k|^2)^2+|k|^{10}(2+2\nu^*|k|^4)}{\nu_*^4|k|^{16}}\\
&\le C_*|\nu_1-\nu_2|,
\end{align*}
where $\dis C_*\ge\frac{1+(1+\nu^*)^2+2(1+\nu^*)}{\nu_*^4}$. The cases for $j=2$ and $j=3$ are just similar and we omit the details.
\end{proof}

We are now ready to state and prove the following lemma which gives the continuity of $\pin$ in $\nu\in\I$.

\begin{lemma}\label{continuity in nu lemma}
We fix $\nu^*>\nu_*>0$ and define $\I$ by \eqref{def of I*}. Then for each $t > 0$, $\pin(t)\theta_0$ is continuous in $\I$, uniformly for $\theta_0$ in compact subsets of $H^1$.
\end{lemma}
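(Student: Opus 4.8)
The plan is to fix $\nu_0\in\I$ and a bounded set $\U=\{\phi\in H^1:\|\phi\|_{H^1}^2\le R_\U\}$ (every compact subset of $H^1$ is contained in such a $\U$), and to show that $\|\pin(t)\theta_0-\pi^{\nu_0}(t)\theta_0\|_{H^1}\to0$ as $\nu\to\nu_0$ with a rate depending only on $t,\kappa,\nu_*,\nu^*,\|S\|_{H^1},R_\U$, hence uniform over $\theta_0\in\U$. Write $\thetanu=\pin(t)\theta_0$, $\theta^{(\nu_0)}=\pi^{\nu_0}(t)\theta_0$ and $w=\thetanu-\theta^{(\nu_0)}$. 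Subtracting the two copies of \eqref{MG active scalar} (here $\gamma=2$, so $-\kappa\Lambda^\gamma=\kappa\Delta$) and using $\unu=M^\nu[\thetanu]$ together with the splitting $\unu-u^{(\nu_0)}=M^\nu[w]+(M^\nu-M^{\nu_0})[\theta^{(\nu_0)}]$, the difference solves
\begin{align*}
\dt w&=\kappa\Delta w-\unu\cdot\nabla w-M^\nu[w]\cdot\nabla\theta^{(\nu_0)}\\
&\quad-(M^\nu-M^{\nu_0})[\theta^{(\nu_0)}]\cdot\nabla\theta^{(\nu_0)},\qquad w(\cdot,0)=0.
\end{align*}
The last term is the genuine source driving $w$ away from zero; the other two are of transport/reaction type and will be absorbed by a Gr\"onwall argument.

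Next I would run simultaneous $L^2$ and $\dot H^1$ energy estimates, testing the equation against $w$ and against $-\Delta w$. The convective term $\unu\cdot\nabla w$ vanishes in the $L^2$ estimate by $\divv\unu=0$, and in the $\dot H^1$ estimate an integration by parts (again using $\divv\unu=0$) reduces it to $-\intox\partial_k\unu_j\,\partial_j w\,\partial_k w$, bounded by $\|\nabla\unu\|_{L^\infty}\|\nabla w\|_{L^2}^2$, where \eqref{bound on nabla u in terms of nabla theta} gives $\|\nabla\unu\|_{L^\infty}\le C_*\|\thetanu\|_{H^2}$. The reaction term $M^\nu[w]\cdot\nabla\theta^{(\nu_0)}$ is controlled by the uniform-in-$\nu$ smoothing bound \eqref{two order smoothing for u when nu inside I*} with $s=2$, which yields $\|M^\nu[w]\|_{L^\infty}\le C\|M^\nu[w]\|_{H^2}\le C_*\|w\|_{L^2}$ (using $H^2(\T^3)\hookrightarrow L^\infty$); after a Young splitting this contributes $\tfrac{\kappa}{8}\|\Delta w\|_{L^2}^2+C_*\|\nabla\theta^{(\nu_0)}\|_{L^2}^2\|w\|_{L^2}^2$. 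The crucial source term is handled by \eqref{estimates on the difference on Fourier symbols}, which says $M^\nu-M^{\nu_0}$ smooths by two derivatives with norm $O(|\nu-\nu_0|)$, i.e. $\|(M^\nu-M^{\nu_0})[\theta^{(\nu_0)}]\|_{H^2}\le C_*|\nu-\nu_0|\,\|\theta^{(\nu_0)}\|_{L^2}$; pairing with $\Delta w$ and using Young gives $\tfrac{\kappa}{8}\|\Delta w\|_{L^2}^2+C_*|\nu-\nu_0|^2\|\theta^{(\nu_0)}\|_{L^2}^2\|\nabla\theta^{(\nu_0)}\|_{L^2}^2$.

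Collecting these estimates produces a differential inequality
\begin{align*}
\frac{d}{dt}\|w\|_{H^1}^2+\kappa\|w\|_{H^2}^2\le g(t)\,\|w\|_{H^1}^2+|\nu-\nu_0|^2\,h(t),
\end{align*}
with $g(t)=C_*\big(\|\thetanu\|_{H^2}+\|\theta^{(\nu_0)}\|_{H^1}^2+1\big)$ and $h(t)=C_*\|\theta^{(\nu_0)}\|_{L^2}^2\|\theta^{(\nu_0)}\|_{H^1}^2$. The key input is the uniform estimate \eqref{H1 estimate in compact set of H1}, valid for every $\nu\in\I$ and every $\theta_0\in\U$, which bounds $\sup_{[0,t]}\|\thetanu\|_{H^1}^2$ and $\int_0^t\|\thetanu\|_{H^2}^2\,d\tau$ by $M_*(t)$; via Cauchy--Schwarz, $\int_0^t g\le C_*\big(\sqrt{t\,M_*(t)}+t\,M_*(t)+t\big)$ and $\int_0^t h\le C_*\,M_*(t)^2\,t$ are finite and independent of $\theta_0\in\U$ and $\nu\in\I$. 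Since $w(\cdot,0)=0$, Gr\"onwall's inequality then yields $\|w(t)\|_{H^1}^2\le|\nu-\nu_0|^2\,\tilde M(t)$ with $\tilde M(t)$ depending only on $t,\kappa,\nu_*,\nu^*,\|S\|_{H^1},R_\U$, which tends to $0$ as $\nu\to\nu_0$ uniformly over $\theta_0\in\U$, establishing conditions L1--L2 and the lemma.

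I expect the main difficulty to be keeping every constant uniform in $\nu\in\I$ and in $\theta_0\in\U$ while closing the energy estimate. This is precisely what forces the two distinct smoothing inputs to be used in tandem: the $\nu$-uniform two-derivative gain \eqref{two order smoothing for u when nu inside I*} to tame $M^\nu[w]$ and $\nabla\unu$, and the \emph{quantitative} symbol-difference bound \eqref{estimates on the difference on Fourier symbols} to extract the $O(|\nu-\nu_0|)$ smallness. The most delicate point is that the $H^2$ control of $\thetanu$ is only available in the time-integrated sense of \eqref{H1 estimate in compact set of H1}, so it must enter the Gr\"onwall coefficient $g$ through Cauchy--Schwarz rather than as a pointwise bound; ensuring $\int_0^t g$ is finite (and $\U$-uniform) is where the argument must be carried out with care.
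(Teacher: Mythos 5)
Your proof is correct and takes essentially the same route as the paper: an energy estimate on the difference of the two solutions, closed by the $\nu$-uniform smoothing bound \eqref{two order smoothing for u when nu inside I*}, the quantitative symbol-difference estimate \eqref{estimates on the difference on Fourier symbols}, the a priori bound \eqref{H1 estimate in compact set of H1}, and Gr\"onwall, with only cosmetic differences (you split $u^{(\nu)}-u^{(\nu_0)}$ at the PDE level and run an extra $L^2$ estimate, while the paper splits inside the Fourier bound for $\|\nabla(u^{(\nu_1)}-u^{(\nu_2)})\|_{L^2}$ and tests only against $-\Delta\phi$). One small correction: the uniform bound $\|\nabla u^{(\nu)}\|_{L^\infty}\le C_*\|\theta^{(\nu)}\|_{H^2}$ should be credited to \eqref{two order smoothing for u when nu inside I*} combined with \eqref{L infty bound 2} (as you do in your closing paragraph), not to \eqref{bound on nabla u in terms of nabla theta}, whose constant $C_\nu$ is not uniform over $\I$.
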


\begin{proof}
Given a compact set $\U$ in $H^1$, we choose $R_\U>0$ such that $\U\subset\{\phi\in H^1:\|\phi\|^2_{H^1}\le R_{\U}\}$. For each $\theta_0\in\U$ and $\nu_1$, $\nu_2\in\I$, we define 
$$\theta^{(\nu_i)}(t)=\pi^{\nu_i}(t)\theta_0,\qquad i\in\{1,2\}.$$
We write $\phi=\theta^{(\nu_1)}-\theta^{(\nu_2)}$, then $\phi$ satisfies $\phi(\cdot,0)=0$ and 
\begin{align}\label{differential eqn for the difference in nu}
\dt \phi+(u^{(\nu_1)}-u^{(\nu_2)})\cdot\nabla\theta^{\nu_2}+u^{(\nu_1)}\cdot\nabla \phi=\kappa\Delta \phi,
\end{align}
where $u^{(\nu_i)}=u^{(\nu_i)}[\theta^{(\nu_i)}]$ for $i=1,2$. Multiply \eqref{differential eqn for the difference in nu} by $-\Delta\phi$ and integrate,
\begin{align}\label{integral eqn for the difference in nu}
\frac{1}{2}\frac{d}{dt}\|\nabla\phi\|^2_{L^2}+\kappa\|\Delta\phi\|^2_{L^2}\le\Big|\intox (u^{(\nu_1)}-u^{(\nu_2)})\cdot\nabla\theta^{\nu_2}\Delta \phi\Big|+\Big|\intox u^{(\nu_1)}\cdot\nabla \phi\Delta \phi\Big|
\end{align}
Upon integrating by part and exploiting the fact that $\divv(u^{(\nu_1)})=0$, we readily obtain
\begin{align*}
\Big|\intox u^{(\nu_1)}\cdot\nabla \phi\Delta \phi\Big|\le \|\nabla u^{(\nu_1)}\|_{L^\infty}\|\nabla\phi\|^2_{L^2}.
\end{align*}
Using the estimates \eqref{L infty bound 2} and \eqref{two order smoothing for u when nu inside I*}, we can bound $\|\nabla u^{(\nu_1)}\|_{L^\infty}$ by $C_*\|\Delta\theta^{(\nu_1)}\|_{L^2}$ for some positive constant $C_*$ which only depends on $\nu^*$ and $\nu_*$, and $C_*$ may change from line to line. On the other hand, using H\"{o}lder's inequality and \eqref{bound on L4 using H1}, we have
\begin{align*}
\Big|\intox (u^{(\nu_1)}-u^{(\nu_2)})\cdot\nabla\theta^{\nu_2}\Delta \phi\Big|\le C\|\nabla(u^{(\nu_1)}-u^{(\nu_2)})\|_{L^2}\|\Delta\theta^{(\nu_2)}\|_{L^2}\|\Delta\phi\|_{L^2}.
\end{align*}
To bound the term $\|\nabla(u^{(\nu_1)}-u^{(\nu_2)})\|_{L^2}$, we apply the estimate \eqref{estimates on the difference on Fourier symbols} to obtain
\begin{align*}
\|\nabla(u^{(\nu_1)}-u^{(\nu_2)})\|^2_{L^2}&=\sum_{k\in\mathbb{Z}^3\backslash\{(0,0,0)\}}|k|^2|(\widehat{u^{(\nu_1)}}-\widehat{u^{(\nu_2)}})(k)|^2\\
&\le \sum_{k\in\mathbb{Z}^3\backslash\{(0,0,0)\}}|k|^2|\widehat{M^{\nu_1}}|^2|(\widehat{\theta^{(\nu_1)}}-\widehat{\theta^{(\nu_2)}})(k)|^2\\
&\qquad+\sum_{k\in\mathbb{Z}^3\backslash\{(0,0,0)\}}|k|^2|\widehat{M^{\nu_1}}-\widehat{M^{\nu_2}}|^2|\widehat{\theta^{(\nu_2)}}(k)|^2\\
&\le C_*\|\phi\|^2_{L^2}+C_*|\nu_1-\nu_2|^2\|\theta^{(\nu_2)}\|^2_{L^2}.
\end{align*}
Hence we deduce from \eqref{integral eqn for the difference in nu} that
\begin{align}\label{integral eqn for the difference in nu final}
\frac{1}{2}\frac{d}{dt}\|\nabla\phi\|^2_{L^2}+\frac{\kappa}{2}\|\Delta\phi\|^2_{L^2}&\le \frac{C_*}{\kappa}\|\Delta\theta^{(\nu_2)}\|^2_{L^2}(\|\phi\|^2_{L^2}+|\nu_1-\nu_2|^2\|\theta^{(\nu_2)}\|^2_{L^2})\notag\\
&\qquad+C_*\|\Delta\theta^{(\nu_1)}\|_{L^2}\|\nabla\phi\|^2_{L^2}.
\end{align}
By integrating \eqref{integral eqn for the difference in nu final} from 0 to $t$, using Gr\"{o}nwall's inequality and applying the bound \eqref{H1 estimate in compact set of H1} on $\theta^{(\nu_1)}$ and $\theta^{(\nu_2)}$, there exists a positive function $C_*(t)$ which depends only on $t$, $\kappa$, $\nu_*$, $\nu^*$, $\|S\|_{H^1}$ and $R_{\U}$ such that
\begin{align}\label{continuity of pin in nu}
\|(\theta^{(\nu_1)}-\theta^{(\nu_2)})(\cdot,t)\|^2_{H^1}=\|\phi(t)\|^2_{H^1}\le C_*(t)|\nu_1-\nu_2|^2,\qquad\forall t>0,
\end{align}
and \eqref{continuity of pin in nu} implies $\pin(t)\theta_0$ is continuous in $\I$ uniformly for $\theta_0$ in $\U$.
\end{proof}

In view of Remark~\ref{uniform bounded set in H1 remark} and Lemma~\ref{continuity in nu lemma}, the conditions L1 and L2 as stated at the beginning of this subsection follow immediately and we conclude that \eqref{upper semi-continuity at fixed nu>0} holds. We finish the proof of Theorem~\ref{varying nu theorem}.

% ------------------------------------------------------------------------

\subsection*{Acknowledgment} We would like to thank the reviewers for their valuable comments and suggestions which helped to improve the manuscript. S. Friedlander is supported by NSF DMS-1613135 and A. Suen is supported by Hong Kong General Research Fund (GRF) grant project number 18300720.

% ------------------------------------------------------------------------
\end{document}